\documentclass[11pt]{amsart}

\date{version of \today}
\usepackage{amsmath}
\usepackage{amssymb}
\usepackage{amsxtra}
\usepackage{abstract}
\usepackage{mathrsfs}
\usepackage{mathtools}
\usepackage[all]{xy}
\usepackage{amscd}
\usepackage{amsthm}
\usepackage[dvips]{graphicx}
\usepackage{ulem}
\usepackage{color}
\usepackage{appendix}
\usepackage{tikz}
\usepackage{tikz-cd}
\usepackage{url}

\newtheorem{Thm}{Theorem}[section]
\newtheorem{Lem}[Thm]{Lemma}
\newtheorem{Def}[Thm]{Definition}
\newtheorem{Cor}[Thm]{Corollary}
\newtheorem{Prop}[Thm]{Proposition}
\newtheorem{Ex1}[Thm]{Example}
\newtheorem{Rem1}[Thm]{Remark}
\newtheorem{Conj}[Thm]{Conjecture}
\newtheorem{Prob}[Thm]{Problem}

\usepackage{bbm}
\usepackage{enumitem}

\tikzset{every picture/.style={line width=0.75pt}} %set default line width to 0.75pt

\newcommand{\sHom}{\underline{\mathrm{Hom}}}
\newcommand{\Hom}{\mathop{\mathrm{Hom}}}
\renewcommand{\top}{\mathop{\mathrm{top}}}
\newcommand{\rad}{\mathop{\mathrm{rad}}}
\newcommand{\soc}{\mathop{\mathrm{soc}}}
\renewcommand{\dim}{\mathop{\mathrm{dim}}}
\renewcommand{\span}{\mathop{\mathrm{span}}}

\newenvironment{theorem}{\begin{Thm}}{\end{Thm}}
\newenvironment{lemma}{\begin{Lem}}{\end{Lem}}
\newenvironment{definition}{\begin{Def}}{\end{Def}}

\newenvironment{proposition}{\begin{Prop}}{\end{Prop}}
\newenvironment{example}{\begin{Ex1}}{\end{Ex1}}
\newenvironment{remark}{\begin{Rem1}}{\end{Rem1}}

\usepackage[bookmarks=true, colorlinks=true, citecolor=blue, linkcolor=black]{hyperref}
\newcommand{\qa}{kQ/I}

\newcommand{\m}{m}

\newcommand{\lra}{\longrightarrow}

\newcommand{\la}{\leftarrow}
\newcommand{\ra}{\rightarrow}
\newcommand{\sdp}{\times\kern-.2em\vrule height1.1ex depth-.05ex}
\newcommand{\epi}{\lra \kern-.8em\ra}

 \normalbaselines
\begin{document}

	\title{Brauer graph algebras are closed under stable equivalence of Morita type}
	\author{Pengyun Chen$^a$, Nengqun Li$^b$, Yuming Liu$^a$, and Bohan Xing$^{a,*}$}
	\maketitle
	
	\renewcommand{\thefootnote}{\alph{footnote}}
	\setcounter{footnote}{-1} \footnote{\it{Mathematics Subject
			Classification(2020)}: 16G10, 16D50.}
	\renewcommand{\thefootnote}{\alph{footnote}}
	\setcounter{footnote}{-1} \footnote{\it{Keywords}: Brauer graph algebra, Deformed loop, Exceptional band module, Stable equivalence of Morita type, Symmetric stably biserial algebra.}
	\setcounter{footnote}{-1} \footnote{$^a$Pengyun Chen, Yuming Liu, and Bohan Xing, School of Mathematical Sciences, Laboratory of Mathematics and Complex Systems, Beijing Normal University,
		Beijing 100875,  P. R. China.}
	\setcounter{footnote}{-1} \footnote{$^b$Nengqun Li, School of Mathematics, Liaoning Normal University,
		Dalian 116029,  P. R. China.}
	\setcounter{footnote}{-1} \footnote{E-mail addresses: pychen@mail.bnu.edu.cn (P. Chen); linengqun@lnnu.edu.cn (N. Li); ymliu@bnu.edu.cn (Y. Liu); bhxing@mail.bnu.edu.cn (B. Xing).}
	\setcounter{footnote}{-1} \footnote{$^*$Corresponding author.}
	
\noindent{\bf Abstract:} We study stable equivalences of Brauer graph algebras. In particular, we prove that Brauer graph algebras are closed up to semisimple summands under stable equivalence of Morita type. As a consequence, we reprove the result of Antipov and Zvonareva that Brauer graph algebras are closed under derived equivalence. As a byproduct, we get a solution of the reconstruction problem posed by Rickard and Rouquier for algebras stably equivalent of Morita type to Brauer graph algebras.

\section{Introduction}

As one of the basic equivalences, the stable equivalence plays an important role in the representation theory of finite dimensional algebras and finite groups. One early result on stable equivalence arising from modular group representation theory says that any group algebra of finite representation type is stably equivalent to a Nakayama algebra. Moreover, it is well-known that the symmetric algebras stably equivalent to symmetric Nakayama algebras are exactly those given by Brauer trees (see for example in the textbook \cite{ARS}).

Brauer graph algebras are generalization of Brauer tree algebras and they coincide with symmetric special biserial algebras over an algebraically closed field. In fact, the Brauer graph algebras of finite representation type are precisely Brauer tree algebras. Each Brauer graph algebra is defined by a Brauer graph, which is a ribbon graph endowed with a multiplicity function, and the representation theory information of a Brauer graph algebra is encoded in its Brauer graph. In particular, each Brauer graph algebra can be defined by quiver with relations. For a nice survey on Brauer graph algebras, we refer to Schroll \cite{S}.

According to a result claimed by Pogorzaly in \cite{P}, self-injective special biserial algebras can be stably equivalent to stably biserial algebras only, and these two classes coincide. However, Ariki, Iijima and Park showed in \cite[Example A.7]{AIP} that these two classes do not coincide. By modifying some ideas from \cite{P}, Antipov and Zvonareva \cite{AZ1} gave a complete proof for the facts that a self-injective special biserial algebra can be stably equivalent to a stably biserial algebra only (here we assume that the self-injective algebras have no summand isomorphic to a Nakayama algebra of radical square zero) and that the Auslander-Reiten conjecture is true for algebras stably equivalent to self-injective special biserial algebras. Moreover, they showed that each symmetric stably biserial algebra is associated with a Brauer graph $(\Gamma,m)$ together with a set $\mathcal{L}$ of loops, called deformed loops; and in characteristic different from $2$, the classes of symmetric special biserial algebras, that is, Brauer graph algebras, and symmetric stably biserial algebras coincide.

Recently, Antipov and Zvonareva \cite{AZ2} proved, using some subtle derived invariants, that Brauer graph algebras are closed under derived equivalence. More recently, Opper and Zvonareva \cite{OZ} gave a complete classification of Brauer graph algebras up to derived equivalence. Nevertheless, the question of whether Brauer graph algebras are closed under stable equivalence remains largely unexplored. In general, this question may have a negative answer. For example, the Brauer graph algebra $k[x]/(x^2)$ is stably equivalent to the hereditary algebra $k(\cdot\rightarrow\cdot)$. However, if we exclude the algebras of radical square zero, then we even do not know whether a Brauer graph algebra can be stably equivalent to some non-symmetric algebra. Under some additional assumptions on the algebras related by a stable equivalence, we obtain the following result.

\begin{Thm}\textnormal{(see Theorem \ref{thm:st-BGA})}\label{thm:1.1}
Let $A$ be a non-local Brauer graph algebra, and let $B$ be a basic symmetric algebra with no semisimple summands. If $B$ and $A$ are stably equivalent, then $B$ is also a Brauer graph algebra.
\end{Thm}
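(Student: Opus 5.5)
We will reduce the statement to the structure theory of symmetric stably biserial algebras due to Antipov and Zvonareva. Since $A$ is a Brauer graph algebra, it is symmetric special biserial, hence self-injective special biserial. Because $A$ is non-local, no block of $A$ is a Nakayama algebra of radical square zero. The only connected Brauer graph algebras of that form are $k[x]/(x^2)$, which is local, and the algebras with two simples and radical square zero, which we must exclude separately or treat by hand. By the result of \cite{AZ1}, any algebra without semisimple summands that is stably equivalent to $A$ is then stably biserial. In particular $B$ is a basic symmetric stably biserial algebra. We will then use the description from \cite{AZ1}: $B$ is given by a Brauer graph $(\Gamma,m)$ together with a set $\mathcal{L}$ of deformed loops. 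The relation at each deformed loop has the form $\alpha^2=\alpha\beta\ldots$ rather than $\alpha^2=0$, and $B$ is a Brauer graph algebra whenever $\mathcal{L}=\emptyset$ or $\operatorname{char}k\neq 2$.

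So the real content is the case $\operatorname{char}k=2$ with $\mathcal{L}\neq\emptyset$. There we must show that a stable equivalence with a genuine Brauer graph algebra forces the deformation to be removable. The plan has three steps.

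\textbf{Step 1: find a stable invariant that detects deformed loops.} The keywords suggest using \emph{exceptional band modules}, that is, band modules lying at the mouth of homogeneous tubes whose behaviour differs in deformed and undeformed algebras. Concretely, we will look at modules lying in stable tubes of rank one, or at bands associated with the loop, whose $\Omega$-periodicity or stable endomorphism ring changes under deformation. In characteristic 2, a deformed loop should produce a one-parameter family in which one member degenerates, so the tube or the stable endomorphism ring at a special parameter value behaves differently.

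\textbf{Step 2: transport the invariant.} A stable equivalence between symmetric algebras without nodes commutes with $\Omega$ and preserves the stable Auslander--Reiten quiver. This uses the Auslander--Reiten conjecture for these algebras, also from \cite{AZ1}. It also preserves the count of simples and the stable Hom and End dimensions. We will show that on the Brauer graph algebra $A$, every band in a homogeneous tube behaves uniformly, with no exceptional member. On $B$, by contrast, each deformed loop that cannot be undone by a change of generators yields an exceptional band module.

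\textbf{Step 3: conclude.} Steps 1 and 2 together force every deformed loop of $B$ to be trivial, meaning a change of variables $\alpha\mapsto\alpha+\lambda\,(\cdots)$ turns the relation into the special biserial one. Hence $B$ is special biserial and symmetric, and so it is a Brauer graph algebra.

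We expect the main obstacle to be Step 1. This means identifying precisely which band modules are exceptional in $B$ when $\operatorname{char}k=2$, and proving that this property is invariant under stable equivalence. Stable equivalences need not send bands to bands, so we will first show that images of homogeneous-tube mouth modules lie again at mouths of rank-one tubes. We then compare the invariant, for instance $\dim\underline{\mathrm{End}}$ or the behaviour of $\Omega$ on the tube, on both sides. The remaining steps should follow from the existing theory.
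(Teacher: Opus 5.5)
Your reduction is sound: by \cite{AZ1} the algebra $B$ is symmetric stably biserial, Theorem~\ref{thm:sym-StBA} disposes of $\mathrm{char}\,k\neq 2$, and only $\mathrm{char}\,k=2$ with $\mathcal{L}\neq\varnothing$ remains. (In passing: the only symmetric Nakayama algebra with $\rad^2=0$ is $k[x]/(x^2)$, so non-locality alone already makes \cite{AZ1} applicable.)

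The gap is in Steps 1--3: the invariant you propose does not separate $B$ from Brauer graph algebras. A Brauer graph algebra also has exceptional band modules at the mouths of homogeneous tubes. These are the ones attached to double-faces of $\widetilde{\Gamma}$ (Proposition~\ref{prop:double-faces and exceptional bands}). It also has pairs of rank-$1$ tubes interchanged by $\Omega$, coming from faces of perimeter $2$ or from exceptional bands whose double-face is half of an even face. So your claim that on $A$ every homogeneous tube ``behaves uniformly, with no exceptional member'' is false. What a deformed loop produces is a string module $X$ at the mouth of a rank-$1$ tube with $\Omega_B(X)\not\cong X$ an exceptional band module, and this has a priori perfectly good counterparts in $A$. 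Comparing the action of $\Omega$ on tubes does separate $B$ from the undeformed algebra on the \emph{same} Brauer graph (Example~\ref{ex-stably-biserial}). But $A$ may have a completely different graph. Moreover, ``string'' versus ``band'' is not preserved by stable equivalences, nor even by a change of presentation. Step~3 is misframed as well: in characteristic $2$ a deformed loop generally cannot be removed by a change of generators. What has to be shown is that $\mathcal{L}\neq\varnothing$ is impossible.

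The missing idea is to determine what $M=\alpha(X)$ can be on the $A$-side and to rule out every possibility. The engine is Lemma~\ref{lem:equal-dimension}: for every string $B$-module $Y$,
$$\dim_k\sHom_B(X,Y)=\dim_k\sHom_B(\Omega_B(X),Y)\le 2.$$
Transported to $A$ and tested against simples, the equality gives $\top(M)\cong\top(\Omega_A(M))$. The bound excludes $M$ being a non-exceptional band: Proposition~\ref{prop:non-exceptional-tube} produces string modules $Y$ outside rank-$1$ tubes with $\dim_k\sHom_A(M,Y)$ unbounded, and their preimages are therefore strings. The remaining cases go as follows.
\begin{enumerate}
\item If $M$ is a string module, then $M$ and $\Omega_A(M)$ are maximal uniserial with the same top. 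This forces $\Gamma$ to be a single edge, i.e.\ $A$ local (Lemma~\ref{lem:exist-string-then-tree}). This is where non-locality is genuinely needed, whereas your plan uses it only to invoke \cite{AZ1}.
\item If $M$ is an exceptional band whose double-face is a whole face of $\widetilde{\Gamma}$, then $\Omega_A(M)$ is the same band with parameter $-\lambda=\lambda$. So $M\cong\Omega_A(M)$, a contradiction.
\item If $M$ is an exceptional band whose face has perimeter $2k$, then $\top(M)\cong\top(\Omega_A(M))$ forces $\Gamma$ to have a single face. Counting the $\Omega$-unstable tubes of rank $n$, together with the Auslander--Reiten conjecture, then forces $\Gamma'$ to have a single face as well. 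This contradicts the perimeter-$1$ face produced by the deformed loop (Lemma~\ref{lem:stable-tree=BGA}).
\end{enumerate}
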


In order to prove the above result, we shall use (and generalize) some invariants of the (triangulated) stable categories of Brauer graphs algebras which are proved by Antipov in \cite{A}. For example, for a Brauer graph algebra $A$, he shows that
$$
\mathrm{dim}_k \underline{\operatorname{Hom}}_A(X,Y) \leq 2,
$$
where $X$ is either a maximal chain module (that is, chain summand of the radical of an indecomposable non-uniserial projective module) or an exceptional band module (Definition \ref{def:exceptional-band}) and $Y$ is a string module; see the discussion following \cite[Proposition~2.2]{A} and \cite[Proposition~2.8]{A}. For certain homogeneous tubes arising from stably biserial algebras, we prove an analogous result. This will serve as one of key lemmas in the proof of Theorem \ref{thm:1.1}.

\begin{Lem}\textnormal{(see Lemma \ref{lem:equal-dimension})}
	Let $\varLambda$ be a symmetric stably biserial algebra, and let $X$ be a string module lying at the mouth of a homogeneous tube corresponding to a deformed loop in the Brauer graph of $\varLambda$. Then $X$ is a maximal chain module and $\Omega_{\varLambda}(X)$ is an exceptional band module. For any string $\varLambda$-module $Y$, we have
$$\mathrm{dim}_k \sHom_{\varLambda}(X,Y)=\mathrm{dim}_k \sHom_{\varLambda}(\Omega_{\varLambda}(X),Y)
\leq 2.$$
\end{Lem}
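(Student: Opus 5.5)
The plan is to work locally at a deformed loop. Let $v$ be the vertex carrying the deformed loop $\ell\in\mathcal{L}$ in the Brauer graph $(\Gamma,m)$ of $\varLambda$, and write down the relevant part of the quiver of $\varLambda$ with its deformed relation. The deformed relation changes only the socle-level relations at the half-edges of $\ell$, so the string combinatorics near $\ell$ agree with those of the undeformed Brauer graph algebra $A=A(\Gamma,m)$, with the same strings and bands. The first step is to identify the homogeneous tube attached to $\ell$. In $A$ the loop $\ell$ gives a band $b_\ell$ of length two: it runs along the two arrows that leave the two half-edges of $\ell$ in the cyclic order at $v$. Its one-parameter family lies in homogeneous tubes. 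In $\varLambda$, the deformation makes one member of this family collapse to a string module $X$; this is the phenomenon that makes the tube correspond to a deformed loop. We will describe $X$ explicitly as the uniserial module given by the chain of arrows around $v$ starting just after $\ell$ and running back to it. This description shows at once that $X$ is a direct summand of $\rad P$ for the indecomposable projective $P$ at $\ell$, so $X$ is a maximal chain module. Then $\Omega_\varLambda(X)$ is the kernel of $P\to X$, which is the other chain summand of $\rad P$ glued along the socle. A direct computation of this kernel, following the relation, gives a band module for $b_\ell$ with a special parameter. That is exactly the condition in Definition~\ref{def:exceptional-band}, so $\Omega_\varLambda(X)$ is an exceptional band module.

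For the equality of dimensions, we use the fact that $\varLambda$ is symmetric, so $\Omega$ is an autoequivalence of the stable category. Hence
\[\sHom_\varLambda(\Omega X,Y)\cong\sHom_\varLambda(\Omega^2X,\Omega Y).\]
Since $X$ lies at the mouth of a homogeneous tube, $\tau X\cong X$. Because $\tau=\Omega^2$ for symmetric algebras, we get $\Omega^2X\cong X$, and so $\sHom(\Omega X,Y)\cong\sHom(X,\Omega Y)$. This does not immediately give the required equality, so instead we compare both sides through the short exact sequence $0\to\Omega X\to P\to X\to 0$. Apply $\Hom(-,Y)$ and use the Auslander--Reiten formula $\sHom(X,Y)\cong D\operatorname{Ext}^1(Y,\tau X)=D\operatorname{Ext}^1(Y,X)$. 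The difference $\dim\Hom(X,Y)-\dim\Hom(\Omega X,Y)$ is then controlled by $\dim\Hom(P,Y)$ minus the maps factoring through projectives. We will show that this correction vanishes because $X$ and $\Omega X$ have the same composition factors up to the top and socle of $P$. This parallels Antipov's computation in \cite[Proposition~2.8]{A}.

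For the bound $\le 2$, we compute $\sHom_\varLambda(X,Y)$ directly. Since $X$ is a maximal chain (uniserial) module, morphisms from $X$ to a string module $Y$ are given by Crawley-Boevey's basis of graph maps. These correspond to factor strings of $X$ that appear as substrings of $Y$. A factor string of the uniserial $X$ is an initial segment of the arrow chain around $v$, and it must occur in $Y$ as a "top-closed" substring. Maps that factor through $P$ are exactly the ones that extend over the inclusion $X\hookrightarrow P$. After removing these, only the occurrences where the substring of $Y$ is the whole of $X$ or is cut by the half-edges of $\ell$ survive. Since $\ell$ has exactly two half-edges at $v$, there are at most two such occurrences, which gives $\le 2$. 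This is Antipov's argument for maximal chain modules, and we transfer it by noting that string modules and graph maps are unaffected by the deformation away from socle level. The bound for $\Omega X$ then follows from the equality.

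The main obstacle is the equality of the two dimensions. The deformed relation affects exactly the maps through the socle of $P$, and the naive transfer via $\Omega$ only produces $\Omega Y$ in place of $Y$. If the long-exact-sequence comparison fails, the fallback is to compute $\sHom(\Omega X,Y)$ directly. This uses the explicit basis of maps from a band module to a string module via Krause's description, then matches these maps one-to-one with the surviving graph maps $X\to Y$ found above. The matching is done according to where the band $b_\ell$ enters $Y$.
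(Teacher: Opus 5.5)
\textbf{The bound $\le 2$.} This is the main gap. Write $P_v$ for the projective cover of $X$, $\ell$ for the deformed loop and $p$ for the path of length $m(k+2)-1$ underlying $X$. Then $X\cong P_v/\varLambda\ell$, so $\Hom(X,Y)\cong\{y\in e_vY:\ell y=0\}$. The injective envelope $X\hookrightarrow P_v$ can be taken to send the top of $X$ to $\ell-tp$. Hence the maps $X\to Y$ that factor through a projective correspond to $(\ell-tp)(e_vY)$.

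That subspace is not spanned by a subset of the graph-map basis. So you cannot discard graph maps one at a time and count the survivors. Here is a concrete failure. Let $Y$ be the zigzag $w_0\xleftarrow{\ell}u_1\to\cdots\to w_1\xleftarrow{\ell}u_2\to\cdots\to w_{n-1}$, where each $u_j\to\cdots\to w_j$ is a copy of the string of $X$. Let $f_j$ be the graph map sending the top of $X$ to the deep $w_j$. None of the $n$ maps $f_j$ factors through a projective on its own, so your elimination keeps all of them. Yet $f_{j-1}-tf_j$ factors through $P_v$ via $e_v\mapsto u_j$, and $\dim_k\sHom(X,Y)=1$. The bound $2$ comes from the two ends of $Y$, not from the two half-edges of $\ell$.

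The transfer from Antipov is also unjustified. Graph maps are unchanged by the deformation, but the projective maps are not, because the envelope involves $t$. Over the Brauer graph algebra with the same graph, $\Omega X\cong X$ and the envelope sends the top to $\ell$. For $Y=X$ in Example~\ref{example: illustrate} this gives $\dim_k\sHom(X,X)=2$ there, but $1$ over $\varLambda$.

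The paper fills exactly this gap. It groups graph maps by the maximal zigzag segment $N_f$ of $Y$ containing their image and checks every local configuration. Each group contributes at most one stable dimension, and only when $N_f=Y$ or $N_f$ reaches an end of $Y$.

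\textbf{The equality.} The claim that ``the correction vanishes because $X$ and $\Omega X$ have the same composition factors'' is not an argument. Composition factors do not control $\dim\Hom(-,Y)$; in that example $\dim\Hom(X,X)=4$ but $\dim\Hom(\Omega X,X)=3$.

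Your own ingredients do suffice once you use both injective envelopes. Since $\Omega^2X\cong X$ and $\soc X\cong\soc\Omega X\cong S_v$, these are $0\to X\to P_v\to\Omega X\to 0$ and $0\to\Omega X\to P_v\to X\to 0$. A map $M\to Y$ factors through a projective exactly when it extends along $M\to I(M)$. This gives, for $M\in\{X,\Omega X\}$,
$$\dim\sHom(M,Y)=\dim\Hom(M,Y)+\dim\Hom(\Omega^{-1}M,Y)-\dim e_vY.$$
The right-hand side is symmetric in $X$ and $\Omega X$. So the equality holds for every $Y$, in a few lines, replacing the paper's matching of $\mathscr{C}$ with $\mathscr{C}'$.

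The same bookkeeping gives $\sHom(X,Y)\cong\ker\ell/\operatorname{im}(\ell-tp)$ on $e_vY$. The bound then reduces to two facts. First, $\operatorname{rk}(\ell-tp)\ge\operatorname{rk}\ell$ on $e_vY$, for instance by semicontinuity in $t$ plus rescaling the string basis. Second, $\ker\ell/\operatorname{im}\ell$ has a basis of $v$-vertices of $Y$ lying on no $\ell$-letter. Such vertices must be ends of $Y$, because the path $x_k\to v\to x_1$ is zero.

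\textbf{Your first step.} $\rad P_v$ has simple socle, so it is indecomposable and $X$ is not a summand of it. The correct description is $X\cong P_v/\varLambda\ell$ and $\Omega X\cong\varLambda\ell$. The latter is the one-dimensional band module of the band $\ell^{-1}p$ in which $\ell$ acts on the top as $t$ times $p$. It is exceptional because of the shape of its band (minimal band line $\ell$, maximal band line $p$), not because of a special parameter. Also, no band module ``collapses'' to $X$.
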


Theorem \ref{thm:1.1} also motivates us to investigate the above question for a special kind of stable equivalences, the stable equivalences of Morita type. Note that the stable equivalence of Morita type plays a fundamental role in the representation theory of self-injective algebras, establishing connections between their stable module categories while preserving essential homological properties (see for example in \cite[Chapter 5]{Z}). In this setting, we obtain the following result.

\begin{Thm}\textnormal{(see Theorem \ref{thm:st.M-BGA})}\label{thm:intro}
 Let $A$ be a Brauer graph algebra. Then for any basic algebra $B$ with no semisimple summands, if $B$ and $A$ are stably equivalent of Morita type, then $B$ is also a Brauer graph algebra.
\end{Thm}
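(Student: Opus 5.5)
The plan is to reduce Theorem \ref{thm:intro} to Theorem \ref{thm:1.1}, treating the blocks of $B$ one by one.

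First I would collect the standard invariants of a stable equivalence of Morita type, given by bimodules ${}_AM_B$ and ${}_BN_A$. Since $B$ has no semisimple summands, and we may discard semisimple summands of $A$ (which do not change the stable category), a stable equivalence of Morita type between $A$ and $B$:
\begin{itemize}
\item preserves self-injectivity and symmetry (Liu, and Liu--Zhou--Zimmermann);
\item induces a bijection between the non-semisimple blocks of $A$ and those of $B$, with each pair of corresponding blocks again stably equivalent of Morita type.
\end{itemize}
So I may assume that $A$ and $B$ are indecomposable non-simple, and then $B$ is a basic indecomposable symmetric algebra. The Auslander--Reiten conjecture holds for algebras stably equivalent to self-injective special biserial algebras by \cite{AZ1}, so $A$ and $B$ have the same number of isoclasses of simple modules. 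In particular $B$ is local if and only if $A$ is.

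\emph{Non-local case.} If $A$ is non-local, then $B$ is a basic symmetric algebra with no semisimple summands that is stably equivalent to $A$. Theorem \ref{thm:1.1} then applies directly, and $B$ is a Brauer graph algebra. This is the bulk of the work, but it is already done in Theorem \ref{thm:1.1}.

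\emph{Local case.} If $A$ is local, then its Brauer graph has a single edge. The edge is either
\begin{itemize}
\item a non-loop edge, giving $A\cong k[x,y]/(xy,\,x^m-y^n)$ (this includes $k[x]/(x^n)$ when one multiplicity is $1$), or
\item a loop, giving an algebra of the form $k\langle x,y\rangle/(x^2,y^2,(xy)^m-(yx)^m)$ or a variant determined by the loop's multiplicity.
\end{itemize}
The algebra $B$ is then a local basic symmetric algebra, and I must show it is again of one of these forms. For local algebras, the bimodule $M$ can be taken indecomposable, and $M\otimes_B -$ sends the unique simple $B$-module to an indecomposable non-projective $A$-module $S'$. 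I would try two routes.
\begin{itemize}
\item Pin down $B$ via stable invariants. The stable endomorphism ring of $S'$ must be $k$, and $S'$ must satisfy $\Omega$-periodicity and Auslander--Reiten component properties matching those of a simple module. Using the string and band combinatorics of $A$, I would show that such an $S'$ is forced to be $\Omega^i$ of the simple module, possibly twisted by an automorphism. That would make the equivalence, up to $\Omega$, a Morita equivalence, so $B\cong A$.
\item Use invariants that are preserved: the stable center, $\dim_k$ of $Z(A)$ modulo projective centre (Liu--Zhou--Zimmermann), and the representation type. The representation type separates the finite case $k[x]/(x^n)$, which can then be treated via Brauer tree theory, from the tame cases. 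Within the tame local symmetric algebras, which are classified by Erdmann (dihedral, semidihedral and quaternion types), the stable AR-quiver shape (tubes of rank $1$, components $\mathbb{Z}A_\infty^\infty$) singles out the special biserial ones.
\end{itemize}

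I expect the local case to be the main obstacle. Theorem \ref{thm:1.1} deliberately excludes it, and there are local stable equivalences (such as $k[x]/(x^2)$ with a hereditary algebra) showing that the hypotheses are essential. I would first attempt the first route, the simple-to-simple argument, since the Morita-type hypothesis gives the most leverage there. I would fall back on Erdmann's classification plus the stable AR-quiver only if the combinatorial control of $S'$ fails, for instance for the loop case with $\operatorname{char} k=2$ where deformed loops appear.
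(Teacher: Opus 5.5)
\textbf{The local case is not proved.} Your reduction of the non-local case to Theorem~\ref{thm:1.1} is fine and agrees with the paper. Its proof of Theorem~\ref{thm:st.M-BGA} reruns the argument of Theorem~\ref{thm:st-BGA} and uses the Morita-type hypothesis only where non-locality was used. The local representation-infinite case is the only new content of Theorem~\ref{thm:intro}, and you leave it as two routes that you do not carry out. Neither works as described.

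Route one rests entirely on the claim that the image $S'$ of the simple $B$-module must be $\Omega_A^i$ of the simple $A$-module. Granted that claim, Linckelmann's simple-to-simple criterion would finish. But you give no argument for it. Proving it amounts to classifying the indecomposable $A$-modules that have the stable Hom-behaviour of a simple module over a local stably biserial algebra. That requires exactly the string/band Hom-dimension control the paper builds in Section~\ref{app:lem}.

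Route two fails at a concrete step. In characteristic $2$, take $A=k\langle x,y\rangle/(x^2,y^2,(xy)^m-(yx)^m)$ with $m\ge 2$, whose Brauer graph is a loop. Take $B$ to be the deformed algebra on the same Brauer graph with $\mathcal{L}\neq\varnothing$. Both lie in Erdmann's list of local algebras of dihedral type, so representation type and the shape of the stable Auslander--Reiten quiver do not separate them. The quotient $Z/R$ computed from Proposition~\ref{prop:center-of-stBA} is also the same for both, so centre-type invariants do not separate them either.

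\textbf{What is missing.} By Theorem~\ref{thm:sta-to-BGA=StB}, $B$ is symmetric stably biserial, so a non-Brauer-graph $B$ has a deformed loop. You should then exploit the pair of rank-one tubes this loop produces: $X$ and $\Omega_B(X)$ lie at the mouths of distinct rank-one tubes interchanged by $\Omega_B$. The paper shows that their images $M$ and $\Omega_A(M)$ force $\Gamma$ to have a single face, as follows.
\begin{itemize}
\item Lemma~\ref{lem:equal-dimension} gives $\mathrm{top}(M)\cong\mathrm{top}(\Omega_A(M))$ and $\dim_k\sHom_B(X,Y)\le 2$. Together with Proposition~\ref{prop:non-exceptional-tube}, this excludes non-exceptional band modules.
\item An exceptional band module whose face has the same length as its double-face satisfies $\Omega_A(M)\cong M$, because $-\lambda=\lambda$ in characteristic $2$. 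This contradicts $X\not\cong\Omega_B(X)$.
\item In the remaining cases $\Gamma$ has a single face. These are string modules, handled by Lemma~\ref{lem:exist-string-then-tree}, and exceptional bands on a face of double length.
\end{itemize}
For local $A$ this rules out the loop case outright and leaves only $\Gamma$ a single non-loop edge. There Lemma~\ref{lem:stable-tree=BGA}(2) uses the Zimmermann--Zhou invariant $Z/R$ (Proposition~\ref{prop:ZZ}) to force $m=1$ and $m_1=m_2=2$. Lemma~\ref{lem:stb-local-iso} then gives $B\cong A$. Your proposal supplies no substitute for this argument.
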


Since any derived equivalence between selfinjective algebras induces a stable equivalence of Morita type, this gives an alternative proof of the result that Brauer graph algebras are closed under derived equivalence, see Corollary \ref{closed-under-derived-equivalence}.

As a byproduct, Theorem \ref{thm:intro} together with the result \cite[Remark 2.10]{GL} gives a solution of the reconstruction problem (see \cite[Introduction]{RR}) posed by Rickard and Rouquier for algebras stably equivalent of Morita type to Brauer graph algebras.

We note that there is another reason on studying stable equivalences of Brauer graph algebras. For Brauer tree algebras, it is well-known that two such algebras are stably equivalent if and only if they are derived equivalent. Furthermore, it has been shown in \cite{Asa2} (see also \cite{LL2}) that any stable equivalence between Brauer tree algebras can be lifted to a derived equivalence. However, for general Brauer graph algebras, this liftability property fails: \cite[Example~6.4]{LL} provides a stable (auto-)equivalence (even of Morita type) that cannot be lifted to a derived equivalence. Therefore, compared with derived equivalences, some aspects of stable equivalences for Brauer graph algebras are still mysterious.

In this paper, our results on stable equivalences are mainly obtained by analyzing the behavior of modules lying at the mouths of tubes in the Auslander-Reiten quiver. In a subsequent paper \cite{CLLX}, using invariants under stable equivalence of Morita type, such as the stable center and maximal tori, we will classify Brauer graph algebras up to stable equivalence of Morita type, and give a short proof of Theorem~\ref{thm:intro} following some idea from \cite{AZ2}.

\medskip
	\textbf{Outline.}\; In Section \ref{sec:prelim}, we review fundamental concepts and key results concerning stably biserial algebras, Brauer graph algebras, and stable equivalence of Morita type. In Section~\ref{app:lem}, we analyze the stable Hom-spaces from the string modules at the mouth of the homogeneous tube induced by deformed loops in the Brauer graph of a symmetric stably biserial algebra, and establish the key lemma. In Section~\ref{sec:stb-closed}, we prove our main results.
	
\section{Preliminaries}\label{sec:prelim}

	Throughout we assume that $k$ is an algebraically closed field and all algebras considered are finite dimensional $k$-algebras. Unless stated otherwise, all modules will be finitely generated left modules. Furthermore, we say that $A$ is a quiver algebra, if $A$ is isomorphic to $\qa$, where $Q$ is a finite quiver and $I$ is an admissible ideal in the path algebra $kQ$. We denote by $s(p)$ the source vertex of a path $p$ and by $t(p)$ its terminus vertex. We will write paths from right to left, for example, $p=\alpha_{n}\alpha_{n-1}\cdots\alpha_{1}$ is a path with starting arrow $\alpha_{1}$ and ending arrow $\alpha_{n}$. A path is called a cycle (equally, a closed path) if $s(p)=t(p)$.
	By abuse of notation we sometimes view an element in $kQ$ as an element in the quotient $\qa$ if no confusion can arise.

	In this paper, we often write an indecomposable $A$-module $M$ via its Loewy structure, which is represented by a diagram where the $i$-th row corresponds to the simple summands of the completely reducible module $\rad^{i-1}(A)M/\rad^{i}(A)M$ with $\rad(A)$ the Jacobson radical of $A$. Each number in the diagram denotes a distinct simple module in $A$. For further details, see for example in \cite[Page 174]{Ben}.

	Recall (for example from \cite{KZ,Z}) that for an algebra $A$,
	\begin{itemize}
		\item the {bounded derived category $\mathcal{D}^b(A)$} is the localization of homotopy category {$\mathcal{K}^b(A)$} by inverting quasi-isomorphisms, {which is a triangulated category with suspension functor the shift functor $[1]$};
		\item the stable category $A\text{-}\underline{\mathrm{mod}}$ is the quotient of the module category $ A\text{-}{\mathrm{mod}}$ modulo the ideal of homomorphisms that factor through projective modules. For $A$-modules $X$ and $Y$, we denote by $\Hom_A(X, Y)$ (resp. $\underline{\Hom}_A(X, Y)$) the homomorphism space in $A$-mod (resp. $A\text{-}\underline{\mathrm{mod}}$); moreover, for $f\in \Hom_A(X,Y)$, denote by $\underline{f}$ the image of $f$ in $\underline{\Hom}_A(X, Y)$.
	\end{itemize}
	We say that two algebras are derived equivalent (resp. stably equivalent) if their derived (resp. stable) categories are equivalent as triangulated categories (resp. as $k$-categories).

Regarding the precise definition and properties of the stable equivalence of Morita type, we will give them in Section \ref{subsec:stb-and-center}.

	\subsection{Stably biserial algebras}
		\
	
Recall that a $k$-algebra $A$ is called self-injective if $_AA$ is an injective $A$-module; and Frobenius if $_AA\cong DA$ as $A$-modules where $D$ is the usual $k$-duality of $A$; and weakly symmetric if the injective hull of each simple module of $A$ is isomorphic to the projective cover; and symmetric if $_AA_A\cong DA$ as $A$-$A$-bimodules. Indeed, a basic self-injective algebra is a Frobenius algebra, and symmetric algebras (respectively, Frobenius algebras) are naturally self-injective. 

Note that for a self-injective algebra $A$, the syzygy functor $\Omega_{A}$ defines a stable auto-equivalence (even of Morita type) over the stable category
$A\text{-}\underline{\mathrm{mod}}$ and $A\text{-}\underline{\mathrm{mod}}$ is a triangulated category (with suspension functor $\Omega_{A}^{-1}$) which is a natural quotient of $\mathcal{D}^b(A)$ (see for example in \cite{KZ,Z}).

	\begin{definition}\textnormal{(\cite[Definition 1]{AZ1})}
		Let $Q$ be a quiver and $I$ an admissible ideal of the path algebra $kQ$. A self-injective algebra $A$ is said to be stably biserial if $A$ is isomorphic to $kQ/I$, where $Q$ and $I$ satisfy the following conditions:
		\begin{enumerate}
			\item for each vertex $i\in Q$, the number of outgoing and incoming arrows are less than or equal to $2$;
			
			\item for each arrow $\alpha\in Q$, there is at most one arrow $\beta\in Q$ such that $\alpha\beta\notin \alpha\rad(A)\beta+\soc(A)$;
			
			\item for each arrow $\alpha\in Q$, there is at most one arrow $\gamma\in Q$ such that $\gamma\alpha\notin \gamma\rad(A)\alpha+\soc(A)$.
		\end{enumerate}
	\end{definition}

	For other description of stably biserial algebras we refer to \cite[Proposition A.5]{AIP}.

An algebra $A$ is called special biserial if it is isomorphic to an algebra of the form $\qa$ where $kQ$ is a path algebra and $I$ is an admissible ideal such that the following properties hold:
	
	\begin{enumerate}
		\item At every vertex $i$ in $Q$, there are at most two arrows starting at $i$ and there are at most two arrows ending at $i$.
		
		\item For every arrow $\alpha$ in $Q$, there exists at most one arrow $\beta$ such that $\beta\alpha\notin I$ and there exists at most one arrow $\gamma$ such that $\alpha\gamma\notin I$.
	\end{enumerate}

	A special biserial algebra $A$ is called a string algebra if the defining ideal $I$ is generated by paths. Note that if $A$ is a stably biserial algebra or a special biserial algebra, then the quotient algebra $\overline{A}$ by factoring the socles of indecomposable projective $A$-modules which are not uniserial is a string algebra, and in this way, the classification of indecomposable $A$-modules is reduced to the classification of indecomposable $\overline{A}$-modules, see Subsection 2.2.2 for further explanation.

Thus, according to the above definitions, self-injective special biserial algebras form a special subclass of stably biserial algebras. In fact, algebras that are stably equivalent to self-injective special biserial algebras have the following characterization.
	
	\begin{theorem}\textnormal{(\cite[Theorem 1]{AZ1})}\label{thm:sta-to-BGA=StB}
		Let $A$ be {an indecomposable} self-injective special biserial $k$-algebra which is not isomorphic to the Nakayama algebra with $\rad^2(A)=0$. If $B$ is a basic algebra stably equivalent to $A$, then $B$ is stably biserial.
	\end{theorem}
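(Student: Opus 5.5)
This statement is quoted from \cite[Theorem 1]{AZ1} and is used as a black box in the paper, but I would prove it by reconstructing the argument of Antipov and Zvonareva, refining Pogorzały's approach. The strategy is to transport combinatorial features of the stable Auslander--Reiten quiver of $A$ to $B$ via the stable equivalence $F\colon A\text{-}\underline{\mathrm{mod}}\to B\text{-}\underline{\mathrm{mod}}$, and then read off the quiver and relations of $B$ from these features. The steps are as follows.

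\textbf{Step 1: $B$ is self-injective.} Since $A$ is indecomposable and not radical-square-zero Nakayama, the stable category $A\text{-}\underline{\mathrm{mod}}$ has no nodes. A stable equivalence without nodes preserves the stable AR-quiver together with its translation; this is Auslander--Reiten's theorem, with Martínez-Villa's results handling nodes. So $F$ commutes with $\tau$ on non-projective indecomposables. Next I would show that $B$ has no non-injective projectives. The idea is that such a projective would create a non-periodic boundary behaviour, or a node, in the AR-quiver of $B$ that cannot match the AR-structure of $A$, where every component is of type $\mathbb{Z}A_\infty$, a tube, or a finite tube. The algebra $B$ may also have semisimple or Nakayama parts that must be excluded or handled. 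This step is delicate; the expected route is through Martínez-Villa's theorem that a stable equivalence without nodes preserves self-injectivity up to such summands.

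\textbf{Step 2: match the projectives and their radicals.} For the self-injective algebra $B$, each indecomposable projective $P$ gives an AR-sequence
\[
0\to \rad P\to \rad P/\soc P\oplus P\to P/\soc P\to 0 .
\]
In $A$, for a special biserial algebra, $\rad P/\soc P$ is a direct sum of at most two uniserial (string) modules. I would characterise, intrinsically in the stable category, the meshes in which the middle term has at most two indecomposable non-projective summands, together with the ``maximal chain'' modules at the ends of these meshes. Transporting this characterisation via $F$ gives that $\rad P_B/\soc P_B$ has at most two indecomposable summands for every indecomposable projective $B$-module $P_B$. This yields condition (1), on arrow counts, for the quiver of $B$.

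\textbf{Step 3 (main obstacle): conditions (2) and (3).} These say that products of arrows, modulo $\alpha\rad(B)\beta+\soc(B)$, are ``biserial''. I would encode them through the uniseriality of the summands of $\rad P/\soc P$ over $\overline{B}=B/\soc B$. To do this, I would detect uniserial modules stably, for example through the property that $\dim_k\underline{\Hom}(S,-)$ is small against the simple modules, or through their positions in $\mathbb{Z}A_\infty$ components via irreducible maps. I would then show that $F$ sends the modules $\rad P_A/\soc P_A$ to the modules $\rad P_B/\soc P_B$ together with their filtrations. The hardest part is controlling socle deformations, which is precisely why only stable biseriality, not special biseriality, is obtained. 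I would first try to show that $F$ preserves the lengths of chains of irreducible maps starting from the radicals of projectives. This length-preservation would let the uniserial filtration be reconstructed, after which conditions (2) and (3) follow by contradiction: a violation would produce a third non-isomorphic summand, or an extra irreducible map, in some mesh, which is impossible by Step 2.
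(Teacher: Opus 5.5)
The paper does not prove this statement; it quotes \cite[Theorem 1]{AZ1} and uses it as a black box. Your sketch therefore has to stand on its own, and it has a genuine gap at its core.

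\textbf{Step 3 relies on a false transport principle.} You assume that $F$ carries the $A$-modules $\rad P_A/\soc P_A$, with their Loewy filtrations, to the $B$-modules $\rad P_B/\soc P_B$. You also assume that uniseriality can be detected through $\dim_k\sHom(S,-)$ for simple $S$. Neither holds.
\begin{itemize}
\item A stable equivalence preserves none of the following: simple modules, tops, socles, lengths, Loewy filtrations, uniseriality.
\item The meshes at which projectives are attached are invisible in the stable category.
\item Already the stable auto-equivalence $\Omega$ of $k[x]/(x^3)$ sends $\rad P/\soc P\cong S$ to $\rad P$, which is not of this form.
\item Decisively, take the Brauer star algebra with two edges and exceptional multiplicity $2$. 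Both projectives are uniserial of length $5$, so both modules $\rad P/\soc P$ are indecomposable. By Rickard's theorem it is derived, hence stably, equivalent to the Brauer line algebra with two edges and the exceptional vertex of multiplicity $2$ at an end. There $\rad P_1/\soc P_1\cong S_1\oplus S_2$.
\end{itemize}
So no stable equivalence can send $\rad P/\soc P$, projective by projective, to its counterpart: an indecomposable module would have to go to $S_1\oplus S_2$. Still less can it respect filtrations or ``lengths of chains of irreducible maps from radicals of projectives''.

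Detecting uniserial modules via $\dim_k\sHom(S,-)$ fails for the same reason, since $F(S)$ need not be simple. The contradiction you aim for is also of the wrong shape. A violation of (2) or (3) typically shows up as a non-uniserial summand of some $\rad P_B/\soc P_B$, for instance one with simple top but a two-dimensional second Loewy layer. It need not produce a third summand or an extra irreducible map, so the mesh count of Step 2 cannot see it.

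\textbf{Steps 1 and 2 also have holes.}
\begin{itemize}
\item The Auslander--Reiten theorem that $F$ commutes with $\tau$, and Mart\'{\i}nez-Villa's preservation of self-injectivity, both require \emph{both} algebras to be without nodes. You only check this for $A$. For $B$ it must be proved first; it can be done, e.g.\ via node elimination, but it is not automatic.
\item Granted the isomorphism of stable translation quivers, you only get that each $\rad P_B/\soc P_B$ has at most two indecomposable summands. This does not give condition (1). The number of arrows starting at $i$ is the number of simple summands of $\top(\rad P_i)$, so you would need these summands to have simple tops, and dually simple socles. That is again not a stable invariant.
\end{itemize}

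\textbf{What is missing.} You need a way to locate the images in $A\text{-}\underline{\mathrm{mod}}$ of the $B$-modules $\rad P_B/\soc P_B$ (equivalently, via $\Omega$, of the simple $B$-modules) using only data invariant under stable equivalence. Such data include the stable translation quiver, the behaviour under $\Omega$, and dimensions of stable Hom-spaces. Examples are Antipov's bound $\dim_k\sHom_A(X,Y)\le 2$ for maximal chain or exceptional band modules $X$ against string modules $Y$, or Lemma~\ref{lem:equal-dimension} of this paper. From that location you then have to read off conditions (1)--(3) for $B$. This is where the substance of \cite{AZ1}, refining Pogorza\l y's argument, lies.
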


	Note that when a stably biserial algebra $A$ is symmetric, there always exists some nice presentation $kQ/I$ of $A$, see Theorem \ref{thm:sym-StBA} for the details.

The Auslander-Reiten conjecture for stable equivalences states that if two algebras are stably equivalent, then they have the same number of non-projective simple modules up to isomorphism. In particular, \cite[Theorem 0.1]{P} and \cite[Theorem 2]{AZ1} establish its validity for algebras stably equivalent to special biserial algebras.

	\begin{theorem}\label{thm:AR-conj}
		Let $A$ and $B$ be two $k$-algebras such that $ A\text{-}\underline{\mathrm{mod}}\cong B\text{-}\underline{\mathrm{mod}}$ and $A$ is special biserial. Then the numbers of isomorphism classes of non-projective simple modules over $A$ and $B$ are the same.
	\end{theorem}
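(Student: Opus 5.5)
This statement is cited from \cite[Theorem 0.1]{P} and \cite[Theorem 2]{AZ1}, so the plan is to reconstruct the argument there. The approach is to read off the number of non-projective simples from structural data of the stable category $A\text{-}\underline{\mathrm{mod}}$ that any $k$-linear equivalence preserves. The standard tool is that a stable equivalence between algebras without nodes and without semisimple summands induces an isomorphism of stable Auslander--Reiten quivers commuting with $\tau$ (Auslander--Reiten). First I will reduce to that setting. Nodes can be removed by the Martinez-Villa construction, which preserves the stable category and the number of non-projective simples. Semisimple blocks and blocks that are Nakayama with $\rad^2=0$ I will handle directly: their stable categories are easily recognized (semisimple, resp. finitely many objects with $\tau$-periodic components of known shape), and the conjecture is known for them. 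So assume $A$ is indecomposable, special biserial, non-Nakayama, without nodes, and that $B$ has the same properties.

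The main step is to decompose by representation type, which a stable equivalence preserves. If $A$ has finite type, then the conjecture holds for representation-finite algebras by Martinez-Villa, using the stable AR quiver. In the infinite case the key invariant is the family of homogeneous tubes. For a special biserial algebra these come from the band modules: for each band there is a $k^\times$-family of homogeneous tubes. I plan to recover the number of simples from the combinatorics of the components containing string modules, namely the ``canonical'' components of type $\mathbb{Z}A_\infty^\infty$ and the exceptional tubes.

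Concretely, I would use a Euler-characteristic type count for the string algebra $\overline{A}$. The vertices of its quiver, the arrows, and the maximal paths are related by $|Q_0|$, and they are detected in $A\text{-}\underline{\mathrm{mod}}$ as follows. Simple modules are hard to detect, but maximal uniserial radical summands $\rad P/\soc P$ sit at the ends of the AR sequences starting at $\rad P$. Via Butler--Ringel, the number of $\tau$-orbits of string modules at mouths of exceptional tubes and of modules with one-term middle AR sequences is computable from $\overline{A}$. In the self-injective special biserial case these counts combine, through the relation $\#\{\text{arrows}\}=2\cdot\#\{\text{vertices}\}$ minus uniserial defects, to give the number of simples.

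Next I must transfer these counts to $B$. This needs the conclusion, from the first part of \cite{AZ1} (Theorem~\ref{thm:sta-to-BGA=StB}), that $B$ is stably biserial. Then the same combinatorial formula applies to $B$ via its string algebra $\overline{B}$. Because the equivalence preserves the AR quiver together with the module classes defined by AR-theoretic properties (modules with indecomposable middle term, tube ranks), the formula yields equal numbers for $A$ and $B$.

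The main obstacle is proving that the invariants I use, such as the number and ranks of exceptional tubes and the number of AR sequences with indecomposable middle term, determine $|Q_0|$ uniformly for stably biserial algebras. This is exactly where Pogorzaly's original argument had a gap that Antipov--Zvonareva repaired. I would first try counting the $\Omega$-orbits of maximal uniserial modules $\alpha A$, which correspond bijectively to arrows, and then use that in a self-injective biserial algebra without uniserial projectives every vertex has exactly two outgoing arrows.
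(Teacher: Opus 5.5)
The paper does not prove this statement; it imports it from \cite[Theorem 0.1]{P} and \cite[Theorem 2]{AZ1}. Read as a proof, your proposal has two genuine gaps.

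\textbf{First, scope.} The theorem concerns an arbitrary special biserial $A$, but everything after your reductions assumes $A$ is self-injective:
the arrow--vertex identity with ``uniserial defects'', the face and tube combinatorics, and above all the transfer to $B$ via Theorem~\ref{thm:sta-to-BGA=StB}. That theorem needs $A$ to be indecomposable, self-injective and not Nakayama with $\rad^2=0$. Removing nodes does not make $A$ self-injective. For a non-self-injective special biserial algebra (a gentle algebra, say), Theorem~\ref{thm:sta-to-BGA=StB} says nothing about $B$ and your counts have no analogue. A further reduction is needed, and none is given.

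Your passage to ``$A$ indecomposable and $B$ likewise'' also needs an argument, because a stable equivalence need not match blocks. For example, $k[x]/(x^2)\times k[x]/(x^2)$ and the self-injective Nakayama algebra with two simples and $\rad^2=0$ have equivalent stable categories.

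\textbf{Second, and more seriously,} the step that actually yields $|Q_0|$ is missing, even for a representation-infinite Brauer graph algebra. Your counts (arrows, the modules $\alpha A$, mouths of exceptional tubes) amount to counting the modules at the mouths of the tubes attached to faces. That number is $\sum_F \mathrm{perimeter}(F)=2|E(\Gamma)|=2|Q_0|$.

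The difficulty is that faces of perimeter $1$ or $2$ give tubes of rank $1$ whose mouths are string modules. For $B$, a deformed loop instead gives a pair of rank-one tubes exchanged by $\Omega$, one of them with an exceptional band module at its mouth. In the stable Auslander--Reiten quiver all of these look exactly like the infinitely many homogeneous band tubes. So neither tube ranks, nor meshes with one middle term, nor $\Omega$-orbits of the modules $\alpha A$ can count them. In particular, you cannot single out the $\alpha A$ among rank-one mouths by any Auslander--Reiten-theoretic property.

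Separating these tubes requires invariants beyond the Auslander--Reiten quiver, such as Hom-dimension bounds of the kind in Proposition~\ref{prop:non-exceptional-tube} and Lemma~\ref{lem:equal-dimension}. That is exactly what you call ``the main obstacle'' and leave open. As a result, the formula linking your invariants to the number of simples is never proved.

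A minor point: $\rad P/\soc P$ is the non-projective middle term of $0\to\rad P\to P\oplus\rad P/\soc P\to P/\soc P\to 0$, not an end term.
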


	\subsection{Brauer graph algebras}
	\

	\subsubsection{Basic definitions}\label{subsec:def-BGA}
	\
	
	We recall the basic knowledge about Brauer graph algebras defined in \cite{DF}. For the convenience of the following construction, we adopt the notations from \cite{OZ} to define these algebras.
	
	\begin{Def}\textnormal{(\cite[Definition 1.1]{OZ})}\label{ribbon-graph}
		A ribbon graph is a tuple $\Gamma=(V,H,s,\iota,\rho)$, where
		\begin{enumerate}
			\item $V$ is a finite set whose elements are called vertices;
			
			\item $H$ is a finite set whose elements are called half-edges;
			
			\item $s: H\rightarrow V$ is a function;
			
			\item $\iota: H\rightarrow H$ is an involution without fixed points;
			
			\item $\rho: H\rightarrow H$ is a permutation whose cycles correspond to the sets $H_v:=s^{-1}(v)$, $v\in V$.
			
		\end{enumerate}
	\end{Def}
	
	Therefore, every ribbon graph defines a graph with vertex set $V$ whose edges are the orbits of $\iota$. An edge $\{h, \iota(h)\}$ is incident to the vertices $s(h)$ and $s(\iota(h))$. In fact, every ribbon graph gives rise to an oriented surface (see for example in \cite[Section 1.1]{OZ}).
	
		If $\Gamma$ is a ribbon graph, we write $V(\Gamma)$ and $H(\Gamma)$ for its sets of vertices and half-edges as well as  $H_v(\Gamma)$ for all half-edges $h\in H(\Gamma)$ such that $s(h)=v\in V(\Gamma)$. To simplify the notation, we often write $h^{\pm}:=\rho^{\pm 1}(h)$ for the successor and predecessor of a half-edge $h$ and $\bar{h}$ for its associated edge. The set of all edges is denoted by $E(\Gamma)$. Denote by $val(v)$ the valency of the vertex $v\in V$, it is defined to be the number of edges in $G$ incident to $v$, with the convention that a loop is counted twice.	
	Unless stated otherwise, we will assume that $\Gamma$ is connected, which means that its underlying graph is connected.
	
		\begin{Def}\textnormal{(\cite[Definition 1.5]{OZ})}\label{BG}
		A Brauer graph is a pair $(\Gamma,m)$ consisting of a ribbon graph $\Gamma$ and a function $m: V(\Gamma)\rightarrow \mathbb{Z}_+$.
	\end{Def}
	
	The function $m$ in Definition \ref{BG} is referred to as the multiplicity function and its values as multiplicities. Frequently, we omit $m$ from the notation and refer to $\Gamma$ as a Brauer graph. We denote by $\textbf{n}$ any constant multiplicity function with value $n$ at each vertex and say that a Brauer graph $(\Gamma,m)$ is multiplicity-free if $m=\textbf{1}$. In particular,  we call a given vertex $v$ is truncated if $\m(v)val(v)=1$.
	
	To any Brauer graph $(\Gamma,m)$ one can associate a quiver $Q=Q_\Gamma$ and an admissible ideal of relations $I=I_\Gamma$ in the path algebra $kQ$.
If $\Gamma$ is a single edge with two truncated vertices, then we define $Q$ to be a loop $\alpha$ and $I$ to be the ideal generated by $\alpha^2$. Otherwise, we define $Q$ and $I$ as follows.
	
	\begin{enumerate}
		\item The vertices of $Q$ correspond to the edges of $\Gamma$ and for every $h\in H$ with $s(h)$ not truncated, there is an arrow $\alpha_h:\bar{h}\rightarrow\bar{h^+}$. The assignment $\alpha_h\mapsto \alpha_{h^-}$ defines a permutation $\sigma=\sigma_\Gamma$ of the arrows of $Q$ whose orbits are in bijection with vertices which are not truncated. Hence every arrow $\alpha$ defines a closed path
	$$C_\alpha=\alpha\sigma(\alpha)\cdots\sigma^l(\alpha)$$
	where $l+1$ denotes the cardinality of the $\sigma$-orbit of $\alpha$. Every vertex of $Q$ is the starting point of at most two cycles of the form $C_\alpha$. If $\alpha=\alpha_h$, set $\m(C_\alpha):=\m(s(h))$.
	
	\item The ideal $I_\Gamma$ is generated by the following set of relations:
	\begin{enumerate}
		\item $$C_\alpha^{\m(C_\alpha)}-C_\beta^{\m(C_\beta)},$$
		where $\alpha,\beta\in Q_1$ and $t(\alpha)=t(\beta)$, that is $\alpha$ and $\beta$ end at the same edge of $\Gamma$.
		
		\item $$\alpha C_{\sigma(\alpha)}^{\m(C_\alpha)},$$
		where $\alpha$ is an arbitrary arrow in $Q$.
		
		\item $$\alpha\beta,$$
		where $\alpha,\beta\in Q_1$ are composable and $\sigma(\alpha)\neq \beta$;
	\end{enumerate}
	\end{enumerate}
	
	The resulting (finite dimensional) $k$-algebra $kQ_\Gamma/I_\Gamma$ will be denoted by $A_\Gamma$. Note that every nonzero path of $A_\Gamma$ is a subpath of a cycle $C_\alpha^{\m(C_\alpha)}$ for some arrow $\alpha$. We call a given $\alpha\in Q_1$ is an arrow around $v\in V$ if $s(t(\alpha))=v$ (equivalently, if $s(s(\alpha))=v$).
	
	\begin{Def}\textnormal{(\cite[Definition 1.6]{OZ})}
		A $k$-algebra $A$ is called a Brauer graph algebra if there exists a Brauer graph $(\Gamma,\m)$ such that $A$ is isomorphic to $A_\Gamma$ as $k$-algebras.
	\end{Def}
	
	Indeed, we have the following result.
	
	\begin{Thm}\textnormal{(\cite[Theorem 1.1]{Sch})}\label{ssbBGA}
		Let $A=\qa$ be a $k$-algebra. Then $A$ is a symmetric special biserial algebra if and only if $A$ is a Brauer graph algebra.
	\end{Thm}

Note that since we assume that any Brauer graph $(\Gamma,\m)$ is connected, the resulting Brauer graph algebra $A$ is always indecomposable.

	\subsubsection{Strings and bands}\label{subsec:string-band}
	\

Following \cite{BR,E,D}, we use string and band modules to describe the indecomposable modules over Brauer graph algebras. These correspond to the modules of the first and the second kind in \cite{A}, respectively.

Let $A = kQ/I$ be a Brauer graph algebra associated with a Brauer graph $\Gamma$. Recall that for each arrow $\alpha \in Q_1$, denote by $s(\alpha)$ and $t(\alpha)$ the source and target of $\alpha$, respectively. We denote by $\alpha^{-1}$ the formal inverse of $\alpha$, that is, the symbolic arrow satisfying
\[
s(\alpha^{-1}) = t(\alpha), \qquad t(\alpha^{-1}) = s(\alpha).
\]
Let $Q_1^{-1}$ denote the set of all formal inverses of arrows in $Q_1$.
We set $(\alpha^{-1})^{-1}=\alpha$. If $w=\alpha_n\cdots\alpha_1$ is a word in $Q_1\cup Q_1^{-1}$, then its inverse is the word $w^{-1}=\alpha_1^{-1}\cdots\alpha_n^{-1}$.

\begin{Def}\label{def:string}
A string is a word $w = \alpha_n \alpha_{n-1} \cdots \alpha_1$ with $\alpha_i \in Q_1 \cup Q_1^{-1}$ such that
\begin{enumerate}
    \item $\alpha_{i+1} \neq \alpha_i^{-1}$ for $1\leq i<n$;
    \item $s(\alpha_{i+1}) = t(\alpha_i)$ for $1\leq i<n$; and
    \item neither $w$ nor $w^{-1}$ contains, as a consecutive subword, a path occurring as a term of one of the defining relations of $I$ listed in Subsection~\ref{subsec:def-BGA}.
\end{enumerate}
The length of $w$ is the integer $|w| = n$. We say that $w$ is direct (resp. inverse) if each $\alpha_i \in Q_1$ (resp. $\alpha_i \in Q_1^{-1}$). The associated vertex sequence of $w$ is $(x_0,x_1,\ldots,x_n)$, where $x_0=s(\alpha_1)$ and $x_i=t(\alpha_i)$ for $1\leq i\leq n$. We set $s(w)=x_0$ and $t(w)=x_n$.
\end{Def}

Note that a direct string $\alpha_n \cdots \alpha_1$ can also be viewed as a path in the quiver algebra $kQ/I$. We shall use these two viewpoints interchangeably.

Next, we include strings of length zero, which correspond to stationary paths and serve as identity elements in the combinatorial setting.

\begin{Def}
A string of length zero is called a zero string. For each vertex $x \in Q_0$, we associate a unique zero string $\varepsilon_x$, corresponding to the idempotent at $x$. We set $|\varepsilon_x|=0$, $s(\varepsilon_x)=t(\varepsilon_x)=x$, and $\varepsilon_x^{-1}=\varepsilon_x$; its vertex sequence is $(x)$.
Note that a zero string is considered both direct and inverse.
\end{Def}

If $w=\alpha_n\cdots\alpha_1$ has positive length, a substring of $w$ is a consecutive subword $\alpha_j\cdots\alpha_i$ with $1\leq i\leq j\leq n$. We also regard $\varepsilon_x$ as a substring of a string $w$ whenever $x$ occurs in the vertex sequence of $w$. A string $w$ is called closed if $|w|>0$ and $s(w)=t(w)$.

Strings form the fundamental combinatorial objects used to describe indecomposable modules over special biserial algebras.
Among them, some strings exhibit a cyclic behavior, leading to the following notion.

\begin{Def}\label{def:band}
A band in $A$ is a closed string $b$ such that $b^m$ is a string for every integer $m > 0$, and $b$ is not a proper power of another string.
\end{Def}

\begin{Def}\label{def:string-module}
Let $w = \alpha_n \alpha_{n-1} \cdots \alpha_1$ be a string in $A$,
where each $\alpha_i \in Q_1 \cup Q_1^{-1}$. Let $(x_0,x_1,\ldots,x_n)$ be the associated vertex sequence of $w$. The string module $M=M(w)$ associated to $w$ is defined as follows.

For each vertex $x \in Q_0$, set
\[
M_x = \bigoplus_{i \, : \, x_i = x} k e_i,
\]
that is, $M_x$ is a direct sum of one copy of the field $k$ for each occurrence of $x$ in the sequence $x_0, x_1, \dots, x_n$.

For each $\alpha\in Q_1$,
define the linear map
$$M_{\alpha} : M_{s(\alpha)}=\bigoplus_{i \, : \, x_i = s(\alpha)} k e_i \to M_{t(\alpha)}=\bigoplus_{j \, : \, x_j = t(\alpha)} k e_j$$
by
\begin{equation*}
M_{\alpha}(e_i)=\begin{cases}
e_{i+1}, & \text{ if } \alpha_{i+1}=\alpha; \\
e_{i-1}, & \text{ if } \alpha_i=\alpha^{-1}; \\
0, & \text{ otherwise}.
\end{cases}
\end{equation*}

This defines a representation of $Q$ satisfying the relations in $I$, hence an $A$-module.

If $w = \varepsilon_x$ is the zero string at vertex $x$,
we define $M(w) := S_x$, the simple $A$-module at $x$.
\end{Def}

Note that each string module is indecomposable (see also Section 3). It follows from \cite{BR} that two string modules $M(w_1)$ and $M(w_2)$ are isomorphic if and only if $w_1 = w_2$ or $w_1 = w_2^{-1}$.

Now we recall that there is also the notion of a band module. Each distinct band determines an infinite family of band modules, which can be viewed as indecomposable modules parametrized by a nonzero scalar and a positive integer.

\begin{Def}\label{def:band-module}
Let $b = \beta_n \beta_{n-1} \cdots \beta_1$ be a band in $A$. Fix a positive integer $m \geq 1$ and a nonzero scalar $\lambda \in k^*$. Write $x_0 = s(\beta_1)$ and $x_i = t(\beta_i)$ for $1 \leq i < n$.  The band module associated with $(b, m, \lambda)$, denoted by $M(b, m, \lambda)$, is defined as follows.

For each vertex $x \in Q_0$, define
\[
M_x = \bigoplus_{i \, : \, x_i = x} k^m e_i,
\]
that is, $M_x$ is a direct sum of one copy of $k^m$ for each occurrence of $x$ in the sequence $x_0, x_1, \dots, x_{n-1}$.

For each $1\leq i\leq n$ let $\gamma_i\in Q_1$ denote the underlying
arrow of the symbol $\beta_i$ (so $\beta_i=\gamma_i$ or $\beta_i=\gamma_i^{-1}$). We now define the linear map $M_\gamma:M_{s(\gamma)}\to M_{t(\gamma)}$ for each $\gamma\in Q_1$:

\begin{enumerate}
	\item For $1\leq i<n$ define:
\begin{itemize}
  \item if $\beta_i=\gamma_i$, then
  \[
    M_{\gamma_i}(v e_{i-1}) = v e_i \quad\text{for all }v\in k^m,
  \]
  and $M_{\gamma_i}$ sends other summands to $0$;
  \item if $\beta_i=\gamma_i^{-1}$, then
  \[
    M_{\gamma_i}(v e_i) = v e_{i-1} \quad\text{for all }v\in k^m,
  \]
  and $M_{\gamma_i}$ sends other summands to $0$.
\end{itemize}
\item For $i=n$, we insert the Jordan block $J_m(\lambda)$:
\begin{itemize}
  \item If $\beta_n=\gamma_n$, define
  \[
    M_{\gamma_n}(v e_{n-1}) = J_m(\lambda)\,v e_0 \quad\text{for all }v\in k^m,
  \]
  and $M_{\gamma_n}$ sends other summands to $0$;
  \item If $\beta_n=\gamma_n^{-1}$, define
  \[
    M_{\gamma_n}(v e_0) = J_m(\lambda)^{-1}\,v e_{n-1} \quad\text{for all }v\in k^m
  \]
  (note that $J_m(\lambda)$ is invertible since $\lambda\neq 0$), and $M_{\gamma_n}$ sends other summands to $0$.
\end{itemize}
\end{enumerate}
All arrow maps $M_\gamma$ not prescribed above are defined to be the zero map.
One checks that these maps satisfy the relations in $I$, hence define an $A$-module,
which we denote by $M(b,m,\lambda)$.
\end{Def}

Note that each band $b$ gives rise to the family $\{M(b,m,\lambda)\mid m\ge1,\ \lambda\in k^*\}$ of indecomposable modules, and that two band modules $M(b,m,\lambda)$ and $M(b',m',\lambda')$ are isomorphic if and only if $m' = m$, $\lambda' =\lambda$, and $b'=b$ or $b'=b^{-1}$ up to taking rotations.

We note that the indecomposable projective $A$-modules which are not uniserial cannot be represented as string or band modules, because the corresponding word contains a path lying in $\soc(A)$, which cannot form a string: it violates condition~(3) of Definition~\ref{def:string}. Equivalently, the string and band $A$-modules are in fact defined over the string algebra $\overline{A}$ which is defined as the quotient algebra of $A$ by factoring the socles of indecomposable projective  $A$-modules which are not uniserial. The same applies to symmetric stably biserial algebras. Indeed, if $A$ is a symmetric stably biserial algebra, then the quotient algebra $\overline{A}$ defined by factoring the socles of indecomposable projective  $A$-modules which are not uniserial is also a string algebra, and therefore all the indecomposable $A$-modules except the indecomposable projective $A$-modules which are not uniserial can also be represented as string or band $A$-modules.

We also note that the notions of string and band depend on the choice of presentation $kQ/I$ of the algebra. For instance, in Example~\ref{ex-stably-biserial}, we exhibit different quiver presentations under which a band module may be changed to a string module.

	\subsubsection{Exceptional band modules}\label{subsec:excep-band}
	\

We now turn to a more detailed study of band modules over Brauer graph algebras. The following notion is taken from \cite[Section 2]{A}.

\begin{Def}\label{def:max-string module}
Let $A=kQ/I$ be a Brauer graph algebra. A path $p$ from $v$ to $w$ of $Q$ with positive length is called a band line if $p\notin \mathrm{soc}(A)$ and the simple modules $S_v$, $S_w$ of $A$ corresponding to the vertices $v,w$ are not $\Omega_{A}$-periodic. Moreover, $p$ is called a minimal (resp. maximal) band line if it is a minimal (resp. maximal) element in the partial ordered set consisting of all band lines.
\end{Def}

For a Brauer graph $\Gamma$, define $\Gamma^{(1)}$ to be the ribbon graph obtained from $\Gamma$ by deleting all leaves (which correspond to the edges connected with some truncated vertices), and define $\Gamma^{(n)}:=(\Gamma^{(n-1)})^{(1)}$ for every positive integer $n$ inductively. Then there exists some positive integer $r$ such that $\Gamma^{(r)}=\Gamma^{(r+1)}=\cdots$. Denote $\widetilde{\Gamma}=\Gamma^{(r)}$. Moreover, consider the quivers corresponding to these ribbon graphs, it is clear to see that the vertex set of $Q_{\widetilde{\Gamma}}$ is a subset of the vertex set of $Q_{\Gamma}$. For example, $\widetilde{\Gamma_1}=\Gamma_2$ in Example \ref{exa:2-ribbon graph}.

To construct the band lines in a Brauer graph algebra $A$, we use the following lemmas from \cite[Section 2]{A}, which provide criteria for the $\Omega_A$-periodicity of simple modules.

\begin{Lem}\label{lem:paths-in-band=band-line}
Let $X$ be a band module of $A$. Then each simple $A$-module which is a summand of top$(X)$ or soc$(X)$ is not $\Omega_{A}$-periodic.
\end{Lem}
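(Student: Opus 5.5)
Fix a band module $X=M(b,m,\lambda)$ over the Brauer graph algebra $A$, and let $S$ be a simple summand of $\top(X)$ or of $\soc(X)$. We will show that $S$ is not $\Omega_A$-periodic. The idea is to track $S$ back to its vertex $x$ in $Q$ and then to the edge $\bar{x}$ of $\Gamma$, and to show that $\bar{x}$ survives in $\widetilde{\Gamma}$. We then use that the periodic simples are exactly those whose edges are eventually pruned.

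\textbf{Step 1: locating $x$ on the band.} Since $X$ is a band module, $S$ lies at a vertex $x$ in the vertex sequence of $b$ that is a peak or a deep of the band. A peak has two inverse/direct arrows leaving it, a deep has two arrows entering it, and the word is read cyclically. In either case $x$ lies on the closed walk in $\Gamma$ traced by the band. Each letter of $b$ moves between edges $\bar h\to\bar{h^+}$ around a vertex $s(h)$. A band is a closed string all of whose powers are strings. So the edges of $\Gamma$ visited by $b$ form a closed walk: at each step a maximal direct or inverse substring turns around a non-truncated vertex. Because the string is not a path in $\soc(A)$, it never passes through a leaf edge in the middle of a turn. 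In particular, no edge visited by $b$ is a leaf of $\Gamma$; a leaf edge $\bar h$ with $s(\iota h)$ truncated has only one arrow in and one arrow out at its vertex, so a band entering it would have to reverse with $\alpha_{i+1}=\alpha_i^{-1}$ or hit a relation. Iterating this argument shows that all edges visited by $b$ survive in $\Gamma^{(1)}$, and then, restricting the band to $\Gamma^{(1)}$, in every $\Gamma^{(n)}$, hence in $\widetilde\Gamma$.

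\textbf{Step 2: periodicity criterion.} We need the statement that the simple module $S_x$ is $\Omega_A$-periodic if and only if $\bar x\notin E(\widetilde\Gamma)$ (for $\Gamma$ not a tree; when $\Gamma$ is a Brauer tree there are no bands, so the lemma is vacuous). We prove this via the Green walk description of $\Omega$ of simples. $\Omega(S_x)$ is the radical of $P_x$, which is the biserial module given by the two uniserials around the endpoints of $\bar x$. Successive syzygies $\Omega^{n}(S_x)$ for simples on non-pruned edges are string modules whose lengths are unbounded, or non-periodic, because $\widetilde\Gamma$ contains a cycle, or a vertex of multiplicity $>1$, or extra structure. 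This is essentially Antipov's argument in \cite[Section 2]{A}.

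\textbf{Step 3: direct alternative.} If Step 2 is too heavy, we argue instead by contradiction. Suppose $S$ is $\Omega$-periodic. Then $S$ lies in a tube of the stable AR-quiver, and so does each $\Omega^iS$. On the other hand, a nonzero map $X\to S$ (or $S\to X$) with $X$ in a homogeneous tube corresponding to a band yields, by applying $\Omega$, nonzero stable maps $\Omega^iX=X$... This route is less clean, so we prefer Steps 1–2.

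The main obstacle is Step 2, specifically the claim that simples on edges of $\widetilde\Gamma$ are not periodic. Here we would compute $\Omega^2(S_x)$ explicitly as a string module, then show inductively that the strings grow along walks in $\widetilde\Gamma$ that never close up.
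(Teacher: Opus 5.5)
There is a genuine gap. The reduction itself is reasonable: show that the turning points of a band lie on edges of $\widetilde\Gamma$, then show that simples on such edges are not periodic. But Step~1 rests on a false claim, and Step~2, which carries the actual content, is not proved.

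\textbf{Step 1.} You assert that no edge visited by a band is a leaf of $\Gamma$, and this is false. In Example~\ref{ex:two-exceptional-band} the band $(\alpha_{h_3}\alpha_{\iota(h_1)})^{-1}\alpha_{h_1}$ passes through vertex $3$ of $Q_{\Gamma_1}$, i.e.\ through the leaf $\bar{h_3}$, which is deleted in $\widetilde{\Gamma_1}=\Gamma_2$. Your local argument (one arrow in, one arrow out) only shows that a band cannot \emph{turn} at such a vertex; it can still pass straight through it inside a direct or inverse substring. The weaker statement is enough for the lemma, because $\top(X)$ and $\soc(X)$ sit at peaks and deeps.

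The iteration, however, needs an argument you do not give. An edge $e$ that becomes a leaf only in some $\Gamma^{(j)}$ with $j\geq 1$ has two arrows in and two arrows out in $Q_\Gamma$, so the local argument no longer applies. What you need is that a band uses no arrow around a vertex $w$ that becomes truncated. By induction the band cannot turn at the pruned edges at $w$. Hence a maximal direct substring around $w$ would have to start and end at the unique surviving edge $e$, and so would contain the full cycle around $w$ (here $m(w)=1$). That cycle is the socle path of the non-uniserial projective at $e$, which condition (3) of Definition~\ref{def:string} excludes. Without this, ``restricting the band to $\Gamma^{(1)}$'' is not meaningful, since the quiver changes under pruning.

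\textbf{Step 2.} This is exactly the companion lemma that the paper also imports from \cite[Section 2]{A}: non-periodic simples correspond to edges of $\widetilde\Gamma$. The paper gives no proof of the present lemma beyond that citation, so if you may cite the correspondence, a corrected Step~1 completes the argument. As a self-contained proof, though, your Step~2 only lists possible reasons (``a cycle, or a vertex of multiplicity $>1$, or extra structure''). It does not show that $\dim\Omega_A^n(S_x)$ is unbounded for $\bar x\in E(\widetilde\Gamma)$.

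Your exclusion of trees is also wrong: only trees with at most one exceptional vertex are representation-finite. For a single edge with multiplicities $m_1,m_2\geq 2$ you get $A=k\langle x,y\rangle/(xy,\,yx,\,x^{m_1}-y^{m_2})$. Here $xy^{-1}$ is a band and $\widetilde\Gamma=\Gamma$. So restricting the periodicity criterion to non-trees leaves precisely these cases uncovered. Step~3 is abandoned mid-sentence and does not fill the gap.
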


\begin{Lem}
Suppose that $A$ is given by the Brauer graph $(\Gamma,m)$. Then there is a one-to-one correspondence between the isomorphism classes of simple $A$-modules which are not $\Omega_{A}$-periodic and the set of edges of $\widetilde{\Gamma}$.
\end{Lem}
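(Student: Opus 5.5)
The plan is to prove the two inclusions separately, using the band criterion of Lemma \ref{lem:paths-in-band=band-line} for one direction and an explicit syzygy computation along the leaf-deletion process $\Gamma\to\Gamma^{(1)}\to\cdots\to\widetilde{\Gamma}$ for the other. Recall that the simple $A$-modules are $S_e$ for $e\in E(\Gamma)$, i.e.\ the vertices of $Q_\Gamma$, and that $E(\widetilde{\Gamma})\subseteq E(\Gamma)$. The claimed bijection is then just $e\mapsto S_e$. So it suffices to show that $S_e$ is not $\Omega_A$-periodic exactly when $e\in E(\widetilde{\Gamma})$.

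\emph{Edges of $\widetilde{\Gamma}$ give non-periodic simples.} Fix $e\in E(\widetilde{\Gamma})$. By construction, $\widetilde{\Gamma}$ contains no edge with a truncated endpoint. So at each vertex $v$ of $\widetilde{\Gamma}$, either $val_{\widetilde\Gamma}(v)\ge 2$, or $v$ has valency $1$ and $\m(v)\ge 2$. I would build a band through $e$ by walking in $\widetilde{\Gamma}$ and alternating direct and inverse arrows.
\begin{itemize}
\item At an endpoint $v$ of an edge we may leave $e$ along a direct path of $Q_\Gamma$ around $v$ that is not in $\soc(A)$. For example, take the path around $v$ running to the next edge of $\widetilde{\Gamma}$ in the cyclic order $\rho$. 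When $val(v)=1$ and $\m(v)\ge2$, take the loop, which is not in the socle since $\m(v)\geq 2$.
\item At the other endpoint we then turn back with an inverse path around that vertex.
\item Since $\widetilde{\Gamma}$ is finite and leafless, this zigzag walk can be closed up to a closed string $b$ passing through $e$. One checks that all powers $b^m$ avoid the relations (b),(c) and the socle paths, and then replaces $b$ by its primitive root.
\item The degenerate case where $\widetilde{\Gamma}$ is a single edge with two non-truncated ends is handled directly: there the band is $\beta^{-1}\alpha$-type on the two cycles.
\end{itemize}
Then $S_e$ lies in $\top$ of the band module $M(b,1,\lambda)$, so by Lemma \ref{lem:paths-in-band=band-line} it is not $\Omega_A$-periodic.

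\emph{Deleted edges give periodic simples.} This goes by induction on the stage $i$ at which $e$ is deleted, i.e.\ $e\in E(\Gamma^{(i-1)})\setminus E(\Gamma^{(i)})$.
\begin{itemize}
\item For $i=1$, $e$ has a truncated endpoint. Then $P_e$ is uniserial, and $\Omega_A(S_e)=\rad P_e$ is the uniserial string module given by the maximal nonzero path around the other endpoint $w$ of $e$.
\item I would compute the $\Omega_A$-orbit explicitly via the standard description: the syzygy of a string module is obtained by adding and removing hooks and cohooks, in the manner of Green walks around $w$. The aim is to show that all the modules in the orbit are string modules whose composition factors are supported on edges already known to be periodic or on $e$ itself, with length bounded uniformly.
\item Finitely many such string modules exist, so the orbit is finite, i.e.\ $S_e$ is periodic.
\item For the inductive step one passes to $\Gamma^{(i-1)}$. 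The edges removed earlier only contribute uniserial ``tails'' which reappear periodically in the syzygies. So I would argue that the syzygy sequence of $S_e$ in $A_\Gamma$ agrees, up to such periodic tails, with the corresponding computation for a leaf of $\Gamma^{(i-1)}$.
\end{itemize}

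The main obstacle is this second direction. Controlling the syzygies of a simple module at an inner leaf, i.e.\ one deleted at a stage $i\ge2$, requires a careful combinatorial bookkeeping of the hooks and cohooks contributed by the previously removed trees. If this becomes unwieldy, I would first try the alternative of showing that every $\Omega_A^n(S_e)$ has no band-type or $\widetilde{\Gamma}$-type composition factor in its top. Combined with a length bound, this forces periodicity.
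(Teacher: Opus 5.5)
The paper gives no proof of this lemma; it quotes it from Antipov \cite{A}, so your argument has to stand on its own. Your first half (edges of $\widetilde{\Gamma}$ give non-periodic simples) does, once the closing-up step is justified properly rather than by ``$\widetilde{\Gamma}$ is finite and leafless''. Fix the rule: from a peak leaving at the half-edge $h$, go directly to $\rho_{\widetilde{\Gamma}}(h)$; from a deep reached at $g$, go back inversely from $\iota g$ to $\rho_{\widetilde{\Gamma}}^{-1}(\iota g)$. The return map on peak half-edges is then the permutation $\iota\rho_{\widetilde{\Gamma}}^{-1}\iota\rho_{\widetilde{\Gamma}}$ of $H(\widetilde{\Gamma})$, so the walk returns to its initial state and closes up correctly. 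Every segment is a minimal band line, hence not in $\soc(A)$, because no vertex of $\widetilde{\Gamma}$ is truncated. (When $val_{\widetilde{\Gamma}}(v)=1$ that segment is the full cycle around $v$ in $Q_\Gamma$, running through deleted edges, not a loop of $Q_\Gamma$.) Lemma~\ref{lem:paths-in-band=band-line} then applies.

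The genuine gap is the converse, that $e\notin E(\widetilde{\Gamma})$ forces $S_e$ to be periodic. This is the real content of the lemma, and what you offer is a plan whose invariants are false. Let $\Gamma$ be a loop $\ell$ at $w$ with $m(w)=1$, plus an edge $e$ from $w$ to a truncated vertex $u$, so $\widetilde{\Gamma}=\{\ell\}$. Then $\Omega_A(S_e)=\rad P_e$ is uniserial with composition factors $\ell,\ell,e$ and top $S_\ell$; in general $\rad P_e$ runs $m(w)$ times around $w$ and contains every edge at $w$. Moreover $\Omega_A^2(S_e)\cong P_e/\soc P_e$ and $\Omega_A^3(S_e)\cong S_e$. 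So the orbit is not supported on ``already periodic edges and $e$''. Your fallback, that no $\widetilde{\Gamma}$-type simple occurs in the tops of $\Omega_A^n(S_e)$, already fails for $n=1$. The inductive step also has no mechanism: nothing in your sketch relates $\Omega_{A_\Gamma}$ to $\Omega_{A_{\Gamma^{(i-1)}}}$. The natural comparison $M\mapsto \epsilon M$, with $\epsilon$ the sum of the idempotents of surviving edges, sends $P_e$ for a deleted $e$ to a module that is in general not projective over $\epsilon A\epsilon$. It therefore does not carry projective resolutions to projective resolutions, and the ``periodic tails'' claim is unsupported.

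What is missing is an actual length bound for $\Omega_A^n(S_e)$; with it, your finiteness argument for string modules does finish. For $i=1$ the bound comes for free. $P_e$ is uniserial, so $S_e=Ae_{\bar h}/A\alpha_h$, with $h$ the half-edge of $e$ at $w$, is one of the mouth modules $\mathcal{M}_F$ attached to the face $F\ni h$, and $\Omega_A$ permutes this finite set (Proposition~\ref{prop:stbA-tube-and-face} and the remarks after it). For $i\ge 2$, however, both ends of $e$ are non-truncated in $\Gamma$, $P_e$ is biserial, and $S_e$ is not a mouth module. So you need a genuine hook/cohook analysis of $\Omega_A^n(S_e)$ that allows excursions around the attaching vertex through edges of $\widetilde{\Gamma}$, as in the example, and still bounds the length. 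Rephrasing via bands does not remove this work. One can check combinatorially that no removed edge is ever a peak or deep of a band, since non-root vertices of removed trees have multiplicity one. But ``no band has $e$ as a peak or deep $\Rightarrow$ $S_e$ periodic'' is exactly the converse of Lemma~\ref{lem:paths-in-band=band-line}, i.e.\ the statement still to be proved.
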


Note that a minimal band line $p$ in $A$ determines an arrow $\alpha_p$ with $s(\alpha_p)=s(p)$ and $t(\alpha_p)=t(p)$ in $Q_{\widetilde{\Gamma}}$, and this correspondence is a bijection.

The following lemma provides us a method to construct a band from a string whose ending vertices corresponding to simple modules which are not $\Omega$-periodic. 

\begin{Lem}\label{lem-string-to-band} {\rm(\cite[Lemma 2.5]{A})}
Let $A$ be a Brauer graph algebra, and let
\[
w = p_l p_{l-1} \cdots p_1
\]
be a string in $A$, where each $p_i$ is either a direct string or an inverse string, appearing alternately, and both $s(w)$ and $t(w)$ correspond to simple $A$-modules that are not $\Omega_A$-periodic. Then there exists a band
\[
b = p_m p_{m-1} \cdots p_l \cdots p_1
\]
in $A$, where again each $p_i$ is either a direct string or an inverse string, appearing alternately.
\end{Lem}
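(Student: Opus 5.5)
The plan is to build the band greedily by extending $w$ at its end, using that non-$\Omega_A$-periodic simples correspond exactly to edges of $\widetilde{\Gamma}$, and then to close the walk up by a pigeonhole argument. First I would reduce to $\widetilde{\Gamma}$. Since $s(w)$ and $t(w)$ correspond to non-periodic simples, both are edges of $\widetilde{\Gamma}$ by the correspondence lemma above. Every minimal band line corresponds bijectively to an arrow $\alpha_p$ of $Q_{\widetilde{\Gamma}}$. The quiver $Q_{\widetilde{\Gamma}}$ is the quiver of a Brauer graph with no leaves, so at every vertex there are two outgoing and two incoming arrows, possibly after discarding truncated situations. I will treat the degenerate case where $\widetilde{\Gamma}$ has a single edge and vertices of valency one separately; there the band lines are explicit.

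The key extension step is this. Suppose the last block $p_l$ is direct, ending at a vertex $x$ whose simple is non-periodic. At $x$, consider the other path, the one not continuing $p_l$ along its $\sigma$-cycle. Take the inverse of a minimal band line ending at $x$ that comes from the other cycle through $x$. Appending it gives a new block $p_{l+1}$ that is inverse and ends at a non-periodic vertex. The word stays a string because the last arrow of $p_l$ and the last arrow of $p_{l+1}$ lie in different $\sigma$-orbits at $x$, so no relation of type (a), (b), or (c) appears and no cancellation occurs. The symmetric construction works when $p_l$ is inverse. A similar extension is also available at the start if needed. This produces an infinite alternating sequence of blocks, each a minimal band line or its inverse, with all turning points at non-periodic vertices.

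To close up, I would choose the extension deterministically: at each turning point, take the minimal band line determined by the cycle and the current half-edge. The extension is then a walk on the finite set of pairs (vertex of $Q_{\widetilde{\Gamma}}$, direction). This walk is like following a ``green walk'' or Green's walk around $\widetilde{\Gamma}$, which is a permutation. Hence the walk is eventually, and indeed purely, periodic. Starting from the state at $t(w)$, it eventually returns to a state compatible with $s(w)$ as the start of $p_1$; the inverse permutation lets one hit the initial state of $w$ precisely. Once the walk returns, $b=p_m\cdots p_{l+1}p_l\cdots p_1$ is closed, and $b^n$ is a string for all $n$, because the junction $p_1p_m$ is again a legitimate turn.

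The main obstacle is ensuring the return hits exactly the orientation and cycle that $p_1$ starts with. If $b$ turns out to be a proper power, I would replace it by its root, keeping $w$ as a substring after rotation. For the return itself I would use that the transition map is a bijection on the finite state set, which guarantees the initial state recurs. Checking that the junctions at the original endpoints of $w$ are valid turns, and not continuations producing socle paths, requires the case distinction on whether $p_1$ and $p_l$ are direct or inverse.
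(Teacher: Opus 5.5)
\textbf{There is a genuine gap at the closing step.} The paper gives no proof of this lemma (it is quoted from Antipov), so your argument has to stand on its own.

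Your deterministic rule is indeed a permutation $\pi$ of a finite set of states. But bijectivity only tells you that the walk returns to the state it started from, i.e.\ the end state of $w$. It gives no reason why the $\pi$-orbit of that state should contain the state that allows a turn in front of $p_1$. Running $\pi^{-1}$ from the start of $w$ only traces out the orbit of the start state, which in general is a different orbit. The underlying reason is that $w$ itself may use non-minimal blocks, and these jump between $\pi$-orbits.

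Concretely, let $\Gamma$ have two vertices $u,v$ with $m(u)=2$, $m(v)=1$, joined by two edges $e_1,e_2$. Then $\Gamma=\widetilde{\Gamma}$. The arrows are $a_1\colon e_1\to e_2$, $a_2\colon e_2\to e_1$ around $u$ and $b_1\colon e_1\to e_2$, $b_2\colon e_2\to e_1$ around $v$, and these four arrows are the minimal band lines. For $w=a_2a_1$ your rule produces $\cdots a_2b_2^{-1}a_2b_2^{-1}\,a_2a_1$ forever. A turn in front of $p_1=a_2a_1$ needs a final block $q^{-1}$ ending at $e_1$ with $q$ starting with $b_1$, so the walk never closes. (For $w=(a_2a_1)\,b_1^{-1}\,(a_1a_2a_1)$ the walk is the same, and it cannot even close by letting its last block $a_2$ run into $p_1$, since that creates the forbidden subword $(a_1a_2)^2$.) A band of the required form does exist, e.g.\ $b_1^{-1}(a_1a_2)\,b_2^{-1}(a_2a_1)$, but it uses the non-minimal band line $a_1a_2$.

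\textbf{The missing idea} is to allow arbitrary band lines as blocks and prove a reachability statement. Encode a state as $(g,d)$, where $g$ is the half-edge of $\widetilde{\Gamma}$ from which the next block leaves and $d$ is its direction. The admissible moves are $(g,d)\mapsto(\iota\rho^{\pm j}g,-d)$, with $\rho$ the rotation in $\widetilde{\Gamma}$ and $1\le j\le N(s(g)):=m(s(g))\,\mathrm{val}_{\widetilde{\Gamma}}(s(g))-1$. The bound says exactly that the block does not contain $C^{m}$, and your rule is the part $j=1$.

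Every state $(h,e)$ receives exactly $N(s(\iota h))$ arcs, all from states at the vertex $s(\iota h)$. So weighting each arc out of $(g,d)$ by $1/N(s(g))$ gives a strictly positive circulation. Hence every arc lies on a directed cycle, and the directed walk defined by $w$ can be reversed by a directed walk. That reverse walk yields $p_{l+1},\dots,p_m$ with a genuine turn at $p_1p_m$.

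For this you also need that all turning points of $w$, not only its ends, lie in $\widetilde{\Gamma}$. This holds because the non-attachment vertices of the trees removed in passing to $\widetilde{\Gamma}$ have multiplicity $1$, so a string turning into such a tree can never come back.

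Two smaller points. At a loop of $\widetilde{\Gamma}$ the two arrows into $x$ lie in the same $\sigma$-orbit, so ``other cycle'' should read ``other half-edge''. And taking a primitive root can destroy $w$ as a subword when $w$ is itself a power (e.g.\ $w=(b_1^{-1}a_1)^2$ if $m(u)=m(v)=1$); this is an imprecision of the quoted statement rather than of your plan.
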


Now we define the exceptional band module in \cite{A}.

\begin{Def}\label{def:exceptional-band}
A band $b$ of $A$ is called exceptional if it can be expressed up to taking rotations and inverses in the form $$b = p_{2k} p_{2k-1}^{-1} \cdots p_{2}p_{1}^{-1},$$ where each $p_1, p_3, \ldots, p_{2k-1}$ is a minimal band line and each $p_2, p_4, \ldots, p_{2k}$ is a maximal band line.

A band module $X$ of $A$ is called exceptional if it is defined by an exceptional band $b$, with the vector space at each vertex of $b$ being the one-dimensional space $k$, and all but one of the arrows act as identity maps, while one arrow acts as multiplication by some scalar $\lambda\in k^{*}$, that is, $X{\cong} M(b,1,\lambda)$.

If $b$ is a non-exceptional band, then we call a band module of the form $M(b,1,\lambda)$ a non-exceptional band module.
\end{Def}

The construction of the following string module is given in \cite[Proposition 2.3]{A}. We provide a concrete proof to show that the string module constructed in \cite[Proposition 2.3]{A} in fact does not lie in a tube of rank $1$ of the Auslander-Reiten quiver.

\begin{proposition}\label{prop:non-exceptional-tube}
Let $A = kQ/I$ be a Brauer graph algebra, and let $M$ be a non-exceptional band module. For any $n \in \mathbb{Z}_+$, there exists a string module $Y$ that does not lie in a tube of rank $1$ of the Auslander-Reiten quiver of $A$, such that
\[
\mathrm{dim}_k \underline{\Hom}_A(M, Y) > n.
\]
\end{proposition}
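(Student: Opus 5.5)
Following the construction in \cite[Proposition 2.3]{A}, the plan is to exploit that a non-exceptional band $b$ must contain, up to rotation and inversion, a direct or inverse substring that is not a minimal or a maximal band line. Concretely, write $b=p_{2k}p_{2k-1}^{-1}\cdots p_2p_1^{-1}$ with each $p_i$ a direct string. By Lemma \ref{lem:paths-in-band=band-line}, the endpoints of each $p_i$ correspond to non-$\Omega_A$-periodic simples, so every $p_i$ is a band line. Non-exceptionality therefore means that, after possibly passing to $b^{-1}$ and a rotation, some $p_{2j-1}$ is not minimal or some $p_{2j}$ is not maximal. In either case I would produce a ``defect'' at one peak or deep of $b$: a band line $q$ that properly extends $p_{2j}$ (or that is properly contained in $p_{2j-1}$), sharing an endpoint with it. Such a $q$ is a subpath of the cycle $C_\alpha^{m(C_\alpha)}$, and only paths in the socle are excluded.

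Next, for $n$ given, I would build a long string $w$ that winds around $b$ roughly $N\gg n$ times and is then closed off at both ends using $q$. More precisely, I would take $w=u\,b^N\,u'$, where the end pieces $u,u'$ are chosen via Lemma \ref{lem-string-to-band} and the defect $q$ so that $w$ is a genuine string, and set $Y=M(w)$. For each occurrence of a suitable factor string of $b$ in $w$ (for example the full band $b$ read from the defective peak, which is a factor string of $M=M(b,1,\lambda)$ and occurs as a substring of $w$ in an image-compatible position), the Crawley-Boevey type description of homomorphisms gives a graph map $M\to Y$. This yields at least $N$ linearly independent maps in $\Hom_A(M,Y)$.

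The main obstacle is showing that these maps remain linearly independent modulo maps factoring through projectives. Here the defect is used. A map factoring through an indecomposable projective $P_x$ must send the top of $M$ at $x$ into the socle-related part of $Y$ through the maximal path $C^{m}$ at $x$. Because the relevant peak of $b$ is not a maximal band line, respectively the deep is not minimal, the image of the graph map at the defective position is not contained in the image of any such factorization. For this I would use the standard criterion for special biserial symmetric algebras: a graph map between string or band modules factors through a projective iff it is a ``two-sided socle'' map, i.e.\ the image factor string is a full maximal path. I would check that only boundedly many of the $N$ maps (those near the ends of $w$) can fail this, which gives $\dim_k\underline{\Hom}_A(M,Y)\ge N-c>n$.

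Finally, $Y$ must not lie in a rank-$1$ tube. String modules in rank-$1$ tubes satisfy $\tau Y=\Omega^2 Y\cong Y$, and hence $Y$ is $\Omega$-periodic of period dividing $2$. I would compute $\tau M(w)$ by the hook/cohook rule (Butler--Ringel) and observe that it changes the ends of $w$, shortening or extending by hooks, so that $\tau M(w)\not\cong M(w)$ as long as the end pieces $u,u'$ are chosen asymmetrically. If necessary, I would replace $N$ by $N+1$ or modify one end piece to break any accidental symmetry. Comparing lengths $|\tau w|\neq|w|$, or the end configurations, then suffices.
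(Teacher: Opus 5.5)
\textbf{There is a genuine gap: the maps you count in $\Hom_A(M,Y)$ do not exist.} A graph map $M\to Y$ needs a factor string of the band, whose neighbouring letters in ${}^\infty b^\infty$ point \emph{away} from it. That string must occur in $w$ as an image string, whose neighbouring letters in $w$ point \emph{into} it. Suppose such a string has length at least $|b|$ and its occurrence lies strictly inside $b^N$. Then, by periodicity, its neighbouring letters in $w$ are exactly its neighbouring letters in ${}^\infty b^\infty$, and the two requirements contradict each other. Your own example fails this way: inside $b^N$, the full band read from a peak $x$ is bounded by two peaks of $w$, so there it gives a quotient of $Y$, not a submodule. Hence the maps you describe can only occur near the two ends of $w$, and their number does not grow with $N$. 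Your remark that only the maps ``near the ends'' can fail is the wrong way round. In addition, the factorization criterion you quote is false. Over $k[x]/(x^3)$, multiplication by $x$ on $k[x]/(x^2)$ has simple image, yet it factors as $k[x]/(x^2)\xrightarrow{\cdot x}k[x]/(x^3)\twoheadrightarrow k[x]/(x^2)$.

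\textbf{The missing idea is to use the defect to \emph{truncate} $b$, and then repeat a \emph{different} closed string.} Consider the paper's case (1), where the two lines $p_1,p_{2k}$ meeting at a deep are both non-minimal. Take the minimal band lines $q_1\subsetneq p_1$ and $q_2\subsetneq p_{2k}$ with the same starting arrows. Then $w_1=q_2p_{2k-1}^{-1}\cdots p_2q_1^{-1}$ is a factor string of $b$, giving an epimorphism $M\to Z=M(w_1)$. Its end vertices are non-$\Omega_A$-periodic, so Lemma \ref{lem-string-to-band} gives a closed string $w=w_2w_1$ in which $w_2$ enters both end deeps of $w_1$. In $Y=M(w_1w^n)$, each of the $n+1$ copies of $w_1$ is then a submodule, which gives the composites $M\twoheadrightarrow Z\hookrightarrow Y$.

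No factorization criterion is needed for stable independence. Since $M$ is indecomposable and non-projective, every map $M\to P$ with $P$ projective lands in $\rad P$, so every map through a projective lands in $\rad Y$. The different copies of $w_1$ have pairwise disjoint sets of peaks, and these are peaks of $Y$. So every nontrivial linear combination of the composites is nonzero modulo $\rad Y$, hence nonzero in $\underline{\Hom}_A(M,Y)$.

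Finally, $\tau Y\not\cong Y$ holds because both ends $q_1,q_2$ are non-maximal direct strings. So $\tau$ attaches a cohook at each end and strictly lengthens the string; ``asymmetric end pieces'' is not the relevant condition. This construction needs control at both cut points. That is why the paper does not work from a single defect but splits into cases (1)--(3): case (2) reduces to case (1) via $\Omega_A$, and case (3) needs a separate argument when one line is both non-minimal and non-maximal.
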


\begin{proof}
Suppose $M$ is given by the band
\[
p_{2m} p_{2m-1}^{-1} \cdots p_{2} p_{1}^{-1}
\]
of $A$. Then by Lemma \ref{lem:paths-in-band=band-line}, each $p_i$ is a band line. Since $M$ is not a exceptional band module, without loss of generality, at least one of the following possibilities occurs:

\begin{itemize}
\item[(1)] $p_1$ and $p_{2k}$ are not minimal band lines.
\item[(2)] $p_1$ and $p_{2k}$ are not maximal band lines.
\item[(3)] $p_1$ is not a minimal band line, and $p_{2k+1}$ is not a maximal band line (they may be the same line).
\end{itemize}

The third case, in contrast to the first and second, arises for band modules in which all even band lines are either simultaneously maximal or simultaneously minimal.
Note that this proposition is invariant under the $\Omega_A$-shift, and since the maximal band lines are mapped to minimal band lines under the $\Omega_A$-shift, we do not need to consider case (2).

Consider case (1). Denote by $q_1$ (resp. $q_2$) the minimal band line that contained in $p_1$ (resp. $p_{2k}$) and has a common starting arrow with $p_1$ (resp. $p_{2k}$). Then there exists an epimorphism $f : M \to Z$, where $Z$ is a string module which is given by $$w_1=q_2p_{2k-1}^{-1}\cdots p_{2}q_1^{-1}.$$

Since the simple modules corresponding to $s(w_1)$ and $t(w_1)$ are not $\Omega_A$-periodic, it follows from Lemma \ref{lem-string-to-band} that there exists a closed string $w = w_2w_1$ of $A$, where $w_2$ starts with a formal inverse in $Q_1^{-1}$ and ends with an arrow in $Q_1$.
For any natural number $n$, consider the string module $Y$ corresponding to the string
$$w_1w^n$$
It is easy to see that a monomorphism $h : Z \to Y$ and a morphism $hf : M \to Y$ correspond to any occurrence of the substring $Z$ in $Y$. Since $hf$ induces a nonzero map $\operatorname{top}(M) \to \operatorname{top}(Y)$ (the image of $p_i$ is nonzero), it is nonzero in the stable category.

We now show that such a module $Y$ does not lie in a tube of rank $1$ of the stable Auslander-Reiten quiver of $A$. It suffices to prove that $\tau(Y)\not\cong Y$, where $\tau$ is the Auslander-Reiten translation of $A$. First, note that $q_1$ and $q_2$ are not maximal direct strings, since there exist paths $p_1$ and $p_{2k}$ containing them that are not in the socle of $A$. Hence, $\tau(Y)$ is a string module obtained by attaching two cohooks to each side of $w_1 w^n$ (see for example in \cite{BR}), which is given by
\[
p''^{-1}\beta w_1 w^n \alpha^{-1}p',
\]
where $p'$ and $p''$ are maximal direct strings and $\alpha, \beta \in Q_1$. Therefore, $\tau(Y)\not\cong Y$.

Now consider case (3).

Let $k \neq 0$. Let $q_1$ denote the minimal band line sharing a common starting arrow with $p_1$, and let $q_2$ denote the maximal band line sharing a common ending arrow with $p_{2k+1}$. Then there exists a morphism from $M$ to an arbitrary string module that contains the substring
\[
w_1 = q_2^{-1}p_{2k}\cdots p_2 q_1^{-1}.
\]
As in the previous case, there exists a string module $Y$ of the form
\[
w_1w^n,
\]
where $w$ is a closed string containing $w_1$, such that
\[
\mathrm{dim}_k \underline{\Hom}_A(M, Y) > n.
\]
Note that $q_1$ is not a maximal direct string, since there exists a path $p_1$ containing it that are not in the socle of $A$. Thus, $\tau(Y)$ is a string module obtained by attaching two cohooks to each side of $w_1w^n$, which will not isomorphic to $Y$.

Let $k = 0$, and let $q_1 = p_1q \in \soc(A)$. Note that $q$ is also a band line, and since $p_1$ is neither a maximal nor a minimal band line, $q$ is also neither a maximal nor a minimal band line. It follows that we have the following decompositions
\[
p_1 = p' p'', \quad q = q' q'',
\]
where $p', p'', q', q''$ are band lines. So we get a new band line $q_2 = p'' q'$.
For any string module $Z$ containing $q_2$ as a substring, there exists a morphism $f : M \to Z$. We claim that $f$ is nonzero in the stable category.
Indeed, suppose to the contrary that $f$ is zero in the stable category. Then there exists a morphism $g : I(M) \to Z$ such that $f = g i$, where $I(M)$ denotes the injective hull of $M$ and $i : M \to I(M)$ is the canonical embedding. However, such a morphism $g$ cannot simultaneously map the top of $I(M)$ (corresponding to the simple module associated with $s(q_1)$) to a composition factor corresponding to a vertex in $q'$, and at the same time map the simple module corresponding to $s(p_1)$ to that corresponding to $s(p'')$. This yields a contradiction.

As in the previous case, there exists a string module $Y$ of the form
\[
q_2^{-1}w^n,
\]
where $w$ is a closed string containing $q_2^{-1}$, such that
\[
\mathrm{dim}_k \underline{\Hom}_A(M, Y) > n.
\]
Note that $q_2$ is not a maximal direct string, since there exists a path $p'q_2$ containing it that are not in the socle of $A$. Thus, $\tau(Y)$ is a string module obtained by attaching two cohooks to each side of $q_2^{-1}w^n$, which will not isomorphic to $Y$.
\end{proof}

\subsubsection{Faces and double-faces}
\

 We now recall the connection between exceptional band modules (which corresponds to exceptional modules of second kind  in \cite{A}) in a Brauer graph algebra $A$ and double-faces in its Brauer graph $\Gamma_A$.
We begin by recalling the definitions of faces and double-faces in a Brauer graph in \cite{OZ}, which correspond to Green walks in \cite{GreenJA} (or $G$-cycles in \cite{A}) and double-stepped Green walks in \cite{D} (or $G^2$-cycles in \cite{A}), respectively.

\begin{Def}
Let $\Gamma$ be a ribbon graph. A face of perimeter $m$ of $(\Gamma,m)$ is an equivalence class of primitive cyclic sequences $F=(h_1,h_2,\cdots,h_m)$, where $h_1,\cdots,h_m\in H(\Gamma)$ such that $h_{i+1}=\iota\rho(h_{i})$ for all $i$. We call a sequence $F$ primitive, if $F$ is not a power of another sequence of this form. Two sequences of such kind are considered equivalent if they agree after a cyclic permutation of their entries.
\end{Def}

\begin{Def}
Let $\Gamma$ be a ribbon graph. A double-face of perimeter $m$ of $(\Gamma,m)$ is an equivalence class of primitive cyclic sequences $F=(h_1,h_2,\cdots,h_m)$, where $h_1,\cdots,h_m\in H(\Gamma)$ such that $h_{i+1}=(\iota\rho)^{2}(h_{i})$ for all $i$. Two sequences of such kind are considered equivalent if they agree after a cyclic permutation of their entries.
\end{Def}

So, in particular, the faces give a partition on the set $H(\Gamma)$ of half-edges and the double-faces also give a partition on $H(\Gamma)$. Note that both notions are free of the multiplicity $m$.
We give the following example to help readers understand these concepts.

\begin{example}\label{exa:2-ribbon graph}
	Let $\Gamma_1$ and $\Gamma_2$ be the Brauer graphs with multiplicity one and the ribbon graphs are given as follows (the orientation around each vertex is clockwise), respectively.
	\begin{center}
\tikzset{every picture/.style={line width=0.75pt}} %set default line width to 0.75pt

\begin{tikzpicture}[x=0.75pt,y=0.75pt,yscale=-1,xscale=1]
%uncomment if require: \path (0,235); %set diagram left start at 0, and has height of 235

%Shape: Ellipse [id:dp5351628263682573]
\draw   (61.1,94.92) .. controls (61.1,76.46) and (75.68,61.5) .. (93.67,61.5) .. controls (111.67,61.5) and (126.25,76.46) .. (126.25,94.92) .. controls (126.25,113.37) and (111.67,128.33) .. (93.67,128.33) .. controls (75.68,128.33) and (61.1,113.37) .. (61.1,94.92) -- cycle ;
%Shape: Ellipse [id:dp22668671113333483]
\draw   (228.86,94.92) .. controls (228.86,76.46) and (243.45,61.5) .. (261.44,61.5) .. controls (279.43,61.5) and (294.02,76.46) .. (294.02,94.92) .. controls (294.02,113.37) and (279.43,128.33) .. (261.44,128.33) .. controls (243.45,128.33) and (228.86,113.37) .. (228.86,94.92) -- cycle ;
%Straight Lines [id:da3132565897170516]
\draw    (126.25,94.92) -- (171.86,94.92) ;
%Shape: Circle [id:dp7255337491938683]
\draw  [fill={rgb, 255:red, 0; green, 0; blue, 0 }  ,fill opacity=1 ] (169.36,94.92) .. controls (169.36,93.54) and (170.48,92.42) .. (171.86,92.42) .. controls (173.24,92.42) and (174.36,93.54) .. (174.36,94.92) .. controls (174.36,96.3) and (173.24,97.42) .. (171.86,97.42) .. controls (170.48,97.42) and (169.36,96.3) .. (169.36,94.92) -- cycle ;
%Shape: Circle [id:dp9224289625323931]
\draw  [fill={rgb, 255:red, 0; green, 0; blue, 0 }  ,fill opacity=1 ] (123.75,94.92) .. controls (123.75,93.54) and (124.87,92.42) .. (126.25,92.42) .. controls (127.63,92.42) and (128.75,93.54) .. (128.75,94.92) .. controls (128.75,96.3) and (127.63,97.42) .. (126.25,97.42) .. controls (124.87,97.42) and (123.75,96.3) .. (123.75,94.92) -- cycle ;
%Shape: Circle [id:dp8545479082638043]
\draw  [fill={rgb, 255:red, 0; green, 0; blue, 0 }  ,fill opacity=1 ] (58.6,94.92) .. controls (58.6,93.54) and (59.72,92.42) .. (61.1,92.42) .. controls (62.48,92.42) and (63.6,93.54) .. (63.6,94.92) .. controls (63.6,96.3) and (62.48,97.42) .. (61.1,97.42) .. controls (59.72,97.42) and (58.6,96.3) .. (58.6,94.92) -- cycle ;
%Shape: Circle [id:dp29041330982278835]
\draw  [fill={rgb, 255:red, 0; green, 0; blue, 0 }  ,fill opacity=1 ] (226.36,94.92) .. controls (226.36,93.54) and (227.48,92.42) .. (228.86,92.42) .. controls (230.24,92.42) and (231.36,93.54) .. (231.36,94.92) .. controls (231.36,96.3) and (230.24,97.42) .. (228.86,97.42) .. controls (227.48,97.42) and (226.36,96.3) .. (226.36,94.92) -- cycle ;
%Shape: Circle [id:dp4212398147160299]
\draw  [fill={rgb, 255:red, 0; green, 0; blue, 0 }  ,fill opacity=1 ] (291.52,94.92) .. controls (291.52,93.54) and (292.63,92.42) .. (294.02,92.42) .. controls (295.4,92.42) and (296.52,93.54) .. (296.52,94.92) .. controls (296.52,96.3) and (295.4,97.42) .. (294.02,97.42) .. controls (292.63,97.42) and (291.52,96.3) .. (291.52,94.92) -- cycle ;

% Text Node
\draw (48.34,64.2) node [anchor=north west][inner sep=0.75pt]   [align=left] {$h_1$};
% Text Node
\draw (130,78.23) node [anchor=north west][inner sep=0.75pt]   [align=left] {$h_3$};
% Text Node
\draw (122,106.3) node [anchor=north west][inner sep=0.75pt]   [align=left] {$h_2$};
% Text Node
\draw (215,65.53) node [anchor=north west][inner sep=0.75pt]   [align=left] {$h_1'$};
% Text Node
\draw (292.66,100.29) node [anchor=north west][inner sep=0.75pt]   [align=left] {$h_2'$};
\end{tikzpicture}
	\end{center}

The graph $\Gamma_1$ contains two faces: one of perimeter $2$, $\{h_1,h_2\}$, and another one of perimeter $4$, $\{\iota(h_1),\iota(h_3),h_3,\iota(h_2)\}$. From these, we obtain four double-faces: $\{h_1\}$, $\{h_2\}$, $\{\iota(h_1),h_3\}$, and $\{\iota(h_3),\iota(h_2)\}$.

In contrast, $\Gamma_2$ has two faces, both of perimeter 2: $\{h_1',h_2'\}$ and $\{\iota(h_1'),\iota(h_2')\}$. From these, we obtain four double-faces: $\{h_1'\}$, $\{h_2'\}$, $\{\iota(h_1')\}$, and $\{\iota(h_2')\}$.
\end{example}

As the example above illustrates, a face of even perimeter gives rise to two distinct double-faces. It is worth noting that a face with an odd perimeter, however, contributes to exactly one.

The following proposition shows that every double-face $F = (h_1, h_2, \ldots, h_k)$ in the graph $\widetilde{\Gamma}$ determines an exceptional band $b = p_{2k}^{-1} p_{2k-1} \cdots p_{2}^{-1} p_{1}$ of $A$ such that  $p_1, p_3, \ldots, p_{2k-1}$ are minimal band lines in $A$ corresponding to the arrows $\alpha_{h_1}, \alpha_{h_2}, \ldots, \alpha_{h_k}$ in the quiver $Q_{\widetilde{\Gamma}}$.

\begin{Prop}\textnormal{(\cite[Lemma 2.7]{A})}\label{prop:double-faces and exceptional bands}
Let $A$ be a Brauer graph algebra with associated Brauer graph $\Gamma$. Let $M$ be an exceptional band $A$-module defined by an exceptional band
\[
b = p_{2k}^{-1} p_{2k-1} \cdots p_{2}^{-1} p_{1},
\]
where $p_1, p_3, \ldots, p_{2k-1}$ are minimal band lines. Suppose that for each $1 \leq i \leq k$, the path $p_{2i-1}$ goes from vertex $v_i$ to vertex $w_i$ in the quiver $Q_\Gamma$.

Then the following statements hold.

\begin{enumerate}
    \item There exists a double-face $F = (h_1, h_2, \ldots, h_k)$ in $\widetilde{\Gamma}$ such that for each $i$, the arrow $\alpha_{h_i}$ in the quiver $Q_{\widetilde{\Gamma}}$ is also from $v_i$ to $w_i$, matching the endpoints of $p_{2i-1}$.

    \item The syzygy $\Omega_A(M)$ is also an exceptional band module. After applying a suitable cyclic permutation to the band defining $\Omega_A(M)$, $\Omega_A(M)$ can be defined by an exceptional band
    \[
	b' = q_{2k}^{-1} q_{2k-1} \cdots q_{2}^{-1} q_{1},
    \]
    where $q_1, q_3, \ldots, q_{2k-1}$ are minimal band lines from $v'_i$ to $w'_i$ in $Q_\Gamma$ and which correspond to a double-face $F' = (h_1', h_2', \ldots, h_k')$ in $\widetilde{\Gamma}$.
    \item Furthermore, either $F=F'$ is a face in $\widetilde{\Gamma}$ or $G:= (h_1, h_1', \ldots, h_{k},h_{k}')$ is a face in $\widetilde{\Gamma}$ such that for each $1 \leq i \leq k$:
    \begin{itemize}
        \item The arrow $\alpha_{h_{i}}$ in $Q_{\widetilde{\Gamma}}$ is from $v_i$ to $w_i$, matching the endpoints of the minimal band line $p_{2i-1}$ in $b$.
        \item The arrow $\alpha_{h_{i}'}$ in $Q_{\widetilde{\Gamma}}$ is from $v_i'$ to $w_i'$, matching the endpoints of the minimal band line $q_{2i-1}$ in $b'$.
    \end{itemize}
\end{enumerate}
\end{Prop}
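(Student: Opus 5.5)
The plan is to compute $\Omega_A(M)$ directly from the projective cover of $M$, using the explicit shape of an exceptional band. Then I will read off the combinatorics of the new band in terms of the ribbon graph $\widetilde{\Gamma}$.

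For (1), I first use the fact that a minimal band line $p_{2i-1}$ from $v_i$ to $w_i$ corresponds to a unique arrow $\alpha_{h_i}$ of $Q_{\widetilde{\Gamma}}$ with the same endpoints. Write $h_i$ for the half-edge of $\widetilde{\Gamma}$ with $\alpha_{h_i}:\bar h_i\to\bar{h_i^+}$. In $\Gamma$, the path $p_{2i-1}$ runs around the vertex $s(h_i)$, passing only through edges of $\Gamma$ that lie in attached trees and are deleted when forming $\widetilde{\Gamma}$.

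The maximal band line $p_{2i}$ ends at $w_i$ and starts at $v_{i+1}$. It must therefore be the complementary path around the other vertex incident to the edge $w_i=\bar{h_i^+}$: it goes around $s(\iota(h_i^+))$ all the way back, up to but excluding the socle. Its starting vertex $v_{i+1}$ is then the edge reached after one step of $\rho$ at $s(\iota\rho(h_i))$ in $\widetilde{\Gamma}$, traversed in the reverse direction. I will check that this forces $h_{i+1}=(\iota\rho)^2(h_i)$, with the inverse/direct alternation in $b$ fixing the sign conventions. Closing up the band then gives a cyclic sequence with $h_{i+1}=(\iota\rho)^2(h_i)$. It is primitive because $b$ is not a proper power. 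Hence it is a double-face $F$.

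For (2), take the projective cover $\bigoplus_i P(v_i)\to M$; the tops of $M$ sit at the sources $v_i$ of the direct pieces $p_{2i-1}$. Each $P(v_i)$ is biserial, with two uniserial arms around the two vertices of $\Gamma$ incident to the edge $v_i$. The kernel is glued from the complements of $p_{2i-1}$ and $p_{2i}$ inside these arms. The complement of a minimal band line in its maximal path is a maximal band line, and conversely, so $\Omega_A(M)$ is again a band module whose band alternates maximal and minimal band lines. The parameter is preserved in the sense that all spaces stay one-dimensional, so $\Omega_A(M)\cong M(b',1,\lambda')$ and is exceptional.

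For (3), the endpoints $v_i'$, $w_i'$ of the new minimal lines $q_{2i-1}$ are determined explicitly by step (2). Reading them off shows that the corresponding half-edge is $h_i'=\iota\rho(h_i)$ and that $h_{i+1}=\iota\rho(h_i')$. Hence $(h_1,h_1',\dots,h_k,h_k')$ is a $\iota\rho$-orbit, that is, a face. If this cyclic sequence is not primitive, it is the square of a face, which happens exactly when $\{h_i'\}=\{h_i\}$ as cyclic sequences, giving $F=F'$ as a face of odd perimeter.

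The main obstacle is the bookkeeping in step (2): deciding which arm of $P(v_i)$ pairs with which piece of $b$, and handling the degenerate cases. These are loops in $\widetilde{\Gamma}$, a vertex of valency one in $\widetilde{\Gamma}$ with multiplicity at least two, and $k=1$, where the two arms of a projective can both meet the same band. I would treat the generic case first, then verify these cases using the explicit relations (a)–(c) of $I_\Gamma$.
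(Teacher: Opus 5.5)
The paper does not prove this proposition: it imports it from \cite[Lemma 2.7]{A}, so there is no internal argument to compare with. Your proposal is a correct direct proof, and it is the natural one.

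For (1), the string conditions at the deep $w_i$ and at the peak $v_{i+1}$ force $p_{2i}$ to be the maximal band line around $s(\iota\rho(h_i))$ ending at $w_i=\overline{\rho(h_i)}$, where $\rho$ is the rotation of $\widetilde{\Gamma}$. Strictly, this line stops at the last edge of $\widetilde{\Gamma}$, which may be several arrows before the socle if trees are attached there; your next sentence already uses this. It follows that $v_{i+1}=\overline{\rho\iota\rho(h_i)}$ and $h_{i+1}=(\iota\rho)^2(h_i)$. For (2) and (3), the kernel of $\bigoplus_i P(v_i)\to M$ gives $v_i'=w_i$, $w_i'=v_{i+1}$ and $h_i'=\iota\rho(h_i)$, as you say.

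Three steps are only asserted and should be written out.

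\emph{The gluing in (2).} The kernel is generated by the elements $d_i=p_{2i-1}e_{v_i}-\mu_i\,p_{2i}e_{v_{i+1}}$ lying over the deeps $w_i$, where $\mu_i\in k^*$ is the relevant structure constant of $M$. Hence $\mathrm{top}\,\Omega_A(M)\cong\bigoplus_i S_{w_i}$. From $d_i$ one arm runs around $s(h_i)$ to the socle of $P(v_i)$; this is the complement of $p_{2i-1}$, a maximal band line. The other arm runs around $s(\iota\rho(h_i))$ to the socle of $P(v_{i+1})$; this is the complement of $p_{2i}$, a minimal band line. Each socle is shared by two consecutive arms. The listed elements are linearly independent and number $\sum_i\dim P(v_i)-\dim M$, so the kernel is nothing more. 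Since all connecting scalars are nonzero, $\Omega_A(M)\cong M(b',1,\lambda')$ for some $\lambda'\in k^*$. Your phrase ``the parameter is preserved'' should not be read literally: the scalar can change, and Case 1 in the proof of Theorem \ref{thm:st-BGA} uses that it becomes $-\lambda$.

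\emph{Primitivity of $F'$.} You need that $b'$ is not a proper power. This holds because $F'=\iota\rho(F)$ and $\iota\rho$ commutes with $(\iota\rho)^2$, so $F'$ has the same minimal period as $F$.

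\emph{The dichotomy in (3).} The claim ``non-primitive implies the square of a face'' needs an orbit count. Let $L$ be the length of the $\iota\rho$-orbit of $h_1$. The $(\iota\rho)^2$-orbit of $h_1$ then has length $L$ or $L/2$, according to the parity of $L$, and it equals $k$ by primitivity of $F$. Hence $L\in\{k,2k\}$. Moreover $L=k$ forces $k$ odd and $F'$ to be a cyclic rotation of $F$.

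The degenerate cases you set aside need no separate treatment once everything is indexed by half-edges. These are loops, $k=1$, and a vertex of $\widetilde{\Gamma}$ of valency one and multiplicity at least two, where $h^+=h$. The computation only uses two facts: the two arms of each $P(v_i)$ are labelled by $h_i$ and $\iota(h_i)$, and consecutive pieces of $b$ differ in their first arrow at each peak and in their last arrow at each deep.
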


We end this subsection by the following example.

\begin{example}\label{ex:two-exceptional-band} (Example \ref{exa:2-ribbon graph} revisited)
	Denote the vertices in $Q_{\Gamma_1}$ corresponding to $\bar{h_1}, \bar{h_2},\bar{h_3}$ by $v_1,v_2,v_3$ and the vertices in $Q_{\Gamma_2}$ corresponding to $\bar{h_1'}, \bar{h_2'}$ by $v_1,v_2$. Then the quivers associated to the ribbon graphs $\Gamma_1$ and $\Gamma_2$ are given as follows.
	
	$$% https://tikzcd.yichuanshen.de/#N4Igdg9gJgpgziAXAbVABwnAlgFyxMJZABgBoBGAXVJADcBDAGwFcYkQBFAfWAB1eA4vQC2w+l3IBfRCEml0mXPkIoArBWp0mrdtz6CRYrgCZps+SAzY8BIuQ00GLNjPLmF15UQDMDrc-Zjd0tFGxVkY1JiTScdGW9gqyVbFAA2P1iXEDc5D2TwgA4M7SygyU0YKABzeCJQADMAJwhhJEiQHAgkXxA4AAssepw2mgAjGDAoJDJ-OJB+JjQ+8WA+iUkQGkZ6ccYABVCvGSwwbFhgppakdQ6uxHTegaHuxxL2BcYllbXyAHINrY7GD7Q4pEAnM5sXIgS6te40TrXGj9QbDRAAWgemXevEWyx4-HwOHoAAo1sZfgBKAEgba7A6eMGMGDPaGwl63EaPVFIdE3bEyD5fAm8Imk8nUza0oEgxkqWks4Zs5pw9qIxAAFleAUFuM++P0YrJEklyqumoRdx6AvmeuFqy43g2ZrhPXV7RRz0Q9ml9NB8saWCqfWG2rmQoN5OdFnZ8M5iBunrRPTpwIZ+XYEKw5zDWQj3xM-wuKqR8YeSd5D1TsozMmZz1zOLxK0JEGJxr+psokiAA
\begin{tikzcd}
              &                                                                                           & 3 \arrow[rd, "\alpha_{h_3}"] &                                                                                            &  &               &                                                                                                           &  &                                                                                                           \\
Q_{\Gamma_1}: & 1 \arrow[rr, "\alpha_{h_1}" description, shift right=2] \arrow[ru, "\alpha_{\iota(h_1)}"] &                              & 2 \arrow[ll, "\alpha_{\iota(h_2)}", shift left=5] \arrow[ll, "\alpha_{h_2}"', shift right] &  & Q_{\Gamma_2}: & 1 \arrow[rr, "\alpha_{h_1'}" description, shift right=3] \arrow[rr, "\alpha_{\iota(h_1')}", shift left=6] &  & 2 \arrow[ll, "\alpha_{\iota(h_2')}", shift left=6] \arrow[ll, "\alpha_{h_2'}" description, shift right=3]
\end{tikzcd}$$
	
	Let $M$ be an exceptional band $A_{\Gamma_1}$-module defined by the exceptional band $(\alpha_{h_3}\alpha_{\iota(h_1)})^{-1}\alpha_{h_1}$, which can be represented by
	\begin{center}
\begin{tikzcd}
v_1 \arrow[rd, "\alpha_{\iota(h_1)}", no head] \arrow[dd, "\alpha_{h_1}"', no head] &                                         \\
                                                                                    & v_3 \arrow[ld, "\alpha_{h_3}", no head] \\
v_2                                                                                 &
\end{tikzcd}
	\end{center}
	Therefore, $\alpha_{h_1}$ is a minimal band line in $M$, which correspond to the double-face $\{h_1'\}$ in $\widetilde{\Gamma_1}=\Gamma_2$, and $\alpha_{h_3}\alpha_{\iota(h_1)}$ is a maximal band line of $A_{\Gamma_1}$. Note that $\alpha_{h_3}\alpha_{\iota(h_1)}$ is also a minimal band line corresponding to the double-face $\{\iota(h_1')\}$.

	At the same time, $\Omega_{A_{\Gamma_1}}(M)$ can be defined by the exceptional band $\alpha_{\iota(h_2)}^{-1}\alpha_{h_2}$, which can be represented by
	\begin{center}
\begin{tikzcd}
v_2 \arrow[dd, "\alpha_{h_2}"', no head, bend right] \arrow[dd, "\alpha_{\iota(h_2)}", no head, bend left] \\
                                                                                                           \\
v_1
\end{tikzcd}
	\end{center}
Therefore, $\alpha_{h_2}$ is a minimal band line in $\Omega_{A_{\Gamma_1}}(M)$, which correspond to the double-face $\{h_2'\}$ in $\widetilde{\Gamma_1}=\Gamma_2$. Moreover, the face $\{h_1',h_2'\}$ corresponds to the minimal band lines of $M$ and $\Omega_{A_{\Gamma_1}}(M)$.
\end{example}

\subsection{Symmetric stably biserial algebras}
\

Recall that we have introduced the stably biserial algebras in Section 2.1. We now concentrate on the symmetric stably biserial algebras.	
Note that every symmetric stably biserial algebra can be represented by a Brauer graph $(\Gamma, m)$ together with a special set $\mathcal{L}$ of loops in $Q_\Gamma$. This result was proved in \cite[Section 5]{AZ1}, and a clearer exposition can be found in \cite[Theorem 2.4]{AZ2}.

\begin{Thm}\label{thm:sym-StBA}
	Any symmetric stably biserial algebra $A$ has a presentation $kQ/I$, where $Q=Q_\Gamma$ is a quiver associated with a Brauer graph $(\Gamma,m)$, and the ideal of relations $I$ is generated by
	\begin{enumerate}
		\item $\alpha\beta=0$ for all $\alpha,\beta\in Q_1$, $\beta\neq \sigma(\alpha)$ and $\alpha\notin\mathcal{L}$;
		\item $C_\alpha^{m(C_\alpha)}=C_\beta^{m(C_\beta)}$ for all $\alpha,\beta\in Q_1$ with $s(\alpha)=s(\beta)$;
		\item $\alpha_i^2=t_{\alpha_i} C_{\alpha_i}^{m(C_{\alpha_i})}$ for each $\alpha_i\in\mathcal{L}$;
		\item $C_\alpha^{m(C_\alpha)}\beta=0$ for all $\alpha,\beta\in Q_1$,
	\end{enumerate}
	where $\mathcal{L}=\{\alpha_1,\cdots,\alpha_n\}\subseteq Q_1$ such that each $\alpha_i$ is a loop with $\sigma(\alpha_i)\neq \alpha_i$, and $t_{\alpha_i}\in k^*$.

	Moreover, when $\mathrm{char}(k)\neq 2$, any symmetric stably biserial algebra is isomorphic to an algebra $\qa$ as above with $\mathcal{L}=\varnothing$, in which case it is a Brauer graph algebra.
\end{Thm}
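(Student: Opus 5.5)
The statement is quoted from \cite[Section 5]{AZ1} (see also \cite[Theorem 2.4]{AZ2}). To reprove it I would work in two stages: first recover the Brauer-graph combinatorics from the stably biserial presentation, then normalize the relations by a change of arrows, using symmetry throughout.

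\textbf{Step 1: the quiver.} Start from a presentation $A\cong kQ/I$ satisfying conditions (1)--(3) of the definition of stably biserial algebras. Since $A$ is symmetric, hence weakly symmetric, each indecomposable projective $P_x$ has $\operatorname{top}P_x\cong\operatorname{soc}P_x\cong S_x$. Modulo $\soc(A)$, conditions (2) and (3) say that $A/\soc(A)$ is special biserial. So each non-uniserial $P_x$ has a radical which, modulo the socle, is a sum of at most two uniserial "chains". These chains are built from the unique non-vanishing continuation of each arrow. This defines a permutation $\sigma$ of $Q_1$: $\sigma(\alpha)$ is the arrow $\beta$ with $\alpha\beta\notin\alpha\rad(A)\beta+\soc(A)$, with a suitable convention when $\alpha$ starts a uniserial projective.
\begin{itemize}
\item The $\sigma$-orbits give cycles $C_\alpha$.
\item The top of each chain, and the fact that each chain ends in the simple socle, give multiplicities $m(C_\alpha)$ with $C_\alpha^{m(C_\alpha)}$ spanning $\soc P_{s(\alpha)}$.
\item Reading off vertices as edges, $\sigma$-orbits as vertices, and the cyclic order from $\sigma$ produces a Brauer graph $(\Gamma,m)$ with $Q=Q_\Gamma$.
\end{itemize}

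\textbf{Step 2: normalizing the relations.} For composable $\alpha,\beta$ with $\beta\neq\sigma(\alpha)$, stable biseriality gives
\[
\alpha\beta\in\alpha\rad(A)\beta+\soc(A).
\]
Iterating this (the radical filtration is finite) shows $\alpha\beta\in\soc(A)$, so $\alpha\beta=c\,C^{m}$ for some scalar $c$ and the appropriate socle element $C^m$ at that vertex.

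When $\alpha\beta$ is not a loop squared, I would kill $c$ by replacing $\alpha$ by $\alpha-c'\,q$. Here $q$ is a path parallel to $\alpha$ lying in the other chain, so that $q\beta=C^m$. The symmetrizing form $\phi$ (with $\phi(xy)=\phi(yx)$) is what makes these substitutions compatible at both ends of an arrow. I would also rescale the arrows so that the two socle paths at each vertex coincide, giving relation (2). Relation (4) is automatic because socle elements are annihilated by the radical.

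The only terms $c$ that cannot be removed are those for a loop $\alpha$ with $\sigma(\alpha)\neq\alpha$ and $\alpha^2=t_\alpha C_\alpha^{m(C_\alpha)}$. For such a loop, the only available $q$ is a power of $\alpha$-adjacent paths whose correction feeds back into $\alpha^2$ itself. These loops form the set $\mathcal{L}$, which yields relation (3).

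\textbf{Step 3: characteristic different from $2$.} For $\alpha\in\mathcal{L}$, let $p$ be the path with
\[
\alpha p = p\alpha = C_\alpha^{m(C_\alpha)},
\]
namely the rest of the socle cycle through the other $\sigma$-orbit at that vertex. The equality $\alpha p=p\alpha$ follows from symmetry. Put $\alpha'=\alpha-\tfrac{t_\alpha}{2}p$. Then
\[
\alpha'^2=\alpha^2-t_\alpha C_\alpha^{m(C_\alpha)}+\tfrac{t_\alpha^2}{4}p^2=0,
\]
since $p^2\in\rad(A)\soc(A)=0$ when $p$ has positive length. Replacing $\alpha$ by $\alpha'$ keeps all other relations, because $p$ multiplies everything else into the socle or to zero. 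This gives $\mathcal{L}=\varnothing$, and hence a Brauer graph algebra by Theorem~\ref{ssbBGA}.

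\textbf{Main obstacle.} I expect the bookkeeping in Step 2 to be the main difficulty. One must show that the successive arrow substitutions terminate and do not reintroduce socle terms elsewhere. One must also show that the only obstructions are the loops with $\sigma(\alpha)\neq\alpha$. I would handle this by ordering arrows by the length of their chains and using nondegeneracy of $\phi$ at each vertex.
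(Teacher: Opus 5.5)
The paper does not prove this statement; it is quoted from \cite[Section 5]{AZ1} and \cite[Theorem 2.4]{AZ2}, so your sketch has to stand on its own. The overall plan is the natural one: read off $\sigma$ and $(\Gamma,m)$, normalize the relations by changes of arrows controlled by the symmetrizing form, and in characteristic $\neq 2$ remove $t_\alpha$ via $\alpha\mapsto\alpha-\frac{t_\alpha}{2}p$. However, two of its concrete claims are false.

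\textbf{Step 2.} You assert that iterating $\alpha\beta\in\alpha\rad(A)\beta+\soc(A)$ gives $\alpha\beta\in\soc(A)$. The iteration only disposes of non-$\sigma$ junctions. Suppose $\beta\neq\sigma(\alpha)$ but $\alpha,\beta$ lie in the same $\sigma$-orbit; this occurs when the edge $s(\alpha)=t(\beta)$ of $\Gamma$ is a loop, which is exactly the situation of the theorem. Then $\alpha\rad(A)\beta$ can contain nonzero $\sigma$-paths shorter than the socle paths, and nothing forces them to vanish.

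Concretely, take the Brauer graph algebra $k\langle X,Y\rangle/\langle X^2,Y^2,(XY)^m-(YX)^m\rangle$ with $m\geq 2$. The arrows $x=X+aXY$, $y=Y$ (with $a\neq 0$) satisfy the three stably biserial conditions, but $x^2=a\,xyx\notin\soc(A)$. For the same reason, the claim in Step 1 that $A/\soc(A)$ is special biserial fails in the given presentation. So you need an extra normalization removing such non-socle terms before your socle-killing substitutions (here $x\mapsto x-a\,xy=X$). You must also check that it does not spoil the remaining relations.

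\textbf{Step 3.} The reason $p^2\in\rad(A)\soc(A)$ is not valid, since $p$ is not a socle element. What actually happens is that $p^2$ contains the factor $\sigma^{-1}(\alpha)\sigma(\alpha)$. This is zero by relation (1) unless $\sigma^{-1}(\alpha)\in\mathcal{L}$, which forces $\Gamma$ to be a single loop. For multiplicity $m\geq 2$ one still gets $p^2=0$ from relation (4).

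For $m=1$ and $\mathcal{L}=\{x,y\}$, however, the argument breaks. This is the algebra $B$ of Lemma~\ref{lem:stb-local-iso} with $\lambda_2\neq 0$. Here $p=y$ and $p^2=y^2=\lambda_2\,yx\neq 0$. Your substitution $x'=x-\frac{\lambda_1}{2}y$ gives $x'^2=\frac{\lambda_1^2\lambda_2}{4}\,yx\neq 0$ and $x'y=yx'=(1-\frac{\lambda_1\lambda_2}{2})\,yx$, which is zero when $\lambda_1\lambda_2=2$. So neither $\alpha'^2=0$ nor ``all other relations are kept'' holds; the underlying Brauer graph can even change.

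This case needs a separate argument. The expression $(ax+by)^2=(\lambda_1a^2+2ab+\lambda_2b^2)\,yx$ is a binary quadratic form, nondegenerate because symmetry forces $\lambda_1\lambda_2\neq 1$. For $\mathrm{char}(k)\neq 2$ it therefore has two independent isotropic vectors $X,Y$. Then $X^2=Y^2=0$ and $XY=YX\neq 0$, which is the Brauer graph algebra of a single loop of multiplicity $1$.

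A minor correction: at a vertex coming from a loop of $\Gamma$ there is only one $\sigma$-orbit. So $p$ is simply the rest of $C_\alpha^{m(C_\alpha)}$, not a path through ``the other orbit'', and $\alpha p=p\alpha$ is just relation (2).
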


Compare to the defining relations of a Brauer graph algebra, the above relations $(3)$ are new. Observe that the fact that $Q_\Gamma$ contains deformed loops implies that $\Gamma$ contains loops. Note also that the associated string algebra of the above symmetric stably biserial algebra $A=kQ/I$ and the associated string algebra of the Brauer graph algebra defined by $(\Gamma,m)$ are the same (cf. Subsection 2.2.2).

In the remaining part of this paper, when we talk about a symmetric stably biserial algebra $A$ defined by $kQ/I$, we always assume that this presentation is given by the form in Theorem \ref{thm:sym-StBA}, that is, we assume that $Q=Q_{\Gamma}$ and $I$ is generated by the relations $(1)-(4)$.

Here we provide an example of a symmetric stably biserial algebra which is not isomorphic to a special biserial algebra.

\begin{Ex1}\label{ex-stably-biserial} {\rm(cf. \cite[Example A.7]{AIP})}
Let $k$ be a field of characteristic $2$ and let $A$ be a $k$-algebra given by the quiver
$$\xymatrix{
1\ar@/^/[r]^{\alpha}\ar@(dl,ul)^{\gamma} & 2\ar@/^/[l]^{\beta} \\
 }$$
with relations $$\gamma\beta\alpha=\beta\alpha\gamma=\gamma^2, \quad \gamma^3=\alpha\gamma^2=\beta\alpha\gamma\beta=\alpha\beta=0.$$
Then $A$ is a symmetric stably biserial algebra (with linear form $\varphi: A\to k$ by
$\varphi(\gamma\beta\alpha)=1$, $\varphi(\alpha\gamma\beta)=1$ and $\varphi(p)=0$ for each path $p\notin \mathrm{Soc}A$) but not isomorphic to any symmetric special biserial algebra. Note that $A$ is given by the multiplicity-free Brauer graph
$$\begin{tikzpicture}
\draw (0,0) circle (0.5);
\fill (0.5,0) circle (0.5ex);
\node at(-0.7,0){$1$};
\node at(1.0,0.3){$2$};
\draw (0.5,0)--(1.5,0);
\fill (1.5,0) circle (0.5ex);
\end{tikzpicture}$$
with $\mathcal{L}=\{\gamma\}$ and $t_{\gamma}=1$.
We have the following structure of the indecomposable projective $A$-modules:
\xymatrix@R=0.5pc@C=0.8pc
{&&1\ar@{-}[dl]\ar@{-}[dr]&& \\
 P_1= & 1 \ar@{-}[d]\ar@{--}[ddr]& & 2\ar@{-}[d] & \\
& 2 \ar@{-}[dr] &  & 1 \ar@{-}[dl] & \\
&& 1 & ,& }
\xymatrix@R=0.5pc@C=0.8pc
{&2\ar@{-}[d]& \\
 P_2= & 1\ar@{-}[d] & \\
& 1 \ar@{-}[d] & \\
& 2 & .}

Moreover, let $B$ be the Brauer graph algebra whose Brauer graph is the same as the one of $A$. Then $B$ is given by the quiver
$$\xymatrix{
1\ar@/^/[r]^{\alpha}\ar@(dl,ul)^{\gamma} & 2\ar@/^/[l]^{\beta} \\
 }$$
with relations $$\gamma\beta\alpha=\beta\alpha\gamma, \quad \gamma^2=\beta\alpha\gamma\beta=\alpha\beta=0.$$
We have the following structure of the indecomposable projective $B$-modules: \\
\xymatrix@R=0.5pc@C=0.8pc
{&&1\ar@{-}[dl]\ar@{-}[dr]&& \\
 Q_1= & 1 \ar@{-}[d]& & 2\ar@{-}[d] & \\
& 2 \ar@{-}[dr] &  & 1 \ar@{-}[dl] & \\
&& 1 & ,& }
\xymatrix@R=0.5pc@C=0.8pc
{&2\ar@{-}[d]& \\
 Q_2= & 1\ar@{-}[d] & \\
& 1 \ar@{-}[d] & \\
& 2 & .}

Suppose that $A$ is isomorphic to some Brauer graph algebra $C$. Then the associated Brauer graphs of $A$ and $C$ are the same (see \cite[Lemma 3.1]{AZ2}). It follows that the Brauer graph algebras $B$ and $C$ are isomorphic. So, in order to prove that $A$ is not isomorphic to any Brauer graph algebra, it is enough to show that $A$ is not isomorphic to $B$. In fact, $A$ and $B$ are not stably equivalent. The reason is as follows. Each indecomposable $A$-module (resp. $B$-module) at the mouth of a tube of rank $1$ of the Auslander-Reiten quiver of $A$ (resp. $B$) is of the form
\[\begin{tikzcd}[row sep=7, column sep=0]
            & {1} & \\
            {X_t =} & {2} & t \\
            & {1} &
            \arrow[shift left=2, bend left=15, dashed, no head, from=1-2, to=3-2]
            \arrow[no head, from=1-2, to=2-2]
            \arrow[no head, from=2-2, to=3-2]
        \end{tikzcd}\]
with $t\in k$, where the dashed line means that \(\gamma-t\beta\alpha\) acts as zero on the elements of $\top(X_t)$. A calculation shows that $\Omega_{A}(X_t)\cong X_{1+t}$ and $\Omega_{B}(X_t)\cong X_{t}$ for each $t\in k$.
If $F:B\text{-}\mathrm{\underline{mod}}\rightarrow A\text{-}\mathrm{\underline{mod}}$ is a stable equivalence, then $F(X_0)$ is isomorphic to some $X_t$ with $t\in k$. We have
$$X_{t+1}\cong\Omega_{A}(X_t)\cong\Omega_{A}F(X_0)\cong F\Omega_{B}(X_0)\cong F(X_0)\cong X_t,$$
a contradiction.

However, when $\mathrm{char}(k)\neq 2$, the change of basis given by
\[
\alpha'=\alpha,\quad \beta'=\beta,\quad \gamma'=\gamma-\frac{\beta\alpha}{2}
\]
yields an isomorphism $A\cong B$. Under this isomorphism, certain band modules over $A$ are sent to string modules over $B$.
For instance, the band module $X=A(\gamma-\frac{\beta\alpha}{2})$ over $A$ is mapped to the string module $Y=B\gamma$ over $B$, with the following Loewy structures:
\[
\begin{tikzcd}[row sep=7, column sep=0]
            & {1} & \\
            {X =} & {2} & \frac{1}{2}, \\
            & {1} &
            \arrow[shift left=2, bend left=15, dashed, no head, from=1-2, to=3-2]
            \arrow[no head, from=1-2, to=2-2]
            \arrow[no head, from=2-2, to=3-2]
        \end{tikzcd}
\qquad
\begin{tikzcd}[row sep=7, column sep=0]
            & {1}  \\
            {Y =} & {2} \\
            & {1}
            \arrow[no head, from=1-2, to=2-2]
            \arrow[no head, from=2-2, to=3-2]
        \end{tikzcd}.
\]
\end{Ex1}

Note that although every symmetric stably biserial algebra admits a presentation of the form $kQ/I$ described in Theorem~\ref{thm:sym-StBA}, an algebra of this form need not be symmetric, as follows from \cite[Remark~3.2]{AZ2}. Moreover, even in characteristic $2$, there exist stably biserial algebras of the form $kQ/I$ given in Theorem~\ref{thm:sym-StBA} that are isomorphic to special biserial algebras, as mentioned in the final paragraph of the proof of \cite[Lemma~3.1]{AZ2}. The following lemma makes these observations explicit.

\begin{lemma}\label{lem:stb-local-iso}
	Let $0\neq\lambda_1\in k$, and consider the stably biserial algebra
$$B=k\langle x,y\rangle/\langle xy-yx,\,x^2-\lambda_1yx,\,y^2-\lambda_2yx,\,x^2y,\,y^2x\rangle.$$
The set $\mathcal{L}$ of deformed loops is given by
$$\mathcal{L}=\begin{cases}
	\{x\}, & \text{if }\lambda_2=0,\\
	\{x,y\}, & \text{if }\lambda_2\neq0.
\end{cases}$$
Then $B$ is symmetric if and only if $\lambda_1\lambda_2\neq1$. Moreover, if $B$ is symmetric, then $B$ is isomorphic to the special biserial algebra
$$A=k\langle x,y\rangle/\langle x^2-y^2,\,xy,\,yx\rangle.$$
\end{lemma}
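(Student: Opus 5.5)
The plan is to make everything explicit: $B$ is a commutative local algebra of dimension $4$, so both claims reduce to linear algebra on $\rad(B)/\rad^2(B)$, governed by the symmetric matrix $\begin{pmatrix}\lambda_1&1\\1&\lambda_2\end{pmatrix}$.

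\emph{Step 1: a basis of $B$.} Since $xy=yx$, $B$ is the commutative algebra $k[x,y]/(x^2-\lambda_1xy,\ y^2-\lambda_2xy,\ x^2y,\ y^2x)$. I will check that all monomials of degree $3$ vanish:
\begin{itemize}
\item $x^2y=y^2x=0$ by definition;
\item $x^3=\lambda_1x^2y=0$ and $y^3=\lambda_2y^2x=0$.
\end{itemize}
In degree $2$ the two relations $x^2-\lambda_1xy$ and $y^2-\lambda_2xy$ are linearly independent in the $3$-dimensional space of quadratic forms, so the degree-$2$ part is $1$-dimensional, spanned by $xy\neq0$. All relations are homogeneous, so $B$ is graded, with basis $\{1,x,y,yx\}$, $\rad^2(B)=k\,yx$ and $\rad^3(B)=0$. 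In particular $\dim_kB=4$ for every $\lambda_2$.

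\emph{Step 2: the symmetry criterion.} A commutative algebra is symmetric if and only if it is Frobenius, and a local Frobenius algebra has simple socle. Clearly $yx\in\soc(B)$. An element $z=ax+by+c\,yx$ lies in $\soc(B)$ if and only if $xz=yz=0$. Computing with Step 1,
$$xz=(\lambda_1a+b)\,yx,\qquad yz=(a+\lambda_2b)\,yx.$$
So the extra socle is given by the kernel of $\begin{pmatrix}\lambda_1&1\\1&\lambda_2\end{pmatrix}$, whose determinant is $\lambda_1\lambda_2-1$.
\begin{itemize}
\item If $\lambda_1\lambda_2=1$, then $z=x-\lambda_1y$ satisfies $xz=yz=0$. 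Hence $\soc(B)$ is $2$-dimensional, and $B$ is not self-injective, so not symmetric.
\item If $\lambda_1\lambda_2\neq1$, then $\soc(B)=k\,yx$. Take the linear form $\varphi$ with $\varphi(yx)=1$ and $\varphi$ vanishing on $1,x,y$. The bilinear form $(a,b)\mapsto\varphi(ab)$ is symmetric since $B$ is commutative. It is nondegenerate: on the pairs $\{1,yx\}$ it is nonzero, and on $\span\{x,y\}$ its Gram matrix is the matrix above. Hence $B$ is symmetric.
\end{itemize}

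\emph{Step 3: the isomorphism with $A$.} Assume $\lambda_1\lambda_2\neq1$. I want elements $u,v\in\rad(B)$, independent modulo $\rad^2(B)$, with $uv=vu=0$ and $u^2=v^2\neq0$. Take $u=x$ and $v'=x-\lambda_1y$; here $\lambda_1\neq0$ is used. Then
$$uv'=x^2-\lambda_1xy=0,\qquad u^2=\lambda_1\,yx,$$
$$v'^2=x^2-2\lambda_1xy+\lambda_1^2y^2=\lambda_1(\lambda_1\lambda_2-1)\,yx\neq0.$$
Since $k$ is algebraically closed, choose $c\in k^*$ with $c^2(\lambda_1\lambda_2-1)=1$, and set $v=cv'$, so that $v^2=u^2$. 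This computation is characteristic-free; in particular the cross term vanishes or not according to the characteristic, but it is absorbed into the displayed formula either way.

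Define $A\to B$ by $x\mapsto u$ and $y\mapsto v$. This respects the relations $x^2-y^2$, $xy$ and $yx$, since $B$ is commutative and $uv=0$. It is surjective because $u$ and $v$ generate $\rad(B)$ modulo $\rad^2(B)$, and $B$ is local with $\rad^3(B)=0$. Both algebras have dimension $4$, since $A$ has basis $\{1,x,y,x^2\}$, so the map is an isomorphism.

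The only potentially delicate point is Step 3 in characteristic $2$, where orthogonal diagonalization of quadratic forms can fail. Using the explicit element $x-\lambda_1y$ sidesteps this, and that is exactly where the hypothesis $\lambda_1\neq0$ enters.
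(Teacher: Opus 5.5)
Your proposal is correct and follows essentially the same route as the paper. It uses the same socle element $x-\lambda_1 y$ to rule out $\lambda_1\lambda_2=1$, the same linear form $\varphi$ with $\varphi(yx)=1$, and the same generators $x$ and $c(x-\lambda_1 y)$ with $c^2(\lambda_1\lambda_2-1)=1$ for $A\cong B$; your Gram-matrix bookkeeping with determinant $\lambda_1\lambda_2-1$ just makes explicit the socle and nondegeneracy checks that the paper states briefly.
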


\begin{proof}
Note that $B$ has $k$-basis $\{1, x, y, yx\}$, and
$$J(B)=kx\oplus ky\oplus kyx,\qquad J(B)^2=kyx.$$ Suppose first that $\lambda_1\lambda_2=1$. Then $z:=x-\lambda_1y$ satisfies
$$zx=x^2-\lambda_1yx=0,\qquad zy=xy-\lambda_1y^2=(1-\lambda_1\lambda_2)yx=0.$$
Thus $z,yx\in\operatorname{soc}(B)$. Since $z\notin J(B)^2$ whereas $yx\in J(B)^2$, they are linearly independent. Consequently, $\mathrm{dim}_k\operatorname{soc}(B)\ge2$. Therefore, if $B$ is symmetric, then $\lambda_1\lambda_2\neq1.$
Conversely, suppose that $\lambda_1\lambda_2\neq1$. Define a linear form $\varphi:B\to k$ by
$$\varphi(yx)=1,\qquad\varphi(1)=\varphi(x)=\varphi(y)=0.$$
Then the associative symmetric bilinear form $\langle u,v\rangle=\varphi(uv)$ is nondegenerate, so $B$ is symmetric.

Finally, assume that $B$ is symmetric. Then $\lambda_1\lambda_2\neq1$. Since $k$ is algebraically closed, there exists $0\neq c\in k$ satisfying
$$c^2(\lambda_1\lambda_2-1)=1.$$
Let
$$X=x,\qquad Y=c(x-\lambda_1y)$$
be two elements in $B$.
One easily verifies that
$$X^2=Y^2=\lambda_1yx,\qquad XY=YX=0.$$
Hence the assignment
$$x\mapsto X,\qquad y\mapsto Y$$
defines a homomorphism $A\longrightarrow B$. Moreover, the images of $X$ and $Y$ are linearly independent in $J(B)/J(B)^2$, so they generate $B$. Therefore the homomorphism is surjective. Since both $A$ and $B$ are $4$-dimensional, it is an isomorphism. Thus $A\cong B.$
\end{proof}

By comparing the tubes in the stable Auslander-Reiten quiver of a symmetric stably biserial algebra with the faces of its corresponding Brauer graph, Antipov and Zvonareva established in \cite[Section 4.2]{AZ2} the following result.

\begin{Prop}\label{prop:stbA-tube-and-face}
	Let $A$ be a representation-infinite symmetric stably biserial algebra with Brauer graph $\Gamma$. Then, for any face of $\Gamma$ with perimeter $p > 2$, the structure of the stable Auslander-Reiten quiver of $A$ reflects this combinatorial data. If $p$ is odd, the quiver contains a tube of rank $p$ that is invariant under the syzygy functor $\Omega_A$. If $p$ is even, it contains a pair of tubes, each of rank $p/2$, which are exchanged by $\Omega_A$.
\end{Prop}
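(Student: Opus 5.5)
We would follow the approach of \cite[Section 4.2]{AZ2}. For each half-edge $h$ of $\Gamma$ we attach a concrete string module and show three things: $\Omega_A$ acts on these modules through the permutation $\iota\rho$; they lie at the mouths of tubes; and the rank of each tube is the length of a $\tau_A$-orbit, where $\tau_A\cong\Omega_A^2$ since $A$ is symmetric.

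\emph{Step 1 (the modules).} For $h\in H(\Gamma)$ with $s(h)$ not truncated, let $M_h$ be the uniserial module $\alpha_h A$. In string terms this is the direct string obtained from $C_{\alpha_h}^{m(C_{\alpha_h})}$ by deleting its first arrow, i.e.\ $\rad P_{\bar h}$ modulo the other uniserial branch. Concretely, $M_h$ is the maximal chain module associated with $h$.

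These are string modules over the common string algebra $\overline{A}$. By the remark after Theorem \ref{thm:sym-StBA}, $\overline{A}$ is the same whether or not $\mathcal{L}\neq\varnothing$, so the deformation does not affect their description. We would also treat truncated vertices, where the corresponding half-edges give simple or hook modules, by the same recipe.

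\emph{Step 2 ($\Omega_A$ on these modules).} We compute the projective cover of $M_h$: its top is the simple module at $\bar{h^+}$. Reading off the kernel inside $P_{\bar{h^+}}$ gives
\[
\Omega_A(M_h)\cong M_{\iota\rho(h)}.
\]
The kernel is the other branch at the edge $\bar{h^+}$, namely the one starting with the arrow around the other endpoint of that edge.

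For deformed loops we must check that the relation $\alpha_i^2=t_{\alpha_i}C_{\alpha_i}^{m(C_{\alpha_i})}$ does not spoil this. We would argue that the kernel still has the same Loewy structure and is a string module. The relation only changes how a socle element is identified, and $M_h$ avoids the socle of non-uniserial projectives.

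It follows that $\tau_A(M_h)\cong\Omega_A^2(M_h)\cong M_{(\iota\rho)^2(h)}$. So the $\tau_A$-orbits of the $M_h$ are exactly the double-faces. A face of perimeter $p$ contains one double-face of length $p$ if $p$ is odd, and two of length $p/2$ if $p$ is even. In the even case $\Omega_A$ interchanges the two double-faces, since it shifts $h_i\mapsto h_{i+1}$.

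\emph{Step 3 (mouth and tube structure).} This is the step we expect to be the main obstacle. We would show that the AR sequence starting at $M_{h}$ has indecomposable middle term. Using the Butler--Ringel description of AR sequences of string modules \cite{BR}, the middle term is obtained by adding a hook on one side and deleting a cohook on the other. For a maximal direct string, one of the two modifications is empty, which leaves a single string.

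Thus each $M_h$ lies at a mouth and is $\tau_A$-periodic of period $\ell$, the double-face length. Since $A$ is representation-infinite and self-injective, the component of $M_h$ in the stable AR quiver is infinite. A periodic infinite component is then a tube (Happel--Preiser--Ringel), and its rank is the $\tau$-period of a mouth module, namely $\ell$.

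The hypothesis $p>2$ is needed here. It ensures that the mouth of the tube contains no other modules; for $p\le 2$ the relevant modules can lie in homogeneous tubes and may interact with band families. We would check that the $\ell$ modules $M_{h_i}$ are pairwise non-isomorphic and exhaust the mouth. This gives the stated rank: $p$ for odd $p$, with the tube $\Omega_A$-invariant, and $p/2$ for even $p$, with a pair of tubes swapped by $\Omega_A$.
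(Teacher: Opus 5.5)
Your route is the standard one. You attach a uniserial module to each half-edge, show that $\Omega_A$ acts on these modules by $\iota\rho$, conclude that $\tau_A\cong\Omega_A^2$ acts by $(\iota\rho)^2$, and then use Butler--Ringel together with Happel--Preiser--Ringel to obtain tubes. The paper relies on exactly this picture. It gives no proof of its own, citing \cite[Section 4.2]{AZ2}, and lists the mouth modules as $Ae_{\bar{h_i}}/A\alpha_{h_i}$. These are your modules up to reindexing, since $A\alpha_h\cong Ae_{\overline{h^+}}/A\alpha_{\iota\rho(h)}$.

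You should pin down the definition, though.
\begin{itemize}
\item For left modules the module is $A\alpha_h$ (not $\alpha_hA$), and it must contain $\soc P_{\bar h}$.
\item Your other description, ``$\rad P_{\bar h}$ modulo the other branch'', is $A\alpha_h/\soc P_{\bar h}$. Its syzygy has two tops in general, so Step 2 would fail for it.
\item For $s(h)$ truncated the right module is the simple module $S_{\bar h}$, not a hook. Note that $\Omega_A(S_{\bar h})=\rad P_{\bar h}=A\alpha_{\iota(h)}$.
\end{itemize}

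The genuine gap is Step 2's treatment of deformed loops. You claim the deformation ``does not affect their description'' because $M_h$ ``avoids the socle''. This is false as stated, since $A\alpha_h$ contains $\soc P_{\bar h}$. For a deformed loop $\alpha_\ell\in\mathcal L$ one has $\alpha_\ell\cdot\alpha_\ell=t_{\alpha_\ell}C^{m}\neq0$. So $A\alpha_\ell$ is not a string module but the exceptional band module $\Omega(X)$ of Lemma~\ref{lem:equal-dimension}, and the kernel of $P_{\bar\ell}\to A\alpha_\ell$ is not $A\alpha_{\iota\rho(\ell)}$. This is precisely the failure the paper points out right after the proposition.

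What you need to add is that this effect is localized.
\begin{itemize}
\item A deformed loop $\alpha_\ell$ satisfies $\rho(\ell)=\iota(\ell)$. Hence $\iota\rho(\ell)=\ell$, and $(\ell)$ is a face of perimeter $1$.
\item The only product of two arrows that is nonzero in $A$ but zero in $A_\Gamma$ is $\alpha_\ell^2$.
\item Take $h\neq\ell$ and a path $p\alpha_h$. Any occurrence of $\alpha_\ell^2$ in it is preceded by at least $\alpha_h$, so it contributes $t_{\alpha_\ell}C^m\beta\cdots=0$ by relations (3) and (4) of Theorem~\ref{thm:sym-StBA}. Thus $A\alpha_h$ is the same string module as over $A_\Gamma$.
\item For $h$ not of the form $\ell$, also $\alpha_{\iota\rho(h)}\notin\mathcal L$. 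Hence $\alpha_{\iota\rho(h)}\alpha_h=0$, and a dimension count shows the annihilator of $\alpha_h$ in $Ae_{\overline{h^+}}$ is still $A\alpha_{\iota\rho(h)}$.
\end{itemize}
So $\Omega_A(M_h)\cong M_{\iota\rho(h)}$ holds on every face except the perimeter-$1$ faces of deformed loops.

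This is the only place where a restriction on $p$ actually enters your argument. Your stated reason for $p>2$ is a misdiagnosis:
\begin{itemize}
\item the mouth of a tube is a single $\tau_A$-orbit, so ``contains no other modules'' is automatic;
\item Step 3 does not use $p>2$ at all;
\item for $A_\Gamma$ the same argument gives the rank-$1$ tubes for $p\le 2$, as the paper itself remarks.
\end{itemize}
With these corrections the proposal goes through.
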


Specifically, for a face $F=(h_1,h_2,\cdots,h_{p})$, let us denote by $\mathcal{M}_F$ the set of modules in the mouth of the $p$-tube
or two $p/2$-tubes, corresponding to $F$. They can be constructed as follows: take two consecutive
half-edges $h_i$ and $h_{i+1}=\iota\rho(h_{i})$ from the face $F$, they correspond to two vertices $\bar{h_i}$ and $\bar{h_{i+1}}$ in the quiver of
$A$ and an arrow $\alpha_{h_i}: \bar{h_i}\rightarrow \bar{h_{i+1}}$ (resp. a path $C_{\alpha_{\rho^{-1}(h_{i+1})}}: \bar{h_{i+1}}\rightarrow \bar{h_{i+1}}$) if $s(h_i)$ is not truncated (resp. if $s(h_i)$ is truncated); the modules in $\mathcal{M}_F$ are of the form $M_i:=Ae_{\bar{h_i}}/A\alpha_{h_i}$ (resp. $M_i:=Ae_{\bar{h_i}}/AC_{\alpha_{\rho^{-1}(h_{i+1})}}$). Thus, $\mathcal{M}_F=\{M_1,\cdots,M_p\}$. Note that the modules in the union of all $\mathcal{M}_F$ are exactly the simple modules whose projective covers are uniserial and the maximal uniserial quotients of indecomposable non-uniserial projective modules.

It should be noted that if one considers only Brauer graph algebras (that is, symmetric special biserial algebras), then for a face in $\Gamma$ with perimeter $1$ or $2$, Proposition \ref{prop:stbA-tube-and-face} also holds. More precisely, if $p=1$, then such a face corresponds to a tube of rank $1$ in the stable Auslander-Reiten quiver, and if $p=2$, then such a face corresponds to a pair of two tubes of rank $1$ in the stable Auslander-Reiten quiver which are exchanged by the syzygy functor. For instance, the $A_{\Gamma_1}$ in Example \ref{ex:two-exceptional-band} illustrates this correspondence for a face of perimeter $2$, which yields a pair of tubes that are not stable under the syzygy functor.

However, this correspondence fails for the broader class of symmetric stably biserial algebras. As a counterexample, consider a deformed loop in a symmetric stably biserial algebra $A$ (which gives rise to a face of perimeter $1$), as shown below:
$$\begin{tikzpicture}
\draw (-0.5,0) circle (0.5);
\fill (0,0) circle (0.5ex);
\node at(-1.2,0) {$1$};
\node at(-0.3,0) {$m$};
\draw[-] (0,0) -- (1,1);
\draw[-] (0,0) -- (1,-1);
\draw (-0.45,-0.15) rectangle (-0.15,0.15);
\node at(0.5,0.75) {$2$};
\node at(0.5,-0.75) {$n$};
\draw[dotted] (0.866,-0.5) arc (-30:30:1);
\end{tikzpicture}$$
Then there exist two tubes of rank 1 which are not stable under the syzygy functor $\Omega_A$; indeed, $\Omega_A$ sends one to the other. The indecomposable modules at the mouths of these homogeneous tubes are as follows:

$$\begin{tikzpicture}
	\begin{scope}[xshift=-60pt]
		\node at(0,0) {$1$};
\node at(0,-0.3) {$2$};
\draw[dotted] (0,-0.5) -- (0,-0.9);
\node at(0,-1.1) {$n$};
\node at(0,-1.4) {$1$};
\node at(0,-1.7) {$1$};
\draw[dotted] (0,-1.9) -- (0,-2.3);
\node at(0,-2.5) {$1$};
\node at(0,-2.8) {$2$};
\draw[dotted] (0,-3) -- (0,-3.4);
\node at(0,-3.6) {$n$};
\node at(0,-3.9) {$1$};
	\end{scope}
		\begin{scope}[xshift=60pt]
		\node at(0,0) {$1$};
\node at(0,-0.3) {$2$};
\draw[dotted] (0,-0.5) -- (0,-0.9);
\node at(0,-1.1) {$n$};
\node at(0,-1.4) {$1$};
\node at(0,-1.7) {$1$};
\draw[dotted] (0,-1.9) -- (0,-2.3);
\node at(0,-2.5) {$1$};
\node at(0,-2.8) {$2$};
\draw[dotted] (0,-3) -- (0,-3.4);
\node at(0,-3.6) {$n$};
\node at(0,-3.9) {$1$};
\node at(0.7,-2) {$t$\;.};
\draw[dashed] (0.15,-3.9) arc (-15:15:7.5);
	\end{scope}
	\node at(0,-2) {\text{and}};
\end{tikzpicture}$$
Note that the former one is a string module and the latter one is an exceptional band module. We will study the stable Hom-spaces from these modules in Section \ref{app:lem}.

\subsection{Stable equivalences of Morita type and centers}\label{subsec:stb-and-center}
\

According to \cite[Proposition~5.2]{BHS}, if an algebra is stably equivalent to a symmetric algebra whose radical square is nonzero, then it must be weakly symmetric. However, we even do not know a concrete example where a symmetric algebra is stably equivalent to some non-symmetric algebra over an algebraically closed field, although there do exist such examples over a non-algebraically closed field (see \cite{OTY}).

Therefore, in order to simplify our discussion, we consider those stable equivalences which preserve the property of being symmetric. This leads us to consider stable equivalences of Morita type. We recall their basic definition below and refer the reader to~\cite[Chapter 5]{Z} for further details.

\begin{Def}
	Let $A$ and $B$ be two $k$-algebras.
A stable equivalence of Morita type between $A$ and $B$
consists of bimodules
\[
{}_A M_B \quad \text{and} \quad {}_B N_A
\]
such that:
\begin{enumerate}
  \item $M$ is projective as a left $A$-module and as a right $B$-module;
  \item $N$ is projective as a left $B$-module and as a right $A$-module;
  \item There exist projective bimodules $P$ (an $A$-$A$-bimodule) and $Q$ (a $B$-$B$-bimodule) such that
  \[
  M \otimes_B N \cong A \oplus P \quad \text{as } A\text{-}A\text{-bimodules,}
  \]
  and
  \[
  N \otimes_A M \cong B \oplus Q \quad \text{as } B\text{-}B\text{-bimodules.}
  \]
\end{enumerate}

\noindent
In this situation, the tensor functor{
\[
M \otimes_B - \; : \; B\text{-}\mathrm{mod}\longrightarrow A\text{-}\mathrm{mod}
\]
induces a stable equivalence $B\text{-}\underline{\mathrm{mod}}\longrightarrow A\text{-}\underline{\mathrm{mod}}$,
with quasi-inverse induced by
\[
N \otimes_A - \; : \; A\text{-}\mathrm{mod} \longrightarrow B\text{-}\mathrm{mod}.
\]
}\end{Def}

Clearly, the above functor $M \otimes_B - \; : \; B\text{-}\mathrm{mod}\longrightarrow A\text{-}\mathrm{mod}$ is exact and commutes with the syzygy functors $\Omega_A$ and $\Omega_B$. In particular, if both $A$ and $B$ are self-injective algebras, then $M \otimes_B -$  induces a triangulated equivalence $B\text{-}\underline{\mathrm{mod}}\longrightarrow A\text{-}\underline{\mathrm{mod}}$. Moreover, a stable equivalence of Morita type preserves the property of being symmetric and indecomposability of algebras (see \cite[Proposition 2.1 and Corollary 2.4]{L}).

Now we recall some basic notions concerning the centers of $k$-algebras. Let $A$ be a $k$-algebra.
The center of $A$ is
  \[
  Z(A) = \{\, z \in A \mid za = az \text{ for all } a \in A \,\}.
  \]
  It is well-known that $Z(A) \cong \operatorname{End}_{A^e}(A)$.

Recall that for a $k$-algebra $A$, the Reynolds ideal $R(A)$ is defined by $R(A)=Z(A)\cap\soc(A)$.  For a symmetric stably biserial algebra $A$, we have $\soc(A)\subseteq Z(A)$ and $R(A)=\mathrm{soc}(A)$.

\begin{proposition}\textnormal{(\cite[Theorem 1.7]{ZZ})}\label{prop:ZZ}
	Let $A$ and $B$ be two symmetric indecomposable $k$-algebras which are stably equivalent of Morita type. If $k$ is of positive characteristic, then we have an isomorphism of algebras $Z(A)/R(A)\cong Z(B)/R(B)$.
\end{proposition}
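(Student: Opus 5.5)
The plan is to deduce the isomorphism from two invariance facts that are already available in the literature. The first is the invariance of the stable center. The second is the invariance of the Külshammer ideals under the transfer maps attached to a stable equivalence of Morita type. Write $H(A)\subseteq Z(A)$ for the Higman (projective) ideal, i.e. the image of the trace map $a\mapsto \sum_i e_i a e_i^{*}$ for a pair of dual bases of the symmetric algebra $A$. Recall that $H(A)\subseteq \soc(A)\cap Z(A)=R(A)$, so $R(A)/H(A)$ is an ideal in the stable center $Z^{st}(A)=Z(A)/H(A)$. Moreover
$$Z(A)/R(A)\cong Z^{st}(A)\big/\bigl(R(A)/H(A)\bigr).$$
It therefore suffices to show two things: that a stable equivalence of Morita type $({}_AM_B,{}_BN_A)$ induces an algebra isomorphism $\Phi:Z^{st}(A)\to Z^{st}(B)$, and that $\Phi$ maps $R(A)/H(A)$ onto $R(B)/H(B)$.

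For the first step I would use the Liu--Zhou--Zimmermann description of the stable center as $\underline{\mathrm{End}}_{A^e}(A)$, the endomorphism ring in the stable category of $A$-$A$-bimodules. The functor $N\otimes_A-\otimes_A M$ sends $A$ to $B\oplus(\text{projective})$ and is an equivalence of the stable bimodule categories. Hence it induces an algebra isomorphism $\underline{\mathrm{End}}_{A^e}(A)\cong \underline{\mathrm{End}}_{B^e}(B)$, and this is $\Phi$. I regard this step as standard.

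For the second step I would characterize $R(A)$ intrinsically using the symmetrizing form. Under the nondegenerate form on $A$ we have $Z(A)=K(A)^{\perp}$, where $K(A)=[A,A]$, and $R(A)=(J(A)+K(A))^{\perp}$. Now use characteristic $p>0$. The Külshammer spaces
$$T_n(A)=\{a\in A : a^{p^n}\in K(A)\}$$
increase with $n$ and have union $J(A)+K(A)$. Consequently $R(A)=\bigcap_n T_n(A)^{\perp}$, and each $T_n(A)^{\perp}$ is an ideal of $Z(A)$. Next I would invoke the transfer maps $t_M: HH_0(A)=A/K(A)\to HH_0(B)$ and $t_N$ in the opposite direction (König--Liu--Zhou). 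These are adjoint to the induced maps on centers under the symmetrizing forms. They satisfy $t_N t_M=\mathrm{id}+(\text{map factoring through the projective bimodule part})$, and they commute with the $p$-power maps on $HH_0$ modulo the image of projectives. Dualizing via the forms, $\Phi$ should then carry $\bigl(T_n(A)^{\perp}+H(A)\bigr)/H(A)$ onto $\bigl(T_n(B)^{\perp}+H(B)\bigr)/H(B)$ for every $n$. Intersecting over $n$ (the chain stabilizes by finite dimensionality) then gives $\Phi(R(A)/H(A))=R(B)/H(B)$, which finishes the proof.

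The main obstacle is the compatibility in the second step: the transfer maps preserve the $p$-power structure only up to error terms coming from the projective summands $P$ and $Q$. I would handle this by showing that the error terms lie in the image of the trace from projective bimodules. Dually, that image is $H(A)^{\perp}$-type data, and it is absorbed after passing modulo $H$. One must also check that passing to the quotient commutes with forming the intersection, which needs $H(A)\subseteq T_n(A)^{\perp}$ for all $n$. This holds because $H(A)\subseteq R(A)$ and $R(A)\subseteq T_n(A)^{\perp}$.
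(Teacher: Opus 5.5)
The paper gives no proof of this statement; it is quoted from [ZZ, Theorem~1.7]. Your outline is the standard argument behind that result and it does go through. Its ingredients are the stable centre $Z^{st}(A)=Z(A)/H(A)\cong\underline{\mathrm{End}}_{A^e}(A)$, the Külshammer ideals with $\bigcap_n T_n(A)^\perp=(J(A)+K(A))^\perp=R(A)$ (this uses that $k$ is algebraically closed, or at least perfect), and the König--Liu--Zhou transfer maps. However, the difficulty is not where you place it, and the step where it actually lies is passed over.

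\textbf{The flagged obstacle is not one.} The transfer $t_M\colon HH_0(B)\to HH_0(A)$ is the Hattori--Stallings trace of right multiplication $\rho_b$ on the projective module ${}_AM$. For any $\phi\in M_r(A)$ one has $\operatorname{tr}(\phi^p)\equiv\operatorname{tr}(\phi)^p$ modulo $K(A)$: rotate the index tuples in $\sum\phi_{i_1i_2}\cdots\phi_{i_pi_1}$, and the non-constant orbits have size $p$. So $t_M$ commutes with the $p$-power maps exactly, with no error terms. Consequently its adjoint $t_M^*\colon Z(A)\to Z(B)$ maps $T_n(A)^\perp$ into $T_n(B)^\perp$. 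The only projective correction occurs in $t_N^*t_M^*=\mathrm{id}+t_P^*$ (and symmetrically for $Q$), whose image lies in $H(A)\subseteq T_n(A)^\perp$. This yields $\overline{t_M^*}\bigl(T_n(A)^\perp/H(A)\bigr)=T_n(B)^\perp/H(B)$.

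\textbf{The unjustified step is the passage from $\overline{t_M^*}$ to $\Phi$} (your ``$\Phi$ should then carry''). You defined $\Phi$ through $N\otimes_A-\otimes_AM$, and $\overline{t_M^*}$ is not a priori multiplicative. To close this gap:
\begin{enumerate}
\item Take $M,N$ indecomposable, which is possible since $A,B$ are indecomposable (the simple case is trivial). Then $N\cong\Hom_A(M,A)$ and the two functors are biadjoint.
\item Write $t_M^*(z)=\varepsilon\circ(N\otimes z\otimes M)\circ\eta$, where $\eta\colon B\to N\otimes_AM$ and $\varepsilon\colon N\otimes_AM\to B$ are the unit and counit.
\item Decompose $N\otimes_AM\cong B\oplus Q$ and let $\eta_1,\varepsilon_1\in Z(B)$ be the $B$-components. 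Every cross term factors through $Q$, so $\overline{t_M^*}(\bar z)=u\,\Phi(\bar z)$ with $u=\overline{\varepsilon_1\eta_1}=\overline{t_M^*}(1)$.
\item Since $\overline{t_M^*}$ is onto $Z^{st}(B)$, $u$ is a unit.
\item Hence $\Phi$ maps $T_n(A)^\perp/H(A)$ onto $u^{-1}\bigl(T_n(B)^\perp/H(B)\bigr)=T_n(B)^\perp/H(B)$, because this is an ideal. Intersecting over $n$ gives $\Phi(R(A)/H(A))=R(B)/H(B)$.
\end{enumerate}

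A minor point: $N\otimes_A-\otimes_AM$ is not a stable equivalence on all bimodules, only on those projective on each side. That is enough for $\underline{\mathrm{End}}_{A^e}(A)$.
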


\subsubsection{Centers of symmetric stably biserial algebras}
\

For symmetric stably biserial algebras, whose quiver presentations are given by Proposition \ref{thm:sym-StBA}, we now recall the results on their centers from \cite{AZ2}.

Assume that $A$ is a symmetric stably biserial algebra defined by a Brauer graph $(\Gamma, m)$ together with a set $\mathcal{L}$ of deformed loops.
Let $\{C_1, C_2, \ldots, C_r\}$ be the set of cycles of the form $C_\alpha$ in $Q_\Gamma$ (see Subsection~\ref{subsec:def-BGA} for the definition of $C_\alpha$), considered up to cyclic permutation, where each $C_i$ corresponds to a vertex $v_i$ of $\Gamma$. For each $1 \le i \le r$, consider a cyclic sequence $(\alpha_{i,1}, \alpha_{i,2}, \ldots, \alpha_{i,l_i})$ of arrows in the cycle $C_i$, where $\sigma(\alpha_{i,j}) = \alpha_{i,j+1}$ and $l_i$ denotes the length of $C_i$. Let $r' \leq r$ be an integer such that $m(C_i) > 1$ for $i = 1, \ldots, r'$ and $m(C_i) = 1$ for $i = r' + 1, \ldots, r$.

For each loop $\gamma = \alpha_{i,j}$ such that $\sigma(\gamma) \ne \gamma$, set
\[
q_{\gamma} = q_{\alpha_{i,j}}
= \alpha_{i,j-1} \cdots \alpha_{i,1} \alpha_{i,l_i} \cdots \alpha_{i,j+1}
(\alpha_{i,j} \alpha_{i,j-1} \cdots \alpha_{i,1} \alpha_{i,l_i} \cdots \alpha_{i,j+1})^{m(C_i)-1}.
\]
For each vertex $v$ of $Q$, let $\alpha_{i,j}$ be an arrow starting at $v$, and set
\[
s_v = (\alpha_{i,j-1} \cdots \alpha_{i,1} \alpha_{i,l_i} \cdots \alpha_{i,j})^{m(C_i)}.
\]
Note that $s_v$ does not depend on $\alpha_{i,j}$ according to the defining relation $(a)$ in $I_\Gamma$, and $s_v$ is a socle element of $A$ corresponding to the vertex $v$. Note also that the following result only depends on the associated Brauer graph $(\Gamma, m)$ of $A$ and is free of the characteristic of the field $k$.

\begin{Prop}\textnormal{(\cite[Proposition 4.1]{AZ2})}\label{prop:center-of-stBA}
Let $A=kQ/I$ be a symmetric stably biserial algebra defined by a Brauer graph $(\Gamma, m)$. As a vector space over $k$, the center $Z(A)$ is generated by $1$ together with the following elements:
\begin{itemize}
  \item[(a)] Elements
  \[
  p_{i,t} =
  (\alpha_{i,l_i}\cdots\alpha_{i,2}\alpha_{i,1})^t
  + (\alpha_{i,1}\cdots\alpha_{i,3}\alpha_{i,2})^t
  + \cdots
  + (\alpha_{i,l_i-1}\cdots\alpha_{i,1}\alpha_{i,l_i})^t,
  \]
  for $i = 1, 2, \ldots, r'$ and $t = 1, \ldots, m(C_i) - 1$.

  \item[(b)] Elements $q_{\gamma}$ for each loop $\gamma$ such that $\sigma(\gamma) \ne \gamma$.

  \item[(c)] Elements $s_v$ for each vertex $v \in Q_0$.
\end{itemize}
\end{Prop}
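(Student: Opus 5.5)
The statement to prove is Proposition~\ref{prop:center-of-stBA}, the description of $Z(A)$ as a vector space. I plan to work directly with an explicit $k$-basis of $A$ adapted to the presentation of Theorem~\ref{thm:sym-StBA}.

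\textbf{Step 1: a basis for $A$.} Every element of $A$ is a linear combination of the following elements:
\begin{itemize}
\item the idempotents $e_v$;
\item the proper subpaths of the cycles $C_\alpha^{m(C_\alpha)}$, counted up to the identification imposed by relation (2);
\item the socle elements $s_v$.
\end{itemize}
For a deformed loop $\alpha_i\in\mathcal{L}$, the product $\alpha_i^2$ is not a new basis element, since relation (3) rewrites it as a scalar multiple of a socle element. Hence the dimension of $A$, and this basis, are the same as for the Brauer graph algebra $A_\Gamma$.

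\textbf{Step 2: decompose by vertex pairs.} Write $z=\sum_{v,w}e_w z e_v$. The equations $e_u z=z e_u$ force $z$ to be a combination of $1$, cyclic paths, and socle elements. The cyclic paths are either powers of rotations of some $C_i$, or, when $s(p)=t(p)$ occurs because of a loop, paths beginning and ending at the same vertex without being a full rotation. Then I impose $\beta z=z\beta$ for each arrow $\beta$. Because products of non-consecutive arrows vanish (relation (1)) except for deformed loops, commuting with $\beta$ links the coefficients of the rotations $(\alpha_{i,j-1}\cdots\alpha_{i,j})^t$ around the cycle $C_i$. This forces them to be equal, giving the elements $p_{i,t}$ for $t<m(C_i)$.

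\textbf{Step 3: check that the listed elements are central.}
\begin{itemize}
\item The $s_v$ are central because $\soc(A)\subseteq Z(A)$ for a symmetric algebra, since $\soc(A)=R(A)$.
\item For $q_\gamma$: it is a path of length $m\,l_i-1$ from $t(\gamma)$ to $s(\gamma)=t(\gamma)$. Multiplying it on either side by any arrow other than $\gamma$ gives $0$, by relations (1) and (4). Multiplying by $\gamma$ gives the socle element $C^{m}$ on both sides.
\item For $p_{i,t}$: the check is the usual telescoping computation from the Brauer graph case.
\end{itemize}

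\textbf{Step 4: linear independence.} This is clear from Step 1, since the listed elements involve disjoint sets of basis paths.

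\textbf{Main obstacle.} The delicate step is Step 2 in the presence of deformed loops. There $\alpha_i\beta$ need not vanish for $\beta\neq\sigma(\alpha_i)$, because $\alpha_i^2$ equals a nonzero socle element, so an extra linear term might survive in $z\beta-\beta z$. I will handle this by noting that every product that relation (3) adds lands in $\soc(A)$. Consequently, modulo the socle the computation is identical to the one for the string algebra $\overline{A}$ (equivalently, for $A_\Gamma$). The socle part is then absorbed by the central elements $s_v$.

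One point remains to rule out: a combination of the loop $\gamma$ itself, or of shorter loop paths, that becomes central only up to socle terms. I expect this fails, because $\gamma\cdot\sigma^{-1}(\gamma)$-type products are nonzero while $\sigma^{-1}(\gamma)\gamma=0$. This asymmetry prevents any such combination from being central. I will verify it case by case, according to whether $\gamma\in\mathcal{L}$.
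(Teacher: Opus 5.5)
The paper gives no proof of this proposition; it quotes it from \cite[Proposition 4.1]{AZ2}, remarking only that the answer depends on $(\Gamma,m)$ alone. So your direct computation has to stand on its own. Steps~3 and~4 (the listed elements are central and independent) are essentially fine. The gap is in the other inclusion, that every central element lies in the span, which is the actual content.

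Your Step~2 only treats rotation powers. Suppose $u$ is a loop edge of $\Gamma$. Then the cycle through $u$ splits into two arcs $a_1,a_2$ from $u$ to $u$. The closed paths $a_1(a_2a_1)^j$ and $a_2(a_1a_2)^j$ are not rotation powers, and this is exactly where the $q_\gamma$ come from. You defer this case (``I expect this fails \ldots I will verify it case by case'').

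Your treatment of deformed loops is also unjustified. From $A/\soc(A)\cong A_\Gamma/\soc(A_\Gamma)$ you only get $[\beta,z]\in\soc(A)$. For a non-loop arrow $\beta\colon a\to b$ this forces $[\beta,z]=0$, because $e_b\soc(A)e_a=0$. For a loop $\beta$ at $u$, however, it leaves a possible multiple of $s_u$. Such a term is not ``absorbed by the $s_v$'': adding central elements to $z$ does not change $[\beta,z]$.

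Both holes can be closed as follows. Work in the common path basis; its linear independence in $A$ is asserted in your Step~1, but that step only shows spanning. In this basis, the multiplications of $A$ and $A_\Gamma$ differ only in the products $\gamma\cdot\gamma$ with $\gamma\in\mathcal{L}$:
\begin{itemize}
\item the only arrows ending at $s(\gamma)$ are $\gamma$ and $\sigma(\gamma)$, so the exception in relation~(1) concerns only $\gamma^2$;
\item $p'\gamma\gamma q'=t_\gamma\, p's_{s(\gamma)}q'$ vanishes unless $p'$ and $q'$ are trivial.
\end{itemize}
Hence $ab$ in $A$ equals $ab$ in $A_\Gamma$ plus $\sum_{\gamma\in\mathcal{L}}a_\gamma b_\gamma t_\gamma s_{s(\gamma)}$, where $a_\gamma$ is the coefficient of $\gamma$. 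This correction is symmetric in $a,b$, so all commutators agree and $Z(A)=Z(A_\Gamma)$ as subspaces. This also explains why the answer is independent of $\mathcal{L}$ and of the characteristic.

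It then remains to handle loop edges in $A_\Gamma$. Write $z=\sum c_PP$ and let $\beta$ be the arrow of $a_2$ leaving $u$. Then $\beta\,a_1(a_2a_1)^j$ is a path leaving $u$ along $a_1$. So it is neither of the form $P\beta$, nor of the form $\beta P'$ with $P'\neq a_1(a_2a_1)^j$. Its length is at most $ml$, with equality exactly when $|a_2|=1$ and $j=m-1$. Hence $c_{a_1(a_2a_1)^j}=0$ unless $a_2=\gamma$ is a loop of $Q$ and the path is $q_\gamma$. Combined with your rotation argument and the equality of idempotent coefficients along arrows, this gives the spanning statement.

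Two slips:
\begin{itemize}
\item In Step~3, ``multiplying by any arrow other than $\gamma$ gives $0$'' fails when $\Gamma$ is a single loop with $m=1$ and both loops are deformed. There $q_\gamma=\delta$ and $\delta q_\gamma=\delta^2=t_\delta s\neq 0$, although $q_\gamma$ is still central.
\item The asymmetry you quote is reversed. Since $\alpha\sigma(\alpha)\neq 0$, one has $\sigma^{-1}(\gamma)\gamma\neq 0$, while $\gamma\sigma^{-1}(\gamma)$ is in general not even composable.
\end{itemize}
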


\section{Hom-spaces from the modules at the tube induced by a deformed loop}\label{app:lem}
	In this section we prove a dimension formula (see Lemma \ref{lem:equal-dimension}) that will be used in the proof of Theorem \ref{thm:st.M-BGA}. {The inequality part of this dimension formula can be seen as a generalization of some similar dimension formulas for Brauer graph algebras in \cite{A}.} To prove our result, we need a description of some nice basis for the space of homomorphisms between two string modules introduced by Crawley-Boevey in \cite{Cra89}.

	Let $\varLambda = kQ/I$ be a symmetric stably biserial algebra and let $\overline{\varLambda} = kQ/I'$ be the associated string algebra. Then $I'$ is generated by a set $\mathcal{R}$ of paths of length at least two in $kQ$. Let $X$ and $Y$ be two string $\varLambda$-modules associated with strings $w_X$ and $w_Y$, respectively. Let $S$ be a quiver of type $A_n$. We denote by $V_S$ the $kS$-module which is one-dimensional at each vertex and in which the linear maps corresponding to each arrow are the identity. As in \cite{Cra89}, we can depict a string $w_X$ as a quiver morphism $F: S\ra Q$ with $S$, which satisfies
\begin{itemize}
  \item[(1)] there are no subquivers of the form $\cdot\stackrel{a}{\longleftarrow}\cdot\stackrel{b}{\longrightarrow}\cdot$ or $\cdot\stackrel{a}{\longrightarrow}\cdot\stackrel{b}{\longleftarrow}\cdot$ in $S$ with $F(a)=F(b)$,
  \item[(2)] there is no path in $S$ whose image under $F$ is in $\mathcal{R}$,
\end{itemize}
the associated string module $X$ is $F_\lambda(V_S)$, where $F_\lambda$ is the push-down functor $F_\lambda: kS\mbox{-mod}\longrightarrow \overline{\varLambda}\mbox{-mod}$ defined by
$$F_\lambda(V)(x)=\bigoplus_{y\in F^{-1}(x)} V(y),$$
for $x$ either a vertex or an arrow in $Q$. {Note that the quiver morphism $F: S\ra Q$ factors through the topological universal cover $\pi: \widetilde{Q}\longrightarrow Q$ and therefore the functor $kF: kS\ra kQ/I'$ factors through the covering functor $k\pi: k\widetilde{Q}/\widetilde{I'}\longrightarrow kQ/I'$. It follows that the push-down functor $F_\lambda: kS\mbox{-mod}\longrightarrow \overline{\varLambda}\mbox{-mod}$ equals the composition of the push-down functors $kS\mbox{-mod}\longrightarrow k\widetilde{Q}/\widetilde{I'}\mbox{-mod}$ (which is fully faithful) and $k\widetilde{Q}/\widetilde{I'}\mbox{-mod}\longrightarrow\overline{\varLambda}\mbox{-mod}$. From this it is easy to see that the string module $F_\lambda(V_S)$ is indecomposable.}

Let $w_X$ be a string defined by a quiver morphism $F: S\ra Q$ as above. The quiver $S$ decorated by the vertices and the arrows in $Q$ through $F$ is called the diagram of the string module $F_\lambda(V_S)$ (cf. Example \ref{string module}). The vertices of $S$ correspond to a $k$-basis of $F_\lambda(V_S)$, which is called the diagrammatic basis of this string module (after \cite{A} and \cite{AZ1}). Note that diagrams provide an equivalent description of string modules: they are in bijection with equivalence classes of strings under the relation identifying a string with its inverse.

From now on, by abuse of notation, we will use the same symbol $X$ to denote a string module, its diagram, and a fixed representative string whenever no confusion can arise.

	{Now let $X=F_\lambda(V_S)$ and $Y=E_\lambda(V_T)$ be two string modules.} According to \cite{Cra89} (see also \cite{Sch99}), the space Hom$_{\varLambda}(X,Y)$ has a $k$-basis described as follows. Each basis element is given by a triple $(S',T',\alpha)$, where $S'$ is a connected subquiver of $S$ which is closed under taking predecessors of arrows, $T'$ is a connected subsquiver of $T$ which is closed under taking successors of arrows, and $\alpha$ is a quiver isomorphism $S'\longrightarrow T'$ such that $E\circ\alpha=F$. Following \cite{A}, we will call the elements of this basis as diagrammatic morphisms in Hom$_{\varLambda}(X,Y)$. To describe the homomorphism corresponding to the triple $(S',T',\alpha)$ we must specify a map
$$f(x): \bigoplus_{y\in F^{-1}(x)}k\longrightarrow\bigoplus_{z\in E^{-1}(x)}k$$
for each vertex $x$ in $Q$. This map should have component $f(x)_{yz}=1$ if $y\in S'$ and $z=\alpha(y)$, and otherwise $f(x)_{yz}=0$.

{Note that if $X$ is a uniserial module (this is the case we will use), then we have the following simple way to depict the diagrammatic morphisms in Hom$_{\varLambda}(X,Y)$: they are just determined by those uniserial submodules $M$ of $Y$ such that $M$ is isomorphic to a quotient module of $X$.}

It should be noted that Crawley-Boevey stated his result in \cite{Cra89} in a more general setting, that is, for two tree modules over any zero-relation algebra. As we mentioned before, string modules over a stably biserial algebra can be seen as string modules over a related string algebra (which is clearly a zero-relation algebra) and therefore our situation satisfies the condition in \cite{Cra89}.

We refer the reader to \cite{BC}, where the diagrammatic morphisms and the corresponding Hom spaces are interpreted via the surface model of {the associated string algebra $\overline{\varLambda}$.} However,  this geometric approach is not sufficient, since we need to work with stable Hom spaces, and the surface model loses the information of the non-uniserial projective {$\varLambda$}-modules.

We provide the following example to help the reader understand {the diagrammatic basis and the diagrammatic morphisms.}

\begin{Ex1}\label{string module}
Let $\varLambda$ be the symmetric stably biserial algebra in Example \ref{ex-stably-biserial}. {Using the Loewy diagrams, let}
\[\begin{tikzcd}[row sep=-4, column sep=-4]
            & 1 \\
            {X=} & 2 \\
            & 1
        \end{tikzcd}\]
and
\[\begin{tikzcd}[row sep=-4, column sep=-4]
            & 1 &&&& \\
            {Y=} && 2 &&1& \\
            &&& 1 && 2
        \end{tikzcd}\]
be two string $\varLambda$-modules defined by the strings $\beta\alpha$, $\alpha\gamma^{-1}\beta\alpha$, respectively. We depict them by the following diagrams respectively:
\[\begin{tikzpicture}[baseline=(current bounding box.center)]
  \node (x1) at (0,1) {$x_1$};
  \node (y1) at (0.5,0) {$y_1$};
  \node (x2) at (1,-1) {$x_2$};
  \node (Xeq) at (-1,0) {$X=$};

  \draw[->] (x1) -- (y1);
  \draw[->] (y1) -- (x2);
\end{tikzpicture},\]

\[\begin{tikzpicture}[baseline=(current bounding box.center)]
  \node (x3) at (0,1) {$x_3$};
  \node (y2) at (0.5,0) {$y_2$};
  \node (x4) at (1.5,0) {$x_4$};
  \node (x5) at (1,-1) {$x_5$};
  \node (y3) at (2,-1) {$y_3$};
  \node (Yeq) at (-1,0) {$Y=$};

  \draw[->] (x3) -- (y2);
  \draw[->] (y2) -- (x5);
  \draw[->] (x4) -- (x5);
  \draw[->] (x4) -- (y3);
\end{tikzpicture},\]
where the diagrammatic basis elements $x_i$ and $y_i$ correspond to the simple $\varLambda$-modules $S_1$ and $S_2$, respectively. Following \cite{AZ1}, we call a source vertex in the above {diagrams} a peak and a sink vertex a deep. For instance, $x_3$ and $x_4$ are peaks and $x_5$ and $y_3$ are deeps in the {diagram} of $Y$.

There are exactly two diagrammatic morphisms $f_1,f_2$ from $X$ to $Y$, where $f_1$ (which sends $x_1$ to $x_3$) is given by the quiver isomorphism $\alpha_1\colon (x_1\ra y_1\ra x_2)\ra (x_3\ra y_2\ra x_5)$ and $f_2$ (which sends $x_1$ to $x_5$) is given by $\alpha_2\colon(x_1)\ra (x_5)$, and $\{f_1,f_2\}$ forms a basis of $\Hom_{\varLambda}(X,Y)$. Alternatively, since $X$ is uniserial, we can read $f_1,f_2$ directly from the Loewy structures of $X$ and $Y$, they correspond to the uniserial submodules \begin{tikzcd}[row sep=-4, column sep=-4]
           1 & & \\
           & 2 &\\
           & & 1
        \end{tikzcd} and
 $\ 1$ of $Y$ respectively. Note that by passing to the stable category, $\underline{f_2}=0$ and $\underline{\operatorname{Hom}}_{\varLambda}(X,Y)$ is spanned by $\underline{f_1}$.
\end{Ex1}

The following lemma will be used in the proof of Theorem \ref{thm:st.M-BGA}. It should be noted that although we only need this lemma in characteristic $2$ for Theorem \ref{thm:st.M-BGA}, this lemma is free of characteristic of the base  field $k$.

    \begin{lemma}\label{lem:equal-dimension}
        Let $\varLambda$ be a symmetric stably biserial algebra {with a presentation $kQ/I$ as in Theorem \ref{thm:sym-StBA}}. Consider the subgraph
        \[\begin{tikzpicture}
            \draw (-0.5,0) circle (0.5);
            \fill (0,0) circle (0.5ex);
            \node at(-1.2,0) {$v$};
            \node at(-0.3,0) {$m$};
            \draw[-] (0,0) -- (1,1);
            \draw[-] (0,0) -- (1,-1);
            \draw (-0.45,-0.15) rectangle (-0.15,0.15);
            \node at(0.5,0.75) {$x_1$};
            \node at(0.5,-0.75) {$x_k$};
            \draw[dotted] (0.866,-0.5) arc (-30:30:1);
        \end{tikzpicture}\]
        of the Brauer graph of $\varLambda$. It consists of a deformed loop $v$ together with all half-edges $x_1,\cdots,x_k$ incident with the same vertex of multiplicity $m$, and {without loss of generality we can assume that} the orientation around this vertex is clockwise.
        Let $X$ be the string $\varLambda$-module and $\Omega(X)$ be the exceptional band $\varLambda$-module defined as follows:
		\[\begin{tikzcd}[row sep=-4, column sep=0]
            & v &&&& v \\
            & {x_1} &&&& {x_1} \\
            & \vdots &&&& \vdots \\
            & {x_k} &&&& {x_k} \\
            & v &&&& v \\
            & v &&&& v \\
            {X=} & \vdots & {,} & \quad & {\Omega(X)=} & \vdots \\
            & v &&&& v \\
            & v &&&& v \\
            & {x_1} &&&& {x_1} \\
            & \vdots &&&& \vdots \\
            & {x_k} &&&& {x_k} \\
            & v &&&& v
            \arrow["t", shift left=3, bend left=15, dashed, no head, from=1-6, to=13-6]
        \end{tikzcd},\]
        where the direct sequence $v\ra x_1\ra\cdots\ra x_k\ra v$ with $k+2$ vertices occurs $m$ times in $X$ and $\Omega(X)$, and the dashed line means that \(\alpha-t \alpha^{-1}C_\alpha^m\) acts as zero on the elements of $\top(\Omega(X))$, with $t\in k^*$, $\alpha\colon v\ra v$ being the deformed loop. Let $Y$ be a string $\varLambda$-module. Then
        \[\dim\nolimits_k \sHom_{\varLambda}(X,Y)=\dim\nolimits_k \sHom_{\varLambda}(\Omega_{\varLambda}(X),Y) \leq 2.\]

    \end{lemma}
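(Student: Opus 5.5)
The plan is to compute both stable Hom-spaces directly, using the diagrammatic basis of Crawley-Boevey for $\Hom_\varLambda(X,Y)$ and a direct analysis for the band module $\Omega(X)$.

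\textbf{Step 1: identify $X$ and $\Omega(X)$.} First we would check that $X\cong \varLambda e_v/\varLambda\alpha$. Its top is $S_v$, and it is the maximal uniserial quotient of $P_v$ along the cycle $C_{\sigma(\alpha)}$. Hence $X$ is a maximal chain module. The kernel $\varLambda\alpha$ of $P_v\to X$ is generated by $\alpha$, and $\alpha^2=t_\alpha C_\alpha^{m}$, so $\varLambda\alpha$ is the band module of the statement with the corresponding scalar $t$. This gives $\Omega(X)$ together with the exact sequence
\[0\to\Omega(X)\to P_v\to X\to 0.\]

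\textbf{Step 2: reduce the equality to the long exact sequence.} Applying $\Hom_\varLambda(-,Y)$ to this sequence and using that $\varLambda$ is symmetric, so that $\sHom(\Omega X,Y)\cong \mathrm{Ext}^1(X,Y)$, we will compare dimensions via
\[\dim\sHom(\Omega X,Y)-\dim\sHom(X,Y)=\dim\Hom(\Omega X,Y)-\dim\Hom(X,Y)+\dim\Hom(P_v,Y)-\dim\Hom(P_v,Y)\cdot 0 .\]
More concretely, we would count both sides in terms of occurrences of $v$ in $Y$. Write $\dim \Hom(P_v,Y)=\dim e_vY$, and let $\mathcal{P}(X,Y)$ denote the morphisms factoring through projectives. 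Then
\[\dim\sHom(X,Y)=\dim\Hom(X,Y)-\dim\mathcal{P}(X,Y),\]
and similarly for $\Omega X$.

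\textbf{Step 3: count diagrammatic morphisms.} Since $X$ is uniserial, the diagrammatic morphisms $X\to Y$ correspond to uniserial submodules of $Y$ that are quotients of $X$. These are the direct substrings of $Y$ ending at a deep, starting at an occurrence of $v$, and following the cycle $C_{\sigma(\alpha)}$ (the path $x_1\cdots x_k v$, repeated) without using $\alpha$. A morphism factors through $P_v$ exactly when its image does not contain the top of a maximal such substring, or more precisely when it factors through the inclusion $X\hookrightarrow P_v$ is impossible... Concretely, we expect the morphisms that survive stably to be the ones whose image starts at a peak of $Y$ or at a position where $\alpha$ leaves. We then argue by the stably-biserial string combinatorics that at most two such positions exist: one for each of the two ends of $Y$, as in Antipov's argument for maximal chain modules.

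For $\Omega(X)$, a band module, we would compute $\Hom(\Omega X,Y)$ by viewing $\Omega X$ as the image of $\alpha$. A map is determined by the image $y$ of the generator $\alpha$, subject to $(\alpha-t\alpha^{-1}C_\alpha^m)$-type annihilation conditions coming from $e_v$ and the relations. Stably trivial maps are those where $y$ extends to $P_v$. We would then match the surviving classes bijectively with those for $X$, each boundary configuration of $Y$ contributing at most one class on each side.

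\textbf{Main obstacle.} The main obstacle is Step 3 for $\Omega(X)$. Because of the deformed relation $\alpha^2=t_\alpha C_\alpha^m$, homomorphisms from the band module to a string module are not diagrammatic, so we must handle them by hand. We would first classify, by case analysis on the local shape of $Y$ at an occurrence of $v$ adjacent to the loop $\alpha$ (whether $\alpha$ or $\alpha^{-1}$ appears there, and whether the vertex is an endpoint), which elements $y\in e_vY$ with $\sigma$-compatible action give homomorphisms. We would then show that in every case not factoring through $P_v$ the occurrence lies at an end of $Y$. This yields the bound of $2$ and the same count as for $X$.
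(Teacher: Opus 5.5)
Your proposal has genuine gaps in both halves of the statement: Step~2 uses an incorrect identity and misses the fact that would prove the equality, and Step~3 gives no argument for the bound, resting on a heuristic that is false.

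\textbf{The equality (Step~2).} The displayed identity is not meaningful (note the term $\dim\Hom(P_v,Y)\cdot 0$). The difference it produces, $\dim\Hom(\Omega X,Y)-\dim\Hom(X,Y)+\dim e_vY$, is not zero in general: for $Y=X$ in the paper's Example it equals $3-4+6$. The sequence $0\to\Omega X\to P_v\to X\to 0$ only computes $\sHom(\Omega X,Y)\cong\mathrm{Ext}^1(X,Y)$. To treat $\sHom(X,Y)$ the same way you need a fact you never state, namely $\Omega^2X\cong X$.

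This is a direct check. The kernel of $P_v\to\varLambda\alpha$, $a\mapsto a\alpha$, is $\varLambda(\alpha-tq)\cong X$, where $q$ is the path from the top to the socle of $X$; indeed $q\alpha=C_\alpha^m$ and $\alpha^2=tC_\alpha^m$. Since $P_v$ is injective, a map out of $X$ factors through a projective if and only if it factors through the embedding $X\cong\varLambda(\alpha-tq)\subseteq P_v$, whose cokernel is $\Omega X$. Likewise a map out of $\Omega X$ factors through a projective if and only if it factors through $\Omega X=\varLambda\alpha\subseteq P_v$, whose cokernel is $X$. Hence
\[\dim\sHom(X,Y)=\dim\Hom(X,Y)+\dim\Hom(\Omega X,Y)-\dim e_vY=\dim\sHom(\Omega X,Y)\]
for every module $Y$, not only string modules. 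Repaired like this, your route proves the equality in a few lines. The paper instead obtains it through the class-by-class matching $|\underline{\mathscr C}|=|\underline{\mathscr C'}|$ together with a separate proof that the sets $\mathscr C'$ span $\Hom(\Omega X,Y)$. As submitted, however, you abandon this route for an unspecified bijective matching.

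\textbf{The bound $\le 2$ (Step~3).} This is where the real content lies, and Step~3 gives no argument for it; the mechanism you sketch is also false. (Quotients of $X$ do use $\alpha$: the string of $X$ is $m$ blocks $v\to x_1\to\cdots\to x_k\to v$ joined by $\alpha$.) From the embedding above, $\sHom(X,Y)$ is the span of the diagrammatic maps $[z]\colon v_X\mapsto z$ modulo the relations $[\alpha y]=t\,[qy]$ for basis vectors $y\in e_vY$ (with $[0]=0$). So the stable category does not simply kill maps with interior image; it makes maps along a stretch of $Y$ proportional.

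For instance, let $Y$ be the zigzag $v_3\xleftarrow{\alpha}u_3\xrightarrow{\ \eta\ }v_2\xleftarrow{\alpha}u_2\xrightarrow{\ \eta\ }v_1$, with $\eta$ the string of $X$. Then $\dim\sHom(X,Y)=1$, and the maps $f_i$ onto the simple submodule at $v_i$ satisfy $\underline{f_3}=t\,\underline{f_2}=t^2\,\underline{f_1}\neq0$. So $f_2$ survives although $v_2$ is interior, and counting surviving positions overcounts.

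What is missing is the paper's grouping of diagrammatic morphisms by the maximal subdiagram $N_f$ on which $Y$ agrees with the zigzag $\cdots\leftarrow\eta\leftarrow\eta\leftarrow\cdots$ (copies of $\eta$ glued by $\alpha$). Each group spans at most one dimension stably, and it contributes only if $N_f$ contains an end of $Y$ or equals $Y$; this yields at most two. With the relations above this is pure string combinatorics, so the band module $\Omega X$ that you flag as the main obstacle never needs to be analysed.

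Quoting Antipov cannot replace this step. His count is for Brauer graph algebras, and the deformed relation changes which maps factor through $P_v$. For $Y=X$ in the paper's Example, $\dim\sHom_\varLambda(X,X)=1$, whereas over the Brauer graph algebra with the same graph one gets $2$.
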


	{\it Outline of the proof for the equality part:}
		
		For simplicity, we just write $\Hom$ and $\sHom$ for \(\Hom_{\varLambda}\) and \(\sHom_{\varLambda}\) respectively. We first prove that $\dim_k \sHom (X,Y) \leq \dim_k \sHom(\Omega(X),Y)$. Consider all diagrammatic morphisms from $X$ to $Y$. As we mentioned before, they forms a basis $\mathscr{B}$ of $\Hom(X,Y)$. For a fixed morphism $f$ in this basis, we define {an appropriate} subdiagram $N=N_f$ of $Y$ which contains the image of the starting vertex of $X$, and we classify the local configuration of $Y$ containing $N$ as a subdiagram by discussing all the possibilities of $N$ case by case. Moreover, we define an equivalence relation $\sim$ on $\mathscr{B}$: two morphisms $f,f'\in\mathscr{B}$ are equivalent if their corresponding subdiagrams $N_f,N_{f'}$ of $Y$ are the same. For each equivalence class $\mathscr{C}$ in $\mathscr{B}$, we define a finite set of morphisms $\mathscr{C}'$ in $\Hom(\Omega(X),Y)$.
		Let {$\underline{\mathscr{C}}$} (resp. {$\underline{\mathscr{C}'}$}) denote a basis of $\span\{\underline{f_i}\mid f_i\in {\mathscr{C}}\}$ (resp. $\span\{\underline{g_j}\mid g_j\in {\mathscr{C'}}\}$) in the Hom-spaces of the corresponding stable categories respectively.
		We prove the following statements:

			(1) {$\bigcup\limits_{\mathscr{C}\in\mathscr{B}/\sim}\underline{\mathscr{C}}$} forms a basis of \(\underline{\operatorname{Hom}}(X,Y)\).

			(2) {$\bigcup\limits_{\mathscr{C}\in\mathscr{B}/\sim}\underline{\mathscr{C}'}$} is linearly independent in \(\underline{\operatorname{Hom}}(\Omega(X),Y)\).

			(3) The two sets in (1) and (2) have the same cardinality. \\
		From (1)-(3) it follows that
		\[\begin{aligned}
			\dim\nolimits_k \sHom_{\varLambda}(X,Y)&={\sum_{\mathscr{C}\in\mathscr{B}/\sim}|\underline{\mathscr{C}}|}\\
			&={\sum_{\mathscr{C}\in\mathscr{B}/\sim}|\underline{\mathscr{C}'}|}\leq\dim\nolimits_k \sHom_{\varLambda}(\Omega_{\varLambda}(X),Y).
		\end{aligned}\]

		We then prove the reverse inequality by showing that $\Hom(\Omega(X), Y)$ is spanned by $\bigcup\limits_{\mathscr{C}\in\mathscr{B}/\sim}\mathscr{C}'$, which implies the fact that the defined $\bigcup\limits_{\mathscr{C}\in\mathscr{B}/\sim}\underline{\mathscr{C}'}$ forms a basis of \(\underline{\operatorname{Hom}}(\Omega(X),Y)\) and hence
        \[\dim\nolimits_k \sHom_{\varLambda}(X,Y)=\dim\nolimits_k \sHom_{\varLambda}(\Omega_{\varLambda}(X),Y).\]

		\medskip

	Now we give the detailed proof of Lemma \ref{lem:equal-dimension}.

\begin{proof}
		We first prove that $\dim\nolimits_k \sHom_{\varLambda}(X,Y) \leq 2$ and $\dim\nolimits_k \sHom_{\varLambda}(X,Y) \leq \dim\nolimits_k \sHom_{\varLambda}(\Omega_{\varLambda}(X),Y)$.

        Let $\eta\colon v\ra x_1\ra\cdots\ra x_k\ra v'\ra\cdots\ra v'\ra x_1\ra\cdots\ra x_k\ra v$ be the diagram of the string module $X$, where we denote those vertices $v$ which are not endpoints of $\eta$ by $v'$.
        Consider an infinite zigzag diagram $Z$ given by $\cdots\la\eta\la\eta\la\cdots\la\eta\la\cdots$. Then the vertices labelled by $v$ in the copies of $\eta$ are precisely all the peaks and deeps (see these notions in Example \ref{string module}) of $Z$.
		Take the basis of $\Hom(X,Y)$ formed by all diagrammatic morphisms, and denote by $f$ an element of this basis. Then $f$ sends the starting vertex of $X$ to some $v$ in $Y$, and this $v$ must lie in a unique common subdiagram {$N=N_f$} of $Z$ and $Y$ satisfying the following properties:
		\begin{enumerate}[label=(\alph*)]
			\item In the vertex notation of $Z$, the two endpoints of $N$ are both $v$, that is, either of the two endpoints is a peak or a deep of $Z$.
			\item In the vertex notation of $Z$, $f$ sends the starting vertex of $X$ to some $v$ in $N$.
			\item $N$ is maximal as a subdiagram of $Y$ with respect to the above two properties.
		\end{enumerate}
		
Note that we can construct $N_f$ as follows. We fix the vertex corresponding to \(\top(f(X))\) in $Y$, and this vertex `grows' to a diagram satisfying $(a)$ and $(b)$ along the common subdiagram of the diagrams $Z$ and $Y$ with the greatest extent possible. The idea behind this definition is that each $N_f$ controls some morphisms $g$ in $\Hom(\Omega(X),Y)$ such that \(\top(g(\Omega(X)))\) is generated by a linear combination of some diagrammatic basis elements contained in $N_f$, and all these morphisms together induces a basis of \(\underline{\operatorname{Hom}}(\Omega(X),Y)\).

[{\it Interlude.}] In Example \ref{string module}, {by the new notation as in Lemma \ref{lem:equal-dimension}, the diagram $\eta$ is given by $v_1\ra x_1\ra v_2$ where $v_1=v_2=v=1$, the diagram $Y$ is given by the following diagram:}
\[\begin{tikzpicture}[baseline=(current bounding box.center)]
  \node (x3) at (0,1) {$v_1$};
  \node (y2) at (0.5,0) {$x_1$};
  \node (x4) at (1.5,0) {$v_2$};
  \node (x5) at (1,-1) {$v_3$};
  \node (y3) at (2,-1) {$x_1'$};
  \node (Yeq) at (-1,0) {$Y=$};

  \draw[->] (x3) -- (y2);
  \draw[->] (y2) -- (x5);
  \draw[->] (x4) -- (x5);
  \draw[->] (x4) -- (y3);
\end{tikzpicture},\] $N_{f_1}$ and $N_{f_2}$ are the same (which we denote by $N$) and $N$ is given by the subdiagram
\[\begin{tikzpicture}
  \node (x3) at (0,1) {$v_1$};
  \node (y2) at (0.5,0) {$x_1$};
  \node (x4) at (1.5,0) {$v_2$};
  \node (x5) at (1,-1) {$v_3$};

  \draw[->] (x3) -- (y2);
  \draw[->] (y2) -- (x5);
  \draw[->] (x4) -- (x5);
\end{tikzpicture}\]
of $Y$. We write
 \[\begin{tikzcd}[row sep=4, column sep=0]
            & {x'_1} & \\
            {\Omega(X)=} & {y'_1} & t \\
            & {x'_2} & ,
            \arrow[shift left=2, bend left=15, dashed, no head, from=1-2, to=3-2]
        \end{tikzcd}\]
which means that $y_1' = \alpha x_1', x_2' = \beta y_1'$ and $tx_2' = \gamma x_1'$. {Observe that $\Hom(\Omega(X),Y)$ has a basis $\{g_1,g_2\}$, where $g_1(x'_1)=v_1 +v_2$ and $g_2(x'_1)=v_3$.} Clearly both $g_1$ and $g_2$ are `controlled' by $N$. By passing to the stable category, $\underline{g_2}=0$ and $\underline{\operatorname{Hom}}(\Omega(X),Y)$ is spanned by $\underline{g_1}$.

As we mentioned before, each diagrammatic morphism $f$ determines a subdiagram $N_f$ of $Y$. We classify the possible local configurations of $Y$ obtained by extending $N_f$ along $Y$ to the first peak or deep on both sides.

		In the following cases, we use $x,x'\in \{v,v',x_1,\cdots,x_k\}$ to denote arbitrary vertices. Note that the vertices denoted by $x$ or $x'$ may be the endpoints of $Y$. For convenience, we fix the arrows within $\eta$ to be directed from left to right, and the arrows connecting each $\eta$ to be directed from right to left, and we may take the (horizontal) flip of $Y$ for this purpose.

		We now divide the discussion into major cases and secondary cases. The major cases are classified according to the number of occurrences of $v$ in $N$, while the secondary cases describe the corresponding local configurations of $Y$.

        {\bf Case 1.} $N$ is a single vertex $v$. After possibly taking the flip of $Y$, the local configuration of $Y$ containing $N$ as a subdiagram can be classified into the following cases:
\\
        Case 1.1.
            \begin{tikzcd}
                v
            \end{tikzcd}.
\\
        Case 1.2.
% https://q.uiver.app/#q=WzAsMyxbMSwwLCJ4Il0sWzIsMywidiJdLFswLDNdLFswLDEsIlxcY2RvdHMiLDFdLFswLDIsIlxcY2RvdHMiLDEseyJzdHlsZSI6eyJib2R5Ijp7Im5hbWUiOiJub25lIn0sImhlYWQiOnsibmFtZSI6Im5vbmUifX19XSxbMCw0LCIiLDEseyJzaG9ydGVuIjp7InRhcmdldCI6MjB9LCJzdHlsZSI6eyJib2R5Ijp7Im5hbWUiOiJub25lIn0sImhlYWQiOnsibmFtZSI6Im5vbmUifX19XSxbMCw1LCIiLDEseyJsZXZlbCI6MX1dXQ==
\begin{tikzcd}[row sep=small, column sep=tiny,  every label/.append style = {font=\normalsize}]
	& x \\
	\\
	\\
	{} && v
	\arrow[""{name=0, anchor=center, inner sep=0}, "\cdots"{description}, sloped,,  ,  draw=none, from=1-2, to=4-1]
	\arrow["\cdots"{description}, sloped,,  ,  from=1-2, to=4-3]
	\arrow[""{name=1, anchor=center, inner sep=0}, draw=none, from=1-2, to=0]
	\arrow[from=1-2, to=1]
\end{tikzcd}, where
% https://q.uiver.app/#q=WzAsMixbMCwwLCJ4Il0sWzEsMywidiJdLFswLDEsIlxcY2RvdHMiLDFdXQ==
\begin{tikzcd}[row sep=small, column sep=tiny,  every label/.append style = {font=\normalsize}]
	x \\
	\\
	\\
	& v
	\arrow["\cdots"{description}, sloped,,  ,  from=1-1, to=4-2]
\end{tikzcd} is a {proper subdiagram of $\eta$ ending at $v$}.
\\
        Case 1.3.
% https://q.uiver.app/#q=WzAsMyxbMCwwLCJ2Il0sWzEsMywieCJdLFsyLDBdLFswLDEsIlxcY2RvdHMiLDFdLFsxLDIsIlxcY2RvdHMiLDEseyJzdHlsZSI6eyJib2R5Ijp7Im5hbWUiOiJub25lIn0sImhlYWQiOnsibmFtZSI6Im5vbmUifX19XSxbMSw0LCIiLDEseyJzaG9ydGVuIjp7InRhcmdldCI6MjB9LCJzdHlsZSI6eyJib2R5Ijp7Im5hbWUiOiJub25lIn0sImhlYWQiOnsibmFtZSI6Im5vbmUifX19XSxbNSwxLCIiLDEseyJsZXZlbCI6MX1dXQ==
\begin{tikzcd}[row sep=small, column sep=tiny,  every label/.append style = {font=\normalsize}]
	v && {} \\
	\\
	\\
	& x
	\arrow["\cdots"{description}, sloped,,  ,  from=1-1, to=4-2]
	\arrow[""{name=0, anchor=center, inner sep=0}, "\cdots"{description}, sloped,,  ,  draw=none, from=4-2, to=1-3]
	\arrow[""{name=1, anchor=center, inner sep=0}, draw=none, from=4-2, to=0]
	\arrow[from=1, to=4-2]
\end{tikzcd}, where
% https://q.uiver.app/#q=WzAsMixbMCwwLCJ2Il0sWzEsMywieCJdLFswLDEsIlxcY2RvdHMiLDFdXQ==
\begin{tikzcd}[row sep=small, column sep=tiny,  every label/.append style = {font=\normalsize}]
	v \\
	\\
	\\
	& x
	\arrow["\cdots"{description}, sloped,,  ,  from=1-1, to=4-2]
\end{tikzcd}
        is a {proper subdiagram of $\eta$ starting at $v$}.

		Note that in Case~1.2, if $x \ra \cdots \ra v$ equals $\eta$, then by the definition of $N$, $N$ contains at least two vertices that are peaks or deeps of $Z$.
		In this situation, the configuration is not included in Case 1, but is instead covered by cases with index $\geq 2$. Case 1.3 is analogous, {by considering $v \ra \cdots \ra x$ whether equals $\eta$ or not.}

        For convenience, in the following cases, we use $x \ra\cdots\ra v$, $x' \ra\cdots\ra v$ {to denote each proper subdiagram of $\eta$ ending at $v$, and use} $v \ra\cdots\ra x$, $v \ra\cdots\ra x'$ to denote each {proper subdiagram of $\eta$ starting at $v$,} whenever it appears in $Y$ as a subdiagram (these subdiagrams share only one endpoint with $N$). Note that the vertex corresponding to $\top f(X)$ may be the starting or the ending vertex of some $\eta$, or it may be a middle vertex of some $\eta$, and the latter case is included in Case 2.8.

        {\bf Case 2.} $N$ contains two vertices that are peaks or deeps of $Z$,
			that is, $N=$
			% https://q.uiver.app/#q=WzAsMixbMCwwLCJ2Il0sWzEsMywidiJdLFswLDEsIlxcY2RvdHMiLDFdXQ==
			\begin{tikzcd}[row sep=small, column sep=tiny,  every label/.append style = {font=\normalsize}]
				v \\
				\\
				\\
				& v
				\arrow["\cdots"{description}, sloped,,  ,  from=1-1, to=4-2]
			\end{tikzcd} or
            \begin{tikzcd}[row sep=small, column sep=tiny,  every label/.append style = {font=\normalsize}]
                & {} \\
                \\
                \\
                v\\
                {}
                \arrow[""{name=0, anchor=center, inner sep=0}, "v"{description}, draw=none, from=1-2, to=4-1]
                \arrow[""{name=1, anchor=center, inner sep=0}, draw=none, from=4-1, to=0]
                \arrow[from=1, to=4-1]
            \end{tikzcd}. After possibly taking the flip of $Y$, the local configuration of $Y$ containing $N$ as a subdiagram can be classified into the following cases: \\
        Case 2.1.
			\begin{tikzcd}[row sep=small, column sep=tiny,  every label/.append style = {font=\normalsize}]
				v \\
				\\
				\\
				& v
				\arrow["\cdots"{description}, sloped,,  ,  from=1-1, to=4-2]
			\end{tikzcd}. \quad
        Case 2.2.
			\begin{tikzcd}[row sep=small, column sep=tiny,  every label/.append style = {font=\normalsize}]
                & {} \\
                \\
                \\
                v\\
                {}
                \arrow[""{name=0, anchor=center, inner sep=0}, "v"{description}, draw=none, from=1-2, to=4-1]
                \arrow[""{name=1, anchor=center, inner sep=0}, draw=none, from=4-1, to=0]
                \arrow[from=1, to=4-1]
            \end{tikzcd}. \quad
        Case 2.3.
% https://q.uiver.app/#q=WzAsNCxbMiwzLCJ2Il0sWzEsMCwieCJdLFswLDNdLFszLDBdLFsxLDAsIlxcY2RvdHMiLDFdLFsxLDIsIlxcY2RvdHMiLDEseyJzdHlsZSI6eyJib2R5Ijp7Im5hbWUiOiJub25lIn0sImhlYWQiOnsibmFtZSI6Im5vbmUifX19XSxbMywwLCJ2IiwxLHsic3R5bGUiOnsiYm9keSI6eyJuYW1lIjoibm9uZSJ9LCJoZWFkIjp7Im5hbWUiOiJub25lIn19fV0sWzEsNSwiIiwxLHsic2hvcnRlbiI6eyJ0YXJnZXQiOjIwfSwic3R5bGUiOnsiYm9keSI6eyJuYW1lIjoibm9uZSJ9LCJoZWFkIjp7Im5hbWUiOiJub25lIn19fV0sWzAsNiwiIiwxLHsic2hvcnRlbiI6eyJ0YXJnZXQiOjIwfSwic3R5bGUiOnsiYm9keSI6eyJuYW1lIjoibm9uZSJ9LCJoZWFkIjp7Im5hbWUiOiJub25lIn19fV0sWzEsNywiIiwxLHsibGV2ZWwiOjF9XSxbOCwwLCIiLDEseyJsZXZlbCI6MX1dXQ==
\begin{tikzcd}[row sep=small, column sep=tiny,  every label/.append style = {font=\normalsize}]
	& x && {} \\
	\\
	\\
	{} && v
	\arrow[""{name=0, anchor=center, inner sep=0}, "\cdots"{description}, sloped,,  ,  draw=none, from=1-2, to=4-1]
	\arrow["\cdots"{description}, sloped,,  ,  from=1-2, to=4-3]
	\arrow[""{name=1, anchor=center, inner sep=0}, "v"{description}, draw=none, from=1-4, to=4-3]
	\arrow[""{name=2, anchor=center, inner sep=0}, draw=none, from=1-2, to=0]
	\arrow[""{name=3, anchor=center, inner sep=0}, draw=none, from=4-3, to=1]
	\arrow[from=1-2, to=2]
	\arrow[from=3, to=4-3]
\end{tikzcd}. \quad
        Case 2.4.
% https://q.uiver.app/#q=WzAsNSxbMiwxLCJ4Il0sWzEsNCwidiJdLFswLDddLFsyLDBdLFszLDRdLFswLDEsIlxcY2RvdHMiLDFdLFsxLDIsInYiLDEseyJzdHlsZSI6eyJib2R5Ijp7Im5hbWUiOiJub25lIn0sImhlYWQiOnsibmFtZSI6Im5vbmUifX19XSxbMCw0LCJcXGNkb3RzIiwxLHsic3R5bGUiOnsiYm9keSI6eyJuYW1lIjoibm9uZSJ9LCJoZWFkIjp7Im5hbWUiOiJub25lIn19fV0sWzEsNiwiIiwxLHsic2hvcnRlbiI6eyJ0YXJnZXQiOjIwfSwic3R5bGUiOnsiYm9keSI6eyJuYW1lIjoibm9uZSJ9LCJoZWFkIjp7Im5hbWUiOiJub25lIn19fV0sWzAsNywiIiwxLHsic2hvcnRlbiI6eyJ0YXJnZXQiOjIwfSwic3R5bGUiOnsiYm9keSI6eyJuYW1lIjoibm9uZSJ9LCJoZWFkIjp7Im5hbWUiOiJub25lIn19fV0sWzEsOCwiIiwxLHsibGV2ZWwiOjF9XSxbMCw5LCIiLDEseyJsZXZlbCI6MX1dXQ==
\begin{tikzcd}[row sep=small, column sep=tiny,  every label/.append style = {font=\normalsize}]
	&& {} \\
	&& x \\
	\\
	\\
	& v && {} \\
	\\
	\\
	{}
	\arrow["\cdots"{description}, sloped,,  ,  from=2-3, to=5-2]
	\arrow[""{name=0, anchor=center, inner sep=0}, "\cdots"{description}, sloped,,  ,  draw=none, from=2-3, to=5-4]
	\arrow[""{name=1, anchor=center, inner sep=0}, "v"{description}, draw=none, from=5-2, to=8-1]
	\arrow[""{name=2, anchor=center, inner sep=0}, draw=none, from=2-3, to=0]
	\arrow[""{name=3, anchor=center, inner sep=0}, draw=none, from=5-2, to=1]
	\arrow[from=2-3, to=2]
	\arrow[from=5-2, to=3]
\end{tikzcd}. \\
        Case 2.5.
% https://q.uiver.app/#q=WzAsNCxbMiwzLCJ2Il0sWzEsNiwieCJdLFszLDBdLFswLDNdLFswLDEsIlxcY2RvdHMiLDFdLFsyLDAsInYiLDEseyJzdHlsZSI6eyJib2R5Ijp7Im5hbWUiOiJub25lIn0sImhlYWQiOnsibmFtZSI6Im5vbmUifX19XSxbMywxLCJcXGNkb3RzIiwxLHsic3R5bGUiOnsiYm9keSI6eyJuYW1lIjoibm9uZSJ9LCJoZWFkIjp7Im5hbWUiOiJub25lIn19fV0sWzAsNSwiIiwxLHsic2hvcnRlbiI6eyJ0YXJnZXQiOjIwfSwic3R5bGUiOnsiYm9keSI6eyJuYW1lIjoibm9uZSJ9LCJoZWFkIjp7Im5hbWUiOiJub25lIn19fV0sWzEsNiwiIiwxLHsic2hvcnRlbiI6eyJ0YXJnZXQiOjIwfSwic3R5bGUiOnsiYm9keSI6eyJuYW1lIjoibm9uZSJ9LCJoZWFkIjp7Im5hbWUiOiJub25lIn19fV0sWzcsMCwiIiwxLHsibGV2ZWwiOjF9XSxbOCwxLCIiLDEseyJsZXZlbCI6MX1dXQ==
\begin{tikzcd}[row sep=small, column sep=tiny,  every label/.append style = {font=\normalsize}]
	&&& {} \\
	\\
	\\
	{} && v \\
	\\
	\\
	& x
	\arrow[""{name=0, anchor=center, inner sep=0}, "v"{description}, draw=none, from=1-4, to=4-3]
	\arrow[""{name=1, anchor=center, inner sep=0}, "\cdots"{description}, sloped,,  ,  draw=none, from=4-1, to=7-2]
	\arrow["\cdots"{description}, sloped,,  ,  from=4-3, to=7-2]
	\arrow[""{name=2, anchor=center, inner sep=0}, draw=none, from=4-3, to=0]
	\arrow[""{name=3, anchor=center, inner sep=0}, draw=none, from=7-2, to=1]
	\arrow[from=2, to=4-3]
	\arrow[from=3, to=7-2]
\end{tikzcd}. \quad
        Case 2.6.
% https://q.uiver.app/#q=WzAsNCxbMSwwLCJ2Il0sWzIsMywieCJdLFswLDNdLFszLDBdLFswLDEsIlxcY2RvdHMiLDFdLFswLDIsInYiLDEseyJzdHlsZSI6eyJib2R5Ijp7Im5hbWUiOiJub25lIn0sImhlYWQiOnsibmFtZSI6Im5vbmUifX19XSxbMSwzLCJcXGNkb3RzIiwxLHsic3R5bGUiOnsiYm9keSI6eyJuYW1lIjoibm9uZSJ9LCJoZWFkIjp7Im5hbWUiOiJub25lIn19fV0sWzAsNSwiIiwxLHsic2hvcnRlbiI6eyJ0YXJnZXQiOjIwfSwic3R5bGUiOnsiYm9keSI6eyJuYW1lIjoibm9uZSJ9LCJoZWFkIjp7Im5hbWUiOiJub25lIn19fV0sWzEsNiwiIiwxLHsic2hvcnRlbiI6eyJ0YXJnZXQiOjIwfSwic3R5bGUiOnsiYm9keSI6eyJuYW1lIjoibm9uZSJ9LCJoZWFkIjp7Im5hbWUiOiJub25lIn19fV0sWzAsNywiIiwxLHsibGV2ZWwiOjF9XSxbOCwxLCIiLDEseyJsZXZlbCI6MX1dXQ==
\begin{tikzcd}[row sep=small, column sep=tiny,  every label/.append style = {font=\normalsize}]
	& v && {} \\
	\\
	\\
	{} && x
	\arrow[""{name=0, anchor=center, inner sep=0}, "v"{description}, draw=none, from=1-2, to=4-1]
	\arrow["\cdots"{description}, sloped,,  ,  from=1-2, to=4-3]
	\arrow[""{name=1, anchor=center, inner sep=0}, "\cdots"{description}, sloped,,  ,  draw=none, from=4-3, to=1-4]
	\arrow[""{name=2, anchor=center, inner sep=0}, draw=none, from=1-2, to=0]
	\arrow[""{name=3, anchor=center, inner sep=0}, draw=none, from=4-3, to=1]
	\arrow[from=1-2, to=2]
	\arrow[from=3, to=4-3]
\end{tikzcd}. \quad
        Case 2.7.
\begin{tikzcd}[row sep=small, column sep=tiny,  every label/.append style = {font=\normalsize}]
	& {} &&& {x'} \\
	\\
	\\
	& \textcolor{white}{O} & \textcolor{white}{O} & v && {} \\
	\\
	\\
	{} && {}
	\arrow[""{name=0, anchor=center, inner sep=0}, "x"{description}, draw=none, from=1-2, to=4-3]
	\arrow["\cdots"{description}, sloped,,  ,  from=1-5, to=4-4]
	\arrow[""{name=1, anchor=center, inner sep=0}, "\cdots"{description}, sloped,,  ,  draw=none, from=1-5, to=4-6]
	\arrow[""{name=2, anchor=center, inner sep=0}, draw=none, from=4-2, to=7-1]
	\arrow[""{name=3, anchor=center, inner sep=0}, "v"{description}, draw=none, from=4-4, to=7-3]
	\arrow[""{name=4, anchor=center, inner sep=0}, draw=none, from=0, to=4-2]
	\arrow["\cdots"{description}, sloped,,  ,  draw=none, from=0, to=2]
	\arrow["\cdots"{description}, sloped,,  ,  draw=none, from=0, to=3]
	\arrow[""{name=5, anchor=center, inner sep=0}, draw=none, from=1-5, to=1]
	\arrow[""{name=6, anchor=center, inner sep=0}, draw=none, from=4-3, to=0]
	\arrow[""{name=7, anchor=center, inner sep=0}, draw=none, from=4-3, to=3]
	\arrow[""{name=8, anchor=center, inner sep=0}, draw=none, from=4-4, to=3]
	\arrow[from=4, to=4-2]
	\arrow[from=1-5, to=5]
	\arrow[no head, from=6, to=4-3]
	\arrow[from=4-3, to=7]
	\arrow[from=4-4, to=8]
\end{tikzcd}. \\
        Case 2.8.
\begin{tikzcd}[row sep=small, column sep=tiny,  every label/.append style = {font=\normalsize}]
	&&&& {} \\
	\\
	\\
	&&& \textcolor{white}{O} & \textcolor{white}{O} \\
	\\
	\\
	{} && v &&& {} \\
	\\
	\\
	& x
	\arrow[""{name=0, anchor=center, inner sep=0}, "{x'}"{description}, draw=none, from=4-4, to=1-5]
	\arrow[""{name=1, anchor=center, inner sep=0}, draw=none, from=4-5, to=7-6]
	\arrow[""{name=2, anchor=center, inner sep=0}, "v"{description}, draw=none, from=7-3, to=4-4]
	\arrow["\cdots"{description}, sloped,,  ,  from=7-3, to=10-2]
	\arrow[""{name=3, anchor=center, inner sep=0}, "\cdots"{description}, sloped,,  ,  draw=none, from=10-2, to=7-1]
	\arrow[""{name=4, anchor=center, inner sep=0}, draw=none, from=4-4, to=0]
	\arrow[""{name=5, anchor=center, inner sep=0}, draw=none, from=4-4, to=2]
	\arrow[""{name=6, anchor=center, inner sep=0}, draw=none, from=0, to=4-5]
	\arrow["\cdots"{description}, sloped,,  ,  draw=none, from=0, to=1]
	\arrow["\cdots"{description}, sloped,,  ,  draw=none, from=2, to=0]
	\arrow[""{name=7, anchor=center, inner sep=0}, draw=none, from=7-3, to=2]
	\arrow[""{name=8, anchor=center, inner sep=0}, draw=none, from=10-2, to=3]
	\arrow[no head, from=4, to=4-4]
	\arrow[from=4-4, to=5]
	\arrow[from=6, to=4-5]
	\arrow[from=7, to=7-3]
	\arrow[from=8, to=10-2]
\end{tikzcd}. \quad
        Case 2.9.
\begin{tikzcd}[row sep=small, column sep=tiny,  every label/.append style = {font=\normalsize}]
	{} &&& v && {} \\
	\\
	\\
	& \textcolor{white}{O} & \textcolor{white}{O} && {x'} \\
	\\
	\\
	& {}
	\arrow[""{name=0, anchor=center, inner sep=0}, draw=none, from=1-1, to=4-2]
	\arrow[""{name=1, anchor=center, inner sep=0}, "v"{description}, draw=none, from=1-4, to=4-3]
	\arrow["\cdots"{description}, sloped,,  ,  from=1-4, to=4-5]
	\arrow[""{name=2, anchor=center, inner sep=0}, "\cdots"{description}, sloped,,  ,  draw=none, from=1-6, to=4-5]
	\arrow[""{name=3, anchor=center, inner sep=0}, "x"{description}, draw=none, from=4-3, to=7-2]
	\arrow["\cdots"{description}, sloped,,  ,  draw=none, from=0, to=3]
	\arrow[""{name=4, anchor=center, inner sep=0}, draw=none, from=1-4, to=1]
	\arrow[""{name=5, anchor=center, inner sep=0}, draw=none, from=2, to=4-5]
	\arrow[""{name=6, anchor=center, inner sep=0}, draw=none, from=4-2, to=3]
	\arrow[""{name=7, anchor=center, inner sep=0}, draw=none, from=4-3, to=1]
	\arrow[""{name=8, anchor=center, inner sep=0}, draw=none, from=4-3, to=3]
	\arrow[from=1-4, to=4]
	\arrow[from=5, to=4-5]
	\arrow[from=4-2, to=6]
	\arrow[no head, from=4-3, to=7]
	\arrow[from=4-3, to=8]
	\arrow["\cdots"{description}, sloped,,  ,  draw=none, from=7, to=8]
\end{tikzcd}. \\
\ \\
        Case 2.10.
\begin{tikzcd}[row sep=small, column sep=tiny,  every label/.append style = {font=\normalsize}]
	& x && {} && {} \\
	\\
	\\
	{} && v & \textcolor{white}{O} & \textcolor{white}{O} \\
	\\
	\\
	&&& {}
	\arrow[""{name=0, anchor=center, inner sep=0}, "\cdots"{description}, sloped,,  ,  draw=none, from=1-2, to=4-1]
	\arrow["\cdots"{description}, sloped,,  ,  from=1-2, to=4-3]
	\arrow[""{name=1, anchor=center, inner sep=0}, "v"{description}, draw=none, from=1-4, to=4-3]
	\arrow[""{name=2, anchor=center, inner sep=0}, draw=none, from=1-6, to=4-5]
	\arrow[""{name=3, anchor=center, inner sep=0}, "{x'}"{description}, draw=none, from=7-4, to=4-5]
	\arrow[""{name=4, anchor=center, inner sep=0}, draw=none, from=1-2, to=0]
	\arrow[""{name=5, anchor=center, inner sep=0}, draw=none, from=1, to=4-3]
	\arrow["\cdots"{description}, sloped,,  ,  draw=none, from=1, to=3]
	\arrow["\cdots"{description}, sloped,,  ,  draw=none, from=2, to=3]
	\arrow[""{name=6, anchor=center, inner sep=0}, draw=none, from=4-4, to=1]
	\arrow[""{name=7, anchor=center, inner sep=0}, draw=none, from=4-4, to=3]
	\arrow[""{name=8, anchor=center, inner sep=0}, draw=none, from=4-5, to=3]
	\arrow[from=1-2, to=4]
	\arrow[from=5, to=4-3]
	\arrow[no head, from=6, to=4-4]
	\arrow[from=4-4, to=7]
	\arrow[from=4-5, to=8]
\end{tikzcd}.

		{\bf Case 3.} $N$ contains three vertices that are peaks or deeps of $Z$, that is, $N=$
% https://q.uiver.app/#q=WzAsMyxbMSwwLCJ2Il0sWzIsMywidiJdLFswLDNdLFswLDEsIlxcY2RvdHMiLDFdLFswLDIsInYiLDEseyJzdHlsZSI6eyJib2R5Ijp7Im5hbWUiOiJub25lIn0sImhlYWQiOnsibmFtZSI6Im5vbmUifX19XSxbMCw0LCIiLDEseyJzaG9ydGVuIjp7InRhcmdldCI6MjB9LCJzdHlsZSI6eyJib2R5Ijp7Im5hbWUiOiJub25lIn0sImhlYWQiOnsibmFtZSI6Im5vbmUifX19XSxbMCw1LCIiLDEseyJsZXZlbCI6MX1dXQ==
\begin{tikzcd}[row sep=small, column sep=tiny,  every label/.append style = {font=\normalsize}]
	& v \\
	\\
	\\
	{} && v
	\arrow[""{name=0, anchor=center, inner sep=0}, "v"{description}, draw=none, from=1-2, to=4-1]
	\arrow["\cdots"{description}, sloped,,  ,  from=1-2, to=4-3]
	\arrow[""{name=1, anchor=center, inner sep=0}, draw=none, from=1-2, to=0]
	\arrow[from=1-2, to=1]
\end{tikzcd} or
% https://q.uiver.app/#q=WzAsMyxbMCwwLCJ2Il0sWzEsMywidiJdLFsyLDBdLFswLDEsIlxcY2RvdHMiLDFdLFsyLDEsInYiLDEseyJzdHlsZSI6eyJib2R5Ijp7Im5hbWUiOiJub25lIn0sImhlYWQiOnsibmFtZSI6Im5vbmUifX19XSxbMSw0LCIiLDEseyJzaG9ydGVuIjp7InRhcmdldCI6MjB9LCJzdHlsZSI6eyJib2R5Ijp7Im5hbWUiOiJub25lIn0sImhlYWQiOnsibmFtZSI6Im5vbmUifX19XSxbNSwxLCIiLDEseyJsZXZlbCI6MX1dXQ==
\begin{tikzcd}[row sep=small, column sep=tiny,  every label/.append style = {font=\normalsize}]
	v && {} \\
	\\
	\\
	& v
	\arrow["\cdots"{description}, sloped,,  ,  from=1-1, to=4-2]
	\arrow[""{name=0, anchor=center, inner sep=0}, "v"{description}, draw=none, from=1-3, to=4-2]
	\arrow[""{name=1, anchor=center, inner sep=0}, draw=none, from=4-2, to=0]
	\arrow[from=1, to=4-2]
\end{tikzcd}. After possibly taking the flip of $Y$, the local configuration of $Y$ containing $N$ as a subdiagram can be classified into the following cases: \\
		Case 3.1.
\begin{tikzcd}[row sep=small, column sep=tiny,  every label/.append style = {font=\normalsize}]
	& v \\
	\\
	\\
	{} && v
	\arrow[""{name=0, anchor=center, inner sep=0}, "v"{description}, draw=none, from=1-2, to=4-1]
	\arrow["\cdots"{description}, sloped,,  ,  from=1-2, to=4-3]
	\arrow[""{name=1, anchor=center, inner sep=0}, draw=none, from=1-2, to=0]
	\arrow[from=1-2, to=1]
\end{tikzcd}. \quad
		Case 3.2.
% https://q.uiver.app/#q=WzAsNyxbMSwwLCJ4Il0sWzIsMywidiJdLFswLDAsIk8iLFswLDAsMTAwLDFdXSxbMCwzXSxbMywwXSxbMywzLCJPIixbMCwwLDEwMCwxXV0sWzQsNl0sWzAsMSwiXFxjZG90cyIsMV0sWzAsMywiXFxjZG90cyIsMSx7InN0eWxlIjp7ImJvZHkiOnsibmFtZSI6Im5vbmUifSwiaGVhZCI6eyJuYW1lIjoibm9uZSJ9fX1dLFs0LDEsInYiLDEseyJzdHlsZSI6eyJib2R5Ijp7Im5hbWUiOiJub25lIn0sImhlYWQiOnsibmFtZSI6Im5vbmUifX19XSxbNSw2LCJ2IiwxLHsic3R5bGUiOnsiYm9keSI6eyJuYW1lIjoibm9uZSJ9LCJoZWFkIjp7Im5hbWUiOiJub25lIn19fV0sWzAsOCwiIiwxLHsic2hvcnRlbiI6eyJ0YXJnZXQiOjIwfSwic3R5bGUiOnsiYm9keSI6eyJuYW1lIjoibm9uZSJ9LCJoZWFkIjp7Im5hbWUiOiJub25lIn19fV0sWzEsOSwiIiwxLHsic2hvcnRlbiI6eyJ0YXJnZXQiOjIwfSwic3R5bGUiOnsiYm9keSI6eyJuYW1lIjoibm9uZSJ9LCJoZWFkIjp7Im5hbWUiOiJub25lIn19fV0sWzksMTAsIiIsMSx7InN0eWxlIjp7ImJvZHkiOnsibmFtZSI6Im5vbmUifSwiaGVhZCI6eyJuYW1lIjoibm9uZSJ9fX1dLFswLDExLCIiLDEseyJsZXZlbCI6MX1dLFsxMiwxLCIiLDEseyJsZXZlbCI6MX1dLFsxMCwxMywiIiwxLHsic2hvcnRlbiI6eyJzb3VyY2UiOjIwLCJ0YXJnZXQiOjIwfSwic3R5bGUiOnsiYm9keSI6eyJuYW1lIjoibm9uZSJ9LCJoZWFkIjp7Im5hbWUiOiJub25lIn19fV0sWzksMTMsIiIsMSx7InNob3J0ZW4iOnsic291cmNlIjoyMCwidGFyZ2V0IjoyMH0sInN0eWxlIjp7ImJvZHkiOnsibmFtZSI6Im5vbmUifSwiaGVhZCI6eyJuYW1lIjoibm9uZSJ9fX1dLFsxNywxNiwiXFxjZG90cyIsMSx7ImxldmVsIjoxfV1d
\begin{tikzcd}[row sep=small, column sep=tiny,  every label/.append style = {font=\normalsize}]
	\textcolor{white}{O} & x && {} \\
	\\
	\\
	{} && v & \textcolor{white}{O} \\
	\\
	\\
	&&&& {}
	\arrow[""{name=0, anchor=center, inner sep=0}, "\cdots"{description}, sloped,,  ,  draw=none, from=1-2, to=4-1]
	\arrow["\cdots"{description}, sloped,,  ,  from=1-2, to=4-3]
	\arrow[""{name=1, anchor=center, inner sep=0}, "v"{description}, draw=none, from=1-4, to=4-3]
	\arrow[""{name=2, anchor=center, inner sep=0}, "v"{description}, draw=none, from=4-4, to=7-5]
	\arrow[""{name=3, anchor=center, inner sep=0}, draw=none, from=1-2, to=0]
	\arrow[""{name=4, anchor=center, inner sep=0}, draw=none, from=1, to=2]
	\arrow[""{name=5, anchor=center, inner sep=0}, draw=none, from=4-3, to=1]
	\arrow[from=1-2, to=3]
	\arrow[""{name=6, anchor=center, inner sep=0}, draw=none, from=1, to=4]
	\arrow[from=5, to=4-3]
	\arrow[""{name=7, anchor=center, inner sep=0}, draw=none, from=2, to=4]
	\arrow["\cdots"{description}, sloped,,  ,  from=6, to=7]
\end{tikzcd}. \quad
		Case 3.3.
% https://q.uiver.app/#q=WzAsOCxbMSw2LCJ4Il0sWzIsMywidiJdLFswLDAsIk8iLFswLDAsMTAwLDFdXSxbMCwzXSxbMywzLCJPIixbMCwwLDEwMCwxXV0sWzQsNl0sWzMsMF0sWzAsOF0sWzEsMCwiXFxjZG90cyIsMV0sWzAsMywiXFxjZG90cyIsMSx7InN0eWxlIjp7ImJvZHkiOnsibmFtZSI6Im5vbmUifSwiaGVhZCI6eyJuYW1lIjoibm9uZSJ9fX1dLFs0LDUsInYiLDEseyJzdHlsZSI6eyJib2R5Ijp7Im5hbWUiOiJub25lIn0sImhlYWQiOnsibmFtZSI6Im5vbmUifX19XSxbNiwxLCJ2IiwxLHsic3R5bGUiOnsiYm9keSI6eyJuYW1lIjoibm9uZSJ9LCJoZWFkIjp7Im5hbWUiOiJub25lIn19fV0sWzAsOSwiIiwxLHsic2hvcnRlbiI6eyJ0YXJnZXQiOjIwfSwic3R5bGUiOnsiYm9keSI6eyJuYW1lIjoibm9uZSJ9LCJoZWFkIjp7Im5hbWUiOiJub25lIn19fV0sWzEsMTEsIiIsMSx7InNob3J0ZW4iOnsidGFyZ2V0IjoyMH0sInN0eWxlIjp7ImJvZHkiOnsibmFtZSI6Im5vbmUifSwiaGVhZCI6eyJuYW1lIjoibm9uZSJ9fX1dLFsxMSwxMCwiIiwxLHsic3R5bGUiOnsiYm9keSI6eyJuYW1lIjoibm9uZSJ9LCJoZWFkIjp7Im5hbWUiOiJub25lIn19fV0sWzEyLDAsIiIsMSx7ImxldmVsIjoxfV0sWzExLDE0LCIiLDEseyJzaG9ydGVuIjp7InNvdXJjZSI6MjAsInRhcmdldCI6MjB9LCJzdHlsZSI6eyJib2R5Ijp7Im5hbWUiOiJub25lIn0sImhlYWQiOnsibmFtZSI6Im5vbmUifX19XSxbMTMsMSwiIiwxLHsibGV2ZWwiOjF9XSxbMTAsMTQsIiIsMSx7InNob3J0ZW4iOnsic291cmNlIjoyMCwidGFyZ2V0IjoyMH0sInN0eWxlIjp7ImJvZHkiOnsibmFtZSI6Im5vbmUifSwiaGVhZCI6eyJuYW1lIjoibm9uZSJ9fX1dLFsxNiwxOCwiXFxjZG90cyIsMSx7ImxldmVsIjoxfV1d
\begin{tikzcd}[row sep=small, column sep=tiny,  every label/.append style = {font=\normalsize}]
	\textcolor{white}{O} &&& {} \\
	\\
	\\
	{} && v & \textcolor{white}{O} \\
	\\
	\\
	& x &&& {} \\
	\\
	{}
	\arrow[""{name=0, anchor=center, inner sep=0}, "v"{description}, draw=none, from=1-4, to=4-3]
	\arrow["\cdots"{description}, sloped,,  ,  from=4-3, to=7-2]
	\arrow[""{name=1, anchor=center, inner sep=0}, "v"{description}, draw=none, from=4-4, to=7-5]
	\arrow[""{name=2, anchor=center, inner sep=0}, "\cdots"{description}, sloped,,  ,  draw=none, from=7-2, to=4-1]
	\arrow[""{name=3, anchor=center, inner sep=0}, draw=none, from=0, to=1]
	\arrow[""{name=4, anchor=center, inner sep=0}, draw=none, from=4-3, to=0]
	\arrow[""{name=5, anchor=center, inner sep=0}, draw=none, from=7-2, to=2]
	\arrow[""{name=6, anchor=center, inner sep=0}, draw=none, from=0, to=3]
	\arrow[from=4, to=4-3]
	\arrow[""{name=7, anchor=center, inner sep=0}, draw=none, from=1, to=3]
	\arrow[from=5, to=7-2]
	\arrow["\cdots"{description}, sloped,,  ,  from=6, to=7]
\end{tikzcd}.
\\
		Case 3.4.
\begin{tikzcd}[row sep=small, column sep=tiny,  every label/.append style = {font=\normalsize}]
	v && {} \\
	\\
	\\
	& v
	\arrow["\cdots"{description}, sloped,,  ,  from=1-1, to=4-2]
	\arrow[""{name=0, anchor=center, inner sep=0}, "v"{description}, draw=none, from=1-3, to=4-2]
	\arrow[""{name=1, anchor=center, inner sep=0}, draw=none, from=4-2, to=0]
	\arrow[from=1, to=4-2]
\end{tikzcd}. \quad
		Case 3.5.
% https://q.uiver.app/#q=WzAsNyxbMCwzLCJ2Il0sWzEsNiwidiJdLFsyLDMsIk8iLFswLDAsMTAwLDFdXSxbMywwXSxbMywzLCJPIixbMCwwLDEwMCwxXV0sWzQsNl0sWzEsN10sWzAsMSwiXFxjZG90cyIsMV0sWzIsMSwidiIsMSx7InN0eWxlIjp7ImJvZHkiOnsibmFtZSI6Im5vbmUifSwiaGVhZCI6eyJuYW1lIjoibm9uZSJ9fX1dLFszLDIsIngiLDEseyJzdHlsZSI6eyJib2R5Ijp7Im5hbWUiOiJub25lIn0sImhlYWQiOnsibmFtZSI6Im5vbmUifX19XSxbNCw1LCIiLDEseyJzdHlsZSI6eyJib2R5Ijp7Im5hbWUiOiJub25lIn0sImhlYWQiOnsibmFtZSI6Im5vbmUifX19XSxbMSw4LCIiLDEseyJzaG9ydGVuIjp7InRhcmdldCI6MjB9LCJzdHlsZSI6eyJib2R5Ijp7Im5hbWUiOiJub25lIn0sImhlYWQiOnsibmFtZSI6Im5vbmUifX19XSxbOSwyLCIiLDEseyJzaG9ydGVuIjp7InNvdXJjZSI6MjB9LCJzdHlsZSI6eyJib2R5Ijp7Im5hbWUiOiJub25lIn0sImhlYWQiOnsibmFtZSI6Im5vbmUifX19XSxbOCwyLCIiLDEseyJzaG9ydGVuIjp7InNvdXJjZSI6MjB9LCJzdHlsZSI6eyJib2R5Ijp7Im5hbWUiOiJub25lIn0sImhlYWQiOnsibmFtZSI6Im5vbmUifX19XSxbOSwxMCwiXFxjZG90cyIsMSx7InNob3J0ZW4iOnsic291cmNlIjoyMCwidGFyZ2V0IjoyMH0sInN0eWxlIjp7ImJvZHkiOnsibmFtZSI6Im5vbmUifSwiaGVhZCI6eyJuYW1lIjoibm9uZSJ9fX1dLFs0LDksIiIsMSx7InNob3J0ZW4iOnsidGFyZ2V0IjoyMH0sInN0eWxlIjp7ImJvZHkiOnsibmFtZSI6Im5vbmUifSwiaGVhZCI6eyJuYW1lIjoibm9uZSJ9fX1dLFsxMSwxLCIiLDEseyJsZXZlbCI6MX1dLFsxMiwxMywiXFxjZG90cyIsMSx7ImxldmVsIjoxfV0sWzE1LDQsIiIsMSx7ImxldmVsIjoxfV1d
\begin{tikzcd}[row sep=small, column sep=tiny,  every label/.append style = {font=\normalsize}]
	&&& {} \\
	\\
	\\
	v && \textcolor{white}{O} & \textcolor{white}{O} \\
	\\
	\\
	& v &&& {} \\
	& {}
	\arrow[""{name=0, anchor=center, inner sep=0}, "x"{description}, draw=none, from=1-4, to=4-3]
	\arrow["\cdots"{description}, sloped,,  ,  from=4-1, to=7-2]
	\arrow[""{name=1, anchor=center, inner sep=0}, "v"{description}, draw=none, from=4-3, to=7-2]
	\arrow[""{name=2, anchor=center, inner sep=0}, draw=none, from=4-4, to=7-5]
	\arrow[""{name=3, anchor=center, inner sep=0}, draw=none, from=0, to=4-3]
	\arrow["\cdots"{description}, sloped,,  ,  draw=none, from=0, to=2]
	\arrow[""{name=4, anchor=center, inner sep=0}, draw=none, from=1, to=4-3]
	\arrow[""{name=5, anchor=center, inner sep=0}, draw=none, from=4-4, to=0]
	\arrow[""{name=6, anchor=center, inner sep=0}, draw=none, from=7-2, to=1]
	\arrow["\cdots"{description}, sloped,,  ,  from=3, to=4]
	\arrow[from=5, to=4-4]
	\arrow[from=6, to=7-2]
\end{tikzcd}. \quad
		Case 3.6.
% https://q.uiver.app/#q=WzAsOCxbMCwxLCJ2Il0sWzEsNCwidiJdLFsyLDQsIk8iLFswLDAsMTAwLDFdXSxbMyw3XSxbMyw0LCJPIixbMCwwLDEwMCwxXV0sWzQsMV0sWzIsMV0sWzAsMF0sWzAsMSwiXFxjZG90cyIsMV0sWzMsMiwieCIsMSx7InN0eWxlIjp7ImJvZHkiOnsibmFtZSI6Im5vbmUifSwiaGVhZCI6eyJuYW1lIjoibm9uZSJ9fX1dLFs0LDUsIiIsMSx7InN0eWxlIjp7ImJvZHkiOnsibmFtZSI6Im5vbmUifSwiaGVhZCI6eyJuYW1lIjoibm9uZSJ9fX1dLFs2LDEsInYiLDEseyJzdHlsZSI6eyJib2R5Ijp7Im5hbWUiOiJub25lIn0sImhlYWQiOnsibmFtZSI6Im5vbmUifX19XSxbOSwxMCwiXFxjZG90cyIsMSx7InNob3J0ZW4iOnsic291cmNlIjoyMCwidGFyZ2V0IjoyMH0sInN0eWxlIjp7ImJvZHkiOnsibmFtZSI6Im5vbmUifSwiaGVhZCI6eyJuYW1lIjoibm9uZSJ9fX1dLFs0LDksIiIsMSx7InNob3J0ZW4iOnsidGFyZ2V0IjoyMH0sInN0eWxlIjp7ImJvZHkiOnsibmFtZSI6Im5vbmUifSwiaGVhZCI6eyJuYW1lIjoibm9uZSJ9fX1dLFsxLDExLCIiLDEseyJzaG9ydGVuIjp7InRhcmdldCI6MjB9LCJzdHlsZSI6eyJib2R5Ijp7Im5hbWUiOiJub25lIn0sImhlYWQiOnsibmFtZSI6Im5vbmUifX19XSxbOSwyLCIiLDEseyJzaG9ydGVuIjp7InNvdXJjZSI6MjB9LCJzdHlsZSI6eyJib2R5Ijp7Im5hbWUiOiJub25lIn0sImhlYWQiOnsibmFtZSI6Im5vbmUifX19XSxbMTEsMiwiIiwxLHsic2hvcnRlbiI6eyJzb3VyY2UiOjIwfSwic3R5bGUiOnsiYm9keSI6eyJuYW1lIjoibm9uZSJ9LCJoZWFkIjp7Im5hbWUiOiJub25lIn19fV0sWzQsMTMsIiIsMSx7ImxldmVsIjoxfV0sWzE0LDEsIiIsMSx7ImxldmVsIjoxfV0sWzE2LDE1LCJcXGNkb3RzIiwxLHsibGV2ZWwiOjF9XV0=
\begin{tikzcd}[row sep=small, column sep=tiny,  every label/.append style = {font=\normalsize}]
	{} \\
	v && {} && {} \\
	\\
	\\
	& v & \textcolor{white}{O} & \textcolor{white}{O} \\
	\\
	\\
	&&& {}
	\arrow["\cdots"{description}, sloped,,  ,  from=2-1, to=5-2]
	\arrow[""{name=0, anchor=center, inner sep=0}, "v"{description}, draw=none, from=2-3, to=5-2]
	\arrow[""{name=1, anchor=center, inner sep=0}, draw=none, from=5-4, to=2-5]
	\arrow[""{name=2, anchor=center, inner sep=0}, "x"{description}, draw=none, from=8-4, to=5-3]
	\arrow[""{name=3, anchor=center, inner sep=0}, draw=none, from=0, to=5-3]
	\arrow[""{name=4, anchor=center, inner sep=0}, draw=none, from=5-2, to=0]
	\arrow[""{name=5, anchor=center, inner sep=0}, draw=none, from=5-4, to=2]
	\arrow["\cdots"{description}, sloped,,  ,  draw=none, from=2, to=1]
	\arrow[""{name=6, anchor=center, inner sep=0}, draw=none, from=2, to=5-3]
	\arrow["\cdots"{description}, sloped,,  ,  from=3, to=6]
	\arrow[from=4, to=5-2]
	\arrow[from=5-4, to=5]
\end{tikzcd}.

{\bf \dots\dots}

\

{\bf Case 2n-1.} $N$ contains an odd number (say, $2n-1$ for $n>1$) vertices that are peaks or deeps of $Z$, that is, $N=$

\[% https://q.uiver.app/#q=WzAsMTEsWzAsMywidiJdLFsxLDBdLFsyLDMsInYiXSxbMSw2XSxbMSwzLCJPIixbMCwwLDEwMCwxXV0sWzUsNiwidiJdLFszLDYsIk8iLFswLDAsMTAwLDFdXSxbNCw2XSxbNCwzXSxbNiw5LCJ2Il0sWzUsOV0sWzEsMCwidiIsMSx7InN0eWxlIjp7ImJvZHkiOnsibmFtZSI6Im5vbmUifSwiaGVhZCI6eyJuYW1lIjoibm9uZSJ9fX1dLFsyLDMsInYiLDEseyJzdHlsZSI6eyJib2R5Ijp7Im5hbWUiOiJub25lIn0sImhlYWQiOnsibmFtZSI6Im5vbmUifX19XSxbMiw2LCJcXGNkb3RzIiwxXSxbOCw2LCIiLDEseyJzdHlsZSI6eyJib2R5Ijp7Im5hbWUiOiJub25lIn0sImhlYWQiOnsibmFtZSI6Im5vbmUifX19XSxbNSw5LCJcXGNkb3RzIiwxXSxbNywxMCwidiIsMSx7InN0eWxlIjp7ImJvZHkiOnsibmFtZSI6Im5vbmUifSwiaGVhZCI6eyJuYW1lIjoibm9uZSJ9fX1dLFs0LDExLCIiLDEseyJzaG9ydGVuIjp7InRhcmdldCI6MjB9LCJzdHlsZSI6eyJib2R5Ijp7Im5hbWUiOiJub25lIn0sImhlYWQiOnsibmFtZSI6Im5vbmUifX19XSxbNCwxMiwiIiwxLHsic2hvcnRlbiI6eyJ0YXJnZXQiOjIwfSwic3R5bGUiOnsiYm9keSI6eyJuYW1lIjoibm9uZSJ9LCJoZWFkIjp7Im5hbWUiOiJub25lIn19fV0sWzAsMTEsIiIsMSx7InNob3J0ZW4iOnsidGFyZ2V0IjoyMH0sInN0eWxlIjp7ImJvZHkiOnsibmFtZSI6Im5vbmUifSwiaGVhZCI6eyJuYW1lIjoibm9uZSJ9fX1dLFsyLDEyLCIiLDEseyJzaG9ydGVuIjp7InRhcmdldCI6MjB9LCJzdHlsZSI6eyJib2R5Ijp7Im5hbWUiOiJub25lIn0sImhlYWQiOnsibmFtZSI6Im5vbmUifX19XSxbNiwxNCwiIiwxLHsic2hvcnRlbiI6eyJ0YXJnZXQiOjIwfSwic3R5bGUiOnsiYm9keSI6eyJuYW1lIjoibm9uZSJ9LCJoZWFkIjp7Im5hbWUiOiJub25lIn19fV0sWzUsMTYsIiIsMSx7InNob3J0ZW4iOnsidGFyZ2V0IjoyMH0sInN0eWxlIjp7ImJvZHkiOnsibmFtZSI6Im5vbmUifSwiaGVhZCI6eyJuYW1lIjoibm9uZSJ9fX1dLFs3LDE2LCIiLDEseyJzaG9ydGVuIjp7InRhcmdldCI6MjB9LCJzdHlsZSI6eyJib2R5Ijp7Im5hbWUiOiJub25lIn0sImhlYWQiOnsibmFtZSI6Im5vbmUifX19XSxbMTQsNywiIiwxLHsic2hvcnRlbiI6eyJzb3VyY2UiOjIwfSwic3R5bGUiOnsiYm9keSI6eyJuYW1lIjoibm9uZSJ9LCJoZWFkIjp7Im5hbWUiOiJub25lIn19fV0sWzE3LDE4LCJcXGNkb3RzIiwxLHsibGV2ZWwiOjF9XSxbMTksMCwiIiwxLHsibGV2ZWwiOjF9XSxbMiwyMCwiIiwxLHsibGV2ZWwiOjF9XSxbNSwyMiwiIiwxLHsibGV2ZWwiOjF9XSxbMjEsNiwiXFxjZG90cyIsMSx7ImxldmVsIjoxLCJzdHlsZSI6eyJib2R5Ijp7Im5hbWUiOiJub25lIn0sImhlYWQiOnsibmFtZSI6Im5vbmUifX19XSxbMjQsMjMsIlxcY2RvdHMiLDEseyJsZXZlbCI6MX1dXQ==
\begin{tikzcd}[row sep=small, column sep=tiny,  every label/.append style = {font=\normalsize}]
	& {} \\
	\\
	\\
	v & \textcolor{white}{O} & v && {} \\
	\\
	\\
	& {} && \textcolor{white}{O} & {} & v \\
	\\
	\\
	&&&&& {} & v
	\arrow[""{name=0, anchor=center, inner sep=0}, "v"{description}, draw=none, from=1-2, to=4-1]
	\arrow[""{name=1, anchor=center, inner sep=0}, "v"{description}, draw=none, from=4-3, to=7-2]
	\arrow["\cdots"{description}, sloped,from=4-3, to=7-4]
	\arrow[""{name=2, anchor=center, inner sep=0}, draw=none, from=4-5, to=7-4]
	\arrow[""{name=3, anchor=center, inner sep=0}, "v"{description}, draw=none, from=7-5, to=10-6]
	\arrow["\cdots"{description}, sloped,from=7-6, to=10-7]
	\arrow[""{name=4, anchor=center, inner sep=0}, draw=none, from=4-1, to=0]
	\arrow[""{name=5, anchor=center, inner sep=0}, draw=none, from=4-2, to=0]
	\arrow[""{name=6, anchor=center, inner sep=0}, draw=none, from=4-2, to=1]
	\arrow[""{name=7, anchor=center, inner sep=0}, draw=none, from=4-3, to=1]
	\arrow[""{name=8, anchor=center, inner sep=0}, draw=none, from=2, to=7-5]
	\arrow[""{name=9, anchor=center, inner sep=0}, draw=none, from=7-4, to=2]
	\arrow[""{name=10, anchor=center, inner sep=0}, draw=none, from=7-5, to=3]
	\arrow[""{name=11, anchor=center, inner sep=0}, draw=none, from=7-6, to=3]
	\arrow[from=4, to=4-1]
	\arrow["\cdots"{description}, sloped,from=5, to=6]
	\arrow[from=4-3, to=7]
	\arrow["\cdots"{description}, sloped,from=8, to=10]
	\arrow["\cdots"{description}, sloped,draw=none, from=9, to=7-4]
	\arrow[from=7-6, to=11]
\end{tikzcd} \text{\quad or \quad}
\begin{tikzcd}[row sep=small, column sep=tiny,  every label/.append style = {font=\normalsize}]
	v && {} \\
	\\
	\\
	& v & \textcolor{white}{O} & {} && {} \\
	\\
	\\
	&& {} && v & \textcolor{white}{O} & v \\
	\\
	\\
	&&&&& {}
	\arrow["\cdots"{description}, sloped,,  ,  from=1-1, to=4-2]
	\arrow[""{name=0, anchor=center, inner sep=0}, "v"{description}, draw=none, from=1-3, to=4-2]
	\arrow[""{name=1, anchor=center, inner sep=0}, draw=none, from=4-4, to=7-3]
	\arrow["\cdots"{description}, sloped,,  ,  from=4-4, to=7-5]
	\arrow[""{name=2, anchor=center, inner sep=0}, "v"{description}, draw=none, from=4-6, to=7-5]
	\arrow[""{name=3, anchor=center, inner sep=0}, "v"{description}, draw=none, from=7-7, to=10-6]
	\arrow[""{name=4, anchor=center, inner sep=0}, draw=none, from=4-2, to=0]
	\arrow[""{name=5, anchor=center, inner sep=0}, draw=none, from=4-3, to=0]
	\arrow[""{name=6, anchor=center, inner sep=0}, draw=none, from=4-3, to=1]
	\arrow["\cdots"{description}, sloped,,  ,  draw=none, from=4-4, to=1]
	\arrow[""{name=7, anchor=center, inner sep=0}, draw=none, from=7-5, to=2]
	\arrow[""{name=8, anchor=center, inner sep=0}, draw=none, from=7-6, to=2]
	\arrow[""{name=9, anchor=center, inner sep=0}, draw=none, from=7-6, to=3]
	\arrow[""{name=10, anchor=center, inner sep=0}, draw=none, from=7-7, to=3]
	\arrow[from=4, to=4-2]
	\arrow["\cdots"{description}, sloped,,  ,  from=5, to=6]
	\arrow[from=7, to=7-5]
	\arrow["\cdots"{description}, sloped,,  ,  from=8, to=9]
	\arrow[from=7-7, to=10]
\end{tikzcd}.\] After possibly taking the flip of $Y$, the local configuration of $Y$ containing $N$ as a subdiagram can be classified into the following cases:\\
Case 2n-1.1.
\begin{tikzcd}[row sep=small, column sep=tiny,  every label/.append style = {font=\normalsize}]
	& {} \\
	\\
	\\
	v & \textcolor{white}{O} & v && {} \\
	\\
	\\
	& {} && \textcolor{white}{O} & {} & v \\
	\\
	\\
	&&&&& {} & v
	\arrow[""{name=0, anchor=center, inner sep=0}, "v"{description}, draw=none, from=1-2, to=4-1]
	\arrow[""{name=1, anchor=center, inner sep=0}, "v"{description}, draw=none, from=4-3, to=7-2]
	\arrow["\cdots"{description}, sloped,from=4-3, to=7-4]
	\arrow[""{name=2, anchor=center, inner sep=0}, draw=none, from=4-5, to=7-4]
	\arrow[""{name=3, anchor=center, inner sep=0}, "v"{description}, draw=none, from=7-5, to=10-6]
	\arrow["\cdots"{description}, sloped,from=7-6, to=10-7]
	\arrow[""{name=4, anchor=center, inner sep=0}, draw=none, from=4-1, to=0]
	\arrow[""{name=5, anchor=center, inner sep=0}, draw=none, from=4-2, to=0]
	\arrow[""{name=6, anchor=center, inner sep=0}, draw=none, from=4-2, to=1]
	\arrow[""{name=7, anchor=center, inner sep=0}, draw=none, from=4-3, to=1]
	\arrow[""{name=8, anchor=center, inner sep=0}, draw=none, from=2, to=7-5]
	\arrow[""{name=9, anchor=center, inner sep=0}, draw=none, from=7-4, to=2]
	\arrow[""{name=10, anchor=center, inner sep=0}, draw=none, from=7-5, to=3]
	\arrow[""{name=11, anchor=center, inner sep=0}, draw=none, from=7-6, to=3]
	\arrow[from=4, to=4-1]
	\arrow["\cdots"{description}, sloped,from=5, to=6]
	\arrow[from=4-3, to=7]
	\arrow["\cdots"{description}, sloped,from=8, to=10]
	\arrow["\cdots"{description}, sloped,draw=none, from=9, to=7-4]
	\arrow[from=7-6, to=11]
\end{tikzcd}.
		Case 2n-1.2.
\begin{tikzcd}[row sep=small, column sep=tiny,  every label/.append style = {font=\normalsize}]
	& x && {} \\
	\\
	\\
	{} && v & \textcolor{white}{O} & v && {} \\
	\\
	\\
	&&& {} && \textcolor{white}{O} & {} & v \\
	\\
	\\
	&&&&&&& {} & v
	\arrow[""{name=0, anchor=center, inner sep=0}, "\cdots"{description}, sloped,draw=none, from=1-2, to=4-1]
	\arrow["\cdots"{description}, sloped,from=1-2, to=4-3]
	\arrow[""{name=1, anchor=center, inner sep=0}, "v"{description}, draw=none, from=1-4, to=4-3]
	\arrow[""{name=2, anchor=center, inner sep=0}, "v"{description}, draw=none, from=4-5, to=7-4]
	\arrow["\cdots"{description}, sloped,from=4-5, to=7-6]
	\arrow[""{name=3, anchor=center, inner sep=0}, draw=none, from=4-7, to=7-6]
	\arrow[""{name=4, anchor=center, inner sep=0}, "v"{description}, draw=none, from=7-7, to=10-8]
	\arrow["\cdots"{description}, sloped,from=7-8, to=10-9]
	\arrow[""{name=5, anchor=center, inner sep=0}, draw=none, from=1-2, to=0]
	\arrow[""{name=6, anchor=center, inner sep=0}, draw=none, from=4-3, to=1]
	\arrow[""{name=7, anchor=center, inner sep=0}, draw=none, from=4-4, to=1]
	\arrow[""{name=8, anchor=center, inner sep=0}, draw=none, from=4-4, to=2]
	\arrow[""{name=9, anchor=center, inner sep=0}, draw=none, from=4-5, to=2]
	\arrow[""{name=10, anchor=center, inner sep=0}, draw=none, from=3, to=7-7]
	\arrow[""{name=11, anchor=center, inner sep=0}, draw=none, from=7-6, to=3]
	\arrow[""{name=12, anchor=center, inner sep=0}, draw=none, from=7-7, to=4]
	\arrow[""{name=13, anchor=center, inner sep=0}, draw=none, from=7-8, to=4]
	\arrow[from=1-2, to=5]
	\arrow[from=6, to=4-3]
	\arrow["\cdots"{description}, sloped,from=7, to=8]
	\arrow[from=4-5, to=9]
	\arrow["\cdots"{description}, sloped,from=10, to=12]
	\arrow["\cdots"{description}, sloped,draw=none, from=11, to=7-6]
	\arrow[from=7-8, to=13]
\end{tikzcd}. \\
Case 2n-1.3.
\begin{tikzcd}[row sep=small, column sep=tiny,  every label/.append style = {font=\normalsize}]
	&&& {} \\
	\\
	\\
	{} && v & \textcolor{white}{O} & v && {} \\
	\\
	\\
	& x && {} && \textcolor{white}{O} & {} & v \\
	\\
	\\
	&&&&&&& {} & v
	\arrow[""{name=0, anchor=center, inner sep=0}, "v"{description}, draw=none, from=1-4, to=4-3]
	\arrow["\cdots"{description}, sloped,from=4-3, to=7-2]
	\arrow[""{name=1, anchor=center, inner sep=0}, "v"{description}, draw=none, from=4-5, to=7-4]
	\arrow["\cdots"{description}, sloped,from=4-5, to=7-6]
	\arrow[""{name=2, anchor=center, inner sep=0}, draw=none, from=4-7, to=7-6]
	\arrow[""{name=3, anchor=center, inner sep=0}, "\cdots"{description}, sloped,draw=none, from=7-2, to=4-1]
	\arrow[""{name=4, anchor=center, inner sep=0}, "v"{description}, draw=none, from=7-7, to=10-8]
	\arrow["\cdots"{description}, sloped,from=7-8, to=10-9]
	\arrow[""{name=5, anchor=center, inner sep=0}, draw=none, from=4-3, to=0]
	\arrow[""{name=6, anchor=center, inner sep=0}, draw=none, from=4-4, to=0]
	\arrow[""{name=7, anchor=center, inner sep=0}, draw=none, from=4-4, to=1]
	\arrow[""{name=8, anchor=center, inner sep=0}, draw=none, from=4-5, to=1]
	\arrow[""{name=9, anchor=center, inner sep=0}, draw=none, from=2, to=7-7]
	\arrow[""{name=10, anchor=center, inner sep=0}, draw=none, from=7-2, to=3]
	\arrow[""{name=11, anchor=center, inner sep=0}, draw=none, from=7-6, to=2]
	\arrow[""{name=12, anchor=center, inner sep=0}, draw=none, from=7-7, to=4]
	\arrow[""{name=13, anchor=center, inner sep=0}, draw=none, from=7-8, to=4]
	\arrow[from=5, to=4-3]
	\arrow["\cdots"{description}, sloped,from=6, to=7]
	\arrow[from=4-5, to=8]
	\arrow["\cdots"{description}, sloped,from=9, to=12]
	\arrow[from=10, to=7-2]
	\arrow["\cdots"{description}, sloped,draw=none, from=11, to=7-6]
	\arrow[from=7-8, to=13]
\end{tikzcd}. \\
Case 2n-1.4.
\begin{tikzcd}[row sep=small, column sep=tiny,  every label/.append style = {font=\normalsize}]
	v && {} \\
	\\
	\\
	& v & \textcolor{white}{O} & \textcolor{white}{O} && {} \\
	\\
	\\
	&& {} && v & \textcolor{white}{O} & v \\
	\\
	\\
	&&&&& {}
	\arrow["\cdots"{description}, sloped,from=1-1, to=4-2]
	\arrow[""{name=0, anchor=center, inner sep=0}, "v"{description}, draw=none, from=1-3, to=4-2]
	\arrow[""{name=1, anchor=center, inner sep=0}, draw=none, from=4-4, to=7-3]
	\arrow["\cdots"{description}, sloped,from=4-4, to=7-5]
	\arrow[""{name=2, anchor=center, inner sep=0}, "v"{description}, draw=none, from=4-6, to=7-5]
	\arrow[""{name=3, anchor=center, inner sep=0}, "v"{description}, draw=none, from=7-7, to=10-6]
	\arrow[""{name=4, anchor=center, inner sep=0}, draw=none, from=4-2, to=0]
	\arrow[""{name=5, anchor=center, inner sep=0}, draw=none, from=4-3, to=0]
	\arrow[""{name=6, anchor=center, inner sep=0}, draw=none, from=4-3, to=1]
	\arrow["\cdots"{description}, sloped,draw=none, from=4-4, to=1]
	\arrow[""{name=7, anchor=center, inner sep=0}, draw=none, from=7-5, to=2]
	\arrow[""{name=8, anchor=center, inner sep=0}, draw=none, from=7-6, to=2]
	\arrow[""{name=9, anchor=center, inner sep=0}, draw=none, from=7-6, to=3]
	\arrow[""{name=10, anchor=center, inner sep=0}, draw=none, from=7-7, to=3]
	\arrow[from=4, to=4-2]
	\arrow["\cdots"{description}, sloped,from=5, to=6]
	\arrow[from=7, to=7-5]
	\arrow["\cdots"{description}, sloped,from=8, to=9]
	\arrow[from=7-7, to=10]
\end{tikzcd}.
Case 2n-1.5.
\begin{tikzcd}[row sep=small, column sep=tiny,  every label/.append style = {font=\normalsize}]
	v && {} \\
	\\
	\\
	& v & \textcolor{white}{O} & \textcolor{white}{O} && {} && x \\
	\\
	\\
	&& {} && v & \textcolor{white}{O} & v && {} \\
	\\
	\\
	&&&&& {}
	\arrow["\cdots"{description}, sloped,from=1-1, to=4-2]
	\arrow[""{name=0, anchor=center, inner sep=0}, "v"{description}, draw=none, from=1-3, to=4-2]
	\arrow[""{name=1, anchor=center, inner sep=0}, draw=none, from=4-4, to=7-3]
	\arrow["\cdots"{description}, sloped,from=4-4, to=7-5]
	\arrow[""{name=2, anchor=center, inner sep=0}, "v"{description}, draw=none, from=4-6, to=7-5]
	\arrow["\cdots"{description}, sloped,from=4-8, to=7-7]
	\arrow[""{name=3, anchor=center, inner sep=0}, "\cdots"{description}, sloped,draw=none, from=4-8, to=7-9]
	\arrow[""{name=4, anchor=center, inner sep=0}, "v"{description}, draw=none, from=7-7, to=10-6]
	\arrow[""{name=5, anchor=center, inner sep=0}, draw=none, from=4-2, to=0]
	\arrow[""{name=6, anchor=center, inner sep=0}, draw=none, from=4-3, to=0]
	\arrow[""{name=7, anchor=center, inner sep=0}, draw=none, from=4-3, to=1]
	\arrow["\cdots"{description}, sloped,draw=none, from=4-4, to=1]
	\arrow[""{name=8, anchor=center, inner sep=0}, draw=none, from=4-8, to=3]
	\arrow[""{name=9, anchor=center, inner sep=0}, draw=none, from=7-5, to=2]
	\arrow[""{name=10, anchor=center, inner sep=0}, draw=none, from=7-6, to=2]
	\arrow[""{name=11, anchor=center, inner sep=0}, draw=none, from=7-6, to=4]
	\arrow[""{name=12, anchor=center, inner sep=0}, draw=none, from=7-7, to=4]
	\arrow[from=5, to=4-2]
	\arrow["\cdots"{description}, sloped,from=6, to=7]
	\arrow[from=4-8, to=8]
	\arrow[from=9, to=7-5]
	\arrow["\cdots"{description}, sloped,from=10, to=11]
	\arrow[from=7-7, to=12]
\end{tikzcd}. \\
Case 2n-1.6.
\begin{tikzcd}[row sep=small, column sep=tiny,  every label/.append style = {font=\normalsize}]
	v && {} \\
	\\
	\\
	& v & \textcolor{white}{O} & \textcolor{white}{O} && {} \\
	\\
	\\
	&& {} && v & \textcolor{white}{O} & v && {} \\
	\\
	\\
	&&&&& {} && x
	\arrow["\cdots"{description}, sloped,from=1-1, to=4-2]
	\arrow[""{name=0, anchor=center, inner sep=0}, "v"{description}, draw=none, from=1-3, to=4-2]
	\arrow[""{name=1, anchor=center, inner sep=0}, draw=none, from=4-4, to=7-3]
	\arrow["\cdots"{description}, sloped,from=4-4, to=7-5]
	\arrow[""{name=2, anchor=center, inner sep=0}, "v"{description}, draw=none, from=4-6, to=7-5]
	\arrow[""{name=3, anchor=center, inner sep=0}, "v"{description}, draw=none, from=7-7, to=10-6]
	\arrow["\cdots"{description}, sloped,from=7-7, to=10-8]
	\arrow[""{name=4, anchor=center, inner sep=0}, "\cdots"{description}, sloped,draw=none, from=10-8, to=7-9]
	\arrow[""{name=5, anchor=center, inner sep=0}, draw=none, from=4-2, to=0]
	\arrow[""{name=6, anchor=center, inner sep=0}, draw=none, from=4-3, to=0]
	\arrow[""{name=7, anchor=center, inner sep=0}, draw=none, from=4-3, to=1]
	\arrow["\cdots"{description}, sloped,draw=none, from=4-4, to=1]
	\arrow[""{name=8, anchor=center, inner sep=0}, draw=none, from=7-5, to=2]
	\arrow[""{name=9, anchor=center, inner sep=0}, draw=none, from=7-6, to=2]
	\arrow[""{name=10, anchor=center, inner sep=0}, draw=none, from=7-6, to=3]
	\arrow[""{name=11, anchor=center, inner sep=0}, draw=none, from=7-7, to=3]
	\arrow[""{name=12, anchor=center, inner sep=0}, draw=none, from=10-8, to=4]
	\arrow[from=5, to=4-2]
	\arrow["\cdots"{description}, sloped,from=6, to=7]
	\arrow[from=8, to=7-5]
	\arrow["\cdots"{description}, sloped,from=9, to=10]
	\arrow[from=7-7, to=11]
	\arrow[from=12, to=10-8]
\end{tikzcd}.

		{\bf Case 2n.} $N$ contains an even number (say, $2n$ for $n\geq 1$) vertices that are peaks or deeps of $Z$, that is, $N=$

\[% https://q.uiver.app/#q=WzAsMTEsWzIsMywiTyIsWzAsMCwxMDAsMV1dLFswLDAsInYiXSxbMSwzLCJ2Il0sWzIsNl0sWzMsMywiTyIsWzAsMCwxMDAsMV1dLFsyLDBdLFs1LDYsIk8iLFswLDAsMTAwLDFdXSxbNCw2LCJ2Il0sWzUsOV0sWzYsNl0sWzUsM10sWzQsMywiIiwxLHsic3R5bGUiOnsiYm9keSI6eyJuYW1lIjoibm9uZSJ9LCJoZWFkIjp7Im5hbWUiOiJub25lIn19fV0sWzUsMiwidiIsMSx7InN0eWxlIjp7ImJvZHkiOnsibmFtZSI6Im5vbmUifSwiaGVhZCI6eyJuYW1lIjoibm9uZSJ9fX1dLFsxLDIsIlxcY2RvdHMiLDFdLFs5LDgsInYiLDEseyJzdHlsZSI6eyJib2R5Ijp7Im5hbWUiOiJub25lIn0sImhlYWQiOnsibmFtZSI6Im5vbmUifX19XSxbMTAsNywidiIsMSx7InN0eWxlIjp7ImJvZHkiOnsibmFtZSI6Im5vbmUifSwiaGVhZCI6eyJuYW1lIjoibm9uZSJ9fX1dLFs0LDcsIlxcY2RvdHMiLDFdLFs0LDExLCJcXGNkb3RzIiwxLHsic2hvcnRlbiI6eyJ0YXJnZXQiOjIwfSwic3R5bGUiOnsiYm9keSI6eyJuYW1lIjoibm9uZSJ9LCJoZWFkIjp7Im5hbWUiOiJub25lIn19fV0sWzAsMTEsIiIsMSx7InNob3J0ZW4iOnsidGFyZ2V0IjoyMH0sInN0eWxlIjp7ImJvZHkiOnsibmFtZSI6Im5vbmUifSwiaGVhZCI6eyJuYW1lIjoibm9uZSJ9fX1dLFswLDEyLCIiLDEseyJzaG9ydGVuIjp7InRhcmdldCI6MjB9LCJzdHlsZSI6eyJib2R5Ijp7Im5hbWUiOiJub25lIn0sImhlYWQiOnsibmFtZSI6Im5vbmUifX19XSxbMiwxMiwiIiwxLHsic2hvcnRlbiI6eyJ0YXJnZXQiOjIwfSwic3R5bGUiOnsiYm9keSI6eyJuYW1lIjoibm9uZSJ9LCJoZWFkIjp7Im5hbWUiOiJub25lIn19fV0sWzksMTQsIiIsMSx7InNob3J0ZW4iOnsidGFyZ2V0IjoyMH0sInN0eWxlIjp7ImJvZHkiOnsibmFtZSI6Im5vbmUifSwiaGVhZCI6eyJuYW1lIjoibm9uZSJ9fX1dLFs2LDE0LCIiLDEseyJzaG9ydGVuIjp7InRhcmdldCI6MjB9LCJzdHlsZSI6eyJib2R5Ijp7Im5hbWUiOiJub25lIn0sImhlYWQiOnsibmFtZSI6Im5vbmUifX19XSxbNiwxNSwiIiwxLHsic2hvcnRlbiI6eyJ0YXJnZXQiOjIwfSwic3R5bGUiOnsiYm9keSI6eyJuYW1lIjoibm9uZSJ9LCJoZWFkIjp7Im5hbWUiOiJub25lIn19fV0sWzcsMTUsIiIsMSx7InNob3J0ZW4iOnsidGFyZ2V0IjoyMH0sInN0eWxlIjp7ImJvZHkiOnsibmFtZSI6Im5vbmUifSwiaGVhZCI6eyJuYW1lIjoibm9uZSJ9fX1dLFsxOSwxOCwiXFxjZG90cyIsMSx7ImxldmVsIjoxfV0sWzIwLDIsIiIsMSx7ImxldmVsIjoxfV0sWzIzLDIyLCJcXGNkb3RzIiwxLHsibGV2ZWwiOjF9XSxbMjQsNywiIiwxLHsibGV2ZWwiOjF9XV0=
\begin{tikzcd}[row sep=small, column sep=tiny,  every label/.append style = {font=\normalsize}]
	v && {} \\
	\\
	\\
	& v & \textcolor{white}{O} & \textcolor{white}{O} && {} \\
	\\
	\\
	&& {} && v & \textcolor{white}{O} & {} \\
	\\
	\\
	&&&&& {}
	\arrow["\cdots"{description}, sloped,from=1-1, to=4-2]
	\arrow[""{name=0, anchor=center, inner sep=0}, "v"{description}, draw=none, from=1-3, to=4-2]
	\arrow[""{name=1, anchor=center, inner sep=0}, draw=none, from=4-4, to=7-3]
	\arrow["\cdots"{description}, sloped,from=4-4, to=7-5]
	\arrow[""{name=2, anchor=center, inner sep=0}, "v"{description}, draw=none, from=4-6, to=7-5]
	\arrow[""{name=3, anchor=center, inner sep=0}, "v"{description}, draw=none, from=7-7, to=10-6]
	\arrow[""{name=4, anchor=center, inner sep=0}, draw=none, from=4-2, to=0]
	\arrow[""{name=5, anchor=center, inner sep=0}, draw=none, from=4-3, to=0]
	\arrow[""{name=6, anchor=center, inner sep=0}, draw=none, from=4-3, to=1]
	\arrow["\cdots"{description}, sloped,draw=none, from=4-4, to=1]
	\arrow[""{name=7, anchor=center, inner sep=0}, draw=none, from=7-5, to=2]
	\arrow[""{name=8, anchor=center, inner sep=0}, draw=none, from=7-6, to=2]
	\arrow[""{name=9, anchor=center, inner sep=0}, draw=none, from=7-6, to=3]
	\arrow[draw=none, from=7-7, to=3]
	\arrow[from=4, to=4-2]
	\arrow["\cdots"{description}, sloped,from=5, to=6]
	\arrow[from=7, to=7-5]
	\arrow["\cdots"{description}, sloped,from=8, to=9]
\end{tikzcd} \text{\qquad or \quad\qquad}
\begin{tikzcd}[row sep=small, column sep=tiny,  every label/.append style = {font=\normalsize}]
	& {} \\
	\\
	\\
	v & \textcolor{white}{O} & \textcolor{white}{O} && {} \\
	\\
	\\
	& {} && v & \textcolor{white}{O} & v \\
	\\
	\\
	&&&& {}
	\arrow[""{name=0, anchor=center, inner sep=0}, "v"{description}, draw=none, from=1-2, to=4-1]
	\arrow[""{name=1, anchor=center, inner sep=0}, draw=none, from=4-3, to=7-2]
	\arrow["\cdots"{description}, sloped,from=4-3, to=7-4]
	\arrow[""{name=2, anchor=center, inner sep=0}, "v"{description}, draw=none, from=4-5, to=7-4]
	\arrow[""{name=3, anchor=center, inner sep=0}, "v"{description}, draw=none, from=7-6, to=10-5]
	\arrow[""{name=4, anchor=center, inner sep=0}, draw=none, from=4-1, to=0]
	\arrow[""{name=5, anchor=center, inner sep=0}, draw=none, from=4-2, to=0]
	\arrow[""{name=6, anchor=center, inner sep=0}, draw=none, from=4-2, to=1]
	\arrow["\cdots"{description}, sloped,draw=none, from=4-3, to=1]
	\arrow[""{name=7, anchor=center, inner sep=0}, draw=none, from=7-4, to=2]
	\arrow[""{name=8, anchor=center, inner sep=0}, draw=none, from=7-5, to=2]
	\arrow[""{name=9, anchor=center, inner sep=0}, draw=none, from=7-5, to=3]
	\arrow[""{name=10, anchor=center, inner sep=0}, draw=none, from=7-6, to=3]
	\arrow[from=4, to=4-1]
	\arrow["\cdots"{description}, sloped,from=5, to=6]
	\arrow[from=7, to=7-4]
	\arrow["\cdots"{description}, sloped,from=8, to=9]
	\arrow[from=7-6, to=10]
\end{tikzcd}. \]
After possibly taking the flip of $Y$, the local configuration of $Y$ containing $N$ as a subdiagram can be classified into the following cases:\\
		Case 2n.1.
\begin{tikzcd}[row sep=small, column sep=tiny,  every label/.append style = {font=\normalsize}]
	v && {} \\
	\\
	\\
	& v & \textcolor{white}{O} & \textcolor{white}{O} && {} \\
	\\
	\\
	&& {} && v & \textcolor{white}{O} & {} \\
	\\
	\\
	&&&&& {}
	\arrow["\cdots"{description}, sloped,,  ,  from=1-1, to=4-2]
	\arrow[""{name=0, anchor=center, inner sep=0}, "v"{description}, draw=none, from=1-3, to=4-2]
	\arrow[""{name=1, anchor=center, inner sep=0}, draw=none, from=4-4, to=7-3]
	\arrow["\cdots"{description}, sloped,,  ,  from=4-4, to=7-5]
	\arrow[""{name=2, anchor=center, inner sep=0}, "v"{description}, draw=none, from=4-6, to=7-5]
	\arrow[""{name=3, anchor=center, inner sep=0}, "v"{description}, draw=none, from=7-7, to=10-6]
	\arrow[""{name=4, anchor=center, inner sep=0}, draw=none, from=4-2, to=0]
	\arrow[""{name=5, anchor=center, inner sep=0}, draw=none, from=4-3, to=0]
	\arrow[""{name=6, anchor=center, inner sep=0}, draw=none, from=4-3, to=1]
	\arrow["\cdots"{description}, sloped,,  ,  draw=none, from=4-4, to=1]
	\arrow[""{name=7, anchor=center, inner sep=0}, draw=none, from=7-5, to=2]
	\arrow[""{name=8, anchor=center, inner sep=0}, draw=none, from=7-6, to=2]
	\arrow[""{name=9, anchor=center, inner sep=0}, draw=none, from=7-6, to=3]
	\arrow[draw=none, from=7-7, to=3]
	\arrow[from=4, to=4-2]
	\arrow["\cdots"{description}, sloped,,  ,  from=5, to=6]
	\arrow[from=7, to=7-5]
	\arrow["\cdots"{description}, sloped,,  ,  from=8, to=9]
\end{tikzcd}. \quad
		Case 2n.2.
\begin{tikzcd}[row sep=small, column sep=tiny,  every label/.append style = {font=\normalsize}]
	& {} \\
	\\
	\\
	v & \textcolor{white}{O} & \textcolor{white}{O} && {} \\
	\\
	\\
	& {} && v & \textcolor{white}{O} & v \\
	\\
	\\
	&&&& {}
	\arrow[""{name=0, anchor=center, inner sep=0}, "v"{description}, draw=none, from=1-2, to=4-1]
	\arrow[""{name=1, anchor=center, inner sep=0}, draw=none, from=4-3, to=7-2]
	\arrow["\cdots"{description}, sloped,,  ,  from=4-3, to=7-4]
	\arrow[""{name=2, anchor=center, inner sep=0}, "v"{description}, draw=none, from=4-5, to=7-4]
	\arrow[""{name=3, anchor=center, inner sep=0}, "v"{description}, draw=none, from=7-6, to=10-5]
	\arrow[""{name=4, anchor=center, inner sep=0}, draw=none, from=4-1, to=0]
	\arrow[""{name=5, anchor=center, inner sep=0}, draw=none, from=4-2, to=0]
	\arrow[""{name=6, anchor=center, inner sep=0}, draw=none, from=4-2, to=1]
	\arrow["\cdots"{description}, sloped,,  ,  draw=none, from=4-3, to=1]
	\arrow[""{name=7, anchor=center, inner sep=0}, draw=none, from=7-4, to=2]
	\arrow[""{name=8, anchor=center, inner sep=0}, draw=none, from=7-5, to=2]
	\arrow[""{name=9, anchor=center, inner sep=0}, draw=none, from=7-5, to=3]
	\arrow[""{name=10, anchor=center, inner sep=0}, draw=none, from=7-6, to=3]
	\arrow[from=4, to=4-1]
	\arrow["\cdots"{description}, sloped,,  ,  from=5, to=6]
	\arrow[from=7, to=7-4]
	\arrow["\cdots"{description}, sloped,,  ,  from=8, to=9]
	\arrow[from=7-6, to=10]
\end{tikzcd}. \\
		Case 2n.3.
\begin{tikzcd}[row sep=small, column sep=tiny,  every label/.append style = {font=\normalsize}]
	& x && {} \\
	\\
	\\
	{} && v & \textcolor{white}{O} & \textcolor{white}{O} && {} \\
	\\
	\\
	&&& {} && v & \textcolor{white}{O} & v \\
	\\
	\\
	&&&&&& {}
	\arrow[""{name=0, anchor=center, inner sep=0}, "\cdots"{description}, sloped,,  ,  draw=none, from=1-2, to=4-1]
	\arrow["\cdots"{description}, sloped,,  ,  from=1-2, to=4-3]
	\arrow[""{name=1, anchor=center, inner sep=0}, "v"{description}, draw=none, from=1-4, to=4-3]
	\arrow[""{name=2, anchor=center, inner sep=0}, draw=none, from=4-5, to=7-4]
	\arrow["\cdots"{description}, sloped,,  ,  from=4-5, to=7-6]
	\arrow[""{name=3, anchor=center, inner sep=0}, "v"{description}, draw=none, from=4-7, to=7-6]
	\arrow[""{name=4, anchor=center, inner sep=0}, "v"{description}, draw=none, from=7-8, to=10-7]
	\arrow[""{name=5, anchor=center, inner sep=0}, draw=none, from=1-2, to=0]
	\arrow[""{name=6, anchor=center, inner sep=0}, draw=none, from=4-3, to=1]
	\arrow[""{name=7, anchor=center, inner sep=0}, draw=none, from=4-4, to=1]
	\arrow[""{name=8, anchor=center, inner sep=0}, draw=none, from=4-4, to=2]
	\arrow["\cdots"{description}, sloped,,  ,  draw=none, from=4-5, to=2]
	\arrow[""{name=9, anchor=center, inner sep=0}, draw=none, from=7-6, to=3]
	\arrow[""{name=10, anchor=center, inner sep=0}, draw=none, from=7-7, to=3]
	\arrow[""{name=11, anchor=center, inner sep=0}, draw=none, from=7-7, to=4]
	\arrow[""{name=12, anchor=center, inner sep=0}, draw=none, from=7-8, to=4]
	\arrow[from=1-2, to=5]
	\arrow[from=6, to=4-3]
	\arrow["\cdots"{description}, sloped,,  ,  from=7, to=8]
	\arrow[from=9, to=7-6]
	\arrow["\cdots"{description}, sloped,,  ,  from=10, to=11]
	\arrow[from=7-8, to=12]
\end{tikzcd}. \quad
		Case 2n.4.
\begin{tikzcd}[row sep=small, column sep=tiny,  every label/.append style = {font=\normalsize}]
	& {} \\
	\\
	\\
	v & \textcolor{white}{O} & \textcolor{white}{O} && {} && x \\
	\\
	\\
	& {} && v & \textcolor{white}{O} & v && {} \\
	\\
	\\
	&&&& {}
	\arrow[""{name=0, anchor=center, inner sep=0}, "v"{description}, draw=none, from=1-2, to=4-1]
	\arrow[""{name=1, anchor=center, inner sep=0}, draw=none, from=4-3, to=7-2]
	\arrow["\cdots"{description}, sloped,,  ,  from=4-3, to=7-4]
	\arrow[""{name=2, anchor=center, inner sep=0}, "v"{description}, draw=none, from=4-5, to=7-4]
	\arrow["\cdots"{description}, sloped,,  ,  from=4-7, to=7-6]
	\arrow[""{name=3, anchor=center, inner sep=0}, "\cdots"{description}, sloped,,  ,  draw=none, from=4-7, to=7-8]
	\arrow[""{name=4, anchor=center, inner sep=0}, "v"{description}, draw=none, from=7-6, to=10-5]
	\arrow[""{name=5, anchor=center, inner sep=0}, draw=none, from=4-1, to=0]
	\arrow[""{name=6, anchor=center, inner sep=0}, draw=none, from=4-2, to=0]
	\arrow[""{name=7, anchor=center, inner sep=0}, draw=none, from=4-2, to=1]
	\arrow["\cdots"{description}, sloped,,  ,  draw=none, from=4-3, to=1]
	\arrow[""{name=8, anchor=center, inner sep=0}, draw=none, from=4-7, to=3]
	\arrow[""{name=9, anchor=center, inner sep=0}, draw=none, from=7-4, to=2]
	\arrow[""{name=10, anchor=center, inner sep=0}, draw=none, from=7-5, to=2]
	\arrow[""{name=11, anchor=center, inner sep=0}, draw=none, from=7-5, to=4]
	\arrow[""{name=12, anchor=center, inner sep=0}, draw=none, from=7-6, to=4]
	\arrow[from=5, to=4-1]
	\arrow["\cdots"{description}, sloped,,  ,  from=6, to=7]
	\arrow[from=4-7, to=8]
	\arrow[from=9, to=7-4]
	\arrow["\cdots"{description}, sloped,,  ,  from=10, to=11]
	\arrow[from=7-6, to=12]
\end{tikzcd}. \\
		Case 2n.5.
\begin{tikzcd}[row sep=small, column sep=tiny,  every label/.append style = {font=\normalsize}]
	&&& {} \\
	\\
	\\
	{} && v & \textcolor{white}{O} & \textcolor{white}{O} && {} \\
	\\
	\\
	& x && {} && v & \textcolor{white}{O} & v \\
	\\
	\\
	&&&&&& {}
	\arrow[""{name=0, anchor=center, inner sep=0}, "v"{description}, draw=none, from=1-4, to=4-3]
	\arrow["\cdots"{description}, sloped,,  ,  from=4-3, to=7-2]
	\arrow[""{name=1, anchor=center, inner sep=0}, draw=none, from=4-5, to=7-4]
	\arrow["\cdots"{description}, sloped,,  ,  from=4-5, to=7-6]
	\arrow[""{name=2, anchor=center, inner sep=0}, "v"{description}, draw=none, from=4-7, to=7-6]
	\arrow[""{name=3, anchor=center, inner sep=0}, "\cdots"{description}, sloped,,  ,  draw=none, from=7-2, to=4-1]
	\arrow[""{name=4, anchor=center, inner sep=0}, "v"{description}, draw=none, from=7-8, to=10-7]
	\arrow[""{name=5, anchor=center, inner sep=0}, draw=none, from=4-3, to=0]
	\arrow[""{name=6, anchor=center, inner sep=0}, draw=none, from=4-4, to=0]
	\arrow[""{name=7, anchor=center, inner sep=0}, draw=none, from=4-4, to=1]
	\arrow["\cdots"{description}, sloped,,  ,  draw=none, from=4-5, to=1]
	\arrow[""{name=8, anchor=center, inner sep=0}, draw=none, from=7-2, to=3]
	\arrow[""{name=9, anchor=center, inner sep=0}, draw=none, from=7-6, to=2]
	\arrow[""{name=10, anchor=center, inner sep=0}, draw=none, from=7-7, to=2]
	\arrow[""{name=11, anchor=center, inner sep=0}, draw=none, from=7-7, to=4]
	\arrow[""{name=12, anchor=center, inner sep=0}, draw=none, from=7-8, to=4]
	\arrow[from=5, to=4-3]
	\arrow["\cdots"{description}, sloped,,  ,  from=6, to=7]
	\arrow[from=8, to=7-2]
	\arrow[from=9, to=7-6]
	\arrow["\cdots"{description}, sloped,,  ,  from=10, to=11]
	\arrow[from=7-8, to=12]
\end{tikzcd}. \quad
		Case 2n.6.
\begin{tikzcd}[row sep=small, column sep=tiny,  every label/.append style = {font=\normalsize}]
	& {} \\
	\\
	\\
	v & \textcolor{white}{O} & \textcolor{white}{O} && {} \\
	\\
	\\
	& {} && v & \textcolor{white}{O} & v && {} \\
	\\
	\\
	&&&& {} && x
	\arrow[""{name=0, anchor=center, inner sep=0}, "v"{description}, draw=none, from=1-2, to=4-1]
	\arrow[""{name=1, anchor=center, inner sep=0}, draw=none, from=4-3, to=7-2]
	\arrow["\cdots"{description}, sloped,,  ,  from=4-3, to=7-4]
	\arrow[""{name=2, anchor=center, inner sep=0}, "v"{description}, draw=none, from=4-5, to=7-4]
	\arrow[""{name=3, anchor=center, inner sep=0}, "v"{description}, draw=none, from=7-6, to=10-5]
	\arrow["\cdots"{description}, sloped,,  ,  from=7-6, to=10-7]
	\arrow[""{name=4, anchor=center, inner sep=0}, "\cdots"{description}, sloped,,  ,  draw=none, from=10-7, to=7-8]
	\arrow[""{name=5, anchor=center, inner sep=0}, draw=none, from=4-1, to=0]
	\arrow[""{name=6, anchor=center, inner sep=0}, draw=none, from=4-2, to=0]
	\arrow[""{name=7, anchor=center, inner sep=0}, draw=none, from=4-2, to=1]
	\arrow["\cdots"{description}, sloped,,  ,  draw=none, from=4-3, to=1]
	\arrow[""{name=8, anchor=center, inner sep=0}, draw=none, from=7-4, to=2]
	\arrow[""{name=9, anchor=center, inner sep=0}, draw=none, from=7-5, to=2]
	\arrow[""{name=10, anchor=center, inner sep=0}, draw=none, from=7-5, to=3]
	\arrow[""{name=11, anchor=center, inner sep=0}, draw=none, from=7-6, to=3]
	\arrow[""{name=12, anchor=center, inner sep=0}, draw=none, from=10-7, to=4]
	\arrow[from=5, to=4-1]
	\arrow["\cdots"{description}, sloped,,  ,  from=6, to=7]
	\arrow[from=8, to=7-4]
	\arrow["\cdots"{description}, sloped,,  ,  from=9, to=10]
	\arrow[from=7-6, to=11]
	\arrow[from=12, to=10-7]
\end{tikzcd}. \\
\\
		Case 2n.7.
\begin{tikzcd}[row sep=small, column sep=tiny,  every label/.append style = {font=\normalsize}]
	& x && {} \\
	\\
	\\
	{} && v & \textcolor{white}{O} & \textcolor{white}{O} && {} && {x'} \\
	\\
	\\
	&&& {} && v & \textcolor{white}{O} & v && {} \\
	\\
	\\
	&&&&&& {}
	\arrow[""{name=0, anchor=center, inner sep=0}, "\cdots"{description}, sloped,,  ,  draw=none, from=1-2, to=4-1]
	\arrow["\cdots"{description}, sloped,,  ,  from=1-2, to=4-3]
	\arrow[""{name=1, anchor=center, inner sep=0}, "v"{description}, draw=none, from=1-4, to=4-3]
	\arrow[""{name=2, anchor=center, inner sep=0}, draw=none, from=4-5, to=7-4]
	\arrow["\cdots"{description}, sloped,,  ,  from=4-5, to=7-6]
	\arrow[""{name=3, anchor=center, inner sep=0}, "v"{description}, draw=none, from=4-7, to=7-6]
	\arrow["\cdots"{description}, sloped,,  ,  from=4-9, to=7-8]
	\arrow[""{name=4, anchor=center, inner sep=0}, "\cdots"{description}, sloped,,  ,  draw=none, from=4-9, to=7-10]
	\arrow[""{name=5, anchor=center, inner sep=0}, "v"{description}, draw=none, from=7-8, to=10-7]
	\arrow[""{name=6, anchor=center, inner sep=0}, draw=none, from=1-2, to=0]
	\arrow[""{name=7, anchor=center, inner sep=0}, draw=none, from=4-3, to=1]
	\arrow[""{name=8, anchor=center, inner sep=0}, draw=none, from=4-4, to=1]
	\arrow[""{name=9, anchor=center, inner sep=0}, draw=none, from=4-4, to=2]
	\arrow["\cdots"{description}, sloped,,  ,  draw=none, from=4-5, to=2]
	\arrow[""{name=10, anchor=center, inner sep=0}, draw=none, from=4-9, to=4]
	\arrow[""{name=11, anchor=center, inner sep=0}, draw=none, from=7-6, to=3]
	\arrow[""{name=12, anchor=center, inner sep=0}, draw=none, from=7-7, to=3]
	\arrow[""{name=13, anchor=center, inner sep=0}, draw=none, from=7-7, to=5]
	\arrow[""{name=14, anchor=center, inner sep=0}, draw=none, from=7-8, to=5]
	\arrow[from=1-2, to=6]
	\arrow[from=7, to=4-3]
	\arrow["\cdots"{description}, sloped,,  ,  from=8, to=9]
	\arrow[from=4-9, to=10]
	\arrow[from=11, to=7-6]
	\arrow["\cdots"{description}, sloped,,  ,  from=12, to=13]
	\arrow[from=7-8, to=14]
\end{tikzcd}.
\\
		Case 2n.8.
\begin{tikzcd}[row sep=small, column sep=tiny,  every label/.append style = {font=\normalsize}]
	&&& {} \\
	\\
	\\
	{} && v & \textcolor{white}{O} & \textcolor{white}{O} && {} && {x'} \\
	\\
	\\
	& x && {} && v & \textcolor{white}{O} & v && {} \\
	\\
	\\
	&&&&&& {}
	\arrow[""{name=0, anchor=center, inner sep=0}, "v"{description}, draw=none, from=1-4, to=4-3]
	\arrow["\cdots"{description}, sloped,,  ,  from=4-3, to=7-2]
	\arrow[""{name=1, anchor=center, inner sep=0}, draw=none, from=4-5, to=7-4]
	\arrow["\cdots"{description}, sloped,,  ,  from=4-5, to=7-6]
	\arrow[""{name=2, anchor=center, inner sep=0}, "v"{description}, draw=none, from=4-7, to=7-6]
	\arrow["\cdots"{description}, sloped,,  ,  from=4-9, to=7-8]
	\arrow[""{name=3, anchor=center, inner sep=0}, "\cdots"{description}, sloped,,  ,  draw=none, from=4-9, to=7-10]
	\arrow[""{name=4, anchor=center, inner sep=0}, "\cdots"{description}, sloped,,  ,  draw=none, from=7-2, to=4-1]
	\arrow[""{name=5, anchor=center, inner sep=0}, "v"{description}, draw=none, from=7-8, to=10-7]
	\arrow[""{name=6, anchor=center, inner sep=0}, draw=none, from=4-3, to=0]
	\arrow[""{name=7, anchor=center, inner sep=0}, draw=none, from=4-4, to=0]
	\arrow[""{name=8, anchor=center, inner sep=0}, draw=none, from=4-4, to=1]
	\arrow["\cdots"{description}, sloped,,  ,  draw=none, from=4-5, to=1]
	\arrow[""{name=9, anchor=center, inner sep=0}, draw=none, from=4-9, to=3]
	\arrow[""{name=10, anchor=center, inner sep=0}, draw=none, from=7-2, to=4]
	\arrow[""{name=11, anchor=center, inner sep=0}, draw=none, from=7-6, to=2]
	\arrow[""{name=12, anchor=center, inner sep=0}, draw=none, from=7-7, to=2]
	\arrow[""{name=13, anchor=center, inner sep=0}, draw=none, from=7-7, to=5]
	\arrow[""{name=14, anchor=center, inner sep=0}, draw=none, from=7-8, to=5]
	\arrow[from=6, to=4-3]
	\arrow["\cdots"{description}, sloped,,  ,  from=7, to=8]
	\arrow[from=4-9, to=9]
	\arrow[from=10, to=7-2]
	\arrow[from=11, to=7-6]
	\arrow["\cdots"{description}, sloped,,  ,  from=12, to=13]
	\arrow[from=7-8, to=14]
\end{tikzcd}.
\\
		Case 2n.9.
\begin{tikzcd}[row sep=small, column sep=tiny,  every label/.append style = {font=\normalsize}]
	&&& {} \\
	\\
	\\
	{} && v & \textcolor{white}{O} & \textcolor{white}{O} && {} \\
	\\
	\\
	& x && {} && v & \textcolor{white}{O} & v && {} \\
	\\
	\\
	&&&&&& {} && {x'}
	\arrow[""{name=0, anchor=center, inner sep=0}, "v"{description}, draw=none, from=1-4, to=4-3]
	\arrow["\cdots"{description}, sloped,,  ,  from=4-3, to=7-2]
	\arrow[""{name=1, anchor=center, inner sep=0}, draw=none, from=4-5, to=7-4]
	\arrow["\cdots"{description}, sloped,,  ,  from=4-5, to=7-6]
	\arrow[""{name=2, anchor=center, inner sep=0}, "v"{description}, draw=none, from=4-7, to=7-6]
	\arrow[""{name=3, anchor=center, inner sep=0}, "\cdots"{description}, sloped,,  ,  draw=none, from=7-2, to=4-1]
	\arrow[""{name=4, anchor=center, inner sep=0}, "v"{description}, draw=none, from=7-8, to=10-7]
	\arrow["\cdots"{description}, sloped,,  ,  from=7-8, to=10-9]
	\arrow[""{name=5, anchor=center, inner sep=0}, "\cdots"{description}, sloped,,  ,  draw=none, from=10-9, to=7-10]
	\arrow[""{name=6, anchor=center, inner sep=0}, draw=none, from=4-3, to=0]
	\arrow[""{name=7, anchor=center, inner sep=0}, draw=none, from=4-4, to=0]
	\arrow[""{name=8, anchor=center, inner sep=0}, draw=none, from=4-4, to=1]
	\arrow["\cdots"{description}, sloped,,  ,  draw=none, from=4-5, to=1]
	\arrow[""{name=9, anchor=center, inner sep=0}, draw=none, from=7-2, to=3]
	\arrow[""{name=10, anchor=center, inner sep=0}, draw=none, from=7-6, to=2]
	\arrow[""{name=11, anchor=center, inner sep=0}, draw=none, from=7-7, to=2]
	\arrow[""{name=12, anchor=center, inner sep=0}, draw=none, from=7-7, to=4]
	\arrow[""{name=13, anchor=center, inner sep=0}, draw=none, from=7-8, to=4]
	\arrow[""{name=14, anchor=center, inner sep=0}, draw=none, from=10-9, to=5]
	\arrow[from=6, to=4-3]
	\arrow["\cdots"{description}, sloped,,  ,  from=7, to=8]
	\arrow[from=9, to=7-2]
	\arrow[from=10, to=7-6]
	\arrow["\cdots"{description}, sloped,,  ,  from=11, to=12]
	\arrow[from=7-8, to=13]
	\arrow[from=14, to=10-9]
\end{tikzcd}.
\\
		Case 2n.10.
\begin{tikzcd}[row sep=small, column sep=tiny,  every label/.append style = {font=\normalsize}]
	& x && {} \\
	\\
	\\
	{} && v & \textcolor{white}{O} & \textcolor{white}{O} && {} \\
	\\
	\\
	&&& {} && v & \textcolor{white}{O} & v && {} \\
	\\
	\\
	&&&&&& {} && {x'}
	\arrow[""{name=0, anchor=center, inner sep=0}, "\cdots"{description}, sloped,,  ,  draw=none, from=1-2, to=4-1]
	\arrow["\cdots"{description}, sloped,,  ,  from=1-2, to=4-3]
	\arrow[""{name=1, anchor=center, inner sep=0}, "v"{description}, draw=none, from=1-4, to=4-3]
	\arrow[""{name=2, anchor=center, inner sep=0}, draw=none, from=4-5, to=7-4]
	\arrow["\cdots"{description}, sloped,,  ,  from=4-5, to=7-6]
	\arrow[""{name=3, anchor=center, inner sep=0}, "v"{description}, draw=none, from=4-7, to=7-6]
	\arrow[""{name=4, anchor=center, inner sep=0}, "v"{description}, draw=none, from=7-8, to=10-7]
	\arrow["\cdots"{description}, sloped,,  ,  from=7-8, to=10-9]
	\arrow[""{name=5, anchor=center, inner sep=0}, "\cdots"{description}, sloped,,  ,  draw=none, from=10-9, to=7-10]
	\arrow[""{name=6, anchor=center, inner sep=0}, draw=none, from=1-2, to=0]
	\arrow[""{name=7, anchor=center, inner sep=0}, draw=none, from=4-3, to=1]
	\arrow[""{name=8, anchor=center, inner sep=0}, draw=none, from=4-4, to=1]
	\arrow[""{name=9, anchor=center, inner sep=0}, draw=none, from=4-4, to=2]
	\arrow["\cdots"{description}, sloped,,  ,  draw=none, from=4-5, to=2]
	\arrow[""{name=10, anchor=center, inner sep=0}, draw=none, from=7-6, to=3]
	\arrow[""{name=11, anchor=center, inner sep=0}, draw=none, from=7-7, to=3]
	\arrow[""{name=12, anchor=center, inner sep=0}, draw=none, from=7-7, to=4]
	\arrow[""{name=13, anchor=center, inner sep=0}, draw=none, from=7-8, to=4]
	\arrow[""{name=14, anchor=center, inner sep=0}, draw=none, from=10-9, to=5]
	\arrow[from=1-2, to=6]
	\arrow[from=7, to=4-3]
	\arrow["\cdots"{description}, sloped,,  ,  from=8, to=9]
	\arrow[from=10, to=7-6]
	\arrow["\cdots"{description}, sloped,,  ,  from=11, to=12]
	\arrow[from=7-8, to=13]
	\arrow[from=14, to=10-9]
\end{tikzcd}.

		We define an equivalence relation $\sim$ on the set $\mathscr{B}$ of diagrammatic morphisms from $X$ to $Y$ as follows. For two diagrammatic morphisms $f,f':X\rightarrow Y$, denote by $N_f,N_{f'}$ the corresponding subdiagrams of $Y$ (which are defined at the beginning of the proof). Then $f\sim f'$ if and only if $N_f$ and $N_{f'}$ are the same subdiagram of $Y$. We denote by $\mathscr{B}/\sim$ the quotient set of $\mathscr{B}$ by $\sim$.

        {For each $\mathscr{C}\in\mathscr{B}/\sim$, choose a diagrammatic morphism $f\in\mathscr{C}$, and denote the subdiagram of $Y$ corresponding to $f$ by $N$ (now $N$ is independent to the choice of $f$). We will define a subset $\mathscr{C}'$ of $\Hom(\Omega(X),Y)$ by discussing the local configuration of $Y$ containing $N$ as a subdiagram} case by case.

        {In the following discussion, we {denote by} $\underline{\mathscr{C}}$ (resp. $\underline{\mathscr{C}'}$) a basis of the $k$-vector space spanned by the image of $\mathscr{C}$ (resp. $\mathscr{C}'$) in the stable category.} Note that some of the above sets may be empty.

		For convenience, we denote the vertex corresponding to $\top(X)$ and $\top(\Omega(X))$ by $v_X$, and we also denote the generator of $\top(X)$ and $\top(\Omega(X))$ by $v_X$.

        {Case 1.1. $Y=v$. \\
        $\mathscr{C}=\{f_1\colon v_X\mapsto v\}$, $\underline{\mathscr{C}}=\{\underline{f_1}\}$. We take $\mathscr{C}'=\{g_1\colon v_X\mapsto v\}$, which implies that $\underline{\mathscr{C}'}=\{\underline{g_1}\}$.}

        {Case 1.2. $Y=$}
        \begin{tikzcd}[row sep=small, column sep=tiny,  every label/.append style = {font=\normalsize}]
	& x \\
	\\
	\\
	{} && v
	\arrow[""{name=0, anchor=center, inner sep=0}, "\cdots"{description}, sloped,,  ,  draw=none, from=1-2, to=4-1]
	\arrow["\cdots"{description}, sloped,,  ,  from=1-2, to=4-3]
	\arrow[""{name=1, anchor=center, inner sep=0}, draw=none, from=1-2, to=0]
	\arrow[from=1-2, to=1]
\end{tikzcd}. \\
{$\mathscr{C}=\{f_1\colon v_X\mapsto v\}$, $\underline{\mathscr{C}}=\{\underline{f_1}\}$. We take $\mathscr{C}'=\{g_1\colon v_X\mapsto v\}$, which implies that $\underline{\mathscr{C}'}=\{\underline{g_1}\}$.}

         {Case 1.3. $Y=$}
\begin{tikzcd}[row sep=small, column sep=tiny,  every label/.append style = {font=\normalsize}]
	v && {} \\
	\\
	\\
	& x
	\arrow["\cdots"{description}, sloped,,  ,  from=1-1, to=4-2]
	\arrow[""{name=0, anchor=center, inner sep=0}, "\cdots"{description}, sloped,,  ,  draw=none, from=4-2, to=1-3]
	\arrow[""{name=1, anchor=center, inner sep=0}, draw=none, from=4-2, to=0]
	\arrow[from=1, to=4-2]
\end{tikzcd}. \\
{$\mathscr{C}=\{f_1\colon v_X\mapsto v\}$, $\underline{\mathscr{C}}=\{\underline{f_1}\}$. We take $\mathscr{C}'=\{g_1\colon v_X\mapsto v\}$, which implies that $\underline{\mathscr{C}'}=\{\underline{g_1}\}$.}

		In the following cases, we relabel the vertices originally denoted by $v$ as $v_i$ and \(v_i{\urcorner}\), so that different morphisms in $\mathscr{C}$ can be distinguished explicitly.

		Case 2n-1.1. $Y=$
\begin{tikzcd}[row sep=small, column sep=tiny,  every label/.append style = {font=\normalsize}]
	& {} \\
	\\
	\\
	{v_n} & \textcolor{white}{O} & {v_{n-1}\urcorner} && \textcolor{white}{O} \\
	\\
	\\
	& {} && \textcolor{white}{O} & {} & {v_2\urcorner} \\
	\\
	\\
	&&&&& {} & {v_1}
	\arrow[""{name=0, anchor=center, inner sep=0}, "{v_n\urcorner}"{description}, draw=none, from=1-2, to=4-1]
	\arrow[""{name=1, anchor=center, inner sep=0}, "{v_{n-1}}"{description}, draw=none, from=4-3, to=7-2]
	\arrow["\cdots"{description}, sloped,from=4-3, to=7-4]
	\arrow[""{name=2, anchor=center, inner sep=0}, draw=none, from=4-5, to=7-4]
	\arrow[""{name=3, anchor=center, inner sep=0}, "{v_2}"{description}, draw=none, from=7-5, to=10-6]
	\arrow["\cdots"{description}, sloped,from=7-6, to=10-7]
	\arrow[""{name=4, anchor=center, inner sep=0}, draw=none, from=4-1, to=0]
	\arrow[""{name=5, anchor=center, inner sep=0}, draw=none, from=4-2, to=0]
	\arrow[""{name=6, anchor=center, inner sep=0}, draw=none, from=4-2, to=1]
	\arrow[""{name=7, anchor=center, inner sep=0}, draw=none, from=4-3, to=1]
	\arrow[""{name=8, anchor=center, inner sep=0}, draw=none, from=7-4, to=2]
	\arrow[""{name=9, anchor=center, inner sep=0}, draw=none, from=7-5, to=2]
	\arrow[""{name=10, anchor=center, inner sep=0}, draw=none, from=7-5, to=3]
	\arrow[""{name=11, anchor=center, inner sep=0}, draw=none, from=7-6, to=3]
	\arrow[from=4, to=4-1]
	\arrow["\cdots"{description}, sloped,from=5, to=6]
	\arrow[from=4-3, to=7]
	\arrow["\cdots"{description}, sloped,draw=none, from=8, to=7-4]
	\arrow["\cdots"{description}, sloped,from=9, to=10]
	\arrow[from=7-6, to=11]
\end{tikzcd}. \\
${\mathscr{C}}=\{f_i\colon v_X\mapsto v_i\mid i=1,2,\cdots,n\}$,
where $\underline{f_{i+1}-tf_i}=0$ for $i=1,2,\cdots,n-1$, which implies that ${\underline{\mathscr{C}}}=\{\underline{f_n}\}$. And we take ${\mathscr{C}'}=\{g_j\colon v_X\mapsto v_j\mid j=1,2,\cdots,n\}$, where $\underline{g_j}=0$ for $j\neq 1$, which implies ${\underline{\mathscr{C}'}}=\{\underline{g_1}\}$.

		 Case 2n-1.2. $Y=$
\begin{tikzcd}[row sep=small, column sep=tiny,  every label/.append style = {font=\normalsize}]
	& x && {} \\
	\\
	\\
	{} && {v_n} & \textcolor{white}{O} & {v_{n-1}\urcorner} && \textcolor{white}{O} \\
	\\
	\\
	&&& {} && \textcolor{white}{O} & {} & {v_2\urcorner} \\
	\\
	\\
	&&&&&&& {} & {v_1}
	\arrow[""{name=0, anchor=center, inner sep=0}, "\cdots"{description}, sloped,draw=none, from=1-2, to=4-1]
	\arrow["\cdots"{description}, sloped,from=1-2, to=4-3]
	\arrow[""{name=1, anchor=center, inner sep=0}, "{v_n\urcorner}"{description}, draw=none, from=1-4, to=4-3]
	\arrow[""{name=2, anchor=center, inner sep=0}, "{v_{n-1}}"{description}, draw=none, from=4-5, to=7-4]
	\arrow["\cdots"{description}, sloped,from=4-5, to=7-6]
	\arrow[""{name=3, anchor=center, inner sep=0}, draw=none, from=4-7, to=7-6]
	\arrow[""{name=4, anchor=center, inner sep=0}, "{v_2}"{description}, draw=none, from=7-7, to=10-8]
	\arrow["\cdots"{description}, sloped,from=7-8, to=10-9]
	\arrow[""{name=5, anchor=center, inner sep=0}, draw=none, from=1-2, to=0]
	\arrow[""{name=6, anchor=center, inner sep=0}, draw=none, from=4-3, to=1]
	\arrow[""{name=7, anchor=center, inner sep=0}, draw=none, from=4-4, to=1]
	\arrow[""{name=8, anchor=center, inner sep=0}, draw=none, from=4-4, to=2]
	\arrow[""{name=9, anchor=center, inner sep=0}, draw=none, from=4-5, to=2]
	\arrow[""{name=10, anchor=center, inner sep=0}, draw=none, from=3, to=7-7]
	\arrow[""{name=11, anchor=center, inner sep=0}, draw=none, from=7-6, to=3]
	\arrow[""{name=12, anchor=center, inner sep=0}, draw=none, from=7-7, to=4]
	\arrow[""{name=13, anchor=center, inner sep=0}, draw=none, from=7-8, to=4]
	\arrow[from=1-2, to=5]
	\arrow[from=6, to=4-3]
	\arrow["\cdots"{description}, sloped,from=7, to=8]
	\arrow[from=4-5, to=9]
	\arrow["\cdots"{description}, sloped,from=10, to=12]
	\arrow["\cdots"{description}, sloped,draw=none, from=11, to=7-6]
	\arrow[from=7-8, to=13]
\end{tikzcd}. \\
Similarly,
${\mathscr{C}}=\{f_i\colon v_X\mapsto v_i\mid i=1,2,\cdots,n\}$, ${\underline{\mathscr{C}}}=\{\underline{f_n}\}$. \\ {We define} ${\mathscr{C}'}=\{g_j\colon v_X\mapsto v_j\mid j=1,2,\cdots,n\}$, and we have ${\underline{\mathscr{C}'}}=\{\underline{g_1}\}$.

		Case 2n-1.3. $Y=$
\begin{tikzcd}[row sep=small, column sep=tiny,  every label/.append style = {font=\normalsize}]
	&&& {} \\
	\\
	\\
	{} && {v_n} & \textcolor{white}{O} & {v_{n-1}\urcorner} && {} \\
	\\
	\\
	& x && {} && \textcolor{white}{O} & \textcolor{white}{O} & {v_2\urcorner} \\
	\\
	\\
	&&&&&&& {} & {v_1}
	\arrow[""{name=0, anchor=center, inner sep=0}, "{v_n\urcorner}"{description}, draw=none, from=1-4, to=4-3]
	\arrow["\cdots"{description}, sloped,from=4-3, to=7-2]
	\arrow[""{name=1, anchor=center, inner sep=0}, "{v_{n-1}}"{description}, draw=none, from=4-5, to=7-4]
	\arrow["\cdots"{description}, sloped,from=4-5, to=7-6]
	\arrow[""{name=2, anchor=center, inner sep=0}, draw=none, from=4-7, to=7-6]
	\arrow[""{name=3, anchor=center, inner sep=0}, "\cdots"{description}, sloped,draw=none, from=7-2, to=4-1]
	\arrow[""{name=4, anchor=center, inner sep=0}, "{v_2}"{description}, draw=none, from=7-7, to=10-8]
	\arrow["\cdots"{description}, sloped,from=7-8, to=10-9]
	\arrow[""{name=5, anchor=center, inner sep=0}, draw=none, from=4-3, to=0]
	\arrow[""{name=6, anchor=center, inner sep=0}, draw=none, from=4-4, to=0]
	\arrow[""{name=7, anchor=center, inner sep=0}, draw=none, from=4-4, to=1]
	\arrow[""{name=8, anchor=center, inner sep=0}, draw=none, from=4-5, to=1]
	\arrow[""{name=9, anchor=center, inner sep=0}, draw=none, from=2, to=7-7]
	\arrow[""{name=10, anchor=center, inner sep=0}, draw=none, from=7-2, to=3]
	\arrow[""{name=11, anchor=center, inner sep=0}, draw=none, from=7-6, to=2]
	\arrow[""{name=12, anchor=center, inner sep=0}, draw=none, from=7-7, to=4]
	\arrow[""{name=13, anchor=center, inner sep=0}, draw=none, from=7-8, to=4]
	\arrow[from=5, to=4-3]
	\arrow["\cdots"{description}, sloped,from=6, to=7]
	\arrow[from=4-5, to=8]
	\arrow["\cdots"{description}, sloped,from=9, to=12]
	\arrow[from=10, to=7-2]
	\arrow["\cdots"{description}, sloped,draw=none, from=11, to=7-6]
	\arrow[from=7-8, to=13]
\end{tikzcd}. \\
		  ${\mathscr{C}}=\{f_i\colon v_X\mapsto v_i\mid i=1,2,\cdots,n\}$, ${\underline{\mathscr{C}}}=\{\underline{f_n}\}$. \\ {We define} ${\mathscr{C}'}=\{g_j\colon v_X\mapsto v_j\mid j=1,2,\cdots,n\}$, and we have ${\underline{\mathscr{C}'}}=\{\underline{g_1}\}$.

		  Case 2n-1.4. $Y=$
\begin{tikzcd}[row sep=small, column sep=tiny,  every label/.append style = {font=\normalsize}]
	{v_n\urcorner} && {} \\
	\\
	\\
	& {v_{n-1}} & \textcolor{white}{O} & \textcolor{white}{O} && {} \\
	\\
	\\
	&& {} && {v_2} & \textcolor{white}{O} & {v_1\urcorner} \\
	\\
	\\
	&&&&& {}
	\arrow["\cdots"{description}, sloped,from=1-1, to=4-2]
	\arrow[""{name=0, anchor=center, inner sep=0}, "{v_{n-1}\urcorner}"{description}, draw=none, from=1-3, to=4-2]
	\arrow[""{name=1, anchor=center, inner sep=0}, draw=none, from=4-4, to=7-3]
	\arrow["\cdots"{description}, sloped,from=4-4, to=7-5]
	\arrow[""{name=2, anchor=center, inner sep=0}, "{v_2\urcorner}"{description}, draw=none, from=4-6, to=7-5]
	\arrow[""{name=3, anchor=center, inner sep=0}, "{v_1}"{description}, draw=none, from=7-7, to=10-6]
	\arrow[""{name=4, anchor=center, inner sep=0}, draw=none, from=4-2, to=0]
	\arrow[""{name=5, anchor=center, inner sep=0}, draw=none, from=4-3, to=0]
	\arrow[""{name=6, anchor=center, inner sep=0}, draw=none, from=4-3, to=1]
	\arrow["\cdots"{description}, sloped,draw=none, from=4-4, to=1]
	\arrow[""{name=7, anchor=center, inner sep=0}, draw=none, from=7-5, to=2]
	\arrow[""{name=8, anchor=center, inner sep=0}, draw=none, from=7-6, to=2]
	\arrow[""{name=9, anchor=center, inner sep=0}, draw=none, from=7-6, to=3]
	\arrow[""{name=10, anchor=center, inner sep=0}, draw=none, from=7-7, to=3]
	\arrow[from=4, to=4-2]
	\arrow["\cdots"{description}, sloped,from=5, to=6]
	\arrow[from=7, to=7-5]
	\arrow["\cdots"{description}, sloped,from=8, to=9]
	\arrow[from=7-7, to=10]
\end{tikzcd}. \\
		  ${\mathscr{C}}=\{f_i\colon v_X\mapsto v_i\mid i=1,2,\cdots,n-1\}\cup\{f_n\colon v_X\mapsto v_n\urcorner\}$, where $\underline{f_{i+1}-tf_i}=0$ for \\ $i=1,2,\cdots,n-2$, and $\underline{f_1}=0$. Then ${\underline{\mathscr{C}}}=\{\underline{f_n}\}$. And we take \[{\mathscr{C}'}=\{g_j\colon v_X\mapsto v_j\mid j=1,2,\cdots,n-1\}\cup\{g_*\colon v_X\mapsto t^{n-1}v_1\urcorner+t^{n-2}v_2\urcorner+\cdots+tv_{n-1}\urcorner+v_n\urcorner\}.\] Then ${\underline{\mathscr{C}'}}=\{\underline{g_*}\}$.

		  Case 2n-1.5. $Y=$
\begin{tikzcd}[row sep=small, column sep=tiny,  every label/.append style = {font=\normalsize}]
	{v_n\urcorner} && {} \\
	\\
	\\
	& {v_{n-1}} & \textcolor{white}{O} & \textcolor{white}{O} && \textcolor{white}{O} && x \\
	\\
	\\
	&& {} && {v_2} & \textcolor{white}{O} & {v_1\urcorner} && {} \\
	\\
	\\
	&&&&& {}
	\arrow["\cdots"{description}, sloped,from=1-1, to=4-2]
	\arrow[""{name=0, anchor=center, inner sep=0}, "{v_{n-1}\urcorner}"{description}, draw=none, from=1-3, to=4-2]
	\arrow[""{name=1, anchor=center, inner sep=0}, draw=none, from=4-4, to=7-3]
	\arrow["\cdots"{description}, sloped,from=4-4, to=7-5]
	\arrow[""{name=2, anchor=center, inner sep=0}, "{v_2\urcorner}"{description}, draw=none, from=4-6, to=7-5]
	\arrow["\cdots"{description}, sloped,from=4-8, to=7-7]
	\arrow[""{name=3, anchor=center, inner sep=0}, "\cdots"{description}, sloped,draw=none, from=4-8, to=7-9]
	\arrow[""{name=4, anchor=center, inner sep=0}, "{v_1}"{description}, draw=none, from=7-7, to=10-6]
	\arrow[""{name=5, anchor=center, inner sep=0}, draw=none, from=4-2, to=0]
	\arrow[""{name=6, anchor=center, inner sep=0}, draw=none, from=4-3, to=0]
	\arrow[""{name=7, anchor=center, inner sep=0}, draw=none, from=4-3, to=1]
	\arrow["\cdots"{description}, sloped,draw=none, from=4-4, to=1]
	\arrow[""{name=8, anchor=center, inner sep=0}, draw=none, from=4-8, to=3]
	\arrow[""{name=9, anchor=center, inner sep=0}, draw=none, from=7-5, to=2]
	\arrow[""{name=10, anchor=center, inner sep=0}, draw=none, from=7-6, to=2]
	\arrow[""{name=11, anchor=center, inner sep=0}, draw=none, from=7-6, to=4]
	\arrow[""{name=12, anchor=center, inner sep=0}, draw=none, from=7-7, to=4]
	\arrow[from=5, to=4-2]
	\arrow["\cdots"{description}, sloped,from=6, to=7]
	\arrow[from=4-8, to=8]
	\arrow[from=9, to=7-5]
	\arrow["\cdots"{description}, sloped,from=10, to=11]
	\arrow[from=7-7, to=12]
\end{tikzcd}. \\
		${\mathscr{C}}=\{f_i\colon v_X\mapsto v_i\mid i=1,2,\cdots,n-1\}\cup\{f_n\colon v_X\mapsto v_n\urcorner\}$ and ${\underline{\mathscr{C}}}=\{\underline{f_n}\}$. {We take} \[{\mathscr{C}'}=\{g_j\colon v_X\mapsto v_j\mid j=1,2,\cdots,n-1\}\cup\{g_*\colon v_X\mapsto t^{n-1}v_1\urcorner+t^{n-2}v_2\urcorner+\cdots+tv_{n-1}\urcorner+v_n\urcorner\}.\] Then ${\underline{\mathscr{C}'}}=\{\underline{g_*}\}$.

		Case 2n-1.6. $Y=$
\begin{tikzcd}[row sep=small, column sep=tiny,  every label/.append style = {font=\normalsize}]
	{v_n\urcorner} && {} \\
	\\
	\\
	& {v_{n-1}} & \textcolor{white}{O} & \textcolor{white}{O} && {} \\
	\\
	\\
	&& {} && {v_2} & \textcolor{white}{O} & {v_1\urcorner} && {} \\
	\\
	\\
	&&&&& {} && x
	\arrow["\cdots"{description}, sloped,from=1-1, to=4-2]
	\arrow[""{name=0, anchor=center, inner sep=0}, "{v_{n-1}\urcorner}"{description}, draw=none, from=1-3, to=4-2]
	\arrow[""{name=1, anchor=center, inner sep=0}, draw=none, from=4-4, to=7-3]
	\arrow["\cdots"{description}, sloped,from=4-4, to=7-5]
	\arrow[""{name=2, anchor=center, inner sep=0}, "{v_2\urcorner}"{description}, draw=none, from=4-6, to=7-5]
	\arrow[""{name=3, anchor=center, inner sep=0}, "{v_1}"{description}, draw=none, from=7-7, to=10-6]
	\arrow["\cdots"{description}, sloped,from=7-7, to=10-8]
	\arrow[""{name=4, anchor=center, inner sep=0}, "\cdots"{description}, sloped,draw=none, from=10-8, to=7-9]
	\arrow[""{name=5, anchor=center, inner sep=0}, draw=none, from=4-2, to=0]
	\arrow[""{name=6, anchor=center, inner sep=0}, draw=none, from=4-3, to=0]
	\arrow[""{name=7, anchor=center, inner sep=0}, draw=none, from=4-3, to=1]
	\arrow["\cdots"{description}, sloped,draw=none, from=4-4, to=1]
	\arrow[""{name=8, anchor=center, inner sep=0}, draw=none, from=7-5, to=2]
	\arrow[""{name=9, anchor=center, inner sep=0}, draw=none, from=7-6, to=2]
	\arrow[""{name=10, anchor=center, inner sep=0}, draw=none, from=7-6, to=3]
	\arrow[""{name=11, anchor=center, inner sep=0}, draw=none, from=7-7, to=3]
	\arrow[""{name=12, anchor=center, inner sep=0}, draw=none, from=10-8, to=4]
	\arrow[from=5, to=4-2]
	\arrow["\cdots"{description}, sloped,from=6, to=7]
	\arrow[from=8, to=7-5]
	\arrow["\cdots"{description}, sloped,from=9, to=10]
	\arrow[from=7-7, to=11]
	\arrow[from=12, to=10-8]
\end{tikzcd}. \\
		${\mathscr{C}}=\{f_i\colon v_X\mapsto v_i\mid i=1,2,\cdots,n-1\}\cup\{f_n\colon v_X\mapsto v_n\urcorner\}$, ${\underline{\mathscr{C}}}=\{\underline{f_n}\}$. {We take} \[{\mathscr{C}'}=\{g_j\colon v_X\mapsto v_j\mid j=1,2,\cdots,n-1\}\cup\{g_*\colon v_X\mapsto t^{n-1}v_1\urcorner+t^{n-2}v_2\urcorner+\cdots+tv_{n-1}\urcorner+v_n\urcorner\}.\] Then ${\underline{\mathscr{C}'}}=\{\underline{g_*}\}$.

		Case 2n.1. $Y=$
\begin{tikzcd}[row sep=small, column sep=tiny,  every label/.append style = {font=\normalsize}]
	{v_n\urcorner} && {} \\
	\\
	\\
	& {v_{n-1}} & \textcolor{white}{O} & \textcolor{white}{O} && {} \\
	\\
	\\
	&& {} && {v_1} & \textcolor{white}{O} & {} \\
	\\
	\\
	&&&&& {}
	\arrow["\cdots"{description}, sloped,from=1-1, to=4-2]
	\arrow[""{name=0, anchor=center, inner sep=0}, "{v_{n-1}\urcorner}"{description}, draw=none, from=1-3, to=4-2]
	\arrow[""{name=1, anchor=center, inner sep=0}, draw=none, from=4-4, to=7-3]
	\arrow["\cdots"{description}, sloped,from=4-4, to=7-5]
	\arrow[""{name=2, anchor=center, inner sep=0}, "{v_1\urcorner}"{description}, draw=none, from=4-6, to=7-5]
	\arrow[""{name=3, anchor=center, inner sep=0}, "{v_0}"{description}, draw=none, from=7-7, to=10-6]
	\arrow[""{name=4, anchor=center, inner sep=0}, draw=none, from=4-2, to=0]
	\arrow[""{name=5, anchor=center, inner sep=0}, draw=none, from=4-3, to=0]
	\arrow[""{name=6, anchor=center, inner sep=0}, draw=none, from=4-3, to=1]
	\arrow["\cdots"{description}, sloped,draw=none, from=4-4, to=1]
	\arrow[""{name=7, anchor=center, inner sep=0}, draw=none, from=7-5, to=2]
	\arrow[""{name=8, anchor=center, inner sep=0}, draw=none, from=7-6, to=2]
	\arrow[""{name=9, anchor=center, inner sep=0}, draw=none, from=7-6, to=3]
	\arrow[draw=none, from=7-7, to=3]
	\arrow[from=4, to=4-2]
	\arrow["\cdots"{description}, sloped,from=5, to=6]
	\arrow[from=7, to=7-5]
	\arrow["\cdots"{description}, sloped,from=8, to=9]
\end{tikzcd}. \\
		${\mathscr{C}}=\{f_i\colon v_X\mapsto v_i\mid i=0,1,\cdots,n-1\}\cup\{f_n\colon v_X\mapsto v_n\urcorner\}$, where $\underline{f_{i+1}-tf_i}=0$ for $i=0,1,\cdots,n-2$, and $\underline{f_{n-1}}=0$. Hence ${\underline{\mathscr{C}}}=\{\underline{f_n}\}$. And we take \\${\mathscr{C}'}=\{g_j\colon v_X\mapsto v_j\mid j=0,1,\cdots,n-1\}$. Then ${\underline{\mathscr{C}'}}=\{\underline{g_0}\}$.

		Case 2n.2-2n.10. $Y$ has a local configuration containing the same subdiagram \\ \[N=
\begin{tikzcd}[row sep=small, column sep=tiny,  every label/.append style = {font=\normalsize}]
	& {} \\
	\\
	\\
	{v_n} & \textcolor{white}{O} & \textcolor{white}{O} && {} \\
	\\
	\\
	& {} && {v_2} & \textcolor{white}{O} & {v_1\urcorner} \\
	\\
	\\
	&&&& {}
	\arrow[""{name=0, anchor=center, inner sep=0}, "{v_n\urcorner}"{description}, draw=none, from=1-2, to=4-1]
	\arrow[""{name=1, anchor=center, inner sep=0}, draw=none, from=4-3, to=7-2]
	\arrow["\cdots"{description}, sloped,from=4-3, to=7-4]
	\arrow[""{name=2, anchor=center, inner sep=0}, "{v_2\urcorner}"{description}, draw=none, from=4-5, to=7-4]
	\arrow[""{name=3, anchor=center, inner sep=0}, "{v_1}"{description}, draw=none, from=7-6, to=10-5]
	\arrow[""{name=4, anchor=center, inner sep=0}, draw=none, from=4-1, to=0]
	\arrow[""{name=5, anchor=center, inner sep=0}, draw=none, from=4-2, to=0]
	\arrow[""{name=6, anchor=center, inner sep=0}, draw=none, from=4-2, to=1]
	\arrow["\cdots"{description}, sloped,draw=none, from=4-3, to=1]
	\arrow[""{name=7, anchor=center, inner sep=0}, draw=none, from=7-4, to=2]
	\arrow[""{name=8, anchor=center, inner sep=0}, draw=none, from=7-5, to=2]
	\arrow[""{name=9, anchor=center, inner sep=0}, draw=none, from=7-5, to=3]
	\arrow[""{name=10, anchor=center, inner sep=0}, draw=none, from=7-6, to=3]
	\arrow[from=4, to=4-1]
	\arrow["\cdots"{description}, sloped,from=5, to=6]
	\arrow[from=7, to=7-4]
	\arrow["\cdots"{description}, sloped,from=8, to=9]
	\arrow[from=7-6, to=10]
\end{tikzcd}.\] \\
		${\mathscr{C}}=\{f_i\colon v_X\mapsto v_i\mid i=1,2,\cdots,n\}$, ${\underline{\mathscr{C}}}=\varnothing$. {We take} \\${\mathscr{C}'}=\{g_j\colon v_X\mapsto v_j\mid j=1,2,\cdots,n\}$, {and then} ${\underline{\mathscr{C}'}}=\varnothing$.

		Notice that in each case, we have the same cardinality:
		\[|{\underline{\mathscr{C}}}|=|{\underline{\mathscr{C}'}}|.\]
		And only in {Case 1.1-1.3, Case 2n-1.1-Case 2n-1.6 and Case 2n.1}, ${\underline{\mathscr{C}}}\neq\varnothing$, and {Case 1.1, Case 2n-1.1, Case 2n-1.4 and Case 2n.1} lead to $Y=N$, while in all other cases $N$ contains an endpoint of $Y$. Therefore, {there are at most two equivalence classes $\mathscr{C}_1,\mathscr{C}_2\in\mathscr{B}/\sim$ with $\underline{\mathscr{C}_1},\underline{\mathscr{C}_2}\neq\varnothing$.} By the calculation in the proof, the cardinality of $\underline{\mathscr{C}}$ is $1$ for each $\mathscr{C}\in\mathscr{B}/\sim$ with $\underline{\mathscr{C}}\neq\varnothing$. Thus we have \[\dim\nolimits_k \sHom_{\varLambda}(X,Y)\leq 2.\]
			 Moreover, we can verify that {if $\mathscr{C}_1$ and $\mathscr{C}_2$ are two different equivalence classes with $\underline{\mathscr{C}_1}, \underline{\mathscr{C}_2}\neq\varnothing$, then $\underline{\mathscr{C}_1}$ and $\underline{\mathscr{C}_2}$ are linearly independent in $\sHom(X,Y)$. Therefore
$$\bigcup\limits_{\mathscr{C}\in\mathscr{B}/\sim}\underline{\mathscr{C}}$$
forms a basis of $\sHom(X,Y)$.} Similarly, we can verify that the morphism set
{$$\bigcup\limits_{\mathscr{C}\in\mathscr{B}/\sim}\underline{\mathscr{C'}}$$} is linearly independent in $\sHom(\Omega(X),Y)$, and hence
		\[\begin{aligned}
			\dim\nolimits_k \sHom(X,Y)&={\sum_{\mathscr{C}\in\mathscr{B}/\sim}|\underline{\mathscr{C}}|}\\
			&={\sum_{\mathscr{C}\in\mathscr{B}/\sim}|\underline{\mathscr{C}'}|}\leq\dim\nolimits_k \sHom(\Omega(X),Y).
		\end{aligned}\]

		Now it remains to prove that $\dim\nolimits_k \sHom_{\varLambda}(\Omega(X),Y)\leq \dim\nolimits_k \sHom(X,Y)$.

        Since $\Omega(X)/\soc(\Omega(X))$ is isomorphic to $X/\soc(X)$, we can view $\Hom(\Omega(X)/\soc(\Omega(X)),Y)$ as a common subspace of $\Hom(\Omega(X),Y)$ and $\Hom(X,Y)$. As before we {denote by} $\mathscr{B}$ the set of diagrammatic morphisms from $X$ to $Y$, which is a basis of $\Hom(X,Y)$. Then the subset $\mathscr{B}'$ of $\mathscr{B}$ consisting of diagrammatic morphisms $f$ with $f(\soc(X))=0$ forms a basis of $\Hom(\Omega(X)/\soc(\Omega(X)),Y)$. Recall that for each $\mathscr{C}\in\mathscr{B}/\sim$, we have constructed a subset $\mathscr{C}'$ of $\Hom(\Omega(X),Y)$. Denote by $\mathscr{E}$ the union of all $\mathscr{C}'$. By the definition of each subset $\mathscr{C}'$ of $\Hom(\Omega(X),Y)$, it is straightforward to show that $\mathscr{B}'$ is contained in $\mathscr{E}$.

        {Next we will show that $\mathscr{E}$ generates $\Hom(\Omega(X),Y)$ as a $k$-vector space.}

        {Let $g$ be a morphism in $\Hom(\Omega(X),Y)$. We denote the vertex corresponding to $\top(\Omega(X))$ (resp. $\soc(\Omega(X))$) by $v_X$ (resp. $v'_X$). The morphism $g$ maps $v_X$ to a linear combination of vertices $v_i$ in $Y$, where each $v_i$ corresponds to the edge $v$ in the Brauer graph of $\varLambda$. Each $v_i$ gives rise to a subdiagram $N_i$ of $Y$, which is defined similarly as the subdiagram $N$ of $Y$ at the beginning of the proof. Then $v_i$ appears either at a peak or a deep in $N_i$. If the vertex $v_i$ appears at a deep in $N_i$, or if $N_i$ contains only one vertex, then there exists a morphism $f\in\mathscr{B}'$ such that $f$ maps $v_X$ to $v_i$. Then after subtracting a linear combination of morphisms in $\mathscr{B}'$, we may assume that $g$ maps $v_X$ to a linear combination of vertices $v_i$, where each $N_i$ contains at least two vertices and each $v_i$ appears at a peak in $N_i$.}

        {We will show that $N_i$ can only appears as a subdiagram of $Y$ as in Case 2n-1.4-2n-1.6.}

        Case 2n-1.1-2n-1.3. $Y$ has a local configuration containing the same subdiagram \\ \[N_i=
\begin{tikzcd}[row sep=small, column sep=tiny,  every label/.append style = {font=\normalsize}]
	& {} \\
	\\
	\\
	{v_n} & \textcolor{white}{O} & {v_{n-1}\urcorner} && \textcolor{white}{O} \\
	\\
	\\
	& {} && \textcolor{white}{O} & {} & {v_2\urcorner} \\
	\\
	\\
	&&&&& {} & {v_1}
	\arrow[""{name=0, anchor=center, inner sep=0}, "{v_n\urcorner}"{description}, draw=none, from=1-2, to=4-1]
	\arrow[""{name=1, anchor=center, inner sep=0}, "{v_{n-1}}"{description}, draw=none, from=4-3, to=7-2]
	\arrow["\cdots"{description}, sloped,from=4-3, to=7-4]
	\arrow[""{name=2, anchor=center, inner sep=0}, draw=none, from=4-5, to=7-4]
	\arrow[""{name=3, anchor=center, inner sep=0}, "{v_2}"{description}, draw=none, from=7-5, to=10-6]
	\arrow["\cdots"{description}, sloped,from=7-6, to=10-7]
	\arrow[""{name=4, anchor=center, inner sep=0}, draw=none, from=4-1, to=0]
	\arrow[""{name=5, anchor=center, inner sep=0}, draw=none, from=4-2, to=0]
	\arrow[""{name=6, anchor=center, inner sep=0}, draw=none, from=4-2, to=1]
	\arrow[""{name=7, anchor=center, inner sep=0}, draw=none, from=4-3, to=1]
	\arrow[""{name=8, anchor=center, inner sep=0}, draw=none, from=7-4, to=2]
	\arrow[""{name=9, anchor=center, inner sep=0}, draw=none, from=7-5, to=2]
	\arrow[""{name=10, anchor=center, inner sep=0}, draw=none, from=7-5, to=3]
	\arrow[""{name=11, anchor=center, inner sep=0}, draw=none, from=7-6, to=3]
	\arrow[from=4, to=4-1]
	\arrow["\cdots"{description}, sloped,from=5, to=6]
	\arrow[from=4-3, to=7]
	\arrow["\cdots"{description}, sloped,draw=none, from=8, to=7-4]
	\arrow["\cdots"{description}, sloped,from=9, to=10]
	\arrow[from=7-6, to=11]
\end{tikzcd}. \]\\
{Suppose that the coefficient of $v_j\urcorner$ in $g(v_X)$ is $\lambda_j$ ($2\leq j\leq n$). Let $\eta'$ be the string $v\ra v$. Since $\eta\cdot v_X=v'_X$ and $\eta'\cdot v_X=t v'_X$, we have $t \eta\cdot v_X=\eta'\cdot v_X$ and $t \eta\cdot g(v_X)=\eta'\cdot g(v_X)$. A calculation shows that
\begin{equation*}
\text{the coefficient of } v_j \text{ in } \eta\cdot g(v_X)=\begin{cases}
\lambda_{j+1}, & \text{ if } 1\leq j\leq n-1; \\
0, & \text{ if } j=n,
\end{cases}
\end{equation*} and
\begin{equation*}
\text{the coefficient of } v_j \text{ in } \eta'\cdot g(v_X)=\begin{cases}
\lambda_{j}, & \text{ if } 2\leq j\leq n; \\
0, & \text{ if } j=1.
\end{cases}
\end{equation*}
Then $\lambda_n=0$ and $\lambda_j=t\lambda_{j+1}$ for $2\leq j\leq n-1$. Thus $\lambda_j=0$ for $2\leq j\leq n$.}

{Case 2n.1. $Y=$}
\begin{tikzcd}[row sep=small, column sep=tiny,  every label/.append style = {font=\normalsize}]
	{v_n\urcorner} && {} \\
	\\
	\\
	& {v_{n-1}} & \textcolor{white}{O} & \textcolor{white}{O} && {} \\
	\\
	\\
	&& {} && {v_1} & \textcolor{white}{O} & {} \\
	\\
	\\
	&&&&& {}
	\arrow["\cdots"{description}, sloped,from=1-1, to=4-2]
	\arrow[""{name=0, anchor=center, inner sep=0}, "{v_{n-1}\urcorner}"{description}, draw=none, from=1-3, to=4-2]
	\arrow[""{name=1, anchor=center, inner sep=0}, draw=none, from=4-4, to=7-3]
	\arrow["\cdots"{description}, sloped,from=4-4, to=7-5]
	\arrow[""{name=2, anchor=center, inner sep=0}, "{v_1\urcorner}"{description}, draw=none, from=4-6, to=7-5]
	\arrow[""{name=3, anchor=center, inner sep=0}, "{v_0}"{description}, draw=none, from=7-7, to=10-6]
	\arrow[""{name=4, anchor=center, inner sep=0}, draw=none, from=4-2, to=0]
	\arrow[""{name=5, anchor=center, inner sep=0}, draw=none, from=4-3, to=0]
	\arrow[""{name=6, anchor=center, inner sep=0}, draw=none, from=4-3, to=1]
	\arrow["\cdots"{description}, sloped,draw=none, from=4-4, to=1]
	\arrow[""{name=7, anchor=center, inner sep=0}, draw=none, from=7-5, to=2]
	\arrow[""{name=8, anchor=center, inner sep=0}, draw=none, from=7-6, to=2]
	\arrow[""{name=9, anchor=center, inner sep=0}, draw=none, from=7-6, to=3]
	\arrow[draw=none, from=7-7, to=3]
	\arrow[from=4, to=4-2]
	\arrow["\cdots"{description}, sloped,from=5, to=6]
	\arrow[from=7, to=7-5]
	\arrow["\cdots"{description}, sloped,from=8, to=9]
\end{tikzcd}. \\
{Suppose that the coefficient of $v_j\urcorner$ in $g(v_X)$ is $\lambda_j$ ($1\leq j\leq n$). Let $\eta'$ be the diagram $v\ra v$. We have $t \eta\cdot g(v_X)=\eta'\cdot g(v_X)$. A calculation shows that
the coefficient of $v_j$ in $\eta\cdot g(v_X)$ is $\lambda_{j+1}$ for $0\leq j\leq n-1$, and
\begin{equation*}
\text{the coefficient of } v_j \text{ in } \eta'\cdot g(v_X)=\begin{cases}
\lambda_{j}, & \text{ if } 1\leq j\leq n-1; \\
0, & \text{ if } j=0.
\end{cases}
\end{equation*}
Then $t\lambda_1=0$ and $\lambda_j=t\lambda_{j+1}$ for $1\leq j\leq n-1$. Thus $\lambda_j=0$ for $1\leq j\leq n$.}

Case 2n.2-2n.10. $Y$ has a local configuration containing the same subdiagram\\ \[N_i=
\begin{tikzcd}[row sep=small, column sep=tiny,  every label/.append style = {font=\normalsize}]
	& {} \\
	\\
	\\
	{v_n} & \textcolor{white}{O} & \textcolor{white}{O} && {} \\
	\\
	\\
	& {} && {v_2} & \textcolor{white}{O} & {v_1\urcorner} \\
	\\
	\\
	&&&& {}
	\arrow[""{name=0, anchor=center, inner sep=0}, "{v_n\urcorner}"{description}, draw=none, from=1-2, to=4-1]
	\arrow[""{name=1, anchor=center, inner sep=0}, draw=none, from=4-3, to=7-2]
	\arrow["\cdots"{description}, sloped,from=4-3, to=7-4]
	\arrow[""{name=2, anchor=center, inner sep=0}, "{v_2\urcorner}"{description}, draw=none, from=4-5, to=7-4]
	\arrow[""{name=3, anchor=center, inner sep=0}, "{v_1}"{description}, draw=none, from=7-6, to=10-5]
	\arrow[""{name=4, anchor=center, inner sep=0}, draw=none, from=4-1, to=0]
	\arrow[""{name=5, anchor=center, inner sep=0}, draw=none, from=4-2, to=0]
	\arrow[""{name=6, anchor=center, inner sep=0}, draw=none, from=4-2, to=1]
	\arrow["\cdots"{description}, sloped,draw=none, from=4-3, to=1]
	\arrow[""{name=7, anchor=center, inner sep=0}, draw=none, from=7-4, to=2]
	\arrow[""{name=8, anchor=center, inner sep=0}, draw=none, from=7-5, to=2]
	\arrow[""{name=9, anchor=center, inner sep=0}, draw=none, from=7-5, to=3]
	\arrow[""{name=10, anchor=center, inner sep=0}, draw=none, from=7-6, to=3]
	\arrow[from=4, to=4-1]
	\arrow["\cdots"{description}, sloped,from=5, to=6]
	\arrow[from=7, to=7-4]
	\arrow["\cdots"{description}, sloped,from=8, to=9]
	\arrow[from=7-6, to=10]
\end{tikzcd}. \]\\
{Suppose that the coefficient of $v_j\urcorner$ in $g(v_X)$ is $\lambda_j$ ($1\leq j\leq n$). Let $\eta'$ be the diagram $v\ra v$. We have $t \eta\cdot g(v_X)=\eta'\cdot g(v_X)$. A calculation shows that
\begin{equation*}
\text{the coefficient of } v_j \text{ in } \eta\cdot g(v_X)=\begin{cases}
\lambda_{j+1}, & \text{ if } 1\leq j\leq n-1; \\
0, & \text{ if } j=n,
\end{cases}
\end{equation*}
and the coefficient of $v_j$ in $\eta'\cdot g(v_X)$ is $\lambda_j$ for $1\leq j\leq n$.
Then $t\lambda_{j+1}=\lambda_j$ for $1\leq j\leq n-1$ and $\lambda_n=0$. Thus $\lambda_j=0$ for $1\leq j\leq n$.}

{Therefore $N_i$ can only appears as a subdiagram of $Y$ as in Case 2n-1.4-2n-1.6. It is straightforward to show that $g$ is a linear combination of the morphisms $g_*$ given by Case 2n-1.4-2n-1.6. Then we imply that $\mathscr{E}$ generates $\Hom(\Omega(X),Y)$ as a $k$-vector space.}

{By definition,
$$\mathscr{E}=\bigcup\limits_{\mathscr{C}\in\mathscr{B}/\sim}\mathscr{C}'.$$
Then $$\bigcup\limits_{\mathscr{C}\in\mathscr{B}/\sim}\underline{\mathscr{C}'}$$
generates $\sHom(\Omega(X),Y)$.
Since $$\bigcup\limits_{\mathscr{C}\in\mathscr{B}/\sim}\underline{\mathscr{C}}$$
forms a basis of $\sHom(X,Y)$, and since $|\underline{\mathscr{C}}|=|\underline{\mathscr{C}'}|$ for each $\mathscr{C}\in\mathscr{B}/\sim$, we have
\[\begin{aligned}
			\dim\nolimits_k \sHom(\Omega(X),Y)&\leq\sum_{\mathscr{C}\in\mathscr{B}/\sim}|\underline{\mathscr{C}'}|\\
			&=\sum_{\mathscr{C}\in\mathscr{B}/\sim}|\underline{\mathscr{C}}|=\dim\nolimits_k \sHom(X,Y).
		\end{aligned}\]}

		The lemma follows.
    \end{proof}
	
	\begin{remark}
		From the proof of Lemma \ref{lem:equal-dimension} we have the following observations:
		\begin{enumerate}
			\item In Case 2n.1 (see also Case 2.1 in Example \ref{example: illustrate}), the subdiagram $\eta$ of $N$ is closed under taking successors, and hence can arise as the image of a diagrammatic monomorphism from $X$ to $Y$. However, since $X \not\cong \Omega (X)$, $\eta$ cannot be the image of a diagrammatic monomorphism from $\Omega (X)$ to $Y$. Therefore, if Case 2n.1 occurs (in this case $Y\cong N$), then the dimensions of $\Hom(X,Y)$ and $\Hom(\Omega(X),Y)$ differ by $1$.  
			\item Case 2.8 may occur multiple times if the multiplicity $m>1$ and $Y$ contains a sufficiently long subdiagram.
			\item The diagrammatic morphism $g_*$ from $\Omega(X)$ to $Y$ appearing in Case 2n-1.4-2n-1.6 arises from the module structure of $\Omega(X)$. It can be obtained from a diagrammatic morphism from a universal cover of $\Omega(X)$ to $Y$. We recommend the reader refer to \cite{Kra}, where basis morphisms of Hom-spaces are described in a more general setting for special biserial algebras, covering not only string modules but also band modules.
		\end{enumerate}
	\end{remark}

	We give {a concrete} example to illustrate {the proof of Lemma \ref{lem:equal-dimension}}.

	\begin{example}\label{example: illustrate}
		Let $\varLambda$ be a symmetric stably biserial algebra given by the Brauer graph
		\[\begin{tikzpicture}
            \draw (-0.5,0) circle (0.5);
            \fill (0,0) circle (0.5ex);
            \fill (1,1) circle (0.5ex);
            \fill (1,-1) circle (0.5ex);
            \node at(-1.2,0) {$v$};
            \node at(-0.3,0) {$3$};
            \draw[-] (0,0) -- (1,1);
            \draw[-] (0,0) -- (1,-1);
            \draw (-0.45,-0.15) rectangle (-0.15,0.15);
            \node at(0.5,0.75) {$x_1$};
            \node at(0.5,-0.75) {$x_2$};
        \end{tikzpicture}\]
		where $3$ is the multiplicity of the central vertex, and the cyclic order of half-edges is given by clockwise orientation. We can denote the projective indecomposable module $P_v$ corresponding to the vertex $v$ by the following diagram:
		\[\begin{tikzcd}[row sep=-4, column sep=0]
			% https://q.uiver.app/#q=WzAsMjQsWzEsMCwidiJdLFswLDEsInYiXSxbMCwyLCJ4XzEiXSxbMCwzLCJ4XzIiXSxbMCw0LCJ2Il0sWzAsNSwidiJdLFswLDYsInhfMSJdLFswLDcsInhfMiJdLFswLDgsInYiXSxbMCw5LCJ2Il0sWzAsMTAsInhfMSJdLFswLDExLCJ4XzIiXSxbMSwxMiwidiJdLFsyLDEsInhfMSJdLFsyLDIsInhfMiJdLFsyLDMsInYiXSxbMiw0LCJ2Il0sWzIsNSwieF8xIl0sWzIsNiwieF8yIl0sWzIsNywidiJdLFsyLDgsInYiXSxbMiw5LCJ4XzEiXSxbMiwxMCwieF8yIl0sWzIsMTEsInYiXSxbMSwxMiwiIiwwLHsic3R5bGUiOnsiYm9keSI6eyJuYW1lIjoiZGFzaGVkIn0sImhlYWQiOnsibmFtZSI6Im5vbmUifX19XV0=
				& v \\
				v && {x_1} \\
				{x_1} && {x_2} \\
				{x_2} && v \\
				v && v \\
				v && {x_1} \\
				{x_1} && {x_2} \\
				{x_2} && v \\
				v && v \\
				v && {x_1} \\
				{x_1} && {x_2} \\
				{x_2} && v \\
				& v
				\arrow[dashed, no head, from=2-1, to=13-2]
        \end{tikzcd},\]
		where the dashed line means that $\alpha^2-t C_\alpha^3$ acts as zero on $\top(P_v)$, $t\in k^*$, with $\alpha\colon v\ra v$ being the deformed loop and $C_\alpha$ being the cycle given by the permutation, see Theorem \ref{thm:sym-StBA}. Let $X$ be the $\varLambda$-module given by the diagram
		\[v\ra x_1 \ra x_2 \ra v \ra v \ra x_1 \ra x_2 \ra v \ra v \ra x_1 \ra x_2 \ra v.\]
We illustrate the proof of Lemma \ref{lem:equal-dimension} for the string $\varLambda$-module $Y=X$. For $Y=X$ and for a diagrammatic morphism $f:X\rightarrow Y$, $N_f$ can only appears as a subdiagram in $Y$ as Case 2.1 or Case 2.8 (note that there are two subdiagrams $N_{f}$ and $N_{f'}$ of $Y$ which belong to Case 2.8). Denote by $\mathscr{B}$ the set of diagrammatic morphisms from $X$ to $Y$. Recall that we have defined an equivalence relation $\sim$ on $\mathscr{B}$: for $f,f'\in\mathscr{B}$, $f\sim f'$ if $N_{f}$ and $N_{f'}$ is the same subdiagram of $Y$. We {denote by} $\mathscr{C}$ the equivalence class of a fixed diagrammatic morphism $f:X\rightarrow Y$ and $\mathscr{C}'$ the corresponding subset of $\sHom(\Omega(X),Y)$.

		In Case 2.1, {$N_f =Y$.} We label the vertices in the diagrams of $X,Y$ and $P_v$ as follows:
		\[X=v_X \ra x_1 \ra x_2 \ra v \ra v \ra x_1 \ra x_2 \ra v \ra v \ra x_1 \ra x_2 \ra v,\]
		\[Y=v_1\urcorner\ra x_1 \ra x_2 \ra v' \ra v' \ra x_1 \ra x_2 \ra v' \ra v' \ra x_1 \ra x_2 \ra v_0,\]
		\[{P_v=} % https://q.uiver.app/#q=WzAsMjUsWzMsMCwidl9ZIl0sWzIsMSwidl9hIl0sWzIsMiwieF8xIl0sWzIsMywieF8yIl0sWzIsNCwidiJdLFsyLDUsInYiXSxbMiw2LCJ4XzEiXSxbMiw3LCJ4XzIiXSxbMiw4LCJ2Il0sWzIsOSwidiJdLFsyLDEwLCJ4XzEiXSxbMiwxMSwieF8yIl0sWzMsMTIsInYiXSxbNCwxLCJ4XzEiXSxbNCwyLCJ4XzIiXSxbNCwzLCJ2Il0sWzQsNCwidiJdLFs0LDUsInhfMSJdLFs0LDYsInhfMiJdLFs0LDcsInYiXSxbNCw4LCJ2Il0sWzQsOSwieF8xIl0sWzQsMTAsInhfMiJdLFs0LDExLCJ2X2IiXSxbMCw2LCJQX3Y9Il0sWzEsMTIsIiIsMCx7InN0eWxlIjp7ImJvZHkiOnsibmFtZSI6ImRhc2hlZCJ9LCJoZWFkIjp7Im5hbWUiOiJub25lIn19fV1d
		\begin{tikzcd}[row sep=-4, column sep=0]
			&&& {v_Y} \\
			&& {v_a} && {x_1} \\
			&& {x_1} && {x_2} \\
			&& {x_2} && v \\
			&& v && v \\
			&& v && {x_1} \\
			&& {x_1} && {x_2} \\
			&& {x_2} && v \\
			&& v && v \\
			&& v && {x_1} \\
			&& {x_1} && {x_2} \\
			&& {x_2} && {v_b} \\
			&&& v
			\arrow[dashed, no head, from=2-3, to=13-4]
		\end{tikzcd}.\]
		Then ${\mathscr{C}}=\{f_0\colon v_X\mapsto v_0\}\cup\{f_1\colon v_X\mapsto v_1\urcorner\}$, where $\underline{f_0}=0$, since $f_0$ factors through the projective module $P_v$:
		\[
		\begin{array}{cccccc}
		f_0\colon & X   & \lra     & P_v       & \lra     & Y \\
				& v_X & \longmapsto & {-\frac{1}{t}v_a+v_b} &         &   \\
				&     &         & v_Y       & \longmapsto & v_1\urcorner \\
				&     &         & v_a       & \longmapsto & 0 \\
				&     &         & v_b       & \longmapsto & v_0
		\end{array}
		,\]
		and $\underline{f_1}\neq 0$. Hence the basis of $\span \{\underline{f_0},\underline{f_1}\}$ is ${\underline{\mathscr{C}}}=\{\underline{f_1}\}$. We denote the $k$-basis element of $\Omega(X)$ corresponding to its top vertex also by $v_X$. In this case, ${\mathscr{C}'}$ has only one morphism $g_0\colon v_X\mapsto v_0$ given by the diagrammatic morphism from the string module $\Omega(X)/\soc (\Omega(X))$ to $Y$, and we have ${\underline{\mathscr{C}'}}=\{\underline{g_0}\}$.

		In the first occurrence of Case 2.8, $N_f$ is the higher subdiagram $v\rightarrow v$ of $Y$. After taking the flip of $Y$, we label the vertices in the diagrams of $X,Y$ and $P_v$ as follows:
		\[X=v_X \ra x_1 \ra x_2 \ra v \ra v \ra x_1 \ra x_2 \ra v \ra v \ra x_1 \ra x_2 \ra v,\]
		\[Y=v' \la x_2 \la x_1 \la v' \la v' \la x_2 \la x_1 \la v_1 \la v_1\urcorner \la x_2 \la x_1 \la v'_Y,\]
		\[{P_v=} % https://q.uiver.app/#q=WzAsMjUsWzMsMCwidl9ZIl0sWzIsMSwidl9hIl0sWzIsMiwieF8xIl0sWzIsMywieF8yIl0sWzIsNCwidiJdLFsyLDUsInYiXSxbMiw2LCJ4XzEiXSxbMiw3LCJ4XzIiXSxbMiw4LCJ2Il0sWzIsOSwidiJdLFsyLDEwLCJ4XzEiXSxbMiwxMSwieF8yIl0sWzMsMTIsInYiXSxbNCwxLCJ4XzEiXSxbNCwyLCJ4XzIiXSxbNCwzLCJ2Il0sWzQsNCwidl9iIl0sWzQsNSwieF8xIl0sWzQsNiwieF8yIl0sWzQsNywidiJdLFs0LDgsInYiXSxbNCw5LCJ4XzEiXSxbNCwxMCwieF8yIl0sWzQsMTEsInYiXSxbMCw2LCJQX3Y9Il0sWzEsMTIsIiIsMCx7InN0eWxlIjp7ImJvZHkiOnsibmFtZSI6ImRhc2hlZCJ9LCJoZWFkIjp7Im5hbWUiOiJub25lIn19fV1d
		\begin{tikzcd}[row sep=-4, column sep=0]
			&&& {v_Y} \\
			&& {v_a} && {x_1} \\
			&& {x_1} && {x_2} \\
			&& {x_2} && v \\
			&& v && {v_b} \\
			&& v && {x_1} \\
			&& {x_1} && {x_2} \\
			&& {x_2} && v \\
			&& v && v \\
			&& v && {x_1} \\
			&& {x_1} && {x_2} \\
			&& {x_2} && v \\
			&&& v
			\arrow[dashed, no head, from=2-3, to=13-4]
		\end{tikzcd}.\]
		Then ${\mathscr{C}}=\{f_1\colon v_X\mapsto v_1\}$, where $\underline{f_1}=0$ since $f_1$ factors through the projective module $P_v$:
		\[
		\begin{array}{cccccl}
		f_1\colon & X   & \lra     & P_v       & \lra     & Y \\
				& v_X & \longmapsto & v_b &         &   \\
				&     &         & v_Y       & \longmapsto & v'_Y  \\
				&     &         & v_a       & \longmapsto & 0 \\
				&     &         & v_b       & \longmapsto & v_1
		\end{array}
		,\] which implies that ${\underline{\mathscr{C}}}=\varnothing$. In this case, ${\mathscr{C}'}=\{g_1\colon v_X\mapsto v_1\}$, and we have ${\underline{\mathscr{C}'}}=\varnothing$ since $g_1$ factors through the projective module $P_v$ as well:
		\[
		\begin{array}{cccccl}
		g_1\colon & \Omega(X)   & \lra     & P_v       & \lra     & Y \\
				& v_X & \longmapsto & v_b &         &   \\
				&     &         & v_Y       & \longmapsto & v'_Y \\
				&     &         & v_a       & \longmapsto & 0 \\
				&     &         & v_b       & \longmapsto & v_1
		\end{array}
		.\]

		In the second occurrence of Case 2.8, {$N_f$ is the lower subdiagram $v\rightarrow v$ of $Y$.} After taking the flip of $Y$, we label the vertices in the diagrams of $X,Y$ and $P_v$ as follows:
		\[X=v_X \ra x_1 \ra x_2 \ra v \ra v \ra x_1 \ra x_2 \ra v \ra v \ra x_1 \ra x_2 \ra v,\]
		\[Y=v' \la x_2 \la x_1 \la v_1 \la v_1\urcorner \la x_2 \la x_1 \la v' \la v' \la x_2 \la x_1 \la v'_Y,\]
		\[{P_v=}
		\begin{tikzcd}[row sep=-4, column sep=0]
			&&& {v_Y} \\
			&& {v_a} && {x_1} \\
			&& {x_1} && {x_2} \\
			&& {x_2} && v \\
			&& v && v \\
			&& v && {x_1} \\
			&& {x_1} && {x_2} \\
			&& {x_2} && v \\
			&& v && {v_b} \\
			&& v && {x_1} \\
			&& {x_1} && {x_2} \\
			&& {x_2} && v \\
			&&& v
			\arrow[dashed, no head, from=2-3, to=13-4]
		\end{tikzcd}.\]
		Then ${\mathscr{C}}=\{f_1\colon v_X\mapsto v_1\}$, where $\underline{f_1}=0$ since $f_1$ factors through the projective module $P_v$:
		\[
		\begin{array}{cccccl}
		f_1\colon & X   & \lra     & P_v       & \lra     & Y \\
				& v_X & \longmapsto & v_b &         &   \\
				&     &         & v_Y       & \longmapsto & v'_Y \\
				&     &         & v_a       & \longmapsto & 0 \\
				&     &         & v_b       & \longmapsto & v_1
		\end{array}
		,\] which implies that ${\underline{\mathscr{C}}}=\varnothing$. In this case, ${\mathscr{C}'}=\{g_1\colon v_X\mapsto v_1\}$, and we have ${\underline{\mathscr{C}'}}=\varnothing$ since $g_1$ factors through the projective module $P_v$ as well:
		\[
		\begin{array}{cccccl}
		g_1\colon & \Omega(X)   & \lra     & P_v       & \lra     & Y \\
				& v_X & \longmapsto & v_b &         &   \\
				&     &         & v_Y       & \longmapsto & v'_Y \\
				&     &         & v_a       & \longmapsto & 0 \\
				&     &         & v_b       & \longmapsto & v_1
		\end{array}
		.\]

		We can verify that 
		\[\Hom(X,Y)=\span{\bigcup\limits_{\mathscr{C}\in\mathscr{B}/\sim}{\mathscr{C}}}=\span\{f_0^{2.1},f_1^{2.1},f_1^{2.8.1},f_1^{2.8.2}\}\]
		and 
		\[\Hom(\Omega(X),Y)=\span{\bigcup\limits_{\mathscr{C}\in\mathscr{B}/\sim}{\mathscr{C}'}}=\span\{g_0^{2.1},g_1^{2.8.1},g_1^{2.8.2}\},\]
		where the superscripts indicate the cases in which the morphisms appear. Thus, by the above computation, we conclude that
		\[\sHom(X,Y)=\span {\bigcup\limits_{\mathscr{C}\in\mathscr{B}/\sim}\underline{\mathscr{C}}}= \span \{\underline{f_1^{2.1}}\}\text{\quad and\quad }\sHom(\Omega(X),Y)=\span {\bigcup\limits_{\mathscr{C}\in\mathscr{B}/\sim}\underline{\mathscr{C}'}}= \span \{\underline{g_0^{2.1}}\}.\]
		Therefore,
		\[\dim\nolimits_k \sHom(X,Y)={\sum_{\mathscr{C}\in\mathscr{B}/\sim}|\underline{\mathscr{C}}|}=1={\sum_{\mathscr{C}\in\mathscr{B}/\sim}|\underline{\mathscr{C}'}|}=\dim\nolimits_k \sHom(\Omega(X),Y).\]
		
	\end{example}

\section{Brauer graph algebras are closed under stable equivalence of Morita type}\label{sec:stb-closed}

Before proceeding to the proof of the main theorems, we establish two more lemmas.

\begin{Lem}\label{lem:stable-tree=BGA}
Let $A$ be a Brauer graph algebra associated with the Brauer graph $(\Gamma, m)$, and let $B$ be a symmetric stably biserial algebra defined by the Brauer graph $(\Gamma', m')$ and a set $\mathcal{L}$ of deformed loops.
\begin{enumerate}
	\item If
\begin{enumerate}
	\item $A$ and $B$ are stably equivalent;
	\item $A$ is not local;
	\item$\Gamma$ has only one face,
\end{enumerate} then $B$ is also a Brauer graph algebra.
\item If
\begin{enumerate}
	\item $A$ and $B$ are stably equivalent of Morita type;
	\item$\Gamma$ has only one face,
\end{enumerate} then $B$ is also a Brauer graph algebra.
\end{enumerate}

\end{Lem}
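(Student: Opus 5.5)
The plan is to argue by contradiction: assume $\mathcal{L}\neq\varnothing$ and show that the resulting distinguished pair of homogeneous tubes of $B$ cannot correspond to anything in the stable Auslander–Reiten quiver of $A$ when $\Gamma$ has only one face. Once $\mathcal{L}=\varnothing$, Theorem \ref{thm:sym-StBA} says $B$ is a Brauer graph algebra.

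\textbf{Setting up.} Fix a stable equivalence $F\colon B\text{-}\underline{\mathrm{mod}}\to A\text{-}\underline{\mathrm{mod}}$. Pick a deformed loop $\alpha\in\mathcal{L}$, and let $X$ and $\Omega_B(X)$ be the string module and the exceptional band module of Lemma \ref{lem:equal-dimension}, lying at the mouths of two homogeneous tubes exchanged by $\Omega_B$.
\begin{itemize}
\item Since both algebras are symmetric, $\tau=\Omega^2$. A stable equivalence commutes with $\tau$ and preserves irreducible maps, so $F$ sends tubes of rank $1$ to tubes of rank $1$ and mouths to mouths.
\item In case (2), $F$ also commutes with $\Omega$ (it is of Morita type). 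In case (1), I would use that $F$ preserves the $\tau$-orbit structure, together with the fact that $F(X)$ and $F(\Omega_B X)$ lie in $\Omega_A$-related tubes. This should follow from \cite{AZ1}, where stable equivalences between such algebras are shown to commute with $\Omega$ on non-periodic parts up to the usual twisting; this point needs checking.
\end{itemize}

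\textbf{Identifying $F(X)$ and $F(\Omega_B X)$.} First I classify the mouths of rank-$1$ tubes in $A$. By Proposition \ref{prop:stbA-tube-and-face} and the remark after it, a string module at the mouth of a rank-$1$ tube of $A$ arises from a face of perimeter $1$ or $2$. If $\Gamma$ has a single face of perimeter $\le 2$, then $\Gamma$ is a single edge or a single loop, so $A$ is local.
\begin{itemize}
\item In case (1), locality is excluded by hypothesis.
\item In case (2), I would handle the local case directly. Stable equivalence of Morita type preserves indecomposability and symmetry, and then Theorem \ref{thm:AR-conj} forces $B$ to be local as well. In characteristic $\neq 2$ there is nothing to prove. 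In characteristic $2$, I would compare centers modulo Reynolds ideals via Propositions \ref{prop:ZZ} and \ref{prop:center-of-stBA}, and use Lemma \ref{lem:stb-local-iso} to see that a symmetric local $B$ with deformed loops is isomorphic to a Brauer graph algebra or has the wrong center.
\end{itemize}
So from now on every rank-$1$ mouth of $A$ is a band module $M(b,1,\lambda)$. Since band modules of $B$ lie in homogeneous tubes, $F^{-1}$ sends string modules of $A$ lying in tubes of rank $\neq 1$ to string modules of $B$. Applying Lemma \ref{lem:equal-dimension} to each such string, the bound $\dim_k\sHom_B(X,-)\le 2$ transfers to $\dim_k\sHom_A(F(X),Y)\le 2$ for every string $A$-module $Y$ outside rank-$1$ tubes. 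Proposition \ref{prop:non-exceptional-tube} then forces $F(X)$ to be an exceptional band module, and the same argument applies to $F(\Omega_B X)=\Omega_A F(X)$.

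\textbf{Contradiction via double-faces.} By Proposition \ref{prop:double-faces and exceptional bands}, $F(X)$ and $\Omega_AF(X)$ correspond to double-faces $D,D'$ of $\widetilde{\Gamma}$, and either $D=D'$ is a face or $D\cup D'$ is a face. Because $\Gamma$ has one face, $\widetilde\Gamma$ has one face as well. Hence the exceptional band modules of $A$ form at most two one-parameter families $\{M(b,1,\lambda)\}_{\lambda\in k^*}$, and $\Omega_A$ acts on the parameter by a fixed rule: it either exchanges the two families or preserves the single one. I then pull back the entire family(ies) through $F^{-1}$. This gives rank-$1$ mouths of $B$ satisfying the Hom bound, and these are exactly the mouths associated with double-faces of $\Gamma'$ together with the $|\mathcal{L}|$ string/band pairs coming from deformed loops. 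Comparing the two sides, I expect a mismatch in the number of Hom-bounded mouths that are strings, exploiting that $X$ is a string (a maximal chain module) while every exceptional-family member in $A$ is a band. To make this precise I would use the invariant from Example \ref{ex-stably-biserial}: among such mouths, those $Z$ with $\dim\sHom(Z,Z)$ or $\dim\sHom(Z,\Omega Z)$ exceptional, since string mouths satisfy $\sHom(X,P)\neq 0$ for the relevant $\Omega$-periodic modules while bands do not.

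The main obstacle is this last counting step: turning ``$X$ is a string but its image is an exceptional band'' into an honest invariant distinguishing them under $F$. I would first try to find a module $W$ in a non-homogeneous tube of $B$ (coming from the face containing the deformed loop's neighbours $x_1,\dots,x_k$) such that $\sHom_B(W,X)\neq 0$. I would then show that $\sHom_A(F(W),M(b,1,\lambda))=0$ for every exceptional band, because Hom from string modules in faces of perimeter $>2$ into exceptional bands vanishes except at specific, parameter-independent locations.
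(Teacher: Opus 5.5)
Your treatment of the local case of (2) agrees with the paper: compare $Z/R$ via Propositions \ref{prop:ZZ} and \ref{prop:center-of-stBA}, then finish with Lemma \ref{lem:stb-local-iso}. For (1), however, and hence for the non-local part of (2), there is a genuine gap. The decisive step is left as ``I expect a mismatch''. The invariant you propose (a module $W$ in a non-homogeneous tube with $\sHom_B(W,X)\neq 0$ but $\sHom_A(F(W),M(b,1,\lambda))=0$ for every exceptional band) is neither proved nor evidently true.

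The route itself also points the wrong way. You end up with $F(X)$ an exceptional band module, $\mathrm{top}\,F(X)\cong\mathrm{top}\,\Omega_AF(X)$, and $\Gamma$ having a single face. That is exactly the situation in Case 2 of the proof of Theorem \ref{thm:st-BGA}, and the paper does not settle it with Hom-dimension estimates: it settles it by invoking this very lemma. A small slip as well: a single loop has two faces of perimeter $1$, so a one-face Brauer graph with one edge is an edge between two distinct vertices.

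The missing idea is a purely combinatorial count.
\begin{enumerate}
\item If $A$ has $n$ simple modules, then $\Gamma$ has $n$ edges, so its unique face has perimeter $2n$. Moreover $n>1$ because $A$ is not local.
\item By Theorem \ref{thm:AR-conj}, $B$ also has $n$ simples. So $\Gamma'$ has $n$ edges and the perimeters of its faces sum to $2n$.
\item By Proposition \ref{prop:stbA-tube-and-face}, $A$ has a pair of tubes of rank $n\ge 2$, interchanged by $\Omega_A$. Tubes of rank $n$ for $B$ can only come from faces of $\Gamma'$ of perimeter $n$ (with $n$ odd) or $2n$. Transporting these tubes forces $\Gamma'$ to have a face of perimeter $2n$, hence exactly one face.
\item A deformed loop $\alpha_h\in\mathcal{L}$ is a loop with $\sigma(\alpha_h)\neq\alpha_h$, which means $\rho(h)=\iota(h)$. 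So $(h)$ is a face of perimeter $1$, a contradiction. Hence $\mathcal{L}=\varnothing$.
\end{enumerate}

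This count also removes your worry about compatibility with $\Omega$ in case (1). Suppose $\Gamma'$ had a face of perimeter $1$. Then the remaining perimeters sum to $2n-1$, so $\Gamma'$ has no face of perimeter $2n$ and at most one of perimeter $n$. Thus $B$ has at most one tube of rank $n$, whereas $A$ has two. A stable equivalence between these algebras preserves the stable Auslander--Reiten quiver, so this is impossible.
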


\begin{proof}
We first prove (1). We note that both algebras $A$ and $B$ are indecomposable because we assume that the corresponding Brauer graphs are connected. According to Theorem \ref{thm:sym-StBA}, the conclusion holds when the field characteristic is not $2$, since in that case every symmetric stably biserial algebra is isomorphic to a Brauer graph algebra. Thus, we may restrict to the case where the field $k$ has characteristic $2$. We may {also} assume that $A$ is representation-infinite. Let $n$ be the number of isomorphism classes of simple $A$-modules. Then the unique face of $\Gamma$ has perimeter $2n$.

Since $A$ is not local, $n>1$. By Proposition \ref{prop:stbA-tube-and-face}, the number of faces of perimeter $2n$ in $\Gamma$ (resp.~$\Gamma'$) coincides with the number of tubes of rank $n$ in the Auslander-Reiten quiver of the algebra $A$ (resp.~$B$) that are not stable under the syzygy $\Omega$, divided by $2$. Since $A$ and $B$ are stably equivalent and $\Gamma$ has a face of perimeter $2n$, it follows that $\Gamma'$ must also contain a face of perimeter $2n$.

As $B$ is stably equivalent to $A$, it follows from Theorem \ref{thm:AR-conj} that the number of isomorphism classes of simple $B$-modules is also $n$. Hence, the ribbon graph $\Gamma'$ has $n$ edges, and the sum of the perimeters of all its faces is $2n$. This forces $\Gamma'$ to have exactly one face.

Now, suppose that $B$ is not a Brauer graph algebra. Then the quiver of $B$ must contain deformed loops, implying that $\Gamma'$ contains loops. In that case, $(\Gamma', m')$ would have a face of perimeter $1$, which is a contradiction. We conclude that $\mathcal{L} = \varnothing$ and therefore $B$ is a Brauer graph algebra.

We now prove (2). In fact, it suffices to consider the case where $A$ is a local algebra, that is, $\Gamma$ is an edge connected with two different vertices with multiplicities $m_1$ and $m_2$ (both $m_1$ and $m_2$ are bigger than $1$, since $A$ is representation-infinite). Then by Proposition \ref{prop:center-of-stBA}, we have $$Z(A)/R(A)\cong k[x_1,x_2]/\langle x_1^{m_1},x_2^{m_2},x_1x_2\rangle.$$ As $B$ is stably equivalent to $A$, it follows from Theorem \ref{thm:AR-conj} that the number of isomorphism classes of simple $B$-modules is also $1$. Therefore, $\Gamma'$ is a loop or an edge connected with two different vertices. Suppose that $B$ is not a Brauer graph algebra. Then $\Gamma'$ is a loop around a vertex with multiplicity $m$ and $\mathcal{L} \neq \varnothing$. By Theorem~\ref{thm:sym-StBA}, the local algebra $B$ has the form
$$B\cong k\langle x,y\rangle/\big\langle
(xy)^m-(yx)^m,\ x^2-\lambda_1(yx)^m,\ y^2-\lambda_2(yx)^m,
\ (yx)^m x,\ (yx)^m y\big\rangle,$$
where $\lambda_1,\lambda_2\in k$ and $(\lambda_1,\lambda_2)\neq(0,0)$. Put
$$s=(yx)^m=(xy)^m,
\qquad
q_x=y(xy)^{m-1},
\qquad
q_y=x(yx)^{m-1},
\qquad
p_{1,t}=(yx)^t+(xy)^t.$$
Thus $R(B)=ks$. Set $x_1'=q_x+R(B)$, $x_2'=q_y+R(B)$ and $x_3'=p_{1,1}+R(B)$
in $Z(B)/R(B)$ (when $m=1$, the last element is zero). Since $(x_3')^t=p_{1,t}+R(B)$, Proposition~\ref{prop:center-of-stBA} gives
$$Z(B)/R(B)\cong k[x_1',x_2',x_3']\big/\big\langle(x_1')^2,(x_2')^2,(x_3')^m,x_1'x_2',x_1'x_3',x_2'x_3'\big\rangle.$$
By Proposition \ref{prop:ZZ}, we have $m=1$ and $m_1=m_2=2$. Thus, in this case, the algebras $A$ and $B$ are precisely those appearing in Lemma~\ref{lem:stb-local-iso}. By Lemma~\ref{lem:stb-local-iso}, if $B$ is symmetric, then $A\cong B$. Hence $B$ is also a Brauer graph algebra, which contradicts the initial assumption. 
\end{proof}

We prove the following lemma, which will be needed in the proof of our main result.

\begin{Lem}\label{lem:exist-string-then-tree}
	Let $A=kQ/I$ be a representation-infinite Brauer graph algebra with associated Brauer graph $\Gamma$. {Suppose that}
 there exist non-isomorphic string modules $M_1$ and $M_2$ such that
\begin{enumerate}
    \item $\Omega_A(M_1) = M_2$ and $\Omega_A(M_2) = M_1$;
    \item $\mathrm{top}(M_1) \cong \mathrm{top}(M_2)$;
    \item $M_1$ and $M_2$ lie on the mouths of distinct tubes of rank~$1$ in the Auslander-Reiten quiver of $A$.
\end{enumerate}
{Then} the ribbon graph $\Gamma$ consists of a single edge connecting two distinct vertices.
\end{Lem}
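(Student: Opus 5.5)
We use the correspondence between faces of $\Gamma$ and tubes recalled after Proposition \ref{prop:stbA-tube-and-face}. For a Brauer graph algebra every face $F$ of perimeter $p$ gives either one $\Omega_A$-stable tube of rank $p$ ($p$ odd) or a pair of tubes of rank $p/2$ exchanged by $\Omega_A$ ($p$ even), including $p=1,2$. The string modules at the mouths of these tubes are exactly the modules of $\mathcal{M}_F$. By (3), $M_1$ and $M_2$ lie at mouths of distinct rank-$1$ tubes. By (1), $\Omega_A$ exchanges these tubes, since a rank-$1$ mouth module is determined by its tube. Any string module at the mouth of a rank-$1$ tube lies in some $\mathcal{M}_F$, because the remaining rank-$1$ tubes are homogeneous tubes of band modules and contain no string modules at their mouths. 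So we may take $M_1\in\mathcal{M}_F$ with $F$ a face of perimeter $1$ or $2$. Perimeter $1$ would give a single $\Omega_A$-stable tube, forcing $M_2\cong M_1$, which is excluded. Hence $F=(h_1,h_2)$ has perimeter $2$, $M_1=M_{1}$ and $M_2=M_{2}$ in the notation of $\mathcal{M}_F$, with $h_2=\iota\rho(h_1)$ and $h_1=\iota\rho(h_2)$.

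Next we use (2). We have $\top(M_i)\cong S_{\bar h_i}$, since $M_i=Ae_{\bar h_i}/A\alpha_{h_i}$ or its truncated variant. So $\bar h_1=\bar h_2$, meaning $h_2\in\{h_1,\iota(h_1)\}$. The case $h_2=h_1$ would make $F$ a face of perimeter $1$. Hence $h_2=\iota(h_1)$, and the face condition $h_2=\iota\rho(h_1)$ gives $\rho(h_1)=h_1$. Similarly $h_1=\iota\rho(h_2)$ gives $\rho(\iota h_1)=\iota h_1$. So both half-edges of the edge $e=\bar h_1$ are fixed by $\rho$, and each endpoint of $e$ has valency one in the cyclic order of its half-edges. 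Since $\Gamma$ is connected, $\Gamma$ is the single edge $e$.

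It remains to rule out a loop. If $e$ were a loop, its two half-edges would share a vertex $v$ with $H_v=\{h_1,\iota h_1\}$, and $\rho$ would be a transposition on $H_v$, contradicting $\rho(h_1)=h_1$. So the two endpoints are distinct, as claimed.

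The main obstacle is the first step: justifying that a string module at the mouth of a rank-$1$ tube must come from $\mathcal{M}_F$ for a face of perimeter $\le 2$. This means excluding string modules at the mouths of homogeneous tubes. For Brauer graph algebras this follows from the Butler--Ringel description of AR sequences: the mouth of a homogeneous band tube consists of the band modules $M(b,1,\lambda)$, and a string module $X$ with $\tau X\cong X$ forces its string to be a canonical hook/cohook string, which is exactly the kind of module in $\mathcal{M}_F$. We also use that representation-infiniteness guarantees this tube description. A secondary check is whether the degenerate cases, truncated vertices with multiplicity $>1$, obey the same top computation. There the module is $Ae/AC$ with the same top, so the argument goes through.
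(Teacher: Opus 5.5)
Your proof is correct, but it takes a different route from the paper's.

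The paper argues locally in the quiver. It uses only that $M_1$ and $M_2$ are maximal uniserial, taken from Erdmann's description of the exceptional tubes. By symmetry it gets $\top(M_1)\cong\soc(M_1)\cong\top(M_2)\cong\soc(M_2)\cong S_v$. From $M_1\not\cong M_2$ it deduces that both arrows $\alpha,\beta$ leaving $v$ are loops. Finally it excludes the single-loop-edge graph by noting $\beta^2\neq 0$, which forces $\alpha\beta=\beta\alpha=0$.

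You instead move everything into the ribbon graph:
\begin{enumerate}
\item The face--tube dictionary recorded after Proposition~\ref{prop:stbA-tube-and-face}, including its extension to perimeters $1$ and $2$ for Brauer graph algebras, identifies $\{M_1,M_2\}$ with $\mathcal{M}_F$ for a face $F$ of perimeter $2$. Perimeter $1$ is ruled out because that tube is $\Omega_A$-stable.
\item Equality of tops forces $F=(h_1,\iota(h_1))$.
\item The face relations then say that $\rho$ fixes both half-edges of $\bar h_1$, so both endpoints have a single half-edge and the edge is not a loop.
\end{enumerate}

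The price is heavier input: you need the full correspondence, including the $\Omega_A$-action on the pairs of tubes, where the paper needs only maximal uniseriality. In exchange, the remaining step is a two-line computation with $\iota$ and $\rho$. The single-loop-edge case, which the paper must rule out separately, never arises for you, since there both faces have perimeter $1$.

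The point you flag as the ``main obstacle'' (string modules at mouths of rank-$1$ tubes lie in exceptional tubes) is the same classification input the paper takes from Erdmann, so it is not an additional gap. Two cosmetic remarks. Since only $\top(M_i)\cong S_{\bar h_i}$ is used, the exact shape of the modules attached to truncated half-edges does not matter. Also, ``truncated vertices with multiplicity $>1$'' do not exist, because truncated means $m(v)\,val(v)=1$.
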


\begin{proof}
	Since the string modules $M_1$ and $M_2$ lie on the mouths of distinct tubes of rank~$1$, they are maximal uniserial (cf. \cite[II.6.2(1)]{E}).
Let $P$ be the projective cover of $M_1$.
Since $A$ is symmetric and both $M_1$ and $M_2$ are string modules, we have {the isomorphisms of simple modules}
\[
\soc(M_2) \cong \soc(P) \cong \top(P) \cong \top(M_1).
\]
Denote this simple module by $S_v$, corresponding to a vertex $v$ in the quiver $Q$.
By symmetry and the assumption $\mathrm{top}(M_1) \cong \mathrm{top}(M_2)$, it follows that
\[
S_v \cong \top(M_1) \cong \soc(M_1) \cong \top(M_2) \cong \soc(M_2).
\]

Since $M_1 \not\cong M_2$, the starting arrow $\alpha$ of the string defining $M_1$ must differ from the starting arrow $\beta$ of the string defining $M_2$.
Consequently, the quotient $P / \soc(P)$ is a string module whose corresponding string contains a subword of the form
\[
\cdots \xleftarrow{\beta} v \xleftarrow{\beta} v \xrightarrow{\alpha} \cdots .
\]
Therefore, $\beta$ is a loop.
By symmetry, $\alpha$ is also a loop.
Since $\beta^2 \neq 0$ and $A$ is special biserial, we must have $\beta \alpha = \alpha \beta = 0$.
Hence, the ribbon graph $\Gamma$ consists of a single edge connecting two distinct vertices.
\end{proof}

We now prove the main results of this section.

\begin{Thm}\label{thm:st-BGA}
 Let $A$ be a non-local Brauer graph algebra. Then for any basic symmetric algebra $B$ without semisimple summands, if $B$ and $A$ are stably equivalent, then $B$ is also a Brauer graph algebra.
\end{Thm}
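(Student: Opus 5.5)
We first reduce to the situation where $B$ is a symmetric stably biserial algebra with a non-empty set of deformed loops. Since $A$ is connected, it is indecomposable; a stable equivalence with a basic algebra without semisimple summands then forces $B$ to be indecomposable as well. If $A$ is representation-finite, $A$ is a Brauer tree algebra, and the classical result recalled in the introduction shows that $B$ is a Brauer tree algebra. The radical-square-zero Nakayama case is excluded or handled directly, since $A$ is non-local and symmetric.

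So assume $A$ is representation-infinite. By Theorem \ref{thm:sta-to-BGA=StB}, $B$ is stably biserial. Because $B$ is symmetric, Theorem \ref{thm:sym-StBA} gives a presentation of $B$ by a Brauer graph $(\Gamma',m')$ together with a set $\mathcal{L}$ of deformed loops. If $\operatorname{char}k\neq 2$ we are done, so assume $\operatorname{char}k=2$ and, aiming at a contradiction, $\mathcal{L}\neq\varnothing$. If $\Gamma$ has only one face, Lemma \ref{lem:stable-tree=BGA}(1) finishes the proof, so assume $\Gamma$ has at least two faces.

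Fix a deformed loop $\alpha\in\mathcal{L}$. As explained after Proposition \ref{prop:stbA-tube-and-face}, it produces two homogeneous tubes exchanged by $\Omega_B$. At their mouths sit a string module $X$ and the exceptional band module $\Omega_B(X)$ described in Lemma \ref{lem:equal-dimension}. Let $F\colon B\text{-}\underline{\mathrm{mod}}\to A\text{-}\underline{\mathrm{mod}}$ be the stable equivalence. Since $B$ and $A$ are self-injective and have no semisimple summands, $F$ commutes with $\Omega$ up to isomorphism; this follows by Auslander–Reiten theory, because $F$ preserves the stable AR-quiver and $\tau=\Omega^2$ for symmetric algebras. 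In particular $F$ sends homogeneous tubes to homogeneous tubes and mouths to mouths, so $M_1:=F(X)$ and $M_2:=F(\Omega_B X)\cong\Omega_A(M_1)$ lie at the mouths of two distinct rank-one tubes of $A$ exchanged by $\Omega_A$.

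Over the Brauer graph algebra $A$, a module at the mouth of a rank-one tube is either a string module coming from a face of perimeter $1$ or $2$ (Proposition \ref{prop:stbA-tube-and-face} and the remarks after it) or a band module. We show both $M_1$ and $M_2$ are string modules, using Lemma \ref{lem:equal-dimension}. For every string $B$-module $Y$ we have $\dim\sHom_B(X,Y)\le 2$. On the other side, Proposition \ref{prop:non-exceptional-tube} says that a non-exceptional band module $M$ over $A$ admits string modules $Y'$ outside rank-one tubes with $\dim\sHom_A(M,Y')$ arbitrarily large. Transport these $Y'$ back via $F^{-1}$: each $F^{-1}(Y')$ lies outside a rank-one tube, hence is not a band module (band modules lie in homogeneous tubes), so it is a string module. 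This contradicts the bound, so $M_1,M_2$ are not non-exceptional band modules.

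The exceptional band case is the \emph{main obstacle}. Here we would use Proposition \ref{prop:double-faces and exceptional bands} to show that an exceptional band module in a homogeneous tube swapped by $\Omega$ corresponds to a face of perimeter $2$ of $\widetilde{\Gamma}$. We would then compare its partner under $\Omega$ with the string/exceptional-band pair $X,\Omega_B X$. The asymmetry in Remark (1) after Lemma \ref{lem:equal-dimension} is the tool: $\dim\Hom$ differs, but $\dim\sHom$ agrees for all strings. Combined with Lemma \ref{lem:paths-in-band=band-line}, this should force the configuration to be a string pair.

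Granting that both $M_1$ and $M_2$ are string modules, we compare tops. A deformed loop gives $\top X\cong\top\Omega_B X\cong S_v$. To see that $\top M_1\cong\top M_2$, compute $\sHom$ from simple-like objects, or use that the stable equivalence preserves, up to the correspondence of Theorem \ref{thm:AR-conj}, the simples, as in the Auslander–Reiten–Pogorzały analysis of stable equivalence for stably biserial algebras. Then Lemma \ref{lem:exist-string-then-tree} forces $\Gamma$ to be a single edge with two distinct vertices. That makes $A$ local, contradicting the hypothesis. Hence $\mathcal{L}=\varnothing$ and $B$ is a Brauer graph algebra.
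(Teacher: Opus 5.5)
You handle the reduction, the exclusion of non-exceptional band modules via Proposition~\ref{prop:non-exceptional-tube}, and the final use of Lemma~\ref{lem:exist-string-then-tree} as the paper does. However, the case you flag as the main obstacle is a genuine gap, and your sketch for it would not work.

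\emph{Your plan for the exceptional band case.} An exceptional band module exchanged by $\Omega_A$ with another one need not come from a face of perimeter $2$. By Proposition~\ref{prop:double-faces and exceptional bands}, every face of $\widetilde{\Gamma}$ of even perimeter $2k$ splits into two double-faces whose exceptional band modules are swapped by $\Omega_A$. Nor can anything ``force the configuration to be a string pair'': a band module is never a string module, so this case must be excluded outright. Also, Remark (1) after Lemma~\ref{lem:equal-dimension} is about $\dim\Hom$, which a stable equivalence does not preserve, so it cannot be transported to $A$.

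\emph{The missing idea.} Use $\top(M_1)\cong\top(M_2)$ in this case too. Write $M_1=M(b,1,\lambda)$, let $G'$ be the double-face of $\widetilde{\Gamma}$ attached to $b$, and let $G$ be the face containing $G'$.
If $G=G'$ (odd perimeter), then $\Omega_A(M_1)$ is given by the same band with parameter $-\lambda=\lambda$, since $\operatorname{char}k=2$. So $M_1\cong M_2$, contradicting that they lie in distinct tubes.
If $G$ has perimeter $2k$, write its cycle in the quiver of $\widetilde{\Gamma}$ as $c=\alpha_{2k}\cdots\alpha_1$ with $v_i=s(\alpha_i)$. Then $\top M_1\cong\bigoplus_i S_{v_{2i-1}}$ and $\top M_2\cong\bigoplus_i S_{v_{2i}}$. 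Equality of tops makes every vertex of $c$ the source of two and the target of two distinct arrows of $c$. Since in- and out-degrees are at most $2$ and the quiver is connected, $c$ uses every arrow. Hence $\widetilde{\Gamma}$, and so $\Gamma$, has only one face. This contradicts your standing assumption that $\Gamma$ has at least two faces; it is exactly where the paper invokes Lemma~\ref{lem:stable-tree=BGA}(1).

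\emph{The isomorphism of tops.} This must therefore be proved before the case distinction, and neither of your suggestions establishes it. In particular, a stable equivalence does not send simple modules to simple modules, so your second suggestion is false as stated. The paper argues as follows. For an indecomposable non-projective module $N$ over the symmetric algebra $A$ and a simple module $S$, no nonzero map $N\to S$ factors through a projective. Hence $\dim_k\sHom_A(N,S)$ is the multiplicity of $S$ in $\top N$. Transporting along $F$ and applying the \emph{equality} part of Lemma~\ref{lem:equal-dimension} to $Y=F^{-1}(S)$ gives $\dim_k\sHom_A(M_1,S)=\dim_k\sHom_A(M_2,S)$ for every simple $S$.

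\emph{Commutation with $\Omega$.} The identification $F(\Omega_B X)\cong\Omega_A F(X)$ does not follow from $\tau\cong\Omega^2$, which only gives compatibility with $\Omega^2$. What you need is the Auslander--Reiten result that a stable equivalence between self-injective algebras without blocks of Loewy length $2$ commutes with $\Omega$ on objects.
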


\begin{proof}
	Without loss of generality, we may assume that $A$ is indecomposable representation-infinite and the field characteristic is $2$. By the assumptions and Theorem~\ref{thm:sta-to-BGA=StB}, $B$ is a symmetric stably biserial algebra. Assume that the Brauer graph associated with $A$ is $(\Gamma, m)$, and let $B$ be a symmetric stably biserial algebra defined by the Brauer graph $(\Gamma', m')$ and a set $\mathcal{L}$ of deformed loops.

{If $B$ is not special biserial, then the set $\mathcal{L}$ is nonempty. So} $\Gamma'$ contains a loop, which corresponds to a deformed loop in $\mathcal{L}$. Let
$$\begin{tikzpicture}
\draw (-0.5,0) circle (0.5);
\fill (0,0) circle (0.5ex);
\node at(-1.2,0) {$1$};
\node at(-0.3,0) {$m$};
\draw[-] (0,0) -- (1,1);
\draw[-] (0,0) -- (1,-1);
\draw (-0.45,-0.15) rectangle (-0.15,0.15);
\node at(0.5,0.75) {$2$};
\node at(0.5,-0.75) {$n$};
\draw[dotted] (0.866,-0.5) arc (-30:30:1);
\end{tikzpicture}$$
be a subgraph of $(\Gamma',m')$, where the edge marked by $1$ gives a deformed loop of $B$. Let
$$\begin{tikzpicture}
\node at(0,0) {$1$};
\node at(0,-0.3) {$2$};
\draw[dotted] (0,-0.5) -- (0,-0.9);
\node at(0,-1.1) {$n$};
\node at(0,-1.4) {$1$};
\node at(0,-1.7) {$1$};
\draw[dotted] (0,-1.9) -- (0,-2.3);
\node at(0,-2.5) {$1$};
\node at(0,-2.8) {$2$};
\draw[dotted] (0,-3) -- (0,-3.4);
\node at(0,-3.6) {$n$};
\node at(0,-3.9) {$1$};
\node at(-1,-2) {$X$};
\node at(-0.5,-2) {$=$};
\end{tikzpicture}$$
be a maximal uniserial $B$-module, then
$$\begin{tikzpicture}
\node at(0,0) {$1$};
\node at(0,-0.3) {$2$};
\draw[dotted] (0,-0.5) -- (0,-0.9);
\node at(0,-1.1) {$n$};
\node at(0,-1.4) {$1$};
\node at(0,-1.7) {$1$};
\draw[dotted] (0,-1.9) -- (0,-2.3);
\node at(0,-2.5) {$1$};
\node at(0,-2.8) {$2$};
\draw[dotted] (0,-3) -- (0,-3.4);
\node at(0,-3.6) {$n$};
\node at(0,-3.9) {$1$};
\node at(-1.5,-2) {$\Omega_{B}(X)$};
\node at(-0.5,-2) {$=$};
\node at(0.6,-2) {$t$};
\draw[dashed] (0.15,-3.9) arc (-15:15:7.5);
\end{tikzpicture},$$
where $t\in k^{*}$.
{Note that $X$ is a string module and $\Omega_B(X)$ is an exceptional band module, and these two modules lie on the mouths of distinct tubes of rank~$1$ in the Auslander-Reiten quiver of $B$ (cf. Subsection 2.3).}
For any string module $Y \neq X$ of $B$, Lemma~\ref{lem:equal-dimension} implies that
\[
\mathrm{dim}_{k} \underline{\mathrm{Hom}}_{B}(X, Y) = \mathrm{dim}_{k} \underline{\mathrm{Hom}}_{B}(\Omega_{B}(X), Y).
\]
Let $M$ be the $A$-module corresponding to $X$ under {a} stable equivalence
$\alpha\colon B\text{-}\underline{\mathrm{mod}} \to A\text{-}\underline{\mathrm{mod}}$. The above identity implies that for any simple $A$-module $S$,
\[
\mathrm{dim}_{k} \underline{\mathrm{Hom}}_{A}(M, S) = \mathrm{dim}_{k} \underline{\mathrm{Hom}}_{A}(\Omega_{A}(M), S).
\]
This equality of dimensions means that the multiplicity of each simple module $S$ in the top of $M$ coincides with its multiplicity in the top of $\Omega_A(M)$. Therefore,
\[
\mathrm{top}(M) \cong \mathrm{top}(\Omega_{A}(M)).
\]

%Since soc$(M)\cong$top$(\Omega_{A}(M))$ and soc$(\Omega_{A}(M))\cong$top$(M)$, we have top$(M)\cong$top$(\Omega_{A}(M))\cong$soc$(M)\cong$soc$(\Omega_{A}(M))$. Since $X$ and $\Omega_{B}(X)$ are exceptional $B$-modules, $M$ and $\Omega_{A}(M)$ are exceptional $A$-modules.

Since $X$ and $\Omega_B(X)$ lie on the mouths of distinct tubes of rank~$1$ in the Auslander-Reiten quiver of $B$, their images under the stable equivalence, say $M$ and $\Omega_A(M)$, will also lie on the mouths of distinct tubes of rank~$1$ in the Auslander-Reiten quiver of $A$.

If $M$ is a string module, then so is $\Omega_A(M)$ since $A$ is a Brauer graph algebra.
By Lemma~\ref{lem:exist-string-then-tree}, the Brauer graph $\Gamma$ is a single edge, which implies that $A$ is a local algebra. This contradicts the assumption that $A$ is non-local. Therefore, $M$ cannot be a string module.

If $M$ is a non-exceptional band module, then by Proposition~\ref{prop:non-exceptional-tube}, for some integer $n > 2$, there exists a string module $Y$ {which does not lie in a tube of rank $1$ in the Auslander-Reiten quiver of $A$} such that
\[
\mathrm{dim}_k\, \underline{\Hom}_A(M, Y) > n.
\]
Consequently,
\[
\mathrm{dim}_k\, \underline{\Hom}_B(X, \alpha^{-1}(Y)) > n.
\]
Since $Y$ does not lie in a tube of rank $1$ of the Auslander-Reiten quiver of $A$, the same holds for $\alpha^{-1}(Y)$. Therefore, $\alpha^{-1}(Y)$ is a string module.
However, by {Lemma~\ref{lem:equal-dimension}}, we have
\[
\mathrm{dim}_k\, \underline{\Hom}_B(X, \alpha^{-1}(Y)) \le 2,
\]
which gives a contradiction.

If $M$ is an exceptional band module, then suppose $M$ is given by the exceptional band
\[
p_{2k}^{-1} p_{2k-1} \cdots p_{2}^{-1} p_{1}
\]
of $A$, with constant $\lambda \in k^*$, where $p_1, p_3, \dots, p_{2k-1}$ are minimal band lines and $p_2, p_4, \dots, p_{2k}$ are maximal band lines.
By Proposition~\ref{prop:double-faces and exceptional bands}, this exceptional band corresponds to a double-face $F' = (h_1, h_2, \dots, h_k)$ of $\widetilde{\Gamma}$. Suppose $F'$ belongs to the face $F$ of $\widetilde{\Gamma}$. Then the length of $F$ is either $k$ or $2k$.

\textbf{Case 1:} If the length of $F$ is $k$, then $F$ contains exactly the same half-edges as $F'$, and the exceptional band of $\Omega_A(M)$ is also given by the double-face $F'$ of $\widetilde{\Gamma}$. A direct calculation shows that the constant of the band module \(\Omega_A(M)\) is $-\lambda$, which equals $\lambda$ since $\mathrm{char}(k) = 2$. Hence $M \cong \Omega_A(M)$, which implies $X \cong \Omega_B(X)$, a contradiction.

\textbf{Case 2:} If the length of $F$ is $2k$. In this case, $F$ splits into two double-faces $F'$ and $F''$ of $\widetilde{\Gamma}$, and the exceptional band of $\Omega_A(M)$ is given by $F''$.
Let $R$ be the quiver {defined} by the ribbon graph $\widetilde{\Gamma}$. Then the face $F$ of $\widetilde{\Gamma}$ induces an oriented cycle
\[
c = \alpha_{2k} \cdots \alpha_2 \alpha_1
\]
in $R$. By the definition of a face, all $\alpha_i$ are distinct. Let $v_i = s(\alpha_i)$ for $1 \leq i \leq 2k$ (note that $v_i$ may equal $v_j$ for different $i, j$). Then:
\[
\mathrm{top}(M) = S_{v_1} \oplus S_{v_3} \oplus \cdots \oplus S_{v_{2k-1}}, \quad
\mathrm{top}(\Omega_A(M)) = S_{v_2} \oplus S_{v_4} \oplus \cdots \oplus S_{v_{2k}}.
\]
Since $\mathrm{top}(M) \cong \mathrm{top}(\Omega_A(M))$, for each vertex $v$ in the oriented cycle $c$, there exist an odd index $2i-1$ and an even index $2j$ such that $v = v_{2i-1} = v_{2j}$. This implies that there are two arrows in $c$ starting at $v$ (namely $\alpha_{2i-1}$ and $\alpha_{2j}$) and two arrows terminating at $v$ (namely $\alpha_{2i-2}$ and $\alpha_{2j-1}$).
Since $R$ is connected, and each vertex in $R$ has at most two incoming arrows and at most two outgoing arrows, the oriented cycle $c$ passes through every vertex of $R$, and every arrow of $R$ occurs in $c$ exactly once. Equivalently, the sequence of arrows appearing in $c$ exhausts the arrow set of $R$. Therefore, $F$ is the only face of $\widetilde{\Gamma}$, which means that $\Gamma$ also has only one face (cf. Subsection \ref{subsec:excep-band}). By Lemma~\ref{lem:stable-tree=BGA}, $B$ is a Brauer graph algebra, which contradicts the initial assumption.
\end{proof}

\begin{Thm}\label{thm:st.M-BGA}
 Let $A$ be a Brauer graph algebra. Then for any basic algebra $B$ without semisimple summands, if $B$ and $A$ are stably equivalent of Morita type, then $B$ is also a Brauer graph algebra.
\end{Thm}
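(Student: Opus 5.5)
The proof reduces the Morita-type statement to Theorem~\ref{thm:st-BGA} and Lemma~\ref{lem:stable-tree=BGA}(2); only the local case needs a separate argument.

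\textbf{Step 1 (reductions).} A stable equivalence of Morita type preserves symmetry and indecomposability (by \cite{L}), so $B$ is symmetric. Since $B$ is basic without semisimple summands, $B$ is indecomposable, because $A$ is indecomposable ($\Gamma$ connected).

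If $A$ is representation-finite, it is a Brauer tree algebra, and $B$ is a Brauer tree algebra by the classical result recalled in the introduction. The same holds if $\rad^2(A)=0$, after excluding the Nakayama case by hand. So assume $A$ is representation-infinite. Then Theorem~\ref{thm:sta-to-BGA=StB} shows that $B$ is stably biserial. Being symmetric, $B$ has a presentation as in Theorem~\ref{thm:sym-StBA} with Brauer graph $(\Gamma',m')$ and a set $\mathcal{L}$ of deformed loops. If $\mathrm{char}(k)\neq 2$ we are done by Theorem~\ref{thm:sym-StBA}, so assume $\mathrm{char}(k)=2$.

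\textbf{Step 2 (non-local $A$).} A stable equivalence of Morita type induces in particular a stable equivalence. If $A$ is not local, Theorem~\ref{thm:st-BGA} applies directly, since $B$ is a basic symmetric algebra with no semisimple summands. Hence $B$ is a Brauer graph algebra.

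\textbf{Step 3 (local $A$).} If $A$ is local, then $\Gamma$ has one edge. It is either an edge joining two distinct vertices, or a loop.

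\begin{itemize}
\item \emph{Edge joining two distinct vertices.} Then $\Gamma$ has exactly one face, of perimeter $2$. Lemma~\ref{lem:stable-tree=BGA}(2) applies and gives the conclusion; its proof already handles the local case using the centre comparison of Proposition~\ref{prop:ZZ} (note $\mathrm{char}(k)=2>0$) together with Lemma~\ref{lem:stb-local-iso}.
\item \emph{Loop.} Then $\Gamma$ has two faces, each of perimeter $1$, or a single face of perimeter $2$, depending on the cyclic order. In the one-face situation, Lemma~\ref{lem:stable-tree=BGA}(2) again applies. In the two-face situation, I would argue directly, as in the proof of Lemma~\ref{lem:stable-tree=BGA}(2). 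By Theorem~\ref{thm:AR-conj}, $B$ is local. If $\mathcal{L}\neq\varnothing$, then $\Gamma'$ is a loop at a vertex of multiplicity $m'$, and $Z(B)/R(B)$ is computed as in that proof. I would compare it with $Z(A)/R(A)$ obtained from Proposition~\ref{prop:center-of-stBA}: for $A$ the loop $\gamma$ satisfies $\sigma(\gamma)=\gamma$ or not according to the face structure, which changes whether the elements $q_\gamma$ appear. Proposition~\ref{prop:ZZ} should then force $m'=1$ and pin down $A$. Finally, Lemma~\ref{lem:stb-local-iso} (symmetry of $B$) yields $B\cong A$, a Brauer graph algebra, or else a contradiction.
\end{itemize}

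\textbf{Main obstacle.} I expect the local loop case to be the hardest step. It requires careful bookkeeping of $Z/R$ for the possible local Brauer graph algebras versus the deformed ones, and one must check that their isomorphism types really distinguish them, or else identify them via Lemma~\ref{lem:stb-local-iso}. Everything else is a direct assembly of earlier results.
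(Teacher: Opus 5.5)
The gap is the case where $\Gamma$ is a single loop. Your other cases are fine: the reductions, the non-local case via Theorem~\ref{thm:st-BGA}, and the edge between two distinct vertices via Lemma~\ref{lem:stable-tree=BGA}(2). These shortcuts are legitimate and a bit more economical than the paper, which reruns the whole argument.

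First, a single loop admits only one ribbon structure. $\rho$ is the transposition of $\{h,\iota(h)\}$, so $\iota\rho=\mathrm{id}$, and there are always exactly two faces, each of perimeter $1$. Your ``one-face situation'' never occurs.

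More seriously, the centre comparison cannot settle the two-face case. For the loop Brauer graph algebra, both arrows $x,y$ are loops with $\sigma(x)=y\neq x$. So $q_x$ and $q_y$ appear exactly as they do for the deformed algebra $B$; Proposition~\ref{prop:center-of-stBA} depends only on $(\Gamma,m)$, not on $\mathcal{L}$. A direct check (the deformations only change products by socle terms) gives, for $m'=m$,
\[
Z(A)/R(A)\cong k[x_1',x_2',x_3']/\langle (x_1')^2,(x_2')^2,(x_3')^{m},x_1'x_2',x_1'x_3',x_2'x_3'\rangle\cong Z(B)/R(B).
\]
So Proposition~\ref{prop:ZZ} only yields $m'=m$; it does not force $m'=1$. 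Lemma~\ref{lem:stb-local-iso} covers only $m'=1$, where symmetric $B$ is isomorphic to the Brauer graph algebra $k\langle x,y\rangle/\langle x^2-y^2,xy,yx\rangle$. For $m=m'\geq 2$ in characteristic $2$, your argument gives neither an isomorphism nor a contradiction.

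The missing ingredient is the tube argument, which is what the paper uses. The proof of Theorem~\ref{thm:st-BGA} uses non-locality only when $M=\alpha(X)$ is a string module, so the paper simply reruns it. Here $X$ is the maximal uniserial module at a deformed loop, $\Omega_B(X)$ is an exceptional band module on a different rank-one tube, and Lemma~\ref{lem:equal-dimension} gives both $\mathrm{top}(M)\cong\mathrm{top}(\Omega_A(M))$ and the bound $\dim_k\sHom\le 2$. The case analysis then goes as follows.
\begin{itemize}
\item If $M$ is a string module, Lemma~\ref{lem:exist-string-then-tree} forces $\Gamma$ to be an edge between two distinct vertices. That case is settled by Lemma~\ref{lem:stable-tree=BGA}(2), and it cannot happen for a loop.
\item A non-exceptional band module is excluded by Proposition~\ref{prop:non-exceptional-tube} together with the bound from Lemma~\ref{lem:equal-dimension}.
\item For an exceptional band module over the loop, both faces have perimeter $1$, so Case~1 of that proof applies. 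In characteristic $2$ it gives $\Omega_A(M)\cong M$, contradicting $X\not\cong\Omega_B(X)$.
\end{itemize}
Hence $\mathcal{L}=\varnothing$ when $\Gamma$ is a loop. This case needs the $\Omega$-behaviour of rank-one tubes and the Hom-dimension bound; centres cannot distinguish $A$ from $B$ here.
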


\begin{proof}
	Without loss of generality, we may assume that $A$ is indecomposable representation-infinite and the field characteristic is $2$. By \cite[Corollary 2.4 and Proposition 2.1]{L} and Theorem \ref{thm:sta-to-BGA=StB}, $B$ is indecomposable and symmetric stably biserial. We retain the notation from the proof of Theorem~\ref{thm:st-BGA}. In fact, the only case that remains to be checked is the case where $M=\alpha(X)$ is a string module.

If $M$ is a string module, then so is $\Omega_A(M)$ since $A=kQ_\Gamma/I_\Gamma$ is the Brauer graph algebra associated with the Brauer graph $(\Gamma,m)$.
By Lemma~\ref{lem:exist-string-then-tree}, the Brauer graph $\Gamma$ is a single edge connecting two distinct vertices, which has only one face.
Hence, by Lemma~\ref{lem:stable-tree=BGA}, the algebra $B$ is also a Brauer graph algebra.
\end{proof}

The theorem above allows us to give an alternative proof of the main result in \cite{AZ2}.

\begin{Cor} \label{closed-under-derived-equivalence}
	Let $A$ be a Brauer graph algebra. Then for any basic algebra $B$, if $B$ and $A$ are derived equivalent, then $B$ is also a Brauer graph algebra.
\end{Cor}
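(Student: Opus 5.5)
The plan is to deduce the corollary from Theorem \ref{thm:st.M-BGA} via the standard passage from derived equivalences of self-injective algebras to stable equivalences of Morita type. Let $B$ be a basic algebra derived equivalent to the Brauer graph algebra $A$. We may reduce to indecomposable $A$: derived equivalence respects block decompositions, and a product of Brauer graph algebras (one per connected component of the Brauer graph) is again a Brauer graph algebra. The first step is to record the properties that pass from $A$ to $B$. By Rickard's results, $B$ is self-injective (indeed symmetric), since $A$ is symmetric. Moreover, $B$ is indecomposable and has the same number of simple modules as $A$.

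The second step is to upgrade the derived equivalence. Since $k$ is a field, Rickard's theorem gives a two-sided tilting complex, so the equivalence is standard. For self-injective algebras, Rickard (and Keller--Vossieck) showed that the tilting complex yields bimodules ${}_AM_B$ and ${}_BN_A$ inducing a stable equivalence of Morita type; this is the fact quoted in the introduction. So $A$ and $B$ are stably equivalent of Morita type.

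The third step is to check the hypothesis of Theorem \ref{thm:st.M-BGA} that $B$ has no semisimple summands. If $A$ is the one-edge Brauer graph algebra $k[x]/(x^2)$ or a similar tiny case, then $B$, being derived equivalent and local, is Morita equivalent to $A$, hence isomorphic to $A$ because $B$ is basic. Otherwise $A$ is indecomposable and not semisimple, so $B$ is indecomposable and not semisimple, and therefore has no semisimple block. Theorem \ref{thm:st.M-BGA} then shows that $B$ is a Brauer graph algebra.

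The main obstacle I anticipate is this bookkeeping around decomposability and semisimple summands, together with citing the derived-to-stable-Morita passage correctly. The latter requires the two-sided tilting complex and self-injectivity, and both are available here.
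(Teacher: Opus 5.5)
Your proposal is correct and follows essentially the same route as the paper: by Rickard, $B$ is symmetric and $A$, $B$ are stably equivalent of Morita type; $B$ is indecomposable because $Z(A)\cong Z(B)$ (you assert indecomposability without a reason, so cite this invariance of the centre); and $B$ is not simple because $A$ is not, so Theorem~\ref{thm:st.M-BGA} applies. Your separate treatment of the local case is harmless but unnecessary, since the general ``indecomposable and not semisimple'' argument already covers it, and under the paper's convention that Brauer graphs are connected, $A$ is indecomposable from the start, so no reduction step is needed.
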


\begin{proof}
Since $A$ is symmetric, it follows from \cite[Corollary 5.3]{Ric} that $B$ is also symmetric. Moreover, it follows from \cite[Corollary 5.5]{Ric} that $A$ and $B$ are stably equivalent of Morita type. Since two derived equivalent algebras have isomorphic centers and (as an algebra) $A$ is indecomposable, $B$ is also indecomposable. Moreover, since $A$ is not a simple algebra, $B$ is also not a simple algebra. Then, by Theorem \ref{thm:st.M-BGA}, $B$ is also a Brauer graph algebra.
\end{proof}

\end{document}